\newlength{\superthick}
\newlength{\cornerradius}
\tikzstyle{corner}=[rounded corners=\cornerradius]
\tikzstyle{dot}=[circle, inner sep=0pt, minimum size=4.8pt]
\tikzstyle{string}=[line width=\superthick]
\tikzstyle{std}=[string,dash pattern=on 0.9pt off 0.9pt]
\definecolor{realcyan}{rgb}{0,1,1}
\newcommand{\rot}[1]{%
    \text{\rotatebox[origin=c]{-180}{${#1}$}}%
  }
\newcommand{\inclscal}[1]{{\color{#1} \spadesuit}}
\newcommand{\projscal}[1]{{\color{#1} \rot{\spadesuit}}}
\newcommand{\epsscal}[1]{{\color{#1} \clubsuit}}
\newcommand{\spescal}[1]{{\color{#1} \rot{\clubsuit}}}
\DeclareMathOperator{\pr}{\Psi}
\renewcommand*{\glossarymark}[1]{}
\theoremstyle{plain}
\newtheorem{thm}{Theorem}[section]
\newtheorem*{thm*}{Theorem}
\newtheorem{thmroman}{Theorem}[section]
\newtheorem{lem}[thm]{Lemma}
\newtheorem*{conj*}{Conjecture}
\newtheorem{cor}[thm]{Corollary}
\newtheorem*{cor*}{Corollary}
\newtheorem{prop}[thm]{Proposition}
\newtheorem*{obs*}{Observation}
\theoremstyle{definition}
\newtheorem{defn}[thm]{Definition}
\newtheorem{exam}[thm]{Example}
\newtheorem{notn}[thm]{Notation}
\newtheorem{constr}[thm]{Construction}
\theoremstyle{remark}
\newtheorem{rem}[thm]{Remark}
\numberwithin{equation}{section}
\newcommand{\textover}[3][l]{%
 \makebox[\widthof{#3}][#1]{#2}%
 }
\newcommand{\iso}{\cong}
\newcommand{\NN}{\mathbb{N}}
\newcommand{\Nneg}{\mathbb{Z}_{\geq 0}}
\newcommand{\ZZ}{\mathbb{Z}}
\newcommand{\RR}{\mathbb{R}}
\newcommand{\field}{\Bbbk}
\newcommand{\SL}{{\rm SL}}
\newcommand{\ident}{{\rm id}}
\newcommand{\defnemph}[1]{\emph{#1}}
\newcommand{\Phif}{\Phi_{\rm f}}
\newcommand{\Sigmaf}{{\Sigma_{\rm f}}}
\newcommand{\alphah}{\alpha_{\rm h}}
\newcommand{\expr}[1]{\underline{#1}}
\newcommand{\emptexpr}{\varnothing}
\newcommand{\seq}[1]{\mathbf{#1}}
\newcommand{\patt}[1]{\expr{#1}}
\newcommand{\match}[1]{\seq{#1}}
\newcommand{\Wf}{W_{\rm f}}
\newcommand{\W}{W}
\newcommand{\Wl}{\W_l}
\newcommand{\Wp}{\W_p}
\newcommand{\Ssf}{S_{{\rm f}}}
\newcommand{\Ss}{S}
\newcommand{\Ssl}{\Ss_l}
\newcommand{\Ssp}{\Ss_p}
\newcommand{\Sslexpr}{\expr{\Ss}_l}
\newcommand{\Sspexpr}{\expr{\Ss}_p}
\newcommand{\sh}{s_{\rm h}}
\newcommand{\linklexpr}{\expr{\Ss}_{l|1}}
\newcommand{\linkexpr}{\expr{\Ss}_{p|1}}
\newcommand{\Wlcosets}{\prescript{l}{}{\W}}
\newcommand{\Wpcosets}{\prescript{p}{}{\W}}
\newcommand{\toralalg}{\prescript{p}{}{\mathbb{U}}}
\newcommand{\toralsubsets}{\prescript{p}{}{\mathcal{U}}}
\newcommand{\Wfcosets}{\prescript{{\rm f}}{}{\W}}
\newcommand{\alcs}{\mathcal{C}}
\newcommand{\alcsl}{\alcs_l}
\newcommand{\alcsp}{\alcs_p}
\newcommand{\alcslfun}{\prescript{l}{}{\alcs}}
\newcommand{\alcspfun}{\prescript{p}{}{\alcs}}
\newcommand{\len}{\ell}
\newcommand{\upseq}{\mathrm{U}}
\newcommand{\downseq}{\mathrm{D}}
\newcommand{\stayseq}{\mathrm{N}}
\newcommand{\laur}{\ZZ[v^{\pm 1}]}
\newcommand{\hecke}{\mathbb{H}}
\newcommand{\heckel}{\hecke_l}
\newcommand{\heckep}{\hecke_p}
\newcommand{\heckelast}{\hecke_{l|\ast}}
\newcommand{\heckepast}{\hecke_{p|\ast}}
\newcommand{\heckeast}{\hecke_{\ast}}
\newcommand{\dgrm}{\mathcal{D}}
\newcommand{\dgrmdeg}{\mathcal{D}^{\rm deg}}
\newcommand{\dgrmBS}{\mathcal{D}_{\rm BS}}
\newcommand{\dgrmBSdeg}{\mathcal{D}_{\rm BS}^{\rm deg}}
\newcommand{\dgrmBSdegsum}{\mathcal{D}_{\rm BS}^{{\rm deg},\oplus}}
\newcommand{\dgrmBSF}{\mathcal{D}_{\rm BS}^F}
\newcommand{\dgrmstdF}{\mathcal{D}_{\rm std}^F}
\newcommand{\dgrmF}{\mathcal{D}^F}
\newcommand{\dgrmstd}{\mathcal{D}_{\rm std}}
\newcommand{\dgrmBSstd}{\mathcal{D}_{{\rm BS},{\rm std}}}
\newcommand{\dgrmBSstdF}{\mathcal{D}_{{\rm BS},{\rm std}}^F}
\newcommand{\dgrmBSpast}{\mathcal{D}_{{\rm BS},p|\ast}}
\newcommand{\dgrmpast}{\mathcal{D}_{p|\ast}}
\newcommand{\LL}{\mathrm{LL}}
\newcommand{\LLdbl}{\mathbb{LL}}
\newcommand{\pastLL}{\prescript{p|\ast}{}{\LL}}
\newcommand{\pastLLdbl}{\prescript{p|\ast}{}{\LLdbl}}
\newcommand{\Mat}{\mathcal{M}}
\DeclareMathOperator{\Hom}{Hom}
\DeclareMathOperator{\End}{End}
\DeclareMathOperator{\charctr}{ch}
\DeclareMathOperator{\val}{val}
\DeclareMathOperator{\grk}{grk}
\DeclareMathOperator{\valrk}{valrk}
\DeclareMathOperator{\dfct}{def}
\newglossaryentry{xi}
{
    name={\ensuremath{\xi}},
    sort={xzzzi},
    description={\nopostdesc}
}
\newglossaryentry{pie}
{
    name={\ensuremath{\pi_{\seq{e}}}},
    sort={pie},
    description={\nopostdesc}
}
\newglossaryentry{rhotexprwtd}
{
    name={\ensuremath{\rho_{t}^{\expr{w},t'}}},
    sort={rhotwtprime},
    description={\nopostdesc}
}
\newglossaryentry{pisubs}
{
    name={\ensuremath{\pi_{s,0}}, \ensuremath{\pi_{s,1}}},
    user1={\ensuremath{\pi_{{\color{red} s},0}}},
    sort={pis},
    description={\nopostdesc}
}
\newglossaryentry{A}
{
    name={\ensuremath{A}},
    sort={A},
    description={\nopostdesc}
}
\newglossaryentry{C}
{
    name={\ensuremath{C}, \ensuremath{C_l}, \ensuremath{C_p}},
    user1={\ensuremath{C}},
    sort={C},
    description={\nopostdesc}
}
\newglossaryentry{len}
{
    name={\ensuremath{\len(x)}, \ensuremath{\len(\expr{x})}},
    user1={\ensuremath{\len}},
    sort={lx},
    description={\nopostdesc}
}
\newglossaryentry{Phif}
{
    type=fake,
    name={\ensuremath{\Phif}},
    user1={\ensuremath{\Phif}},
    sort={Pzhif},
    description={\nopostdesc}
}
\newglossaryentry{Phi}
{
    name={\ensuremath{\Phi}},
    sort={Pzhi},
    description={\nopostdesc}
}
\newglossaryentry{Sigma}
{
    name={\ensuremath{\Sigmaf}, \ensuremath{\Sigma}},
    user1={\ensuremath{\Sigma}},
    sort={Szzigmaf},
    description={\nopostdesc}
}
\newglossaryentry{alphah}
{
    type=fake,
    name={\ensuremath{\alphah}},
    sort={azh},
    description={\nopostdesc}
}
\newglossaryentry{tildes}
{
    name={\ensuremath{\tilde{s}}},
    user1={\ensuremath{\tilde{s}}},
    sort={satilde},
    description={\nopostdesc}
}
\newglossaryentry{sh}
{
    name={\ensuremath{\sh}},
    sort={sh},
    description={\nopostdesc}
}
\newglossaryentry{exprsh}
{
    name={\ensuremath{\expr{\sh}}},
    user1={\ensuremath{\expr{\sh}}},
    sort={shexpr},
    description={\nopostdesc}
}
\newglossaryentry{mst}
{
    name={\ensuremath{m_{st}}},
    sort={mst},
    description={\nopostdesc}
}
\newglossaryentry{tildesl}
{
    name={\ensuremath{\tilde{s}_l}, \ensuremath{\tilde{s}_p}},
    user1={\ensuremath{\tilde{s}_l}},
    sort={satildel},
    description={\nopostdesc}
}
\newglossaryentry{atildesl}
{
    name={\ensuremath{a_{\tilde{s}_l}}, \ensuremath{a_{\tilde{s}_p}}},
    user1={\ensuremath{a_{\tilde{s}_l}}},
    sort={asltildel},
    description={\nopostdesc}
}
\newglossaryentry{Ss}
{
    name={\ensuremath{\Ss}},
    user1={\ensuremath{\Ss}},
    sort={Sz},
    description={\nopostdesc}
}
\newglossaryentry{Wf}
{
    name={\ensuremath{\Wf}},
    user1={\ensuremath{\Wf}},
    sort={Wf},
    description={\nopostdesc}
}
\newglossaryentry{W}
{
    name={\ensuremath{\W}},
    user1={\ensuremath{\W}},
    sort={W},
    description={\nopostdesc}
}
\newglossaryentry{as}
{
    name={\ensuremath{a_s}, \ensuremath{a_s^\vee}},
    user1={\ensuremath{a_s}},
    sort={as},
    description={\nopostdesc}
}
\newglossaryentry{asF}
{
    name={\ensuremath{a_s^F}, \ensuremath{(a_s^F)^\vee}},
    user1={\ensuremath{a_s^F}},
    sort={asF},
    description={\nopostdesc}
}
\newglossaryentry{Va}
{
    type=fake,
    name={\ensuremath{V_{A}}},
    user1={V_{A}},
    description={\nopostdesc}
}
\newglossaryentry{ah}
{
    type=fake,
    name={\ensuremath{a_{\rm h}}, \ensuremath{a_{\rm h}^\vee}},
    user1={\ensuremath{a_{\rm h}}},
    description={\nopostdesc}
}
\newglossaryentry{Vaprime}
{
    type=fake,
    name={\ensuremath{(V'_{A},\{\alpha_s\},\{\alpha_s^\vee\})}},
    user1={\ensuremath{V'_{A}}},
    description={\nopostdesc}
}
\newglossaryentry{c}
{
    name={\ensuremath{c}},
    sort={c},
    description={\nopostdesc}
}
\newglossaryentry{Z}
{
    type=fake,
    name={\ensuremath{Z}},
    description={\nopostdesc}
}
\newglossaryentry{btildes}
{
    type=fake,
    name={\ensuremath{b_{\tilde{s}}}},
    description={\nopostdesc}
}
\newglossaryentry{E}
{
    type=fake,
    name={\ensuremath{E}},
    description={\nopostdesc}
}
\newglossaryentry{F}
{
    name={\ensuremath{F}},
    sort={F},
    description={\nopostdesc}
}
\newglossaryentry{Wl}
{
    name={\ensuremath{\Wl}, \ensuremath{\Wp}},
    user1={\ensuremath{\Wl}},
    sort={WlWp},
    description={\nopostdesc}
}
\newglossaryentry{Ssl}
{
    name={\ensuremath{\Ssl}, \ensuremath{\Ssp}},
    user1={\ensuremath{\Ssl}},
    sort={SzlSzp},
    description={\nopostdesc}
}
\newglossaryentry{Ssf}
{
    name={\ensuremath{\Ssf}},
    user1={\ensuremath{\Ssf}},
    sort={Szf},
    description={\nopostdesc}
}
\newglossaryentry{exprSs}
{
    name={\ensuremath{\expr{\Ss}}, \ensuremath{\Sspexpr}},
    user1={\ensuremath{\expr{\Ss}}},
    sort={Szexpr},
    description={\nopostdesc}
}
\newglossaryentry{linklexpr}
{
    name={\ensuremath{\linklexpr}, \ensuremath{\linkexpr}},
    user1={\ensuremath{\linklexpr}},
    sort={Szponeexpr},
    description={\nopostdesc}
}
\newglossaryentry{Wlcosets}
{
    name={\ensuremath{\Wlcosets}, \ensuremath{\Wpcosets}},
    user1={\ensuremath{\Wlcosets}},
    sort={Wlcosets},
    description={\nopostdesc}
}
\newglossaryentry{VAF}
{
    type=fake,
    name={\ensuremath{(V_{A}^F,a_{F(s)},(a_{F(s)})^\vee)}},
    user1={\ensuremath{V_{A}^F}},
    description={\nopostdesc}
}
\newglossaryentry{exprx}
{
    name={\ensuremath{\expr{x}}},
    user1={\ensuremath{\expr{x}}},
    sort={xexpr},
    description={\nopostdesc}
}
\newglossaryentry{pattr}
{
    name={\ensuremath{\patt{r}}},
    user1={\ensuremath{\patt{r}}},
    sort={rpatt},
    description={\nopostdesc}
}
\newglossaryentry{toralalg}
{
    name={\ensuremath{\toralalg}},
    sort={Up},
    description={\nopostdesc}
}
\newglossaryentry{Wpcosetssast}
{
    name={\ensuremath{\Wpcosets(s,\ast)}},
    sort={Wpcosetssast},
    description={\nopostdesc}
}
\newglossaryentry{toralsubsets}
{
    name={\ensuremath{\toralsubsets}},
    sort={Upsubsets},
    description={\nopostdesc}
}
\newglossaryentry{hasts}
{
    name={\ensuremath{h^{(\ast)}_s}},
    user1={\ensuremath{h^{(\ast)}_s}},
    sort={hsast},
    description={\nopostdesc}
}
\newglossaryentry{hastx}
{
    name={\ensuremath{h^{(\ast)}_{x}}},
    user1={\ensuremath{h^{(\ast)}}},
    sort={hxast},
    description={\nopostdesc}
}
\newglossaryentry{hlast}
{
    name={\ensuremath{h^{(l|\ast)}_{x}}, \ensuremath{h^{(p|\ast)}_{x}}},
    user1={\ensuremath{h^{(l|\ast)}}},
    sort={hxpast},
    description={\nopostdesc}
}
\newglossaryentry{alcs}
{
    type=fake,
    name={\ensuremath{A_0 \in \alcs}},
    user1={\ensuremath{\alcs}},
    description={\nopostdesc}
}
\newglossaryentry{alcsl}
{
    type=fake,
    name={\ensuremath{A_{0,p} \in \alcsp}},
    user1={\ensuremath{\alcsl}},
    description={\nopostdesc}
}
\newglossaryentry{alcslfun}
{
    type=fake,
    name={\ensuremath{\alcslfun}, \ensuremath{\alcspfun}},
    user1={\ensuremath{\alcslfun}},
    description={\nopostdesc}
}
\newglossaryentry{hecke}
{
    name={\ensuremath{\hecke}, \ensuremath{\heckel}, \ensuremath{\heckep}},
    user1={\ensuremath{\hecke}},
    sort={Hz},
    description={\nopostdesc}
}
\newglossaryentry{hx}
{
    name={\ensuremath{h_x}, \ensuremath{h^{(p)}_x}},
    user1={\ensuremath{h_x}},
    sort={hx},
    description={\nopostdesc}
}
\newglossaryentry{bs}
{
    name={\ensuremath{b_s}, \ensuremath{b_s^{(p)}}},
    user1={\ensuremath{b_s}},
    sort={bs},
    description={\nopostdesc}
}
\newglossaryentry{bexprx}
{
    name={\ensuremath{b_{\expr{x}}}, \ensuremath{b_{\expr{x}}^{(p)}}},
    user1={\ensuremath{b_{\expr{x}}}},
    sort={bxaexpr},
    description={\nopostdesc}
}
\newglossaryentry{basts}
{
    name={\ensuremath{b^{(\ast)}_s}},
    user1={\ensuremath{b^{(\ast)}_s}},
    sort={bsast},
    description={\nopostdesc}
}
\newglossaryentry{bexprxast}
{
    name={\ensuremath{b^{(\ast)}_{\expr{x}}}},
    user1={\ensuremath{b^{(\ast)}_{\expr{x}}}},
    description={\nopostdesc},
    sort={bxaexprast}
}
\newglossaryentry{seqe}
{
    name={\ensuremath{\seq{e}}},
    plural={\ensuremath{\seq{c}}},
    user1={\ensuremath{\seq{e}}},
    sort={e},
    description={\nopostdesc}
}
\newglossaryentry{bexprxb}
{
    name={\ensuremath{[\expr{x}]}},
    user1={\ensuremath{[\expr{x}]}},
    sort={xexprbrack},
    description={\nopostdesc}
}
\newglossaryentry{bpattrb}
{
    name={\ensuremath{[\patt{r}]}},
    user1={\ensuremath{\patt{r}}},
    sort={rpattbrack},
    description={\nopostdesc}
}
\newglossaryentry{bexprxbast}
{
    name={\ensuremath{[\expr{x}]_{\ast}(w)}},
    user1={\ensuremath{[\expr{x}]_{\ast}}},
    sort={xexprbrackast},
    description={\nopostdesc}
}
\newglossaryentry{bexprxzlast}
{
    name={\ensuremath{[\expr{x}|\expr{w}]_{l|\ast}}, \ensuremath{[\expr{x}|\expr{w}]_{p|\ast}}},
    user1={\ensuremath{[\expr{x}|\expr{z}]_{l|\ast}}},
    sort={xwexprbracklast},
    description={\nopostdesc}
}
\newglossaryentry{dfct}
{
    name={\ensuremath{\dfct(\seq{e})}},
    user1={\ensuremath{\dfct}},
    sort={def},
    description={\nopostdesc}
}
\newglossaryentry{dfctlw}
{
    name={\ensuremath{\dfct_l^w(\seq{e})}, \ensuremath{\dfct_p^w(\seq{e})}},
    user1={\ensuremath{\dfct}},
    sort={deflw},
    description={\nopostdesc}
}
\newglossaryentry{dfctl}
{
    name={\ensuremath{\dfct_l(\seq{e})}, \ensuremath{\dfct_p(\seq{e})}},
    user1={\ensuremath{\dfct}},
    sort={defl},
    description={\nopostdesc}
}
\newglossaryentry{uz}
{
    name={\ensuremath{u_w}},
    user1={\ensuremath{u_z}},
    sort={uaw},
    description={\nopostdesc}
}
\newglossaryentry{heckelast}
{
    name={\ensuremath{\heckelast}, \ensuremath{\heckepast}},
    user1={\ensuremath{\heckelast}},
    sort={Hzlast},
    description={\nopostdesc}
}
\newglossaryentry{xbarw}
{
    name={\ensuremath{x|w}},
    plural={\ensuremath{x|w}},
    sort={xbarw},
    description={\nopostdesc}
}
\newglossaryentry{exprxbarexprw}
{
    name={\ensuremath{\expr{x}|\expr{w}}},
    user1={\ensuremath{\expr{x}|\expr{w}}},
    sort={xexprbarwwexpr},
    description={\nopostdesc}
}
\newglossaryentry{blastexprxbarexprw}
{
    name={\ensuremath{b^{(l|\ast)}_{\expr{x}|\expr{w}}}, \ensuremath{b^{(p|\ast)}_{\expr{x}|\expr{w}}}},
    plural={\ensuremath{b}},
    user1={\ensuremath{b^{(p|\ast)}_{\expr{x}|\expr{w}}}},
    sort={bxbarwpast},
    description={\nopostdesc}
}
\newglossaryentry{heckeast}
{
    name={\ensuremath{\heckeast}},
    sort={Hzast},
    description={\nopostdesc}
}
\newglossaryentry{V}
{
    name={\ensuremath{V}},
    user1={\ensuremath{V}},
    sort={Vz},
    description={\nopostdesc}
}
\newglossaryentry{VF}
{
    name={\ensuremath{V^F}},
    sort={VzF},
    description={\nopostdesc}
}
\newglossaryentry{R}
{
    name={\ensuremath{R}},
    sort={Rz},
    description={\nopostdesc},
}
\newglossaryentry{deg}
{
    name={\ensuremath{\deg f}},
    user1={\ensuremath{\deg}},
    sort={deg},
    description={\nopostdesc}
}
\newglossaryentry{val}
{
    name={\ensuremath{\val f}},
    user1={\ensuremath{\val}},
    sort={val},
    description={\nopostdesc}
}
\newglossaryentry{hatR}
{
    name={\ensuremath{\hat{R}}},
    sort={Rzhat},
    description={\nopostdesc}
}
\newglossaryentry{dgrm}
{
    name={\ensuremath{\dgrm}},
    sort={Dz},
    description={\nopostdesc}
}
\newglossaryentry{dgrmBSdeg}
{
    name={\ensuremath{\dgrmBSdeg}, \ensuremath{\dgrmdeg}},
    user1={\ensuremath{\dgrmBSdeg}},
    sort={DzBSdeg},
    description={\nopostdesc}
}
\newglossaryentry{dgrmBSdegoplus}
{
    name={\ensuremath{\dgrm_{\rm BS}^{{\rm deg},\oplus}}},
    user1={\ensuremath{\dgrm_{\rm BS}^{{\rm deg},\oplus}}},
    sort={DzBSdegoplus},
    description={\nopostdesc}
}
\newglossaryentry{dgrmBS}
{
    name={\ensuremath{\dgrmBS}},
    user1={\ensuremath{\dgrmBS}},
    sort={DzBS},
    description={\nopostdesc}
}
\newglossaryentry{Bexprx}
{
    name={\ensuremath{B_{\expr{x}}}, \ensuremath{B_{\expr{x}}^F}},
    user1={\ensuremath{B_{\expr{x}}}},
    sort={Bzxaexpr},
    description={\nopostdesc}
}
\newglossaryentry{Bpattr}
{
    name={\ensuremath{B_{\patt{r}}}},
    user1={\ensuremath{B_{\patt{x}}}},
    sort={Bzrpatt},
    description={\nopostdesc}
}
\newglossaryentry{Bexprxpastp}
{
    name={\ensuremath{B_{\expr{x}}^{(\ast)}}},
    user1={\ensuremath{B_{\expr{x}}^{(\ast)}}},
    sort={Bzxaexprast},
    description={\nopostdesc}
}
\newglossaryentry{pipattr}
{
    name={\ensuremath{\pi_{\patt{r}}}},
    user1={\ensuremath{\pi_{\patt{r}}}},
    sort={pirpatt},
    description={\nopostdesc}
}
\newglossaryentry{BexprxpFp}
{
    name={\ensuremath{B_{\expr{x}}^{(F)}}},
    user1={\ensuremath{B_{\expr{x}}^{(F)}}},
    sort={BzxFaexpr},
    description={\nopostdesc}
}
\newglossaryentry{BxpFp}
{
    name={\ensuremath{B_{x}^{(F)}}},
    user1={\ensuremath{B_{x}^{(F)}}},
    sort={BzxF},
    description={\nopostdesc}
}
\newglossaryentry{Bw}
{
    name={\ensuremath{B_{x}}, \ensuremath{B_{x}^F}},
    user1={\ensuremath{B_{w}}},
    sort={Bzw},
    description={\nopostdesc}
}
\newglossaryentry{dgrmBSF}
{
    name={\ensuremath{\dgrmBSF}, \ensuremath{\dgrmF}, \ensuremath{\dgrm_{\rm std}^F}},
    user1={\ensuremath{\dgrmBSF}},
    sort={DzBSF},
    description={\nopostdesc}
}
\newglossaryentry{hatRdgrmBSFdev}
{
    name={\ensuremath{\hat{R} \otimes \dgrm_{\rm BS}^{F,{\rm dev}}}},
    sort={RzhatDBSFdev},
    description={\nopostdesc}
}
\newglossaryentry{hatRdgrmBSFdevoplus}
{
    name={\ensuremath{\hat{R} \otimes \dgrm_{\rm BS}^{F,{\rm dev},\oplus}}},
    sort={RzhatDBSFdevoplus},
    description={\nopostdesc}
}
\newglossaryentry{dots}
{
    name={\ensuremath{{\rm dot}_s}, \ensuremath{{\rm dot}_s^F}},
    user1={\ensuremath{{\rm dot}_s}},
    sort={dots},
    description={\nopostdesc}
}
\newglossaryentry{dotspFp}
{
    name={\ensuremath{{\rm dot}_s^{(F)}}},
    user1={\ensuremath{{\rm dot}_s^{(F)}}},
    sort={dotsF},
    description={\nopostdesc}
}
\newglossaryentry{forks}
{
    name={\ensuremath{{\rm fork}_s}, \ensuremath{{\rm fork}_s^F}},
    user1={\ensuremath{{\rm fork}_s}},
    sort={forks},
    description={\nopostdesc}
}
\newglossaryentry{forkspFp}
{
    name={\ensuremath{{\rm fork}_s^{(F)}}},
    user1={\ensuremath{{\rm fork}_s^{(F)}}},
    sort={forksF},
    description={\nopostdesc}
}
\newglossaryentry{braidst}
{
    name={\ensuremath{{\rm braid}_{s,t}}, \ensuremath{{\rm braid}_{s,t}^F}},
    user1={\ensuremath{{\rm braid}_{s,t}}},
    sort={braidst},
    description={\nopostdesc}
}
\newglossaryentry{braidstpFp}
{
    name={\ensuremath{{\rm braid}_{s,t}^{(F)}}},
    user1={\ensuremath{{\rm braid}_{s,t}^{(F)}}},
    sort={braidstF},
    description={\nopostdesc}
}
\newglossaryentry{bivalents}
{
    name={\ensuremath{{\rm bivalent}_{s}}},
    user1={\ensuremath{{\rm bivalent}_{s}}},
    sort={bivalents},
    description={\nopostdesc}
}
\newglossaryentry{menorahstildeexprwtd}
{
    name={\ensuremath{{\rm menorah}_{\tilde{s}}^{\expr{w},t'}}},
    user1={\ensuremath{{\rm menorah}_{\tilde{s}}^{\expr{w},t'}}},
    sort={menorahswtprime},
    description={\nopostdesc}
}
\newglossaryentry{stdcups}
{
    name={\ensuremath{{\rm stdcup}_{s}}, \ensuremath{{\rm stdcup}_s^F}},
    user1={\ensuremath{{\rm stdcup}_{s}}},
    sort={stdcups},
    description={\nopostdesc}
}
\newglossaryentry{epsilons}
{
    name={\ensuremath{\epsilon_{s}}, \ensuremath{\epsilon_{\expr{x}}}},
    user1={\ensuremath{\epsilon_{s}}},
    sort={epsilonwexpr},
    description={\nopostdesc}
}
\newglossaryentry{stdcapspFp}
{
    name={\parbox{3.5cm}{\ensuremath{{\rm stdcap}_s^{(F)}}, \ensuremath{{\rm stdcup}_s^{(F)}}, \ensuremath{{\rm stdbraid}_{s,t}^{(F)}}}},
    user1={\ensuremath{{\rm stdcap}_{s}^{(F)}}},
    sort={stdcapspFp},
    description={\nopostdesc}
}
\newglossaryentry{stdcaps}
{
    name={\ensuremath{{\rm stdcap}_s}, \ensuremath{{\rm stdcap}_s^F}},
    user1={\ensuremath{{\rm stdcap}_s}},
    sort={stdcaps},
    description={\nopostdesc}
}
\newglossaryentry{stdbraidst}
{
    name={\ensuremath{{\rm stdbraid}_{s,t}}, \ensuremath{{\rm stdcup}_{s,t}^F}},
    user1={\ensuremath{{\rm stdbraid}_{s,t}}},
    sort={stdbraidst},
    description={\nopostdesc}
}
\newglossaryentry{HomdgrmBSbullet}
{
    name={\ensuremath{\Hom_{\dgrmBS}^{\bullet}}, \ensuremath{\Hom_{\dgrmBSpast}^{\bullet}}},
    plural={\ensuremath{\Hom}},
    user1={\ensuremath{\Hom_{\dgrmBS}^{\bullet}}},
    sort={HomDBS},
    description={\nopostdesc}
}
\newglossaryentry{pip}
{
    name={\ensuremath{(i)}; \ensuremath{\langle i \rangle}},
    plural={\ensuremath{\langle i \rangle}},
    user1={\ensuremath{(i)}},
    sort={i},
    description={\nopostdesc}
}
\newglossaryentry{langlemranglew}
{
    name={\ensuremath{\langle m \rangle_w}},
    sort={mw},
    description={\nopostdesc}
}
\newglossaryentry{LLseqeexprw}
{
    name={\ensuremath{\LL_{\seq{e},\expr{w}}}; \ensuremath{\LL_{[\expr{x}],\expr{w}}}},
    user1={\ensuremath{\LL_{\seq{e},\expr{w}}}},
    sort={LL},
    description={\nopostdesc}
}
\newglossaryentry{LLbexprxbexprw}
{
    type=fake,
    name={\ensuremath{\LL_{[\expr{x}],\expr{w}}}},
    user1={\ensuremath{\LL_{[\expr{x}],\expr{w}}}},
    description={\nopostdesc}
}
\newglossaryentry{bBb}
{
    name={\ensuremath{[B]}},
    sort={Bz},
    description={\nopostdesc}
}
\newglossaryentry{bdgrmb}
{
    name={\ensuremath{[\dgrm]}, \ensuremath{[\dgrmpast]}},
    user1={\ensuremath{[\dgrm]}},
    sort={Dzbrack},
    description={\nopostdesc}
}
\newglossaryentry{charctr}
{
    name={\ensuremath{\charctr}},
    sort={ch},
    description={\nopostdesc}
}
\newglossaryentry{charctrpast}
{
    name={\ensuremath{\charctr_{p|\ast}}},
    sort={chpast},
    description={\nopostdesc}
}
\newglossaryentry{Q}
{
    name={\ensuremath{Q}},
    sort={Q},
    description={\nopostdesc}
}
\newglossaryentry{hatQ}
{
    name={\ensuremath{\hat{Q}}},
    sort={Qahat},
    description={\nopostdesc}
}
\newglossaryentry{Qexprx}
{
    name={\ensuremath{Q_{\expr{x}}}, \ensuremath{Q_{\expr{x}}^F}},
    user1={\ensuremath{Q_{\expr{x}}}},
    sort={Qxexpr},
    description={\nopostdesc}
}
\newglossaryentry{QotimesdgrmtildeBSstd}
{
    name={\ensuremath{Q \otimes \dgrmBSstd}, \ensuremath{Q \otimes \dgrmstd}},
    user1={\ensuremath{Q \otimes \widetilde{\dgrm}_{\rm BS,std}}},
    sort={QDBSstd},
    description={\nopostdesc}
}
\newglossaryentry{Rx}
{
    name={\ensuremath{R_{x}}, \ensuremath{R_{x}^F}},
    user1={\ensuremath{R_x}},
    sort={Rzx},
    description={\nopostdesc}
}
\newglossaryentry{Rexprx}
{
    name={\ensuremath{R_{\expr{x}}}, \ensuremath{\hat{R}_{\expr{x}}}},
    user1={\ensuremath{R_{\expr{x}}}},
    sort={Rzxexpr},
    description={\nopostdesc}
}
\newglossaryentry{RexprxpFp}
{
    name={\ensuremath{R_{\expr{x}}^{(F)}}},
    user1={\ensuremath{R_{\expr{x}}^{(F)}}},
    sort={RxexprpFp},
    description={\nopostdesc}
}
\newglossaryentry{dgrmF}
{
    name={\ensuremath{\dgrmF}},
    sort={DzF},
    description={\nopostdesc}
}
\newglossaryentry{dgrmstd}
{
    name={\ensuremath{\dgrmstd}},
    user1={\ensuremath{\dgrmstd}},
    sort={Dzstd},
    description={\nopostdesc}
}
\newglossaryentry{binomfg}
{
    name={\ensuremath{\binom{f}{g}}},
    sort={fg},
    description={\nopostdesc}
}
\newglossaryentry{pcdotp}
{
    name={\ensuremath{(\cdot)}},
    sort={aa},
    description={\nopostdesc}
}
\newglossaryentry{dgrmBSstd}
{
    name={\ensuremath{\dgrmBSstd}},
    sort={DzBSstd},
    description={\nopostdesc}
}
\newglossaryentry{RdgrmBSstd}
{
    name={\ensuremath{\prescript{}{R}{\dgrm}_{\rm BS,std}}},
    user1={\ensuremath{\prescript{}{R}{\dgrm}_{\rm BS,std}}},
    sort={DzBSstdR},
    description={\nopostdesc}
}
\newglossaryentry{dgrmBSpast}
{
    name={\ensuremath{\dgrmBSpast}},
    sort={DzBSpast},
    description={\nopostdesc}
}
\newglossaryentry{Bppastpexprxexprw}
{
    name={\ensuremath{B^{(p|\ast)}_{\expr{x}|\expr{w}}}, \ensuremath{B^{(p)}_{\expr{x}}}},
    user1={\ensuremath{B^{(p|\ast)}_{\expr{x}|\expr{w}}}},
    sort={Bzxzexprwexprpast},
    description={\nopostdesc}
}
\newglossaryentry{Bpppy}
{
    name={\ensuremath{B^{(p)}_{x}}},
    user1={\ensuremath{B^{(p)}_{y}}},
    sort={Bzxp},
    description={\nopostdesc}
}
\newglossaryentry{Bppastpxbarw}
{
    name={\ensuremath{B^{(p|\ast)}_{x|w}}},
    user1={\ensuremath{B^{(p|\ast)}_{x|w}}},
    sort={Bzxzabarwpast},
    description={\nopostdesc}
}
\newglossaryentry{dgrmpast}
{
    name={\ensuremath{\dgrmpast}},
    sort={Dzpast},
    description={\nopostdesc}
}
\newglossaryentry{dgrmBSpastpoplus}
{
    name={\ensuremath{\dgrm_{\rm BS}^{(\ast),\oplus}}},
    sort={DzBSastoplus},
    description={\nopostdesc}
}
\newglossaryentry{LLdblseqeseqf}
{
    name={\ensuremath{\LLdbl_{\seq{e}}^{\seq{f}}}; \ensuremath{\LLdbl_{[\expr{x}]}^{[\expr{y}]}}},
    user1={\ensuremath{\LLdbl_{\seq{e}}^{\seq{f}}}},
    sort={LLdbl},
    description={\nopostdesc}
}
\newglossaryentry{LLdblbexprxbbexpryb}
{
    type=fake,
    name={\ensuremath{\LLdbl_{[\expr{x}]}^{[\expr{y}]}}},
    user1={\ensuremath{\LLdbl_{[\expr{x}]}^{[\expr{y}]}}},
    description={\nopostdesc}
}
\newglossaryentry{pastLLseqeexprwexprz}
{
    name={\ensuremath{\pastLL_{\seq{e},\expr{w},\expr{z}}}; \ensuremath{\pastLL_{[\expr{x}],\expr{w},\expr{z}}}},
    user1={\ensuremath{\pastLL_{\seq{e},\expr{w},\expr{z}}}},
    sort={LLpast},
    description={\nopostdesc}
}
\newglossaryentry{pastLLdblseqeseqf}
{
    name={\ensuremath{\pastLLdbl_{\seq{e}}^{\seq{f}}}; \ensuremath{\pastLLdbl_{[\expr{x}]}^{[\expr{y}]}}},
    user1={\ensuremath{\pastLLdbl_{\seq{e}}^{\seq{f}}}},
    sort={LLpastdbl},
    description={\nopostdesc}
}
\newglossaryentry{prprime}
{
    name={\ensuremath{\pr'}; \ensuremath{\pr}},
    user1={\ensuremath{\pr'}},
    sort={Pzsi},
    description={\nopostdesc}
}
\newglossaryentry{MatWpcosetshatRdgrmBSFdevoplus}
{
    name={\ensuremath{\Mat_{\Wpcosets}}(-)},
    user1={\ensuremath{\Mat_{\Wpcosets}(\hat{R} \otimes \dgrm_{\rm BS}^{F,{\rm dev},\oplus})}},
    sort={MWpDBSF},
    description={\nopostdesc}
}
\newglossaryentry{circWpcosets}
{
    name={\ensuremath{\circ_{\Wpcosets}}},
    sort={Wpcosetscirc},
    description={\nopostdesc}
}
\newglossaryentry{psi}
{
    name={\ensuremath{\psi}},
    sort={psi},
    description={\nopostdesc}
}
\newglossaryentry{voneheckeast}
{
    name={\ensuremath{\prescript{}{u,v=1}{\hecke}_{\ast}}, \ensuremath{\prescript{}{v=1}{\hecke}_{p|\ast}}},
    user1={\ensuremath{\prescript{}{u,v=1}{\hecke}_{\ast}}},
    sort={Hzzastvone},
    description={\nopostdesc}
}
\newglossaryentry{vonebpastps}
{
    name={\ensuremath{\prescript{}{u,v=1}{b}^{(\ast)}_s}, \ensuremath{\prescript{}{v=1}{h}^{(p|\ast)}_{x|w}}},
    user1={\ensuremath{\prescript{}{u,v=1}{b}^{(\ast)}_s}},
    sort={bxzastvone},
    type={fake},
    description={\nopostdesc}
}
\newglossaryentry{pbx}
{
    name={\ensuremath{\prescript{p}{}{b}_x}},
    user1={\ensuremath{\prescript{p}{}{b}_x}},
    sort={bxap},
    description={\nopostdesc}
}
\begin{document}

\title[Matrix recursion for Soergel bimodules]{Matrix recursion for positive characteristic diagrammatic Soergel bimodules for affine Weyl groups}
\author{Amit Hazi}
\date{27 January 2025}
\address{School of Mathematics\\
University of Leeds\\
Leeds\\
LS2 9JT\\
United Kingdom}
\email{A.Hazi@leeds.ac.uk}

\subjclass[2010]{20C08 (primary); 20G40, 20F55 (secondary).}

\maketitle

\begin{abstract} 
Let $\W$ be an affine Weyl group, and let $\Bbbk$ be a field of characteristic $p>0$. The diagrammatic Hecke category $\mathcal{D}$ for $\W$ over $\Bbbk$ is a categorification of the Hecke algebra for $\W$ with rich connections to modular representation theory. We explicitly construct a functor from $\mathcal{D}$ to a matrix category which categorifies a recursive representation $\xi: \mathbb{Z}\W \rightarrow M_{p^r}(\mathbb{Z}\W)$, where $r$ is the rank of the underlying finite root system. This functor gives a method for understanding diagrammatic Soergel bimodules in terms of other diagrammatic Soergel bimodules which are ``smaller'' by a factor of $p$. It also explains the presence of self-similarity in the $p$-canonical basis, which has been observed in small examples. By decategorifying we obtain a new lower bound on the $p$-canonical basis, which corresponds to new lower bounds on the characters of the indecomposable tilting modules by the recent $p$-canonical tilting character formula due to Achar--Makisumi--Riche--Williamson. 

\end{abstract}

\section*{Introduction}

\subsection*{Diagrammatic Soergel bimodules}

The Hecke category provides the universal setting for studying the interaction between Kazhdan--Lusztig theory and Lie theory.
This category lifts the combinatorics of Coxeter groups to the level of a monoidal category, with analogues of the classical Coxeter relations as well as deeper `relations between relations'.
Moreover, diagrammatic presentations of this category have allowed for direct comparisons between representation theory over fields of positive characteristic versus characteristic $0$.
Unifying these two worlds will undoubtedly be the core of Lie theory in the 21st century.

More precisely, let $(\W,\Ss)$ be a Coxeter system, and let $\hecke$ be the Hecke algebra associated to $\W$. 
The \defnemph{diagrammatic Hecke category $\dgrm$} of $\W$ over a field $\field$ is an entirely algebraic construction of a categorification of $\hecke$ \cite{ew-soergelcalc}. 
In other words, $\dgrm$ is a $\field$-linear additive graded monoidal category defined by a diagrammatic presentation, whose split Grothendieck ring $[\dgrm]$ (generated by isomorphism classes of indecomposable objects) is isomorphic to $\hecke$. 
For each $x \in \W$ there is an indecomposable object $B_x$ labeled by $x$, and all indecomposable objects are of this form up to grade shift. The set $\{[B_x] : x \in \W\}$ is a basis of the split Grothendieck ring $[\dgrm]$ and thus corresponds to a basis of $\hecke$. 

When $\W$ is finite crystallographic and $\field$ is of characteristic $0$, $\dgrm$ is equivalent to the category of Soergel bimodules; for this reason objects in $\dgrm$ are called \defnemph{(diagrammatic) Soergel bimodules}, and $\dgrm$ is sometimes called the \defnemph{diagrammatic category of Soergel bimodules}. 
The diagrammatic category $\dgrm$ provides a suitable replacement for the category of Soergel bimodules in situations where Soergel bimodules are less well behaved, such as when $\W$ is infinite or $\field$ has positive characteristic. 
For $\W$ crystallographic, the \defnemph{$p$-canonical basis} $\{\prescript{p}{}{b}_x : x \in \W\}$ for $\W$ is the basis for $\hecke$ corresponding to the isomorphism classes of indecomposable objects in $\dgrm$ defined over a field $\field$ of characteristic $p>0$ \cite{jensen-williamson}. 
Equality of the $p$-canonical basis and the Kazhdan--Lusztig basis $\{b_x : x \in \W\}$ is associated with several classical conjectures in modular representation theory, the most important of which is Lusztig's conjecture on the characters of the simple modules for a reductive algebraic group \cite{lusztig-chevalley,soergel-ontherel,fiebig}. 
In particular, in the case of finite $\W$ it is now known that $b_x=\prescript{p}{}{b}_x$ for all $x \in \W$ only when $p$ is extremely large, i.e.~exponential in the rank of $\W$ \cite{williamson-schubertcalctorexpl}. 
Finding efficient ways to compute the $p$-canonical basis for smaller $p$ is an important open problem. 

\subsection*{Main results}

In this paper we focus on the diagrammatic Hecke category for an affine Weyl group over a field of characteristic $p$.
Our main theorem roughly states that this category can be decomposed into fractally smaller copies of itself, scaled down by a factor of $p$. 
Characteristic $p$-fractal behavior, driven by the Frobenius endomorphism, has long been a centerpiece of modular Lie theory --- our decomposition theorem provides the first realization of this $p$-fractal behavior within the Hecke category. 
This provides the first categorical setting in which to investigate the generational philosophy of Lusztig--Williamson and the billiards conjecture of Lusztig--Williamson--Jensen \cite{lw-tiltinggens,lw-billiards,ltjensen-billiardscorrection}.



In more detail, let $\Phif$ be an irreducible finite root system with finite Weyl group $\Wf$ and let $\W=\Wf \ltimes \ZZ\Phif$ be the affine Weyl group generated by $\Wf$ and translations $t(\alpha) : \lambda \mapsto \lambda+\alpha$ for $\alpha \in \ZZ\Phif$. 
For $p$ a prime we write $\Wp=\W \ltimes p\ZZ\Phif$ for the $p$-affine subgroup of $\W$ generated by $\Wf$ and $t(p\alpha)$ for $\alpha \in \ZZ\Phif$. 
The homomorphism 
\begin{align*}
F : \W & \longrightarrow \W  \\
w & \longmapsto \textover[l]{$w$}{$t(p\alpha)$} \qquad \text{for $w \in \Wf$,} \\ 
t(\alpha) & \longmapsto t(p\alpha) \qquad \text{for $\alpha \in \ZZ\Phif$,} 
\end{align*} 
which fixes $\Wf$ and scales translations by a factor of $p$, induces an isomorphism $\W \iso \Wp$. 
Consider the group ring $\ZZ\W$ as a $(\ZZ\Wp,\ZZ\W)$-bimodule. 
As a left $\ZZ\Wp$-module $\ZZ\W$ is free, with a basis $\Wpcosets \subset \W$ of minimal length representatives for the right cosets $\Wp \backslash \W$. 
The right $\W$-action on $\ZZ\W$ induces a faithful homomorphism 
\begin{equation*}
\ZZ\W \longrightarrow \End_{\ZZ\Wp}(\ZZ\W) \xrightarrow{\sim} M_{|\Wpcosets|}(\ZZ\Wp) \xrightarrow{\sim} M_{|\Wpcosets|}(\ZZ\W) \text{.} 
\end{equation*}
Here the first isomorphism is obtained by writing $\ZZ\Wp$-endomorphisms of $\ZZ\W$ as matrices with respect to the basis $\Wpcosets$. 
The second isomorphism is just $F^{-1}$ applied to the entries of the matrices. 
Following \cite{selfsimilargroups} we call the composition $\xi:\ZZ\W \rightarrow M_{|\Wpcosets|}(\ZZ\W)$ the \defnemph{matrix recursion representation} of $\ZZ\W$. 
As far as we know this representation does not lift easily to Hecke algebras.

Our most important result is a categorification of the matrix recursion representation. 
More precisely, suppose $\field$ is a field of characteristic $p>2$ (or $p>3$ if $\Phif$ is of type $G_2$). 
The construction of the diagrammatic category uses the data of a \defnemph{$\field$-realization} of $\W$, i.e.~a reflection representation of $\W$ over $\field$. 
We consider two different $\field$-realizations of $\W$: the \defnemph{universal realization} of $\W$ over $\field$ with respect to the affine simple roots (see Definition~\ref{defn:univrlz}) and the \defnemph{$F$-twist} of this realization (see Definition~\ref{defn:Ftwist}). 
Let $\dgrm$ and $\dgrmF$ respectively denote the diagrammatic Hecke categories constructed from these two realizations of $\W$. 
The categorification of $\xi$ is as follows (see also Theorem~\ref{thm:linkagefunctor}). 

\begin{thmroman} \label{thm:intro-matrix-recursion}
There is a faithful monoidal functor 
\begin{equation*}
\pr: \dgrmdeg \longrightarrow \Mat_{|\Wpcosets|}(\hat{R} \otimes \dgrm^{F,{\rm deg}}) \text{,} 
\end{equation*}
which induces the matrix recursion representation on Grothendieck rings.
\end{thmroman}

We call $\pr$ the \defnemph{matrix recursion functor}.
The source of $\pr$ is the ``degrading'' of $\dgrm$, i.e.~the ungraded category obtained by forgetting the grading on $\Hom$-spaces in $\dgrm$. 
Its split Grothendieck ring $[\dgrmdeg]$ is canonically isomorphic to $\hecke/(v-1) \iso \ZZ\W$, the classical limit of $\hecke$. 
The target of $\pr$ is a certain ``matrix category'' constructed as follows. 
(This construction is almost certainly not new, but we were unable to find a specific reference for it in the literature.) 
The objects and morphisms in the matrix category $\Mat_{|\Wpcosets|}(\hat{R} \otimes \dgrm^{F,{\rm deg}})$ are just $|\Wpcosets| \times |\Wpcosets|$ matrices whose entries are objects and morphisms respectively in the category $\hat{R} \otimes \dgrm^{F,{\rm deg}}$. 
Composition of matrices of morphisms is just entrywise composition (see \eqref{eq:hdmrdmatrix}), while the tensor product in the matrix category is given by the categorical analogue of the ordinary matrix product (see \eqref{eq:prodmatrix}). 
Here $\hat{R} \otimes \dgrm^{F,{\rm deg}}$ is a scalar extension of $\dgrm^{F,{\rm deg}}$ (the degrading of $\dgrmF$) with respect to a certain complete discrete valuation ring $\hat{R}$ (see Definition~\ref{defn:Rhat}).


The matrix recursion representation $\xi$ characterizes the elements of $\W$ in terms of ``smaller'' elements in the following sense. 
The affine Weyl group $\W$ acts via Euclidean isometries on the root lattice $\ZZ\Phif$. 
For any $x \in \W$, the vector $x(0)$ is contained in the closure of the alcove associated to $x$.
Therefore we can think of the Euclidean norm $|x(0)|$ as a rough measure of the length or complexity of $x$. 
For each $x \in \W$ and each non-zero entry $y$ in the matrix $\xi(x)$, the Euclidean norms of $x(0)$ and $y(0)$ satisfy the inequality 
\begin{equation*}
|y(0)|\leq p^{-1}|x(0)|+2p^{-1} \max_{w \in \Wpcosets} |w(0)| \leq p^{-1}|x(0)|+K(\Phif) \text{,} 
\end{equation*}
where $K(\Phif)$ is a positive constant which only depends on the root system $\Phif$, and not on $x$, $y$, or $p$.
The functor behaves similarly, with direct summands $B_y$ of non-zero entries of $\pr(B_x)$ satisfying the same inequality. 
Since smaller Soergel bimodules are easier to understand than larger ones, the functor is a useful tool for computations in $\dgrm$. 
For example, by reading off the first row of the $\pr(B_x)$ and decategorifying we obtain the following new lower bound on the $p$-canonical basis (see also Corollary~\ref{cor:sbim-highordlink}). 
For $b \in \hecke$ write $\prescript{}{v=1}{b}$ for the image of $b$ in $\ZZ\W$. 

\begin{thmroman} \label{thm:intro-positivity}
Let $x \in \W$. Then
\begin{equation*}
\prescript{p}{v=1}{b}_x \in \sum_{\substack{y \in \W \\ w \in \Wpcosets\\ F(y)w \leq x}} \ZZ_{\geq 0} F(\prescript{p}{v=1}{b}_y) w \text{.}
\end{equation*}
\end{thmroman}

This lower bound can be applied recursively to each of the $p$-canonical basis elements on the right-hand side to obtain an even better bound. 
This feature is especially notable because there is no similar recursion for the functor $\pr$.
The main obstruction to functorial recursion is that the categories $\dgrm$ and $\dgrmF$ are not precisely equivalent, even though their indecomposable objects both give the $p$-canonical basis.

All of the algebraic and categorical constructions which we use to prove the main results are graded or filtered in a natural way. 
For example, in \S\ref{sec:heckeast-heckepast} we introduce an algebra $\heckeast$ which is a $\laur$-module extension of $\ZZ\W$, and an $(\heckep,\heckeast)$-bimodule $\heckepast$ which is a $\laur$-module extension of the $(\ZZ\Wp,\ZZ\W)$-bimodule $\ZZ\W$. 
The right action of $\heckeast$ on $\heckepast$ gives an extension $\heckeast \rightarrow M_{|\Wpcosets|}(\hecke)$ of the matrix recursion representation.
This extends to our categorifications of these structures through the notion of a \defnemph{valuation} on a category (see Definition~\ref{defn:valmod}), which gives rise to filtered $\Hom$-spaces. 
Interestingly, the valuation structure on our new categories does not agree with the usual grading on $\dgrm$.
This is the reason why both of the main results above are inherently ungraded.
We hope to study the extra graded structure in our constructions in future work.

\subsection*{Tilting modules and Andersen's observation}

We will comment on the connection between our results and modular representation theory of reductive groups. 
In this setting, our work builds a framework through which to view Andersen's conjecture on tilting characters.
Let $G$ be a semisimple, simply connected algebraic group with root system $\Phif$ over an algebraically closed field of characteristic $p\geq h$, where $h$ is the Coxeter number of $\Phif$. 
The representation theory of $G$ is controlled by the \emph{dual} affine Weyl group, so we now take $\W=\Wf \ltimes \ZZ\Phi_{\rm f}^\vee$, with corresponding $p$-affine subgroup $\Wp=\Wf \ltimes p\ZZ\Phi_{\rm f}^\vee$ and minimal length right coset representatives $\Wpcosets$. 
We write $\cdot_p$ for the $p$-dilated dot action, i.e.~for $w \in \W$ and $\lambda$ a weight, 
\begin{equation*}
w \cdot_p \lambda=pw(p^{-1}\lambda+p^{-1}\rho)-\rho
\end{equation*}
where $\rho$ is the half-root sum. 
We also write $C_p$ for the dominant weights in the interior of the fundamental ($\rho$-shifted) $p$-alcove. 
We call $w \in \W$ dominant if $w \cdot_p \lambda$ is dominant for any $\lambda \in C_p$. 

A \defnemph{tilting module} is a $G$-module with a filtration by Weyl modules and a filtration by dual Weyl modules. 
For each dominant weight $\lambda$ there is (up to isomorphism) exactly one indecomposable tilting module $T(\lambda)$ with highest weight $\lambda$ \cite{ringel,donkin-ontilting}. 
When $p \geq 2h-2$ there is a formula for the character of the simple $G$-modules in terms of the characters of the indecomposable tilting modules. 
In the analogous setting of quantum groups at an $l$th root of unity, Soergel proved a character formula for the indecomposable tilting modules $T_l(\lambda)$ in terms of antispherical Kazhdan--Lusztig polynomials \cite{soergel-KL,soergel-kacmoody}. 
By a lifting argument, Andersen observed in \cite[\S 4.2]{andersen-sumformula} that quantum tilting characters form a lower bound for modular tilting characters in the following sense.

\begin{obs*}[Andersen]
Let $\lambda \in C_p$. For any $r \in \NN$ and any dominant $x \in \W$,
\begin{equation*}
\charctr T(x \cdot_p \lambda) \in 
\sum_{y \leq x} \Nneg \charctr T_{p^r}(y \cdot_p \lambda) \text{.}
\end{equation*}
\end{obs*}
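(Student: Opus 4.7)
The plan is to lift the modular tilting module $T(x \cdot_p \lambda)$ to a tilting module over an integral form of the quantum group interpolating between the modular and quantum settings, then decompose the generic fiber as a direct sum of indecomposable quantum tiltings. Characters are preserved along the lift, and non-negative integer coefficients arise from honest direct-sum decompositions in the quantum tilting category. The only subtlety at the end is translating a dominance-order bound on weights into a Bruhat-order bound on $\W$.

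Concretely, fix a complete discrete valuation ring $\mathcal{O}$ with residue field $\field$ and fraction field $K$ of characteristic zero containing a primitive $p^r$-th root of unity $\zeta$, and let $U_\mathcal{O}$ denote Lusztig's $\mathcal{O}$-form of the quantum enveloping algebra of type $\Phif$, specialized at $q = \zeta$, so that $U_\mathcal{O} \otimes_\mathcal{O} \field$ is the distribution algebra of $G$ while $U_\mathcal{O} \otimes_\mathcal{O} K$ is the Lusztig quantum group $U_{p^r}$. The key classical input is Andersen's lifting theorem for tilting modules: since $T(x \cdot_p \lambda)$ is projective in the truncated highest-weight category cut out by weights $\leq x \cdot_p \lambda$, there exists a $U_\mathcal{O}$-module $\tilde{T}$, free of finite rank over $\mathcal{O}$, with $\tilde{T} \otimes_\mathcal{O} \field \iso T(x \cdot_p \lambda)$ and with generic fiber $\tilde{T}_K := \tilde{T} \otimes_\mathcal{O} K$ a tilting module for $U_{p^r}$. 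Freeness of $\tilde{T}$ forces $\charctr \tilde{T}_K = \charctr T(x \cdot_p \lambda)$. Decomposing $\tilde{T}_K$ in the semisimple-like quantum tilting category yields
\[
\tilde{T}_K \iso \bigoplus_{\mu} T_{p^r}(\mu)^{\oplus m_\mu}, \qquad m_\mu \in \Nneg,
\]
where the highest weight of $\tilde{T}_K$ (inherited from $\tilde{T}$) is $x \cdot_p \lambda$ with multiplicity one, so $m_\mu \neq 0$ implies $\mu \leq x \cdot_p \lambda$ in the dominance order on weights.

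To finish, one must translate this dominance bound into a Bruhat bound on $\W$. Every $\mu$ with $m_\mu \neq 0$ lies in the linkage class of $x \cdot_p \lambda$ (hence of $\lambda$), because $\tilde{T}_K$ sits in a single block, so $\mu = y \cdot_p \lambda$ for some dominant $y \in \W$. The hypothesis $\lambda \in A_0$ places $\lambda$ in the interior of the fundamental $p$-alcove, so $y \mapsto y \cdot_p \lambda$ is an order-preserving bijection from the dominant elements of $\W$ (with Bruhat order) onto the dominant part of the $p$-dilated dot orbit (with dominance order); this is the classical identification of strong linkage with Bruhat order for regular affine orbits. Hence $y \cdot_p \lambda \leq x \cdot_p \lambda$ becomes $y \leq x$, yielding the desired non-negative expansion. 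The most delicate step is the construction of the lift $\tilde{T}$: one must invoke both the truncated projectivity of tilting modules for the mixed-characteristic $\mathcal{O}$-form and the compatibility of Weyl and dual Weyl filtrations with specialization, so that both the special and generic fibers of $\tilde{T}$ inherit the tilting property and the character is preserved verbatim on the nose.
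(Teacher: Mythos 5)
The paper does not supply a proof of this observation --- it is cited verbatim to Andersen \cite[\S 4.2]{andersen-sumformula} --- so there is no ``paper's own proof'' to compare against. Your proposal reconstructs Andersen's lifting argument, and the overall strategy (lift $T(x \cdot_p \lambda)$ to a free $\mathcal{O}$-form $\tilde{T}$ over a mixed-characteristic DVR, observe that the generic fiber $\tilde{T}_K$ is quantum tilting with the same formal character, and decompose it into indecomposables with $\Nneg$-multiplicities) is indeed the right shape of argument.

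The one genuine gap is the last step. From ``$x \cdot_p \lambda$ is the unique highest weight of $\tilde{T}_K$, with multiplicity one'' you deduce that the summands $T_{p^r}(\mu)$ appearing with $m_\mu \neq 0$ have $\mu \leq x \cdot_p \lambda$ \emph{in the dominance order}. You then assert that dominance order on a regular $\Wp$-orbit corresponds to Bruhat order on the dominant coset representatives, but the natural correspondence is between Bruhat order and the strong linkage order $\uparrow$, which is a priori \emph{finer} than dominance. Asserting this equivalence without proof leaves a hole, and the observation as stated requires the Bruhat-order bound, not just the dominance bound. The clean way to close the gap is to avoid dominance altogether and argue through the Weyl filtration: the $\Delta$-factors of the modular tilting $T(x \cdot_p \lambda)$ are (by strong linkage) supported on $\{y \cdot_p \lambda : y \leq x\}$; freeness of $\tilde{T}$ over $\mathcal{O}$ means $\tilde{T}_K$ has a $\Delta$-filtration with the \emph{same} multiset of highest weights; and each direct summand $T_{p^r}(\mu)$ contributes $\Delta(\mu)$ with multiplicity one at the top of its own $\Delta$-filtration, forcing $\mu \in \{y \cdot_p \lambda : y \leq x\}$ directly. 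This gives the Bruhat bound without invoking any comparison between $\leq$ and $\uparrow$.

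One smaller point: you should be careful that ``tilting'' survives both specializations $\tilde{T} \otimes_\mathcal{O} \field$ and $\tilde{T} \otimes_\mathcal{O} K$, which amounts to checking that both $\Delta$- and $\nabla$-filtrations of $\tilde{T}$ base-change correctly; you acknowledge this at the end but present it as a formality, whereas in Andersen's treatment it is the technical heart of the lifting theorem.
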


Here $\charctr M$ denotes the formal character of $M$, i.e.~$\charctr M=\sum_\lambda (\dim M_\lambda) e^{\lambda}$.
 
Andersen's observation is notable for being one of the few results on tilting characters in the classical modular representation theory of algebraic groups (i.e.~using methods similar to those in \cite{jantzen}) which is valid for all weights. 
It was used extensively in \cite{jg-jensen} to calculate characters of indecomposable tilting modules for $G=\SL_3$. 
Andersen also made the following conjecture related to the $r=1$ case \cite[Remark~3.6(i)]{andersen-filtrationstilting}. 

\begin{conj*}[Andersen]
Suppose Lusztig's conjecture holds for $G$ in characteristic $p$. 
For $\lambda \in C_p$ and $x \in \Wpcosets$, we have $\charctr T(x \cdot_p \lambda)=\charctr T_p(x \cdot_p \lambda)$.
\end{conj*}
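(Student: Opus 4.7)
The plan is to translate the tilting character identity into an identity of elements in $\ZZ\W$ and then attack the latter using the matrix-recursion lower bound combined with Lusztig's conjecture.

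First I would reduce the statement. By the Achar--Makisumi--Riche--Williamson $p$-canonical tilting character formula, $\charctr T(x \cdot_p \lambda)$ is determined by $\prescript{p}{v=1}{\underline{H}}_x$ expressed in antispherical Kazhdan--Lusztig combinatorics. By Soergel's quantum tilting character formula, $\charctr T_p(x \cdot_p \lambda)$ is determined analogously by $\prescript{}{v=1}{\underline{H}}_x$. So, under Lusztig's conjecture, it suffices to prove
\begin{equation*}
\prescript{p}{v=1}{\underline{H}}_x \;=\; \prescript{}{v=1}{\underline{H}}_x \quad\text{in } \ZZ\W, \ \text{for every dominant } x \in \Wpcosets.
\end{equation*}

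I would then apply the lower bound corollary to expand $\prescript{p}{v=1}{\underline{H}}_x$ as a non-negative integer combination of products $F(\prescript{p}{v=1}{\underline{H}}_y)\prescript{}{v=1}{H}_w$ with $F(y)w\leq x$ and $w \in \Wpcosets$. The assumption $x \in \Wpcosets$ is essential: since $F$ rescales translations by $p$, any factorization $F(y)w \leq x$ with $x$ in the fundamental $p$-alcove region forces the translation part of $y$ to have Euclidean norm $\lessapprox |x(0)|/p$, placing $y$ well inside the range where Lusztig's conjecture gives $\prescript{p}{}{\underline{H}}_y = \underline{H}_y$. I would substitute these identities into the expansion and compare the result with the analogous expansion of $\prescript{}{v=1}{\underline{H}}_x$ obtained by running the matrix-recursion argument in characteristic zero (where $p$-canonical and Kazhdan--Lusztig bases coincide by definition).

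The main obstacle is upgrading the containment in the lower bound to an honest equality with the characteristic-zero expansion, i.e.\ showing that the matrix-recursion functor $\pr$ applied to the indecomposable object $B_x$ produces exactly the same multiplicities of indecomposable summands in the first row as its characteristic-zero analogue. The lower bound corollary by itself does not pin down these multiplicities; one needs an additional input, most likely a dimension comparison for the relevant $\Hom$-spaces in $\hat{R}\otimes\dgrmF$ between the two characteristics, or equivalently the observation that idempotents cutting out the indecomposables of $B_x$ lift faithfully under $\pr$ in both settings. This multiplicity-matching step---rather than the geometric bookkeeping on factorizations or the invocation of Lusztig's conjecture on ``small'' $y$---is where I expect the real difficulty to lie, and it is precisely the point at which the recursive nature of the functor (as noted in the paper, the categories $\dgrm$ and $\dgrmF$ are \emph{not} equivalent) prevents a purely formal induction.
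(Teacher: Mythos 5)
This statement is labeled a \emph{conjecture} in the paper (attributed to Andersen, with a citation), and the paper supplies no proof of it. It is used purely as a hypothesis in the subsequent ``Andersen's tilting character lower bound'' theorem, which begins ``Assume Andersen's conjecture holds.'' Indeed, the whole point of the section is that the paper's \emph{unconditional} lower bound on the $p$-canonical basis (Corollary~\ref{cor:sbim-highordlink}) combined with the AMRW tilting character formula gives a far stronger tilting bound than the one that would follow from Andersen's conjecture, precisely because the latter remains open. So there is no paper proof to compare your attempt against; you are attempting an open problem.

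On the merits of your sketch: the reduction to $\prescript{p}{v=1}{\underline{H}}_x = \prescript{}{v=1}{\underline{H}}_x$ for dominant $x \in \Wpcosets$ is the right decategorified translation (quantum tilting characters come from KL polynomials, modular ones from the $p$-canonical basis), and you correctly single out the real difficulty. But your argument does not close it, for two reasons. First, Corollary~\ref{cor:sbim-highordlink} is a containment in a positive cone, not an identification of coefficients: it tells you which terms \emph{can} appear but says nothing about whether the resulting expansion of $\prescript{p}{v=1}{\underline{H}}_x$ matches the characteristic-zero one term by term. Second, even if every $y$ appearing with $F(y)w\le x$ lies in the range where $\prescript{p}{}{\underline{H}}_y = \underline{H}_y$ (which requires care: Lusztig's conjecture does not literally assert $p$-canonical equals Kazhdan--Lusztig, only a restricted consequence of it via [Theorem~5.1]{soergel-KL}), that only gives you that the ``building blocks'' agree, not that $\pr(B_x)$ decomposes with the same multiplicities in characteristic $p$ as in characteristic $0$. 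As you yourself note, the functor does not admit a clean recursion because $\dgrm$ and $\dgrmF$ are not equivalent; that obstruction is genuine, and the idempotent-lifting or $\Hom$-space-comparison step you gesture at is exactly the content of the conjecture, not a lemma available from the paper.
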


This is essentially the strongest possible conjecture of this nature; in particular Lusztig's conjecture is necessary by \cite[Theorem~5.1]{soergel-KL}. 
Note that weights of the form $x \cdot_p \lambda$ for $\lambda \in C_p$ and $x \in \Wpcosets$ are just the dominant weights in the fundamental $p^2$-alcove which lie in the principal block. 

We can combine Andersen's observation and Andersen's conjecture to obtain a special case of a recursive lower bound on tilting characters. 
Even this special case appears to be new, although there are implicit references to it (and to the limits of this approach) in \cite[\S 6]{jg-jensen}.

For $\lambda$ a dominant weight let $\chi(\lambda)$ denote the character of the Weyl module of highest weight $\lambda$ (given by the Weyl character formula).
For
\begin{equation*}
\chi=\sum_{\substack{x \in \W\\ \lambda \in C_p}} a_{x,\lambda} \chi(x \cdot_p \lambda)
\end{equation*}
a character and $w \in \Wpcosets$, write 
\begin{equation*}
F^w(\chi)=\sum_{\substack{x \in \W\\ \lambda \in C_p}} a_{x,\lambda} \chi(F(x)w \cdot_p \lambda) \text{.} 
\end{equation*}

\begin{thm*}[Recursive tilting character lower bound, \textit{\`{a} la} Andersen]~\\
Assume Andersen's conjecture holds. For any $\lambda \in C_p$ and $x,x' \in \Wpcosets$, we have
\begin{equation*}
\charctr T(F(x')x \cdot_p \lambda) \in \sum_{\substack{y \in \W \\ w \in \Wpcosets\\ F(y)w \leq F(x')x}} \Nneg F^w(\charctr T(y \cdot_p \lambda)) \text{.}
\end{equation*}
\end{thm*}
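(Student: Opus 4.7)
The plan is to chain together the three ingredients laid out immediately before the theorem statement, namely Andersen's observation (with $r=2$), the unique factorization of Weyl group elements through $F$ and $\Wpcosets$, Soergel's character formula scaling corollary, and finally Andersen's conjecture. Each step converts one side of a Bruhat-indexed character sum into another, and composing them gives exactly the claimed lower bound.

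First I would apply Andersen's observation at $r=2$ to the element $F(x)w \in \W$ (which is dominant, since $x,w \in \Wpcosets$ index a dominant weight in the fundamental $p^2$-alcove via $F(x)w \cdot_p \lambda$). This yields
\begin{equation*}
\charctr T(F(x)w \cdot_p \lambda) \in \sum_{u \leq F(x)w} \Nneg \, \charctr T_{p^2}(u \cdot_p \lambda).
\end{equation*}
Next, I would re-index the summation using the fact that $F : \W \xrightarrow{\sim} \Wp$ is an isomorphism and $\Wpcosets$ is a set of minimal length representatives for $\Wp \backslash \W$: every $u \in \W$ can be written uniquely as $u = F(y)z$ with $y \in \W$ and $z \in \Wpcosets$, and $F(y)z \leq F(x)w$ in Bruhat order corresponds precisely to $u \leq F(x)w$.

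After this re-indexing, I would apply Soergel's character formula scaling corollary termwise,
\begin{equation*}
\charctr T_{p^2}(F(y)z \cdot_p \lambda) = F^z\bigl(\charctr T_p(y \cdot_p \lambda)\bigr),
\end{equation*}
to rewrite each summand as the image under $F^z$ of a quantum tilting character at level $p$. Finally, invoking Andersen's conjecture gives $\charctr T_p(y \cdot_p \lambda) = \charctr T(y \cdot_p \lambda)$ for the (dominant, $\Wpcosets$-representative) $y$'s that contribute, and since $F^z$ is additive and preserves non-negative integer combinations, assembling the chain yields the desired containment.

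The main obstacle is the bookkeeping around dominance in the re-indexing step: one needs to check that for every pair $(y,z) \in \W \times \Wpcosets$ with $F(y)z \leq F(x)w$ that contributes a nonzero term, the element $y$ satisfies the hypotheses of Andersen's conjecture, i.e.\ that $y$ lies in $\Wpcosets$ and is dominant, so that $\charctr T_p(y \cdot_p \lambda) = \charctr T(y \cdot_p \lambda)$. This requires a careful Bruhat-order argument showing that any $u \leq F(x)w$ that is itself dominant factors as $u = F(y)z$ with $y \in \Wpcosets$ dominant, and that the non-dominant factorizations contribute trivially to Andersen's observation. Once this combinatorial compatibility is verified, the remainder of the argument is purely formal chaining of the three cited results.
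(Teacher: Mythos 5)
Your proposal is correct and is exactly the argument the paper intends: the theorem appears only in the introduction, where its ``proof'' is the one-line instruction to combine Soergel's character formula scaling with Andersen's observation (at $r=2$) and Andersen's conjecture, which is precisely the chain you spell out. The dominance bookkeeping you flag (that every dominant $u \leq F(x)w$ lies in the fundamental $p^2$-alcove and hence factors as $F(y)z$ with $y,z \in \Wpcosets$, so that Andersen's conjecture applies to each contributing term) is the only nontrivial verification, and your sketch of it via the compatibility of Bruhat order with dominance for dominant elements is the right one.
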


See \S \ref{sec:tiltingchars} for a derivation of this result.
Here we note that weights of the form $F(x')x \cdot_p \lambda$ for $x,x' \in \Wpcosets$ are just the regular dominant weights in the fundamental $p^3$-alcove. 
By contrast, our lower bound on the $p$-canonical basis immediately gives a far stronger result using the $p$-canonical tilting character formula from \cite[Theorem~7.6]{amrw-koszultilting}.


\begin{thmroman} \label{thm:tiltcharbound}
For any $\lambda \in C_p$ and $x \in \W$ dominant we have
\begin{equation*}
\charctr T(x \cdot_p \lambda) \in \sum_{\substack{y \in \W\\ w \in \Wpcosets\\ F(y)w \leq x}} \Nneg F^w(\charctr T(y \cdot_p \lambda)) \text{.}
\end{equation*}
\end{thmroman}

As with the $p$-canonical basis lower bound, we can apply this tilting character lower bound recursively as there are no restrictions on $x$. 

The $p$-canonical tilting character formula in \cite{amrw-koszultilting} is the ``combinatorial shadow'' of an equivalence between the antispherical quotient category of $\dgrm$ and the full subcategory of tilting modules lying in some regular block \cite{riche-williamson,bezrukavnikovriche,ciappara}. 
We believe that this equivalence could lead to a more directly representation-theoretic interpretation of the functor $\pr$.

\subsection*{Reflection subgroups}

Some of the proofs in earlier versions of this paper were initially very indirect, relying on complicated relations and extensive use of localization matrices.
In subsequent versions we have focused on formulating more explicit constructions (e.g.~for the image of the braid under the Frobenius functor) and improving the exposition and notation. 
This has had the added benefit of making future generalizations of our work more apparent: since this paper first appeared, there has been a proliferation of similar work decomposing various non-diagrammatic versions of the Hecke category for a Coxeter group $\W$ in terms of a reflection subgroup \cite{mcdonnell,jensen-patimo,williamson-reflsubgroups}. 
In particular, Williamson's 2019 lectures at Current Developments in Mathematics at Harvard \cite{williamson-reflsubgroups} were directly inspired by our work. 
Williamson showed that the geometric incarnation of our decomposition appears to be a consequence of Smith--Treumann theory (see also \cite{rw-smithtreumann}). 
He also proved the analogue of Theorems~\ref{thm:intro-matrix-recursion} and \ref{thm:intro-positivity} for Abe's bimodule-theoretic version of the Hecke category and a ``good'' reflection subgroup.\footnote{Here ``good'' simply means that an analogue of Lemma~\ref{lem:rlzcoefs} holds.}
 
It is likely that the diagrammatic results in this paper can be generalized in a similar way to arbitrary Coxeter groups. 
We have decided to only focus on the affine Weyl group in this paper for two reasons.
Firstly, the affine Weyl group setting is of particular interest due to the aforementioned applications to modular representation theory. 
Secondly, writing our results in complete generality would likely increase the complexity of our notation to unmanageable levels; in some sense the case of $\Wp \leq \W$ is one of the simplest non-trivial examples of a reflection subgroup.

\subsection*{Acknowledgments} 
The author would like to thank Chris Bowman, Ben Elias, Thorge Jensen, Paul Martin, Stuart Martin, Alison Parker, Leonardo Patimo, and Geordie Williamson for their comments on earlier versions of this paper. 
The author is particularly grateful to the referees for their detailed corrections and suggestions. 
In the course of writing this paper the author received financial support from the the Royal Commission for the Exhibition of 1851, the EPSRC grant EP/V00090X/1, and the EPSRC Programme Grant EP/W007509/1.

\section{Affine Weyl groups}

We give an overview of the relationship between affine Cartan matrices, affine Weyl groups, and affine reflection groups.
Many of the ideas here should be familiar to readers with a thorough understanding of affine root systems (e.g.~from \cite{macdonald} or \cite{kac}). 
We will only assume knowledge of finite root systems and general Coxeter groups in what follows.

\subsection{Affine Cartan matrices and realizations}

\label{sec:affWnotn}

Let $\gls*{A}=(a_{st})_{s,t \in \Ss}$ be an indecomposable affine Cartan matrix.
We will use the following facts about such matrices, which can be found in \cite[Chapter~4]{kac}.
The rank of $A$ is $|\Ss|-1$, and there is a unique column vector $(c_s)_{s \in \Ss}$ with non-negative integer entries, at least one of which is equal to $1$, such that
\begin{equation}
\sum_{t \in \Ss} a_{st} c_t=0
\end{equation}
for all $s \in \Ss$.
In addition, every proper submatrix of $A$ is positive definite.
We will also need the fact that $A$ is symmetrizable. 
In other words, there are positive real numbers $(n_s)_{s \in \Ss}$ such that $n_s a_{st}=n_t a_{ts}$ for all $s,t \in \Ss$.

The \defnemph{affine Weyl group} $\glsuseri*{W}$ corresponding to $A$ is the free group generated by $\Ss$ subject to the relations
\begin{gather}
s^2=1 \qquad \text{for all $s \in \Ss$,} \\
\underbrace{sts \dotsm}_{m_{st}} = \underbrace{tst \dotsm}_{m_{st}} \qquad \text{for all distinct $s,t \in \Ss$ with $m_{st}\neq \infty$,}
\end{gather}
where
\begin{equation}
m_{st}=\begin{cases}
2 & \text{if $a_{st}a_{ts}=0$,} \\
3 & \text{if $a_{st}a_{ts}=1$,} \\
4 & \text{if $a_{st}a_{ts}=2$,} \\
6 & \text{if $a_{st}a_{ts}=3$,} \\
\infty & \text{if $a_{st}a_{ts}=4$.}
\end{cases}
\end{equation}
As our notation suggests we interpret $\glsuseri*{Ss}$ primarily as the set of simple generators in the Coxeter system $(\W,\Ss)$, with the rows and columns of the Cartan matrix indexed by these generators.

We recall some Coxeter group terminology and set up some related notation. 
We call $t \in \W$ a \defnemph{reflection} if $t$ is conjugate to a simple generator (i.e.~if $t=wsw^{-1}$ for some $s \in \Ss$ and $w \in \W$.).
An \defnemph{expression} in $\Ss$ is a finite sequence $\glsuseri*{exprx}=\expr{s_1 s_2 \dotsc s_m}$ of Coxeter generators of $\W$ written using an underline. 
We denote the set of all expressions in $\Ss$ by $\glsuseri*{exprSs}$. 
We use the non-underlined counterpart of an expression to denote the product of these generators in $\W$, i.e.~$x=s_1 s_2 \dotsm s_m \in \W$. 
We write $\len(\expr{x})=m$ for the \defnemph{length} of $\expr{x}$. 
As $\W$ is a Coxeter group, it is equipped with a length function $\glsuseri*{len}:W \rightarrow \ZZ_{\geq 0}$ and a closely related partial order $\leq$ on $\W$ called the \defnemph{Bruhat order}. 
We call $\expr{x}$ a \defnemph{reduced expression} (or \defnemph{rex}) for $x$ when $\len(\expr{x})=\len(x)$. 
We note that $\expr{\Ss}$ also includes the degenerate case of the \defnemph{empty expression} $\emptexpr$, i.e.~an expression of length $0$, which is the unique rex for the identity element in $\W$. 
We will use the adjective ``empty'' in a similar way in later definitions involving finite sequences.



We are primarily interested in representations of $\W$ called \defnemph{realizations}, where the action of each $s \in \Ss$ behaves like a reflection (see \cite[Definition 3.1]{ew-soergelcalc} and \cite[\S A.2]{elias-dihedralcathedral}). 
We will later construct the diagrammatic category $\dgrm$ of Soergel bimodules from a realization.


\begin{defn}
Let $\field$ be an integral domain. A \defnemph{realization} of the affine Weyl group $(\W,\Ss)$ over $\field$ consists of a free, finite rank $\field$-module $V$ along with subsets $\{\glsuseri*{as} : s \in \Ss\} \subset V$ and $\{a_s^\vee : s \in \Ss\} \subset V^\ast=\Hom_\field(V,\field)$ such that
\begin{enumerate}[label=(\roman*)]
\item $\langle a_s^\vee,a_s \rangle=2$ for all $s \in \Ss$;

\item the assignment 
\begin{equation*}
s(b)=b-\langle a_s^\vee, b \rangle a_s
\end{equation*} 
for all $s \in \Ss$ and $b \in V$ defines a representation of $\W$ on $V$. 
\item the technical condition \cite[Definition~5.1(iii)]{2colJW} is satisfied.
\end{enumerate}
\end{defn}

When clear from context we will often write a realization $(V,\{a_s\},\{a_s^\vee\})$ simply as $V$. 
We call the matrix $(\langle a_s^\vee,a_t \rangle)_{s,t \in \Ss}$ the \defnemph{Cartan matrix} of $V$. 
One useful fact about realizations which we will use frequently is the following.
For any $w \in \W$ and $s \in \Ss$, the reflection $t=wsw^{-1}$ acts on any realization $V$ via the following formula:
\begin{equation} \label{eq:nonsimplereflections}
\begin{aligned}
t(b)& =wsw^{-1}(b) \\
& =w(w^{-1}(b)-\langle a_s^\vee, w^{-1}(b)\rangle a_s) \\
& =b-\langle a_s^\vee, w^{-1}(b) \rangle w(a_s) \\
& =b-\langle w(a_s^\vee), b \rangle w(a_s) \text{.}
\end{aligned}
\end{equation}

\begin{defn}
Suppose $(U,\{a_s\},\{a_s^\vee\})$ and $(V,\{b_s\},\{b_s^\vee\})$ are two realizations of $(\W,\Ss)$ over $\field$. 
We call a $\field$-linear map $\phi:U \rightarrow V$ a \defnemph{morphism of realizations} if $\phi$ is a homomorphism of $\W$-representations and $\phi(a_s)=b_s$ for all $s \in \Ss$.
\end{defn}

Realizations of $(\W,\Ss)$ over $\field$ naturally form a category using this notion of morphism.
As we will see later, the construction of the diagrammatic category $\dgrm$ from a realization is functorial. 
All of the realizations we will encounter in this paper are variants of the following.

\begin{defn} \label{defn:univrlz}
The \defnemph{universal realization} $(\gls*{Va},\{a_s\},\{a_s^\vee\})$ of $(\W,\Ss)$ with respect to $A=(a_{st})_{s,t \in \Ss}$ is defined as follows. Let $V_{A}$ be a free $\ZZ$-module with basis $\{a_s:s \in \Ss\}$ and define $\{a_s^\vee\} \subset V_{A}^\ast$ by
\begin{equation}
\langle a_s^\vee, a_t \rangle=a_{st}  \qquad \text{for all $s,t \in \Ss$.} \label{eq:univrealiz}
\end{equation}
\end{defn}

We can verify that $V_{A}$ really is a representation of $\W$ by checking this on the finite dihedral parabolic subgroups (see e.g.~\cite[(A.2)]{elias-dihedralcathedral}).
By definition the Cartan matrix of $V_{A}$ is just $A$.
Many of the constructions below work for any realization of $(\W,\Ss)$ with the same Cartan matrix.
We are primarily interested in $V_{A}$ because it has the additional property that the set $\{a_s\}$ is linearly independent in $V$. 

\begin{rem} \label{rem:realizations} \hfill
\begin{enumerate}[label=(\arabic*)]
\item Suppose $\phi:U \rightarrow V$ is a morphism between the realizations $(U,\{a_s\},\{a_s^\vee\})$ and $(V,\{b_s\},\{b_s^\vee\})$. 
It is straightforward to check that $\phi^\ast(b_s^\vee)=a_s^\vee$ for all $s \in \Ss$, so $\phi^\ast:V^\ast \rightarrow U^\ast$ is a morphism between the dual realizations $(V^\ast,\{b_s^\vee\},\{b_s\})$ and $(U^\ast,\{a_s^\vee\},\{a_s\})$.

\item \label{item:morphism-realizations} Morphisms of realizations are quite restrictive. 
Suppose there exists a morphism between two realizations $U$ and $V$.
Then $U$ and $V$ must have the same Cartan matrix.
This condition is necessary but not sufficient for there to be a morphism between $U$ and $V$.

\item The universal realization of $(\W,\Ss)$ has the following universal property: for any realization $V$ of $(\W,\Ss)$ over $\ZZ$ with Cartan matrix $A$, there is a unique morphism of realizations $V_{A} \rightarrow V$. 
For $\field$ an arbitrary integral domain, the extension by scalars $\field \otimes V_{A}$ has the same properties for realizations defined over $\field$.
More generally, the notion of a universal realization makes sense for arbitrary Coxeter groups, not just affine Weyl groups.
\end{enumerate}
\end{rem}

The universal realization $V_{A}$ is closely related to an affine reflection representation of $\W$.
Write $V_{A,\RR}=\RR \otimes V_{A}$ and set 
\begin{equation}
\gls*{c}=\sum_{s \in \Ss} c_s a_s \in V_{A} \text{.}
\end{equation}
Let $E$ be the affine hyperplane
\begin{equation}
\gls*{E}=\left\{b \in V_{A,\RR}^\ast : \langle b, c \rangle =1\right\} \subset V_{A,\RR}^\ast \text{.}
\end{equation}

\begin{thm} \label{thm:affrefl}
The affine space $E$ has the structure of an affine inner product space (i.e.~a Euclidean space).
The affine Weyl group $\W$ acts on $E$ by Euclidean isometries, with each $s \in \Ss$ corresponding to reflection in the affine hyperplane
\begin{equation*}
E \cap (\RR a_s)^\circ=\{b \in E : \langle b, a_s \rangle=0\} \text{.}
\end{equation*}
\end{thm}

\begin{proof}
See \cite[Chapter~6.5]{humphreys} for the case when $A$ is the Cartan matrix of an extended Dynkin diagram, or Appendix~\ref{app:affrefl} for the general case (i.e.~for $A$ of twisted affine type).
\end{proof}

We will frequently use the following standard terminology regarding the affine reflection representation of $\W$, some of which can be found in e.g.~\cite[Chapter~4]{humphreys}. 
The \defnemph{fundamental alcove} is the open subset
\begin{equation}
\glsuseri*{C}=\{b \in E : \langle b, a_s \rangle > 0 \text{ for all $s \in \Ss$}\} \subset E \text{.}
\end{equation}
Its closure in $E$ is a simplex of dimension $|\Ss|-1$.
The images of $C$ under the action of $\W$ are called \defnemph{alcoves}. 
Let $\alcs$ be the set of all alcoves in $E$. 
The group $\W$ acts simply transitively on $\alcs$, which implies that the $\W$-action on $E$ (and thus on $V_{A}$) is faithful.
The action on $\alcs$ also gives a bijection $\W \xrightarrow{\sim} \alcs$ mapping $w \mapsto w(C)$.
We are often interested in the \emph{right} regular $\W$-action defined by this bijection, i.e.~we set $(w(C))x=wx(C)$ for all $w,x \in \W$.
A \defnemph{wall} of an alcove is a (codimension $1$) face of its closure.
Two alcoves $C_1,C_2$ are \defnemph{adjacent} if they share a wall.
It is easy to show that this is equivalent to $C_2=C_1 s$ for some $s \in \Ss$.
We call the wall between $C_1$ and $C_2$ the \defnemph{$s$-wall} of $C_1$ or of $C_2$.

There is an alternative description of the length function $\len : \W \rightarrow \Nneg$ in terms of alcoves.
For any $w \in \W$, $\len(w)$ equals the number of reflection hyperplanes which separate $C$ and $w(C)$ \cite[Theorem~4.5]{humphreys}.
We can rephrase this in terms of the realization as follows. 
Let $\Phi=\{w(a_s) : w \in \W,\ s \in \Ss\}$ be the \defnemph{roots} of the realization $V_{A,\RR}$.
Then $\len(w)$ equals the number of roots $a \in \Phi$ such that
\begin{equation*}
\langle C, a \rangle >0 \qquad \text{but} \qquad \langle w(C), a \rangle <0 \text{.}
\end{equation*}
This suggests the following geometrically intuitive way to compare the lengths of two elements (see \cite[Proposition~5.7]{humphreys}).
Given $w \in \W$ and $s \in \Ss$, $\len(ws) > \len(w)$ if and only if $C$ and $w(C)$ lie on the same side of the reflection hyperplane
\begin{equation*}
E \cap (\RR w(a_s))^\circ=\{b \in E : \langle b, w(a_s) \rangle=0\} \text{.}
\end{equation*}
In other words, 
\begin{equation}
\len(ws) > \len(w) \Longleftrightarrow \langle C, w(a_s) \rangle > 0 \text{.} \label{eq:lengthposrootcriterion}
\end{equation}

It is also important for us to understand the normal subgroup $L \lhd \W$ consisting of elements which act on $E$ as translations. 
Let
\begin{equation}
Z_{\RR}=\{\lambda \in V_{A,\RR}^\ast : \langle \lambda, c\rangle=0\} \text{,}
\end{equation}
a hyperplane in $V_{A,\RR}^\ast$.
Every translation of $E$ is of the form
\begin{align*}
t(\lambda) : E&  \longrightarrow E \\
b & \longmapsto b+\lambda
\end{align*}
for some $\lambda \in Z_{\RR}$.
Write $\Lambda$ for the set of all $\lambda \in Z_{\RR}$ such that $t(\lambda) \in L$. 
In particular, note that we distinguish between the subgroup $L \leq \W$ of translations and the lattice $\Lambda \subset Z_{\RR}$, even though they are isomorphic as groups, with $t(\Lambda)=L$.
The next result describes the relationship between $\Lambda$ and $\{a_s^\vee : s \in \Ss\}$.

\begin{prop} \label{prop:translation-index}
The lattice $\Lambda$ is a sublattice of 
\begin{equation*}
Z=\sum_{s \in \Ss} \ZZ a_s^\vee \text{.}
\end{equation*} 
The index $[Z: \Lambda]$ is either a power of $2$ or $3$.
\end{prop}

\begin{proof}
When $A$ is a Cartan matrix of extended Dynkin type, we have $\Lambda=Z$ essentially by definition and by the construction in \cite[Chapter~6.5]{humphreys}.
For the general case see Appendix~\ref{app:affrefl} (in particular Remark~\ref{rem:affrootsystems}).
\end{proof}

Finally we explain how to obtain translations from the Coxeter generators $\Ss$.

\begin{prop} \label{prop:tildes-defn}
There exists some $\glsuseri*{tildes} \in \Ss$ such that $c_{\tilde{s}}=1$ and $\W$ is generated by $\Ss \setminus \{\tilde{s}\}$ and $L$.
For any such $\tilde{s}$, write $\glsuseri*{Ssf}=\Ss \setminus \{\tilde{s}\}$ and let $\Wf \leq \W$ be the subgroup generated by $\Ssf$.
\begin{enumerate}[label={\rm (\roman*)}]
\item There is a unique reflection $\gls*{sh} \in \Wf$ such that $\tilde{s} \sh$ is a translation.

\item Let
\begin{align*}
a_{\rm h}& =\sum_{s \in \Ssf} c_s a_s \in V_A \text{,} &
a_{\rm h}^\vee & =-a_{\tilde{s}}^\vee \in V_A^\ast \text{.}
\end{align*}
Then for all $b \in V_{A}$ we have
\begin{equation*}
\sh(b)=b-\langle a_{\rm h}^\vee, b \rangle a_{\rm h} \text{.}
\end{equation*}
\end{enumerate}
\end{prop}

\begin{proof}
Again, see \cite{humphreys} for the proof in extended Dynkin type, and Appendix~\ref{app:affrefl} for the general case.
\end{proof}

We fix a choice of such an $\tilde{s}$ once and for all, and take $\Ssf$ and $\Wf$ as above.
We call the generators in $\Ssf$ the \defnemph{finite generators} and $\tilde{s}$ the \defnemph{affine generator} of $\W$.
The subgroup $\Wf$ is finite, as it is isomorphic to the Weyl group of a (finite) root system with Cartan matrix $(a_{st})_{s,t \in \Ssf}$.
Thus $\Wf$ intersects $L$ trivially, so we have $\W=\Wf \ltimes L$.
Also $E$ has a unique $\Wf$-fixed point $o$, defined by
\begin{equation*}
\langle o, a_s \rangle=\delta_{\tilde{s},s} \text{.}
\end{equation*}
As an affine space, $E$ does not have a preferred choice of origin, but a useful convention in what follows is to set the origin of $E$ to be $o$.

Another consequence of Proposition~\ref{prop:tildes-defn} is the following computation, which will be useful in the next section. 
For $m \in \NN$, a straightforward induction argument gives
\begin{equation}
\begin{aligned}
(\tilde{s} s_{\rm h})^m(a_{\tilde{s}})& =2ma_{\rm h}+(2m+1)a_{\tilde{s}} \text{,} \label{eq:dihedralcalc1} \\
(s_{\rm h} \tilde{s})^m(a_{\rm h})& =(2m+1)a_{\rm h}+2ma_{\tilde{s}} \text{,}  \\
(\tilde{s} s_{\rm h})^{m-1} \tilde{s}(a_{\rm h})& =(2m-1)a_{\rm h}+2ma_{\tilde{s}} \text{,} \\
(s_{\rm h} \tilde{s})^{m-1} s_{\rm h}(a_{\tilde{s}})& =2ma_{\rm h}+(2m-1)a_{\tilde{s}} \text{.}
\end{aligned}
\end{equation}


%

\subsection{The Frobenius map}
\label{sec:frobmap}

Let $l$ be an odd positive integer coprime to $3$. 
(See Remark~\ref{rem:chrstcassump} for more detail regarding necessary assumptions on $l$.
In later sections we will restrict to the case where $l$ is prime.)
Recall that $\W = \Wf \ltimes L$ acts via affine reflections on $E$, and that translations in $L$ have the form $t(\lambda)$ for some $\lambda \in \Lambda$.

\begin{defn}
The \defnemph{Frobenius map} $\gls*{F}$ on $\W$ is defined to be the group homomorphism mapping
\begin{equation*}
\begin{aligned}
F:\W & \longrightarrow \W \\
w & \longmapsto w \\ 
t(\lambda) & \longmapsto t(l\lambda) 
\end{aligned}
\end{equation*}
for all $w \in \Wf$ and $\lambda \in \Lambda$. 
The image of $F$ is called the \defnemph{$l$-affine Weyl group} and is denoted by $\glsuseri*{Wl}$.
\end{defn}

The Frobenius map is well defined and injective because it corresponds to conjugation by the dilation map 
\begin{equation*}
o+\lambda \longmapsto o+l\lambda
\end{equation*}
on $E$, where $\lambda \in Z_{\RR}$.
As $F$ is injective it induces an isomorphism $\W \xrightarrow{\sim} \Wl$, so we can transfer the constructions in \S\ref{sec:affWnotn} to $\Wl$. 
Thus $\Wl$ is a Coxeter group with Coxeter generators $\glsuseri*{Ssl}=\Ssf \cup \{\glsuseri*{tildesl}\}$, where
\begin{equation}
\tilde{s}_l=F(\tilde{s})=F(\tilde{s} \sh) F(\sh)=(\tilde{s}\sh)^l \sh=\underbrace{\tilde{s}\sh\tilde{s}\sh \dotsm \tilde{s}}_{\text{$2l-1$ terms}} \text{.} \label{eq:tildesp}
\end{equation}
In particular the isomorphism $\W \xrightarrow{\sim} \Wl$ induced by $F$ is an isomorphism of Coxeter groups.
We write $\len_l : \Wl \rightarrow \Nneg$ for the length function on $\Wl$ in terms of its Coxeter generators $\Ssl$, and $\Sslexpr$ for the set of expressions in $\Ssl$.

Let
\begin{align}
\glsuseri*{atildesl}& =(\tilde{s}\sh)^{(l-1)/2}(a_{\tilde{s}})=(l-1)a_{\rm h}+la_{\tilde{s}} \in V_{A} \text{,} \\
a_{\tilde{s}_l}^\vee& =(\tilde{s}\sh)^{(l-1)/2}(a_{\tilde{s}}^\vee)=(l-1)a_{\rm h}^\vee+la_{\tilde{s}}^\vee \in V_{A}^\ast \text{.}
\end{align}
(Here we have used \eqref{eq:dihedralcalc1} and its dual variant.)
Since $\tilde{s}_l$ is a conjugate of $\tilde{s}$, by \eqref{eq:nonsimplereflections} it acts on $V_{A}$ via the formula
\begin{equation*}
\tilde{s}_l(b)=b-\langle a_{\tilde{s}_l}^\vee, b\rangle a_{\tilde{s}_l} \text{.}
\end{equation*}
We can use $a_{\tilde{s}_l}$ and $a_{\tilde{s}_l}^\vee$ to define a new realization.

\begin{defn} \label{defn:Ftwist}
Let $\glsuseri*{VAF}$ denote the \defnemph{$F$-twist} of the $\W$-representation $V_{A}$; in other words, as a $\ZZ$-module $V_{A}^F$ is $V_{A}$, but the $\W$-action is given by $w \cdot_F v=F(w)v$ for all $w \in \W$ and $v \in V_{A}^F$. 
We call the realization $(V_{A}^F,\{a_{F(s)}\},\{a_{F(s)}^\vee\})$ the \defnemph{$F$-twist} of the universal realization $(V_{A},\{a_s\},\{a_s^\vee\})$. 
\end{defn}

\begin{lem}
The Cartan matrix of the realization $V_{A}^F$ is $A$.
\end{lem}

\begin{proof}
We have 
\begin{equation*}
\langle a_{\tilde{s}_l}^\vee, a_{\tilde{s}_l} \rangle=2(l-1)^2+2(-2)l(l-1)+2l^2=2 \text{,}
\end{equation*}
while for any $s \in \Ssf$ we have
\begin{equation*}
\langle a_s^\vee, a_{\tilde{s}_l} \rangle=(l-1)\langle a_s^\vee, a_{\rm h} \rangle+l\langle a_s^\vee,a_{\tilde{s}} \rangle=-(l-1)a_{s\tilde{s}}+la_{s\tilde{s}}=a_{s\tilde{s}}
\end{equation*}
and
\begin{equation*}
\langle a_{\tilde{s}_l}^\vee, a_s \rangle=(l-1)\langle a_{\rm h}^\vee, a_s \rangle+l\langle a_{\tilde{s}}^\vee, a_s \rangle=-(l-1)a_{\tilde{s}s}+la_{\tilde{s}s}=a_{\tilde{s}s} \text{.} \qedhere
\end{equation*} 
\end{proof}

The Frobenius map also gives an isomorphism $\Wl \iso \Wf \ltimes lL$ of affine reflection groups. 
We define the \defnemph{fundamental $l$-alcove} to be the open subset
\begin{equation}
\glsuseri*{C}_l=\{b \in E : \langle b, a_s \rangle > 0 \text{ for all $s \in \Ssl$}\} \subset E \text{,}
\end{equation}
and call the images of $C_l$ under the action of $\Wl$ \defnemph{$l$-alcoves}.
Let $\glsuseri*{alcsl}$ be the set of all $p$-alcoves in $E$.
As with $\W$, the group $\Wl$ acts simply transitively on $\alcsl$, giving a bijection $\Wl \xrightarrow{\sim} \alcsl$ and a right $\Wl$-action on $\alcsl$, and we may identify each wall of an $l$-alcove with some $s \in \Ssl$.
Note that
\begin{equation}
\langle C, a_{\tilde{s}_l} \rangle=\langle C, (l-1)c+a_{\tilde{s}}\rangle>l-1 \geq 0 \text{,} \label{eq:atildesppositive}
\end{equation}
which in particular means that $\langle C, a_s \rangle > 0$ for all $s \in \Ssl$ and thus $C \subseteq C_l$.

\begin{exam}
Suppose $\W$ is of type $\widetilde{A_2}$ and $l=3$.
The alcove geometry for $\W$ is given in Figure~\ref{fig:lalcgeom}.
Here the small triangles are ordinary alcoves, and the large triangles are $l$-alcoves. 
The fundamental alcove $C$ is colored dark gray, and the fundamental $l$-alcove $C_l$ is colored light gray.
\end{exam}

\begin{figure}
\begin{center}
   \begin{tikzpicture}[scale=1]
 
  \tikzstyle{axes}=[very thick,>=stealth]
  \tikzstyle{grid lines}=[thin]
  \tikzstyle{alcoves}=[thick,color=blue!80]
  
   \path  (0,0)  coordinate (origin); 
  
   


   \path[fill,gray!30](origin) --++(60:3*0.6) --++(180:3*0.6) --++(300:3*0.6) --cycle;
   \path[fill,gray!75](origin) --++(60:0.6) --++(180:0.6) --++(300:0.6) --cycle;

       \begin{scope}

      \clip(0,0)-- (120:0.6*5.5)--++(0:0.6*5.5)--(0,0); 
        \foreach \i in {0,...,35}
  {
    \path (origin)++(60:0.6*\i cm)  coordinate (a\i);
    \path (origin)++(120:0.6*\i cm)  coordinate (b\i);
    \path (a\i)++(120:18cm) coordinate (ca\i);
    \path (b\i)++(60:18cm) coordinate (cb\i);
}
   \path (b20)++(60:0.6*5.5)  coordinate (step1);
      \path (step1)++(0:0.6*5.5)  coordinate (step2);
      \path (step2)++(-60:0.6*5.5)  coordinate (step3); 
      \path (step3)++(0:0.6*5.5)  coordinate (step4);       
      \path (step4)++(-60:0.6*5.5)  coordinate (step5);

  \foreach \i in {0,...,35}    
  { \draw[grid lines]  (a\i) -- (ca\i)  (b\i) -- (cb\i);
     \draw[grid lines]   (a\i) -- (b\i)  ; } 

 \foreach \i in {0,3,6,...,24}    
  { 
  \draw[ultra thick]  (a\i) -- (ca\i);  
  \draw[ultra thick] (b\i) -- (cb\i);
     \draw[ultra thick]  (a\i) -- (b\i)  ; 
     } 
 
   \end{scope}

       \begin{scope}

       \clip(0,0)-- (120:0.6*5.5)--++(-120:0.6*5.5)--(0,0); 
        \foreach \i in {0,...,35}
  {
    \path (origin)++(180:0.6*\i cm)  coordinate (a\i);
    \path (origin)++(120:0.6*\i cm)  coordinate (b\i);
    \path (a\i)++(120:18cm) coordinate (ca\i);
    \path (b\i)++(180:18cm) coordinate (cb\i);
}
   \path (b20)++(180:0.6*5)  coordinate (step1);
      \path (step1)++(0:0.6*5)  coordinate (step2);
      \path (step2)++(-60:0.6*5)  coordinate (step3); 
      \path (step3)++(0:0.6*5)  coordinate (step4);       
      \path (step4)++(-60:0.6*5)  coordinate (step5);

  \foreach \i in {0,...,35}    
  { \draw[grid lines]  (a\i) -- (ca\i)  (b\i) -- (cb\i);
     \draw[grid lines]   (a\i) -- (b\i)  ; } 

 \foreach \i in {0,3,6,...,24}    
  { 
  \draw[ultra thick]  (a\i) -- (ca\i)  (b\i) -- (cb\i);
     \draw[ultra thick]  (a\i) -- (b\i)  ; 
     } 
 
   \end{scope}

       \begin{scope}

       \clip(0,0)-- (-120:0.6*5.5)--++(120:0.6*5.5)--(0,0); 
        \foreach \i in {0,...,35}
  {
    \path (origin)++(180:0.6*\i cm)  coordinate (a\i);
    \path (origin)++(-120:0.6*\i cm)  coordinate (b\i);
    \path (a\i)++(-120:18cm) coordinate (ca\i);
    \path (b\i)++(180:18cm) coordinate (cb\i);
}
   \path (b20)++(180:0.6*5)  coordinate (step1);
      \path (step1)++(0:0.6*5)  coordinate (step2);
      \path (step2)++(-60:0.6*5)  coordinate (step3); 
      \path (step3)++(0:0.6*5)  coordinate (step4);       
      \path (step4)++(-60:0.6*5)  coordinate (step5);

  \foreach \i in {0,...,35}    
  { \draw[grid lines]  (a\i) -- (ca\i)  (b\i) -- (cb\i);
     \draw[grid lines]   (a\i) -- (b\i)  ; } 

 \foreach \i in {0,3,6,...,24}    
  { 
  \draw[ultra thick]  (a\i) -- (ca\i)  (b\i) -- (cb\i);
     \draw[ultra thick]  (a\i) -- (b\i)  ; 
     } 
 
   \end{scope}

       \begin{scope}

       \clip(0,0)-- (-120:0.6*5.5)--++(0:0.6*5.5)--(0,0); 
        \foreach \i in {0,...,35}
  {
    \path (origin)++(-60:0.6*\i cm)  coordinate (a\i);
    \path (origin)++(-120:0.6*\i cm)  coordinate (b\i);
    \path (a\i)++(-120:18cm) coordinate (ca\i);
    \path (b\i)++(-60:18cm) coordinate (cb\i);
}
   \path (b20)++(-60:0.6*5)  coordinate (step1);
      \path (step1)++(0:0.6*5)  coordinate (step2);
      \path (step2)++(-60:0.6*5)  coordinate (step3); 
      \path (step3)++(0:0.6*5)  coordinate (step4);       
      \path (step4)++(-60:0.6*5)  coordinate (step5);

  \foreach \i in {0,...,35}    
  { \draw[grid lines]  (a\i) -- (ca\i)  (b\i) -- (cb\i);
     \draw[grid lines]   (a\i) -- (b\i)  ; } 

 \foreach \i in {0,3,6,...,24}    
  { 
  \draw[ultra thick]  (a\i) -- (ca\i)  (b\i) -- (cb\i);
     \draw[ultra thick]  (a\i) -- (b\i)  ; 
     } 
 
   \end{scope}

   \begin{scope}

       \clip(0,0)-- (-60:0.6*5.5)--++(60:0.6*5.5)--(0,0); 
        \foreach \i in {0,...,35}
  {
    \path (origin)++(-60:0.6*\i cm)  coordinate (a\i);
    \path (origin)++(0:0.6*\i cm)  coordinate (b\i);
    \path (a\i)++(0:18cm) coordinate (ca\i);
    \path (b\i)++(-60:18cm) coordinate (cb\i);
}
   \path (b20)++(-60:0.6*5)  coordinate (step1);
      \path (step1)++(0:0.6*5)  coordinate (step2);
      \path (step2)++(-60:0.6*5)  coordinate (step3); 
      \path (step3)++(0:0.6*5)  coordinate (step4);       
      \path (step4)++(-60:0.6*5)  coordinate (step5);

  \foreach \i in {0,...,35}    
  { \draw[grid lines]  (a\i) -- (ca\i)  (b\i) -- (cb\i);
     \draw[grid lines]   (a\i) -- (b\i)  ; } 

 \foreach \i in {0,3,6,...,24}    
  { 
  \draw[ultra thick]  (a\i) -- (ca\i)  (b\i) -- (cb\i);
     \draw[ultra thick]  (a\i) -- (b\i)  ; 
     } 
 
   \end{scope}

   \begin{scope}

       \clip(0,0)-- (0:0.6*5.5)--++(120:0.6*5.5)--(0,0); 
        \foreach \i in {0,...,35}
  {
    \path (origin)++(60:0.6*\i cm)  coordinate (a\i);
    \path (origin)++(0:0.6*\i cm)  coordinate (b\i);
    \path (a\i)++(0:18cm) coordinate (ca\i);
    \path (b\i)++(60:18cm) coordinate (cb\i);
}
   \path (b20)++(60:0.6*5)  coordinate (step1);
      \path (step1)++(0:0.6*5)  coordinate (step2);
      \path (step2)++(-60:0.6*5)  coordinate (step3); 
      \path (step3)++(0:0.6*5)  coordinate (step4);       
      \path (step4)++(-60:0.6*5)  coordinate (step5);

  \foreach \i in {0,...,35}    
  { \draw[grid lines]  (a\i) -- (ca\i)  (b\i) -- (cb\i);
     \draw[grid lines]   (a\i) -- (b\i)  ; } 

 \foreach \i in {0,3,6,...,24}    
  { 
  \draw[ultra thick]  (a\i) -- (ca\i)  (b\i) -- (cb\i);
     \draw[ultra thick]  (a\i) -- (b\i)  ; 
     } 
 
   \end{scope}


 \draw[ultra thick](origin) --++(0:0.6*5.5); 
 \draw[ultra thick](origin) --++(120:0.6*5.5); \draw[ultra thick](origin) --++(-120:0.6*5.5);
 \draw[ultra thick](origin) --++(60:0.6*5.5); \draw[ultra thick](origin) --++(-60:0.6*5.5);
 \draw[ultra thick](origin) --++(180:0.6*5.5);


\end{tikzpicture}
\end{center}
\caption{The $l$-alcoves for $l=3$ and $\W$ of type $\widetilde{A}_2$.}
\label{fig:lalcgeom}
\end{figure}

Let
\begin{equation}
\glsuseri*{Wlcosets}=\{w \in \W : \len(sw) > \len(w) \text{ for all $s \in \Ssl$}\} \subset \W \text{.}
\end{equation}
We can think of $\Wlcosets$ geometrically as follows. 
Let $\glsuseri*{alcslfun}$ denote the set of ordinary alcoves contained inside the fundamental $l$-alcove $C_l$.

\begin{lem}
The bijection $\W \xrightarrow{\sim} \alcs$ restricts to a bijection $\Wlcosets \xrightarrow{\sim} \alcslfun$.
\end{lem}

\begin{proof}
We have
\begin{align*}
w \in \Wlcosets & \Leftrightarrow \len(sw)>\len(w) \text{ for all $s \in \Ssl$,} \\
& \Leftrightarrow \len(w^{-1}s)>\len(w^{-1}) \text{ for all $s \in \Ssl$,} \\
& \Leftrightarrow \langle C, w^{-1}(a_s)\rangle>0 \text{ for all $s \in \Ssl$,} \\
& \Leftrightarrow \langle w(C), a_s \rangle>0 \text{ for all $s \in \Ssl$,} \\
& \Leftrightarrow w(C) \subseteq C_l
\end{align*}
by \eqref{eq:lengthposrootcriterion} and \eqref{eq:atildesppositive}.
\end{proof}

The elements $\Wlcosets \subset \W$ form a set of representatives for the right cosets $\Wl \backslash \W$.
A geometric way to see this is as follows.
Call two ordinary alcoves \defnemph{$l$-linked} if they lie in the same $\Wl$-orbit under the left action of $\Wl$ on $\alcs$.
In other words, for $x,y \in \W$ the alcoves $x(C)$ and $y(C)$ are $l$-linked if there is some $z \in \Wl$ such that $x=zy$.
Thus $l$-linkage classes correspond to right cosets $\Wl \backslash \W$ under the bijection $\W \xrightarrow{\sim} \alcs$.
A natural choice of representatives for the $l$-linkage classes of alcoves is $\alcslfun$, which we already know corresponds to $\Wlcosets$.

The right action of $\W$ on $\Wl \backslash \W$ induces a right action on $\Wlcosets$. 
Geometrically this corresponds to the right action of $\W$ on the $l$-linkage class representatives in $\alcslfun$.
For each $s \in \Ss$ and $w(C) \in \alcspfun$, $s$ sends $w(C)$ to $ws(C)$ under this action unless the $s$-wall of $w(C)$ is contained in a wall of $C_l$, in which case $w(C)$ is fixed by $s$.



We conclude this section with two results which will be necessary for the construction of the Frobenius functor in \S\ref{sec:frobfunctor}. 
In particular, the following crucial lemma ensures that the Frobenius functor is ``integral'' in a certain sense. 
This in turn ensures that the matrix recursion functor $\pr$ (which we construct in \S \ref{sec:prconstruct}) is well defined.

\begin{lem} \label{lem:rlzcoefs}
Let $w \in \Wlcosets$ and $s \in \Ss$. The coefficient of $a_{\tilde{s}}$ in $w(a_s) \in V_{A}$ is a multiple of $l$ if and only if $\Wl ws=\Wl w$. When this happens we have $wsw^{-1} \in \Ssl$ and $w(a_s)=a_{wsw^{-1}}$.
\end{lem}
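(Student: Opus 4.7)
The plan is to translate the statement into the affine geometry of $E$ via the $\W$-equivariant isomorphism $f \colon V_{\Sigma} \to \mathrm{Aff}(E)$ built from the dilation $\epsilon \colon E \to E'$. By Lemma~\ref{lem:rlz-functions}, $f(a_t) = \tfrac{(\alpha_t,\alpha_t)}{2}\alpha_t^{\vee}$ for $t \in \Ssf$ (no constant term) and $f(a_{\tilde{s}}) = \tfrac{(\alphah,\alphah)}{2}(1-\alphah^{\vee})$ (constant term $\tfrac{(\alphah,\alphah)}{2}$); consequently, for any $v \in V_{\Sigma}$ the coefficient of $a_{\tilde{s}}$ in the basis expansion of $v$ equals $\tfrac{2}{(\alphah,\alphah)}$ times the constant term of the affine function $f(v)$. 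This reduces the question about the $a_{\tilde{s}}$-coefficient to a question about a constant term.

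First I would write $w = t_{\mu}\bar{w}$ with $\bar{w} \in \Wf$ and $\mu \in \ZZ\Phif$, and compute $f(wa_s) = f(a_s) \circ w^{-1}$ by direct substitution. In both cases ($s \in \Ssf$ and $s = \tilde{s}$) the output is a nonzero scalar multiple of an affine function $\gamma^{\vee} - k^{\ast}$ whose zero set is the hyperplane $w(H_s)$ fixed by the reflection $wsw^{-1} = s_{\gamma,k^{\ast}}$, and the coefficient of $a_{\tilde{s}}$ in $wa_s$ works out to be proportional to $\tfrac{(\gamma,\gamma)}{(\alphah,\alphah)}\,k^{\ast}$. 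Because $\alphah$ is the highest \emph{short} root of $\Phif$, the ratio $(\gamma,\gamma)/(\alphah,\alphah)$ lies in $\{1,2,3\}$, so under the standing hypothesis on $p$ it is a unit modulo $p$; this yields the chain of equivalences
\[
p \mid c_{\tilde{s}} \iff p \mid k^{\ast} \iff s_{\gamma,k^{\ast}} \in \Wp \iff \Wp ws = \Wp w.
\]

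Now suppose these equivalent conditions hold. Then $w(H_s)$ is a $p$-hyperplane bounding the alcove $wA_0 \subseteq A_{0,p}$; since the interior of $A_{0,p}$ contains no $p$-hyperplanes, $w(H_s)$ must itself be a wall of $A_{0,p}$, forcing $wsw^{-1} \in \Ssp$ with $(\gamma,k^{\ast}) \in (\Sigmaf \times \{0\}) \cup \{(\alphah, p)\}$. The identity $wa_s = a_{wsw^{-1}}$ then reduces, via injectivity of $f$, to matching the affine functions on both sides: this is immediate when $wsw^{-1} \in \Ssf$ (both sides give $\tfrac{(\alpha_t,\alpha_t)}{2}\alpha_t^{\vee}$ for $t = wsw^{-1}$), and when $wsw^{-1} = \tilde{s}_p$ it follows by unpacking the definition $a_{\tilde{s}_p} = (p-1)a_{\rm h} + pa_{\tilde{s}}$ and computing $f(a_{\tilde{s}_p}) = \tfrac{(\alphah,\alphah)}{2}(p - \alphah^{\vee})$, matching $f(wa_s)$. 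The main obstacle is the root-length bookkeeping, together with tracking signs when $\gamma$ emerges as a negative root; the argument is most delicate in $G_2$, where the ratio $(\gamma,\gamma)/(\alphah,\alphah)$ can be $3$ and the hypothesis on $p$ is really needed to avoid a degenerate equivalence.
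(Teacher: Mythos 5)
Your proof takes essentially the same route as the paper's: pass to affine functions on $E$ via $\epsilon^\ast$ and Lemma~\ref{lem:rlz-functions}, identify $w(H_s)$ with the reflection hyperplane of $wsw^{-1}=s_{\gamma,k^\ast}$, and translate $p$-divisibility of the $a_{\tilde{s}}$-coefficient into $p$-divisibility of the integer $k^\ast$. Your explicit formula, that $c_{\tilde{s}}(wa_s)$ equals $\pm\tfrac{(\gamma,\gamma)}{(\alphah,\alphah)}k^\ast$, makes the underlying arithmetic visible in a way the paper's phrasing leaves implicit.

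Two caveats. First, your concern about $(\gamma,\gamma)/(\alphah,\alphah)$ being a unit modulo $p$ is well-founded and in fact points at a shared subtlety: the converse implication $p \mid c_{\tilde{s}} \Rightarrow p \mid k^\ast$ requires $p$ to be coprime to this ratio, which can equal $2$ (types $B$, $C$, $F_4$) or $3$ ($G_2$), since $(\gamma,\gamma)=(\alpha_s,\alpha_s)$ and $\alphah$ is the highest \emph{short} root. The paper's converse step --- asserting that $p\mid c_{\tilde{s}}$ forces $\epsilon^\ast\bigl(w\bigl(\tfrac{2}{(\alpha_s,\alpha_s)}a_s\bigr)\bigr) \in p\ZZ+\ZZ\Phif^\vee$ --- quietly rests on the same coprimality. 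However, you invoke a ``standing hypothesis on $p$'' that does not exist at this point in the paper: the lemma is stated for $p$ a bare natural number. You should either make your hypothesis explicit, or observe that the lemma is only applied downstream (in Proposition~\ref{prop:frobrealiz} and Example~\ref{exam:tildespexpr}) with $\alpha_s$ short, where the ratio is $1$ and the concern evaporates. Second, the step claiming $wa_s = a_{wsw^{-1}}$ is ``immediate'' glosses over the sign: a priori $\pi(w)\alpha_s^\vee = \pm\alpha_t^\vee$, so you only get $wa_s=\pm a_t$. The paper resolves this by noting that $\epsilon^\ast(wa_s)$ is positive on $w(A_0)\subseteq A_{0,p}$, which forces the $+$ sign. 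You flag ``tracking signs'' as an obstacle but do not actually carry out this positivity argument; without it the last assertion of the lemma is not established.
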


\begin{proof}
When $\Wl ws=\Wl w$, the alcoves $w(C)$ and $ws(C)$ lie in the same $l$-linkage class, and the wall between them is contained in a wall of $C_{l}$, corresponding to some $t \in \Ssl$. 
This means that $wsw^{-1}=t$ and that $w$ maps the reflection hyperplane of $s$ to the reflection hyperplane of $t$. 
By \eqref{eq:nonsimplereflections} we must have $w(a_s)=ka_t$ and $w(a_s^\vee)=k^{-1} a_t^\vee$ for some non-zero scalar $k$. 
Since both $k$ and $k^{-1}$ are integers we must have $k\in \{\pm 1\}$. 
Finally $k>0$ by \eqref{eq:atildesppositive} so $k=1$. 

Conversely, if $l$ divides the coefficient of $a_{\tilde{s}}$ in $w(a_s)$, then $wsw^{-1}$ maps the origin $o \in E$ to
\begin{equation*}
wsw^{-1}(o)=o-\langle o, w(a_s)\rangle w(a_s^\vee) \in o+lZ
\end{equation*}
by \eqref{eq:nonsimplereflections}, where $Z$ is defined as in Proposition~\ref{prop:translation-index}.
If we write $wsw^{-1}$ as a product $t(\lambda)x$ where $\lambda \in \Lambda$ and $x \in \Wf$, this shows that $\lambda \in lZ \cap \Lambda$.
Since $l$ is coprime to $[Z:\Lambda]$ we can infer that $\lambda \in l\Lambda$, and thus $wsw^{-1} \in \Wp$ and $\Wl ws=\Wl w$.
\end{proof}

\begin{lem} \label{lem:conjbraids}
Let $s \in \Ssf$ with $m_{\tilde{s}s}<\infty$. 
There exist $\tilde{s}',s' \in \Ss$ and $w \in \Wlcosets$ such that $m_{\tilde{s}'s'}=m_{\tilde{s}s}$,  $w\tilde{s}'w^{-1}=\tilde{s}_l$ and $ws'w^{-1}=s$.
\end{lem}

Informally, this means that any braid relation in $\Wl$ between $\tilde{s}_l$ and some finite generator $s \in \Ssf$ is conjugate to some braid relation in $\W$.

\begin{proof}
The dihedral reflection subgroup $D$ generated by $\tilde{s}_l$ and $s$ is contained inside a maximal dihedral reflection subgroup $D'$.
Applying the work of Deodhar \cite{deodhar-reflsubgroups}, the order of $D'$ can be determined by counting
\begin{equation*}
\{w(a_{s'}) : w \in W,\ s' \in S\} \cap (\mathbb{R}a_{\tilde{s}_l} \oplus \mathbb{R} a_s) \subset V_{A,\mathbb{R}} \text{.}
\end{equation*}
In particular, this subset is a root system for $D'$ in the sense of Humphreys \cite[Chapter 1.2]{humphreys}, and the simple roots give generators of $D'$.
So $D<D'$ if and only if there is some $w \in W$, $s' \in S$ and $k_{\tilde{s}_l},k_s \in \mathbb{R}$ with different signs such that
\begin{equation*}
w(a_{s'})=k_{\tilde{s}_l} a_{\tilde{s}_l}+k_s a_s \text{.}
\end{equation*}
The right-hand side is equal to
\begin{equation*}
k_{\tilde{s}_l}(l-1)a_{\rm h}+k_s a_s+k_{\tilde{s}_l} la_{\tilde{s}}
\end{equation*}
The coefficients of $w(a_{s'})$ with respect to $\{a_t\}_{t \in S}$ are all integers with the same sign, so without loss of generality we may assume they are all non-negative, and hence $k_{\tilde{s}_l} l \in \NN$. 

Now suppose $k_{\tilde{s}_l}$ is an integer. 
Writing $w=xz$ with $x \in \Wl$ and $z \in \Wlcosets$, it is clear from the action of $\Wl$ that the coefficient of $a_{\tilde{s}}$ in $z(a_{s'})$ is also a multiple of $l$. 
By Lemma~\ref{lem:rlzcoefs} this means $z(a_{s'})=a_t$ for some $t \in \Ssl$, and thus $k_{s}$ and $k_{\tilde{s}_l}$ are both non-negative, which is a contradiction.
This means $k_{\tilde{s}_l}$ cannot be an integer.

In addition, the coefficients of 
\begin{equation*}
k_{\tilde{s}_l}(l-1)a_{\rm h}+k_s a_s
\end{equation*}
with respect to $\{a_t\}_{t \in \Ssf}$ must be non-negative integers. 
Clearing denominators, this means that the coefficients of $a_{\rm h}$ with respect to $\{a_t : t \in \Ssf \setminus\{s\}\}$ must have some non-trivial common divisor with $l$. 
But by Remark~\ref{rem:affrootsystems} these coefficients equal the coefficients of the highest root or the highest short root with respect to the simple roots in some finite root system $\Phif$. 
We can check case-by-case (e.g.~using \cite[Chapter 4, Table 1]{humphreys}) for each highest root whether all but one of the coefficients have a non-trivial common divisor. 
The only common divisors we get in this manner are $2$ and $3$. 
Since $l$ is coprime to both $2$ and $3$, we conclude that $D=D'$.

To complete the proof, we consider the affine action of $\W$ on $E$.
As $m_{\tilde{s}s}=m_{\tilde{s}_l s}<\infty$, the $\tilde{s}_l$- and $s$-walls of the fundamental $l$-alcove $C_l$ must intersect. 
Let $w \in \Wlcosets$ be the label for an alcove in $\alcslfun$ whose closure intersects with the intersection of these walls.
Since $D'=D$, there exist $\tilde{s}',s' \in \Ss$ such that the $\tilde{s}'$- and $s'$-walls of $w(C)$ lie on the $\tilde{s}_l$- and $s$-walls of $C_l$, and the result follows.
\end{proof}

\begin{rem} \label{rem:chrstcassump}
Our assumptions on $l$ are solely to ensure that Lemma~\ref{lem:rlzcoefs} and Lemma~\ref{lem:conjbraids} both hold in full generality.
If $A$ is of simply laced type, then we do not in fact need \emph{any} assumptions on $l$ for these results to hold.
Outside of type $G$ it is enough to assume that $l$ is odd.
If $A$ is of type $\widetilde{G_2}$ or $\widetilde{G_2}^\vee$, i.e.~if $A$ is
\begin{equation*}
\begin{bmatrix}
2 & -3 & 0 \\
-1 & 2 & -1 \\
0 & -1 & 2
\end{bmatrix}
\end{equation*}
or its transpose up to reordering of indices, then it is necessary to assume that $2,3 \nmid l$.
Although we believe it is possible to extend our main results to even $l$ in the simply laced case, for notational convenience we will always assume that $l$ is odd in what follows. 
\end{rem}

\section{Hecke algebras}


\subsection{Ordinary Hecke algebras}
\label{sec:ordhecke}


The \defnemph{Hecke algebra} $\glsuseri*{hecke}=\hecke(\W,\Ss)$ of the affine Weyl group $\W$ is the $\laur$-algebra with generators $\{h_s\}_{s \in \Ss}$ and relations
\begin{align}
h_s^2 & =1+(v^{-1}-v)h_s && \text{for all $s \in \Ss$,} \\
\underbrace{h_s h_t h_s \dotsm}_{\text{$m_{st}$ terms}} & =\underbrace{h_t h_s h_t \dotsm}_{\text{$m_{st}$ terms}}&& \text{for all distinct $s,t \in \Ss$ where $m_{st} \neq \infty$,}
\end{align}
where $m_{st}$ is defined as in \S\ref{sec:affWnotn}.

If $x \in \W$ and $\expr{x}=\expr{s_1 s_2 \dotsm s_m}$ is a rex for $x$, the element $\glsuseri*{hx}=h_{s_1} h_{s_2} \dotsm h_{s_m}$ is well defined, and the set $\{h_x : x \in \W\}$ forms a $\laur$-basis for $\hecke$, called the \defnemph{standard basis}. 
Each generator $h_s$ is invertible, with $h_s^{-1}=h_s+v-v^{-1}$, so each basis element $h_x$ is also invertible. 
The \defnemph{bar involution} or \defnemph{dualization map} $(\overline{\phantom{h}}):\hecke \longrightarrow \hecke$ is the ring homomorphism defined by the following action
\begin{equation}
\begin{split}
\overline{v} & = v^{-1} \text{,} \\
\overline{h_x} & = (h_{x^{-1}})^{-1} \text{.}
\end{split}
\end{equation}
For $s \in \Ss$ we define $\glsuseri*{bs}=h_s+v$, which is self-dual. 
The set $\{b_s\}_{s \in \Ss}$ forms another set of generators for $\hecke$ as a $\laur$-algebra. 

The right action of the self-dual generators on $\hecke$ with respect to the standard basis has a particularly nice form.
Namely, for any $x \in \W$ and any $s \in \Ss$, we have
\begin{equation}
h_x b_s=\begin{cases}
h_{xs}+vh_x & \text{if $xs>x$,} \\
h_{xs}+v^{-1}h_x & \text{if $xs<x$.}
\end{cases} \label{eq:hxbsaction}
\end{equation}
There is a common visual schematic for elements of $\hecke$ using alcoves (e.g.~\cite[\S 8]{soergel-KL}). 
Since the set of alcoves is in bijection with $\W$, we can visualize an element $\sum_{x} k_x h_x \in \hecke$ as the assignment of each non-zero coefficient $k_x$ to its corresponding alcove $x(C)$.
From this perspective, \eqref{eq:hxbsaction} looks like the pair of local relations
\begin{equation*}
\begin{tikzpicture}[scale=0.75,baseline=(origin)]
\node[above right=-0.1,font=\footnotesize,red] at (300:0.5) {$s$};
\draw (300:2) -- (120:2);
\draw (120:1) -- ++(240:2) -- ++(0:2);
\draw (120:1) -- ++(0:2) -- ++(240:2);
\draw[red] (300:1) -- (120:1);
\node at (210:0.577) {$1$};
\coordinate (origin) at (0,0);
\end{tikzpicture}
\cdot b_{{\color{red} s}}=
\begin{tikzpicture}[scale=0.75,baseline=(origin)]
\node[above right=-0.1,font=\footnotesize,red] at (300:0.5) {$s$};
\draw (300:2) -- (120:2);
\draw (120:1) -- ++(240:2) -- ++(0:2);
\draw (120:1) -- ++(0:2) -- ++(240:2);
\draw[red] (300:1) -- (120:1);
\node at (210:0.577) {$v$};
\node at (30:0.577) {$1$};
\coordinate (origin) at (0,0);
\end{tikzpicture}
\end{equation*}
and
\begin{equation*}
\begin{tikzpicture}[scale=0.75,baseline=(origin)]
\node[above right=-0.1,font=\footnotesize,red] at (300:0.5) {$s$};
\draw (300:2) -- (120:2);
\draw (120:1) -- ++(240:2) -- ++(0:2);
\draw (120:1) -- ++(0:2) -- ++(240:2);
\draw[red] (300:1) -- (120:1);
\node at (30:0.577) {$1$};
\coordinate (origin) at (0,0);
\end{tikzpicture}
\cdot b_{{\color{red} s}}=
\begin{tikzpicture}[scale=0.75,baseline=(origin)]
\node[above right=-0.1,font=\footnotesize,red] at (300:0.5) {$s$};
\draw (300:2) -- (120:2);
\draw (120:1) -- ++(240:2) -- ++(0:2);
\draw (120:1) -- ++(0:2) -- ++(240:2);
\draw[red] (300:1) -- (120:1);
\node at (210:0.577) {$1$};
\node at (30:0.577) {$v^{-1}$};
\coordinate (origin) at (0,0);
\end{tikzpicture} \text{,}
\end{equation*}
where length is increasing in the northeast direction.

For any $\expr{x}=\expr{s_1 s_2\dotsm s_m} \in \expr{\Ss}$ let $\glsuseri*{bexprx}=b_{s_1} b_{s_2} \dotsm b_{s_m}$. 
Clearly $b_{\expr{x}}$ is also self-dual.
There is a formula which gives $b_{\expr{x}}$ in terms of the standard basis on $\hecke$.
This requires some more notation.

\begin{notn}
Let $\expr{x}=\expr{s_1 s_2 \dotsm s_m}$ be an expression. 
A \defnemph{subsequence} for $\expr{x}$ is a sequence of the form $\glsuseri*{seqe}=(\seq{e}_1, \dotsc, \seq{e}_m)$, where each $\seq{e}_i$ is an ordered pair $(s_i,t_i)$ with $t_i \in \{0,1\}$. 
We say that $\seq{e}_i$ is a \defnemph{term} with \defnemph{generator} $s_i$ and \defnemph{type} $t_i$, and we refer to the type of $\seq{e}$ to mean the sequence of types of the terms $\seq{e}_i$. 
We write $\hat{e}$ to denote the group element $s_1^{t_1} s_2^{t_2} \dotsm s_m^{t_m} \in \W$. 
We denote the set of all subsequences for $\expr{x}$ by $\glsuseri*{bexprxb}$. 
\end{notn}

Every subsequence $\seq{e}$ can be assigned an integer $\glsuseri*{dfct}(\seq{e})$ called the \defnemph{defect}. 
The defect $\dfct(\seq{e})$ is based on a sequence of elements in $\W$ called the \defnemph{Bruhat stroll} for $\seq{e}$. 
We delay the definition of the Bruhat stroll and the defect until \S\ref{sec:patterns}, where we provide a more general construction.

\begin{lem}[Deodhar's defect formula {\cite[Proposition~3.5]{deodhar-combi}}] \label{lem:ddfhecke}
Let $\expr{x} \in \expr{\Ss}$. 
Then
\begin{equation*}
b_{\expr{x}}=\sum_{\seq{e} \in [\expr{x}]} v^{\dfct(\seq{e})} h_{\hat{e}} \text{.}
\end{equation*}
\end{lem}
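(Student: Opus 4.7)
The plan is to proceed by induction on $\len(\expr{x})$. The base case $\len(\expr{x}) = 0$ is immediate: by convention $\underline{H}_{\expr{x}} = 1 = H_e$, while the only subsequence of the empty expression is the empty one, which has defect zero.

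For the inductive step, factor $\expr{x}$ as an initial subexpression $\expr{x}'$ of length $m-1$ followed by a single generator $s \in \Ss$. By definition of Bott--Samelson elements, $\underline{H}_{\expr{x}} = \underline{H}_{\expr{x}'} \underline{H}_s$, and the inductive hypothesis yields
\begin{equation*}
\underline{H}_{\expr{x}} = \sum_{\seq{e}' \in [\expr{x}']} v^{d(\seq{e}')} H_{e'} \underline{H}_s.
\end{equation*}
The central algebraic computation is that $H_{e'} \underline{H}_s = H_{e'} H_s + v H_{e'}$; unwinding the quadratic relation $H_s^2 = 1 + (v^{-1} - v) H_s$ in the case $e's < e'$ shows uniformly that
\begin{equation*}
H_{e'} \underline{H}_s = H_{e's} + v^{\varepsilon(e', s)} H_{e'},
\end{equation*}
where $\varepsilon(e',s) = +1$ if $e's > e'$ in the Bruhat order and $\varepsilon(e',s) = -1$ otherwise.

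Every subsequence $\seq{e} \in [\expr{x}]$ arises uniquely as an extension of a subsequence $\seq{e}' \in [\expr{x}']$ by a final term with generator $s$ and type $t \in \{0, 1\}$; one has $e = e's$ when $t = 1$ and $e = e'$ when $t = 0$. The two summands of the product above correspond exactly to these two extensions. Once the Bruhat stroll and the defect are set up in \S\ref{sec:patterns}, the final step of the stroll should contribute $0$ to $d(\seq{e})$ when $t = 1$ and $\varepsilon(e', s)$ when $t = 0$; granting this compatibility,
\begin{equation*}
v^{d(\seq{e}')} H_{e'} \underline{H}_s = \sum_{\substack{\seq{e} \in [\expr{x}] \\ \seq{e} \text{ extends } \seq{e}'}} v^{d(\seq{e})} H_e,
\end{equation*}
and summing over $\seq{e}' \in [\expr{x}']$ closes the induction.

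The main subtlety I expect is the type-$1$ extension when $e's < e'$: there the quadratic relation produces an additional summand $(v^{-1} - v) H_{e'}$ on the ``wrong'' basis element, which must combine cleanly with the $+v H_{e'}$ coming from $\underline{H}_s = H_s + v$ to leave the stated $v^{-1} H_{e'}$. This cancellation is the real content of Deodhar's formula, and it is exactly what forces the recursive definition of the defect on the Bruhat stroll. Once the identity $H_{e'} \underline{H}_s = H_{e's} + v^{\varepsilon(e',s)} H_{e'}$ is verified, the remainder of the argument is purely formal bookkeeping over subsequences.
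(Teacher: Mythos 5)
The paper gives no proof for this lemma; it simply cites Deodhar's original paper, so there is nothing to compare against directly. Your inductive argument is correct and is the standard proof of Deodhar's defect formula. The two cases of the Hecke algebra computation check out: if $e's > e'$ then $H_{e'}\underline{H}_s = H_{e's} + vH_{e'}$, and if $e's < e'$ then the quadratic relation gives $H_{e'}H_s = H_{e's} + (v^{-1}-v)H_{e'}$, which combines with the $+vH_{e'}$ from $\underline{H}_s = H_s + v$ to yield $H_{e's} + v^{-1}H_{e'}$. Your ``compatibility'' step is also fine once the definition in \S\ref{sec:patterns} is unwound: with $w=1$ and all terms indeterminate, the last step of the Bruhat stroll for the extended subsequence $\seq{e}$ is decorated $\upseq$ or $\downseq$ according as $e's > e'$ or $e's < e'$, and only contributes to the defect when the final term has type $0$, giving exactly $\varepsilon(e',s)$; a type-$1$ final term changes the stroll but adds nothing to the defect. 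It is worth noting that the paper's proof of Lemma~\ref{lem:ddfheckepast}, the $\heckepast$-generalization, runs by exactly this induction on expression length, so your proposed argument is the same engine the paper relies on in the generalized setting.
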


Let $\heckel=\hecke(\Wl,\Ssl)$ denote the Hecke algebra of the $l$-affine Weyl group. 
To avoid confusion with the generators of $\hecke$, we write $h^{(l)}_x$ and $b^{(l)}_s$ to denote the standard basis elements and self-dual generators in $\heckel$. 
Note that $\heckel$ is isomorphic to $\hecke$ via the $\laur$-algebra homomorphism mapping $h_s \mapsto h^{(l)}_{F(s)}$ for all $s \in \Ss$. 
Thus the constructions in this section apply equally well to $\heckel$.

\subsection{\texorpdfstring{$\heckelast$}{H_{l|*}} and \texorpdfstring{$\heckeast$}{H_*}}

\label{sec:heckeast-heckepast}


Recall that $\Wlcosets \subset \W$ is a set of representatives for the right cosets $\Wl \backslash \W$.
The map
\begin{align*}
\Wl \times \Wlcosets & \longrightarrow \W \\
(x,w) & \longmapsto xw
\end{align*}
is a bijection, so any objects indexed over $\Wl \times \Wlcosets$ can just as easily be indexed over $\W$ and vice versa. 
We will sometimes write products $xw$ of the above form with a bar in the form $\glspl*{xbarw}$, in order to emphasize this correspondence and to separately keep track of the $\Wl$- and $\Wlcosets$-parts. 

Extending the map above $\ZZ$-linearly gives an isomorphism
\begin{equation}
\bigoplus_{w \in \Wlcosets} \ZZ\Wl w \xrightarrow{\sim} \ZZ\W \label{eq:Wldecomp}
\end{equation}
of left $\ZZ\Wl$-modules. 
On the right $\ZZ\W$ acts faithfully via $\ZZ\Wl$-module endomorphisms, which we can write as a matrix representation $\ZZ\W \rightarrow M_{|\Wlcosets|}(\ZZ\Wl)$.
As in the introduction we define the \defnemph{matrix recursion representation} $\gls*{xi}$ to be the composition
\begin{equation*}
\ZZ\W \longrightarrow M_{|\Wlcosets|}(\ZZ\Wl) \xrightarrow{F^{-1}} M_{|\Wlcosets|}(\ZZ\W) \text{,} 
\end{equation*}
where the second arrow denotes applying $F^{-1}$ to the entries of the matrix. 
It is easy to show that
\begin{equation} \label{eq:xientries}
\xi(s)_{w,z}=\begin{cases}
F^{-1}(wsw^{-1}) & \text{if $\Wl ws=\Wl w$ and $w=z$,} \\
1 & \text{if $\Wl ws \neq \Wl w$ and $ws=z$,} \\
0 & \text{otherwise}
\end{cases}
\end{equation}
for all $w,z \in \Wlcosets$ and $s \in \Ss$. 

In this section we construct Hecke analogues of \eqref{eq:Wldecomp} and \eqref{eq:xientries} via a new bimodule $\heckelast$ and a new algebra $\heckeast$.
Later when we categorify, the $\laur$-module structures on $\heckelast$ and $\heckeast$ will correspond to a \defnemph{valuation} on the diagrammatic Hecke category, distinct from the usual grading. 
Unfortunately, constructing $\heckelast$ and $\heckeast$ is not immediately straightforward as $\heckel$ is not even a subalgebra of $\hecke$! 
To begin with, if we want a free action of $\heckel$ we must build one ourselves. 

\begin{defn}
We define $\glsuseri*{heckelast}$ to be the free left $\heckel$-module with free basis
\begin{equation*}
\{h_{w}^{(l|\ast)} : w \in \Wlcosets\} \text{.}
\end{equation*}
For each $x \in \Wl$ and $w \in \Wlcosets$, we write $h^{(l|\ast)}_{x|w}=h^{(l)}_x h^{(l|\ast)}_w$.
The set 
\begin{equation*}
\{\glsuseri*{hlast}_{x|w} : x|w \in \W\}
\end{equation*}
forms a $\laur$-basis for $\heckelast$, which we call the \defnemph{standard basis}.
%
\end{defn}

Here the decomposition
\begin{equation}
\bigoplus_{w \in \Wlcosets} \heckel h^{(l|\ast)}_{w} \xrightarrow{\sim} \heckelast \text{,} \label{eq:heckelastdecomp}
\end{equation}
which arises by definition, is analogous to \eqref{eq:Wldecomp}.



We introduce new operators on $\heckelast$ using the right $\W$-action on the standard basis elements.
For each $s \in \Ss$, we define an operator $\glsuseri*{basts}$ by
\begin{equation}
h^{(l|\ast)}_{x|w} \cdot b^{(\ast)}_s=\begin{cases}
h^{(l|\ast)}_{xwsw^{-1}|w} + v h^{(l|\ast)}_{x|w} & \text{if $\Wl ws=\Wl w$ and $xwsw^{-1}>x$,} \\
h^{(l|\ast)}_{xwsw^{-1}|w} + v^{-1} h^{(l|\ast)}_{x|w} & \text{if $\Wl ws=\Wl w$ and $xwsw^{-1}<x$,} \\
h^{(l|\ast)}_{x|ws} + h^{(l|\ast)}_{x|w} & \text{otherwise}
\end{cases} \label{eq:hpastxwbasts}
\end{equation}
for all $x \in \Wl$ and $w \in \Wlcosets$.
For any $\expr{x}=\expr{s_1 s_2 \dotsm s_m} \in \expr{\Ss}$ we also write $\glsuseri*{bexprxast}=b^{(\ast)}_{s_1} b^{(\ast)}_{s_2} \dotsm b^{(\ast)}_{s_m}$.

For each $z \in \Wlcosets$, we also define an operator $\glsuseri*{uz}$ by
\begin{equation}
h^{(l|\ast)}_{x|w} \cdot u_z=\begin{cases}
vh^{(l|\ast)}_{x|w} & \text{if $w=z$,} \\
h^{(l|\ast)}_{x|w} & \text{otherwise.} 
\end{cases} \label{eq:uzaction}
\end{equation}
We will abuse notation slightly and write $u_{zs}$ for $u_{z'}$, where $z' \in \Wlcosets$ such that $\Wl z'=\Wl zs$.

As with $\hecke$, we can depict an element $\sum_{x|w} k_{x|w} h^{(l|\ast)}_{x|w} \in \heckelast$ by assigning each non-zero coefficient $k_{x|w}$ to its corresponding alcove $xw(C)$.
Visualizing \eqref{eq:hpastxwbasts} in this way gives the local relations
\begin{equation*}
\begin{tikzpicture}[scale=0.6,baseline=(origin)]
\draw[ultra thick] (300:4) -- (120:4);
\draw[ultra thick] (120:3) -- ++(240:6) -- ++(0:6);
\draw[ultra thick] (120:3) -- ++(0:6) -- ++(240:6);
\path (120:3) ++(240:2) edge[draw] +(0:6) edge[draw] +(300:4);
\path (120:3) ++(240:4) edge[draw] +(0:6) edge[draw] +(300:2);
\path (120:3) ++(0:2) edge[draw] +(240:6) edge[draw] +(300:4);
\path (120:3) ++(0:4) edge[draw] +(240:6) edge[draw] +(300:2);
\path (120:0) ++(210:0.577) node {$1$};
\draw[ultra thick, red] (120:-1) -- (120:1);
\node[above right=-0.1,font=\footnotesize, red] at (120:-0.5) {$s$};
\coordinate (origin) at (0,0);
\end{tikzpicture}
\cdot b^{(\ast)}_{{\color{red} s}}=
\begin{tikzpicture}[scale=0.6,baseline=(origin)]
\draw[ultra thick] (300:4) -- (120:4);
\draw[ultra thick] (120:3) -- ++(240:6) -- ++(0:6);
\draw[ultra thick] (120:3) -- ++(0:6) -- ++(240:6);
\path (120:3) ++(240:2) edge[draw] +(0:6) edge[draw] +(300:4);
\path (120:3) ++(240:4) edge[draw] +(0:6) edge[draw] +(300:2);
\path (120:3) ++(0:2) edge[draw] +(240:6) edge[draw] +(300:4);
\path (120:3) ++(0:4) edge[draw] +(240:6) edge[draw] +(300:2);
\path (120:0) ++(210:0.577) node {$v$};
\path (120:0) ++ (30:0.577) node {$1$};
\draw[ultra thick, red] (120:-1) -- (120:1);
\node[above right=-0.1,font=\footnotesize, red] at (120:-0.5) {$s$};
\coordinate (origin) at (0,0);
\end{tikzpicture} \text{,}
\end{equation*}
\begin{equation*}
\begin{tikzpicture}[scale=0.6,baseline=(origin)]
\draw[ultra thick] (300:4) -- (120:4);
\draw[ultra thick] (120:3) -- ++(240:6) -- ++(0:6);
\draw[ultra thick] (120:3) -- ++(0:6) -- ++(240:6);
\path (120:3) ++(240:2) edge[draw] +(0:6) edge[draw] +(300:4);
\path (120:3) ++(240:4) edge[draw] +(0:6) edge[draw] +(300:2);
\path (120:3) ++(0:2) edge[draw] +(240:6) edge[draw] +(300:4);
\path (120:3) ++(0:4) edge[draw] +(240:6) edge[draw] +(300:2);
\path (120:0) ++(30:0.577) node {$1$};
\draw[ultra thick, red] (120:-1) -- (120:1);
\node[above right=-0.1,font=\footnotesize, red] at (120:-0.5) {$s$};
\coordinate (origin) at (0,0);
\end{tikzpicture}
\cdot b^{(\ast)}_{{\color{red} s}}=
\begin{tikzpicture}[scale=0.6,baseline=(origin)]
\draw[ultra thick] (300:4) -- (120:4);
\draw[ultra thick] (120:3) -- ++(240:6) -- ++(0:6);
\draw[ultra thick] (120:3) -- ++(0:6) -- ++(240:6);
\path (120:3) ++(240:2) edge[draw] +(0:6) edge[draw] +(300:4);
\path (120:3) ++(240:4) edge[draw] +(0:6) edge[draw] +(300:2);
\path (120:3) ++(0:2) edge[draw] +(240:6) edge[draw] +(300:4);
\path (120:3) ++(0:4) edge[draw] +(240:6) edge[draw] +(300:2);
\path (120:0) ++(210:0.577) node {$1$};
\path (120:0) ++ (30:0.577) node {$v^{-1}$};
\draw[ultra thick, red] (120:-1) -- (120:1);
\node[above right=-0.1,font=\footnotesize, red] at (120:-0.5) {$s$};
\coordinate (origin) at (0,0);
\end{tikzpicture} \text{,}
\end{equation*}
and
\begin{equation*}
\begin{tikzpicture}[scale=0.6,baseline=(origin)]
\draw[ultra thick] (300:4) -- (120:4);
\draw[ultra thick] (120:3) -- ++(240:6) -- ++(0:6);
\draw[ultra thick] (120:3) -- ++(0:6) -- ++(240:6);
\path (120:3) ++(240:2) edge[draw] +(0:6) edge[draw] +(300:4);
\path (120:3) ++(240:4) edge[draw] +(0:6) edge[draw] +(300:2);
\path (120:3) ++(0:2) edge[draw] +(240:6) edge[draw] +(300:4);
\path (120:3) ++(0:4) edge[draw] +(240:6) edge[draw] +(300:2);
\path (120:1) ++(210:1.115) node {$1$};
\draw[red] (120:1) -- ++(180:2);
\path (120:1) ++(180:0.5) node[above=-0.05,font=\footnotesize, red] {$s$};
\coordinate (origin) at (0,0);
\end{tikzpicture}
\cdot b^{(\ast)}_{{\color{red} s}}=
\begin{tikzpicture}[scale=0.6,baseline=(origin)]
\draw[ultra thick] (300:4) -- (120:4);
\draw[ultra thick] (120:3) -- ++(240:6) -- ++(0:6);
\draw[ultra thick] (120:3) -- ++(0:6) -- ++(240:6);
\path (120:3) ++(240:2) edge[draw] +(0:6) edge[draw] +(300:4);
\path (120:3) ++(240:4) edge[draw] +(0:6) edge[draw] +(300:2);
\path (120:3) ++(0:2) edge[draw] +(240:6) edge[draw] +(300:4);
\path (120:3) ++(0:4) edge[draw] +(240:6) edge[draw] +(300:2);
\path (120:1) ++(210:1.115) node {$1$};
\path (120:2) ++(210:0.577) node {$1$};
\draw[red] (120:1) -- ++(180:2);
\path (120:1) ++(180:0.5) node[above=-0.05,font=\footnotesize, red] {$s$};
\coordinate (origin) at (0,0);
\end{tikzpicture} \text{,}
\end{equation*}
where the large alcoves are $l$-alcoves, and length is increasing in the northeast direction.
Similarly \eqref{eq:uzaction} looks like
\begin{equation*}
\begin{tikzpicture}[scale=0.6,baseline=(origin)]
\fill[red!10] (120:1) -- ++(180:2) -- ++(300:2) --cycle;
\path (120:1) ++(180:2) ++(300:1.5) node[above right=-0.1,font=\footnotesize,red] {$z$};
\draw[ultra thick] (300:4) -- (120:4);
\draw[ultra thick] (120:3) -- ++(240:6) -- ++(0:6);
\draw[ultra thick] (120:3) -- ++(0:6) -- ++(240:6);
\path (120:3) ++(240:2) edge[draw] +(0:6) edge[draw] +(300:4);
\path (120:3) ++(240:4) edge[draw] +(0:6) edge[draw] +(300:2);
\path (120:3) ++(0:2) edge[draw] +(240:6) edge[draw] +(300:4);
\draw (120:3) ++(0:4) edge[draw] +(240:6) edge[draw] +(300:2);
\path (120:1) ++(210:1.155) node {$1$};
\coordinate (origin) at (0,0);
\end{tikzpicture}
\cdot u_{{\color{red} z}}=
\begin{tikzpicture}[scale=0.6,baseline=(origin)]
\fill[red!10] (120:1) -- ++(180:2) -- ++(300:2) --cycle;
\path (120:1) ++(180:2) ++(300:1.5) node[above right=-0.1,font=\footnotesize,red] {$z$};
\draw[ultra thick] (300:4) -- (120:4);
\draw[ultra thick] (120:3) -- ++(240:6) -- ++(0:6);
\draw[ultra thick] (120:3) -- ++(0:6) -- ++(240:6);
\path (120:3) ++(240:2) edge[draw] +(0:6) edge[draw] +(300:4);
\path (120:3) ++(240:4) edge[draw] +(0:6) edge[draw] +(300:2);
\path (120:3) ++(0:2) edge[draw] +(240:6) edge[draw] +(300:4);
\draw (120:3) ++(0:4) edge[draw] +(240:6) edge[draw] +(300:2);
\path (120:1) ++(210:1.155) node {$v$};
\coordinate (origin) at (0,0);
\end{tikzpicture}
\end{equation*}
and
\begin{equation*}
\begin{tikzpicture}[scale=0.6,baseline=(origin)]
\fill[red!10] (120:1) -- ++(180:2) -- ++(300:2) --cycle;
\path (120:1) ++(180:2) ++(300:1.5) node[above right=-0.1,font=\footnotesize,red] {$z$};
\draw[ultra thick] (300:4) -- (120:4);
\draw[ultra thick] (120:3) -- ++(240:6) -- ++(0:6);
\draw[ultra thick] (120:3) -- ++(0:6) -- ++(240:6);
\path (120:3) ++(240:2) edge[draw] +(0:6) edge[draw] +(300:4);
\path (120:3) ++(240:4) edge[draw] +(0:6) edge[draw] +(300:2);
\path (120:3) ++(0:2) edge[draw] +(240:6) edge[draw] +(300:4);
\draw (120:3) ++(0:4) edge[draw] +(240:6) edge[draw] +(300:2);
\path (120:0) ++(210:0.577) node {$1$};
\coordinate (origin) at (0,0);
\end{tikzpicture}
\cdot u_{{\color{red} z}}=
\begin{tikzpicture}[scale=0.6,baseline=(origin)]
\fill[red!10] (120:1) -- ++(180:2) -- ++(300:2) --cycle;
\path (120:1) ++(180:2) ++(300:1.5) node[above right=-0.1,font=\footnotesize,red] {$z$};
\draw[ultra thick] (300:4) -- (120:4);
\draw[ultra thick] (120:3) -- ++(240:6) -- ++(0:6);
\draw[ultra thick] (120:3) -- ++(0:6) -- ++(240:6);
\path (120:3) ++(240:2) edge[draw] +(0:6) edge[draw] +(300:4);
\path (120:3) ++(240:4) edge[draw] +(0:6) edge[draw] +(300:2);
\path (120:3) ++(0:2) edge[draw] +(240:6) edge[draw] +(300:4);
\draw (120:3) ++(0:4) edge[draw] +(240:6) edge[draw] +(300:2);
\path (120:0) ++(210:0.577) node {$1$};
\coordinate (origin) at (0,0);
\end{tikzpicture} \text{,}
\end{equation*}
where the red alcove labeled $z$ is the unique ordinary alcove inside the $l$-alcove corresponding to the $l$-linkage class $\Wl z(C)$.

\begin{defn}
The \defnemph{$\ast$-Hecke algebra} $\gls*{heckeast}$ is the $\laur$-subalgebra of $\End_{\heckel}(\heckelast)$ generated by 
\begin{align*}
b^{(\ast)}_s &\qquad \text{for each $s \in \Ss$,} \\
u_w^{\pm 1} & \qquad \text{for each $w \in \Wlcosets$.}
\end{align*}
\end{defn}

As our previous notation suggests we interpret $\heckeast$ as acting on $\heckelast$ from the right, and $\heckelast$ as an $(\heckel,\heckeast)$-bimodule. 

Now let
\begin{equation*}
\gls*{psi}:\heckeast \longrightarrow M_{|\Wlcosets|}(\hecke)
\end{equation*}
be the homomorphism defined by
\begin{align}
\psi(b^{(\ast)}_s)_{w,w'}& =\begin{cases}
b_{F^{-1}(wsw^{-1})} & \text{if $\Wl ws=\Wl w$ and $w=w'$,} \\
1 & \text{if $\Wl ws \neq \Wl w$ and $ws=w'$,} \\
1 & \text{if $\Wl ws \neq \Wl w$ and $w=w'$,} \\
0 & \text{otherwise}
\end{cases} \label{eq:psientries1} \\
\psi(u_{z})_{w,w'} & =\begin{cases}
v & \text{if $w=w'=z$,} \\
1 & \text{if $w=w' \neq z$,} \\
0 & \text{otherwise}
\end{cases} \label{eq:psientries2}
\end{align}
for all $w,w',z \in \Wlcosets$ and $s \in \Ss$. 
It is easy to check that $\psi$ is the matrix form of the right $\heckeast$-action on $\heckelast$ with respect to the decomposition \eqref{eq:heckelastdecomp}. 
In other words, we have
\begin{equation*}
h^{(l|\ast)}_{x|w} b=\sum_{z \in \Wlcosets} h^{(l)}_x F(\psi(b)_{w,z}) h^{(l|\ast)}_z
\end{equation*}
for all $b \in \heckeast$, $x \in \Wl$, and $w \in \Wlcosets$. 
We call $\psi$ the \defnemph{matrix recursion representation} of $\heckeast$.

Now write $\glsuseri*{voneheckeast}$ and $\prescript{}{u,v=1}{\hecke}_{l|\ast}$ for the quotients $\heckeast/(u_s-1,v-1 : s \in \Ss)$ and $\heckelast/(v-1)$. 
For $b \in \heckeast$ (resp.~$h \in \heckelast$) write $\prescript{}{u,v=1}{b}$ (resp.~$\prescript{}{v=1}{h}$) for the image of $b$ (resp.~$h$) in the quotient. 
We have a ring isomorphism and a $(\ZZ\Wl,\ZZ\W)$-bimodule isomorphism mapping
\begin{align*}
\prescript{}{u,v=1}{\hecke}_{\ast} & \longrightarrow \ZZ\W & \prescript{}{v=1}{\hecke}_{l|\ast} & \longrightarrow \ZZ\W \\
\glsuseri*{vonebpastps} & \longmapsto s+1 & \prescript{}{v=1}{h}^{(l|\ast)}_{x|w} & \longmapsto xw
\end{align*}
for all $s \in \Ss$ and $x|w \in \W$. 
From \eqref{eq:psientries1}--\eqref{eq:psientries2} we have $\prescript{}{u,v=1}{\psi}=\xi$, and thus $\psi$ is a Hecke analogue of the matrix recursion representation $\xi$ of $\W$. 

\begin{rem}
There is an analogue of the bar involution for $\heckeast$, defined as follows.
Namely, for any $w \in \Wlcosets$ and $s \in \Ss$ we set
\begin{equation}
\begin{split}
\overline{v} & =v^{-1} \text{,} \\
\overline{u_w} & =u_w^{-1} \text{,} \\
\overline{b^{(\ast)}_s} & =b^{(\ast)}_s \text{,}
\end{split}
\end{equation}
and then extend to an involution $\overline{\phantom{H}}:\heckeast \rightarrow \heckeast$. 
One can check that this is well-defined with a basis or a presentation of $\heckeast$, which is straightforward (but tedious) to find.
\end{rem}

\subsection{Twisted \texorpdfstring{$l$}{l}-defects and patterns}
\label{sec:patterns}

The goal of this section is to obtain a Deodhar-like formula for calculating products in $\heckelast$. 
This will involve defining a generalization of the Bruhat stroll and the Deodhar defect.

\begin{defn}
Suppose $w \in \Wlcosets$ and $\seq{e}=(\seq{e}_1,\seq{e}_2,\dotsm \seq{e}_m)$ is a subsequence for some expression in $\expr{\Ss}$, with $\seq{e}_i=(s_i,t_i)$ for each integer $1 \leq i \leq m$. 
The \defnemph{$w$-twisted $l$-Bruhat stroll} for $\seq{e}$ is a sequence $(y_i)_{i=0}^m$ in $\Wl$ defined as follows. 
For each integer $0 \leq i \leq m$, let $x_i=w\widehat{e_{\leq i}}$, where $\seq{e}_{\leq i}$ is the subsequence formed from the first $i$ terms of $\seq{e}$.
Then write $x_i=y_i|w_i$, where $y_i \in \Wl$ and $w_i \in \Wlcosets$.

We decorate the types of the terms $\seq{e}_i$ as follows.
For each integer $1 \leq i \leq m$, the \defnemph{$w$-twisted $l$-decorated type} of the term $\seq{e}_i$ is
\begin{equation*}
t_i'=\begin{cases}
\upseq t_i & \text{if $\Wl w_{i-1} s_i=\Wl w_{i-1}$ and $y_i w_{i-1} s_i w_{i-1}^{-1}>y_{i-1}$,} \\
\downseq t_i & \text{if $\Wl w_{i-1} s_i=\Wl w_{i-1}$ and $y_i w_{i-1} s_i w_{i-1}^{-1}<y_{i-1}$,} \\
\stayseq t_i & \text{if $\Wl w_{i-1} s_i \neq \Wl w_{i-1}$.}
\end{cases}
\end{equation*}
and the decorated type of $\seq{e}$ is the sequence of decorated types of all the terms in $\seq{e}$.
The \defnemph{$w$-twisted $l$-defect} of $\seq{e}$ is 
\begin{equation}
\glsuseri*{dfctlw}_l^w(\seq{e})=|\{i : t_i'=\upseq 0\}| - |\{i : t_i'=\downseq 0\}| \text{.}
\end{equation}
\end{defn}


We can visualize $l$-Bruhat strolls and $l$-defects in the following way.
If $\seq{e}$ is a subsequence for an expression in $\expr{\Ss}$ and $w=1$, the sequence $(x_i)_{i=0}^m$ above is just Deodhar's Bruhat stroll for $\seq{e}$.
This corresponds to a sequence of adjacent ordinary alcoves. 
The $l$-Bruhat stroll corresponds to the sequence of $l$-alcoves containing these ordinary alcoves.
The $l$-defect is calculated like the Deodhar defect, except that we only consider steps in the ordinary Bruhat stroll which (potentially) traverse an $l$-wall. 
These steps are decorated according to whether the length of the corresponding $l$-alcove has gone Up or Down.
All other steps are marked $\stayseq$ (for Neutral) and are ignored.
Twisting by $w \neq 1$ means starting the ordinary Bruhat stroll at $w$ instead of $1$.

\begin{exam}
\label{exam:w-twisted-p-defect}
Suppose $\W$ is of type $\widetilde{A_2}$ and $l=3$.
For succinctness we will write the generators of $\W$ as $0$, $1$, and $2$, with $\Ssf=\{1,2\}$ and $\tilde{s}=0$, and write the identity element as $\ident$. 
Consider the expression $\expr{x}=\expr{01202101} \in \expr{\Ss}$ and the element $w=0 \in \W$. 
Let $\seq{e} \in [\expr{x}]$ with type $10111110$.
The $w$-twisted ordinary Bruhat stroll for $\seq{e}$ is
\begin{equation*}
\ident|0,\ident|\ident,\ident|\ident,2|\ident,2|0,2|02,2|021,2|0210,2|0210 \text{,}
\end{equation*}
so the $w$-twisted $l$-Bruhat stroll for $\seq{e}$ is
\begin{equation*}
\ident,\ident,\ident,2,2,2,2,2,2 \text{.}
\end{equation*}
In terms of alcoves, this looks like
\begin{equation*}
\begin{tikzpicture}[scale=0.9,baseline=(origin),>={Stealth[inset=0pt,angle=30:4pt]}]
\draw[ultra thick] (300:4) -- (120:4);
\draw[ultra thick] (120:3) -- ++(240:6) -- ++(0:6);
\draw[ultra thick] (120:3) -- ++(0:6) -- ++(240:6);
%
\path (120:3) ++(240:2) edge[draw] +(0:6) edge[draw] +(300:4);
\path (120:3) ++(240:4) edge[draw] +(0:6) edge[draw] +(300:2);
\path (120:3) ++(0:2) edge[draw] +(240:6) edge[draw] +(300:4);
\draw (120:3) ++(0:4) edge[draw] +(240:6) edge[draw] +(300:2);
\draw (300:1) edge[draw,blue] +(0:2);
\path (300:1) ++(0:1.5) node[above=-0.05,font=\footnotesize,blue] {$0$};
\draw (300:3) edge[draw,ultra thick,red] +(60:2) edge[draw,ultra thick,green] +(120:2);
\path (300:3) ++(60:0.5) node[below right=-0.1,font=\footnotesize,red] {$1$};
\path (300:2.5) node[below left=-0.1,font=\footnotesize,green] {$2$};
\path (300:1) ++(30:1.155) coordinate (id0) 
             ++(270:1.155) coordinate (idid) 
             ++(210:1.155) coordinate (2id)
             ++(150:1.155) coordinate (20)
              ++(90:1.155) coordinate (202)
             ++(150:1.155) coordinate (2021)
              ++(90:1.155) coordinate (20210);
\draw[->] (id0) to (idid); 
\draw[->] (idid) to[out=355,in=305,loop] (idid); 
\draw[->] (idid) to (2id);
\draw[->] (2id) to (20);
\draw[->] (20) to (202);
\draw[->] (202) to (2021);
\draw[->] (2021) to (20210);
\draw[->] (20210) to[out=5,in=55,loop] (20210);
\coordinate (origin) at (0,0);
\end{tikzpicture} \text{.}
\end{equation*}
The decorated type of $\seq{e}$ is
\begin{equation*}
(\stayseq 1, \upseq 0, \upseq 1, \stayseq 1, \stayseq 1, \stayseq 1, \stayseq 1, \downseq 0)
\end{equation*}
Thus the $w$-twisted $l$-defect $\dfct_l^w(\seq{e})$ is $1-1=0$.
\end{exam}


The $w$-twisted $l$-Bruhat stroll gives rise to an equivalence relation on the set of subsequences of a given expression.
As we will see later, the subsequences in an equivalence class are in bijection with the subsequences of a smaller expression $\Sslexpr$.

\begin{defn}
Let $\expr{x} \in \expr{\Ss}$ and $w \in \Wlcosets$.
Consider the following equivalence relation $\sim_w$ on $[\expr{x}]$. 
For $\seq{e},\seq{f} \in [\expr{x}]$ we write $\seq{e} \sim_w \seq{f}$ if for all integers $1 \leq i \leq \len(\expr{x})$ and all $t \in \{0,1\}$
\begin{equation*}
\text{$\seq{e}_i$ has decorated type $\stayseq t$} \quad \Longleftrightarrow \quad \text{$\seq{f}_i$ has decorated type $\stayseq t$}
\end{equation*}
in the $w$-twisted $l$-Bruhat strolls for $\seq{e}$ and $\seq{f}$ respectively.
We call an equivalence class for $\sim_w$ a \defnemph{$w$-twisted $l$-undecided stroll} for $\expr{x}$.
\end{defn}

To avoid writing down large sets of subsequences, we introduce the following notation.

\begin{notn}
Let $\expr{x}=\expr{s_1 s_2 \dotsm s_m} \in \expr{\Ss}$, $\seq{e} \in [\expr{x}]$, and $w \in \Wlcosets$.
The $\sim_w$-equivalence class of $\seq{e}$ is a $w$-twisted $l$-undecided stroll for $\expr{x}$. 
The \defnemph{pattern} for $\expr{x}$ corresponding to this undecided stroll is the sequence $\glsuseri*{pattr}=(\patt{r}_1,\patt{r}_2,\dotsc,\patt{r}_m)$, where 
\begin{equation*}
\patt{r}_i=\begin{cases}
(s_i,t_i) & \text{if the $w$-twisted $l$-decorated type of $\seq{e}_i$ is $\stayseq t_i$,} \\
(s_i,\ast) & \text{otherwise.}
\end{cases}
\end{equation*}
(It is clear that $\patt{r}$ only depends on the $\sim_w$-equivalence class of $\seq{e}$.)

As with subsequences, $\patt{r}_i$ is called a \defnemph{term} with \defnemph{generator} $s_i$ and \defnemph{type} $t_i$, and the type of $\patt{r}$ is the sequence of types of the terms $\patt{r}_i$. 
We call a term \emph{indeterminate} if it is of type $\ast$; otherwise we call it \defnemph{fixed}. 
We set
\begin{equation*}
[\patt{r}]=\{\seq{f} \in [\expr{x}] : \text{$\seq{f}_i,\patt{r}_i$ have the same type whenever $\patt{r}_i$ is fixed}\} \text{,}
\end{equation*}
which by construction is the $w$-twisted $l$-undecided stroll we started with.
We also write $\hat{r}$ for the product of all the generators in $\patt{r}$ of type $1$.

By construction $w\widehat{r_{\leq i}}$ is the $\Wlcosets$-part of $w\widehat{e_{\leq i}}$ for all integers $0 \leq i \leq m$, where $\patt{r}_{\leq i}$ (resp.~$\seq{e}_{\leq i}$) is the pattern (resp.~subsequence) formed from the first $i$ terms of $\patt{r}$ (resp.~$\seq{e}$).
Thus
\begin{equation*}
(w\widehat{r_{\leq i-1}})s_i(w\widehat{r_{\leq i-1}})^{-1} \in \Ssl
\end{equation*}
whenever $\patt{r}_i$ is indeterminate.
We write $w\patt{r}(w\hat{r})^{-1}$ for the expression in $\Sslexpr$ formed by these generators.
\end{notn}

We can think of patterns as generalized expressions, with fixed terms restricting possible subsequences by pre-selecting certain generators to be included or discarded. 
For $\expr{x} \in \expr{\Ss}$, we write $\glsuseri*{bexprxbast}(w)$ for the set of all patterns $\patt{r}$ for $\expr{x}$ which correspond to some $w$-twisted $l$-undecided stroll in the above manner. 
In other words, $[\expr{x}]_{\ast}(w)$ is the set of all patterns for $\expr{x}$ such that for each integer $0 \leq i \leq \len(\expr{x})$, 
\begin{equation*}
\text{$\patt{r}_i$ is indeterminate} \quad \Longleftrightarrow \quad \Wl w\widehat{r_{\leq i-1}}s_i=\Wl w\widehat{r_{\leq i-1}} \text{.}
\end{equation*}
If $\patt{r} \in [\expr{x}]_{\ast}(w)$, any $\seq{e} \in [\patt{r}]$ gives rise to a subsequence $w\seq{e}(w\hat{r})^{-1} \in [w\patt{r}(w\hat{r})^{-1}]$ defined as follows. 
The type of $w\seq{e}(w\hat{r})^{-1}$ is just the sequence of types of $\seq{e}$ with decorations $\upseq$ or $\downseq$ in the $w$-twisted $l$-Bruhat stroll.
Since the $w$-twisted $l$-defect only depends on these types, we have $\dfct_l^w(\seq{e})=\dfct_l(w\seq{e}(w\hat{r})^{-1})$, where $\glsuseri*{dfctl}_l$ denotes the Deodhar defect for subsequences of expressions in $\Sslexpr$.

\begin{exam}
\label{exam:pattern}
We continue Example~\ref{exam:w-twisted-p-defect} where $\W$ is of type $\widetilde{A_2}$ and $l=3$,
with $\expr{x}=\expr{01202101} \in \expr{\Ss}$ and $w=0 \in \W$.
We can calculate $[\expr{x}]_{\ast}(w)$ inductively, e.g.~
\begin{align*}
[\expr{x}_{\leq 1}]_{\ast}(w)& =\{((0,0)),((0,1))\} \text{,} \\
[\expr{x}_{\leq 2}]_{\ast}(w)& =\{((0,0),(1,0)),((0,0),(1,1)),((0,1),(1,\ast))\} \text{,} \\
[\expr{x}_{\leq 3}]_{\ast}(w)& =\{((0,0),(1,0),(2,0)),((0,0),(1,0),(2,1)),((0,0),(1,1),(2,0)), \\
& \qquad ((0,0),(1,1),(2,1)),((0,1),(1,\ast),(2,\ast))\} \text{,} \\
& \mathrel{\makebox[\widthof{=}]{\vdots}}
\end{align*}
Consider the pattern $\patt{r} \in [\expr{x}]_{\ast}(w)$ of type $1 {\ast} {\ast} 1111 {\ast}$.
There are $2^3=8$ subsequences in $[\patt{r}]$. 
Their $w$-twisted $l$-Bruhat strolls look like this:
\begin{equation*}
\begin{tikzpicture}[scale=0.9,baseline=(origin),>={Stealth[inset=0pt,angle=30:4pt]}]
\draw[ultra thick] (300:9) -- (120:3);
\draw[ultra thick] (120:3) -- ++(240:6) -- ++(0:6);
\draw[ultra thick] (120:3) -- ++(0:6) -- ++(240:6);
\path (300:3) ++(0:6) edge[draw, ultra thick] +(120:6) edge[draw, ultra thick] +(180:6) edge[draw, ultra thick] +(240:6);
%
\path (120:3) ++(240:2) edge[draw] +(0:8) edge[draw] +(300:4);
\path (120:3) ++(240:4) edge[draw] +(0:10) edge[draw] +(300:2);
\path (300:5) edge[draw] +(0:4) edge[draw] +(60:6);
\path (300:7) edge[draw] +(0:2) edge[draw] +(60:6);
\path (120:3) ++(0:2) edge[draw] +(240:6) edge[draw] +(300:10);
\draw (120:3) ++(0:4) edge[draw] +(240:6) edge[draw] +(300:8);
\draw (300:1) edge[draw,blue] +(0:2);
\path (300:1) ++(0:1.5) node[above=-0.05,font=\footnotesize,blue] {$0$};
\draw (300:3) edge[draw,ultra thick,red] +(60:2) edge[draw,ultra thick,green] +(120:2);
\path (300:3) ++(60:0.5) node[below right=-0.1,font=\footnotesize,red] {$1$};
\path (300:2.5) node[below left=-0.1,font=\footnotesize,green] {$2$};
\path (300:1) ++(30:1.155) coordinate (id0) 
             ++(270:1.155) coordinate (idid) 
             ++(210:1.155) coordinate (2id)
             ++(150:1.155) coordinate (20)
              ++(90:1.155) coordinate (202)
             ++(150:1.155) coordinate (2021)
              ++(90:1.155) coordinate (20210);
\path (id0) ++(150:1.155) coordinate (id02)
             ++(90:1.155) coordinate (id021)
            ++(150:1.155) coordinate (id0210);
\path (idid) ++(330:1.155) coordinate (1id)
              ++(30:1.155) coordinate (10)
             ++(330:1.155) coordinate (102)
              ++(30:1.155) coordinate (1021)
             ++(330:1.155) coordinate (10210);
\path (1id) ++(270:1.155) coordinate (21id)
            ++(330:1.155) coordinate (210)
             ++(30:1.155) coordinate (2102)
            ++(330:1.155) coordinate (21021)
             ++(30:1.155) coordinate (210210);
\coordinate (origin) at (300:3);
\draw[->,gray] ([xshift=0.05cm]id0) to ([xshift=0.05cm]idid); 
\draw[->,thick] ([xshift=0.05cm]idid) to[out=355,in=305,looseness=8,min distance=5mm] ([yshift=-0.0433cm]idid); 
\draw[->,thick] ([yshift=-0.0433cm]idid) to (2id);
\draw[->,gray] (2id) to (20);
\draw[->,gray] (20) to (202);
\draw[->,gray] (202) to ([xshift=0.025cm]2021);
\draw[->,gray] ([xshift=0.025cm]2021) to ([yshift=-0.0433cm,xshift=0.025cm]20210);
\draw[->,thick] ([yshift=-0.0433cm,xshift=0.025cm]20210) to[out=340,in=80,looseness=8,min distance=5mm] ([yshift=0.0433cm,xshift=-0.025cm]20210);
\draw[->,thick] ([yshift=-0.0433cm,xshift=0.025cm]20210) to ([yshift=-0.0433cm,xshift=0.025cm]id0210);
\draw[->,thick] ([yshift=-0.0433cm]idid) to[out=235,in=185,looseness=8,min distance=5mm] ([xshift=-0.05cm]idid);
\draw[->,gray] ([xshift=-0.05cm]idid) to ([xshift=-0.05cm]id0);
\draw[->,gray] ([xshift=-0.05cm]id0) to (id02);
\draw[->,gray] (id02) to ([xshift=-0.025cm]id021);
\draw[->,gray] ([xshift=-0.025cm]id021) to ([yshift=0.0433cm,xshift=-0.025cm]id0210);
\draw[->,thick] ([yshift=0.0433cm,xshift=-0.025cm]id0210) to[out=160,in=260,looseness=8,min distance=5mm] ([yshift=-0.0433cm,xshift=0.025cm]id0210);
\draw[->,thick] ([yshift=0.0433cm,xshift=-0.025cm]id0210) to ([yshift=0.0433cm,xshift=-0.025cm]20210);
\draw[->,thick] ([xshift=0.05cm]idid) to ([xshift=-0.05cm]1id);
\draw[->,thick] ([xshift=-0.05cm]1id) to[out=245,in=295,looseness=8,min distance=5mm] ([yshift=0.0433cm]1id);
\draw[->,gray] ([yshift=0.0433cm]1id) to (10);
\draw[->,gray] (10) to (102);
\draw[->,gray] (102) to (1021);
\draw[->,gray] (1021) to ([xshift=-0.05cm]10210);
\draw[->,thick] ([xshift=-0.05cm]10210) to[out=220,in=320,looseness=8,min distance=5mm] ([xshift=0.05cm]10210);
\draw[->,thick] ([xshift=-0.05cm]10210) to ([xshift=-0.05cm]210210);
\draw[->,thick] ([xshift=-0.05cm]1id) to ([xshift=-0.05cm]21id);
\draw[->,gray] ([xshift=-0.05cm]21id) to (210);
\draw[->,gray] (210) to (2102);
\draw[->,gray] (2102) to (21021);
\draw[->,gray] (21021) to ([xshift=0.05cm]210210);
\draw[->,thick] ([xshift=0.05cm]210210) to[out=40,in=140,looseness=8,min distance=5mm] ([xshift=-0.05cm]210210);
\draw[->,thick] ([xshift=0.05cm]210210) to ([xshift=0.05cm]10210);
\end{tikzpicture} \text{.}
\end{equation*}
We have $w\patt{r}(w\hat{r})^{-1}=\expr{122} \in \Sslexpr$.
In particular the subsequence $\seq{e} \in [\patt{r}]$ of type $10111110$ gives rise to the subsequence $w\seq{e}(w\hat{r})^{-1} \in [w\patt{r}(w\hat{r})^{-1}]$ of type $010$.
\end{exam}

We are now ready to prove a version of Deodhar's defect formula for $\heckelast$.

\begin{prop} \label{prop:ddfheckepasttwisted} 
Let $w \in \Wlcosets$ and $\expr{x} \in \expr{\Ss}$. Then
\begin{equation*}
h^{(l|\ast)}_w b^{(\ast)}_{\expr{x}}=\sum_{\patt{r} \in [\expr{x}]_{\ast}(w)} b^{(l)}_{w\patt{r}(w\hat{r})^{-1}} h^{(l|\ast)}_{w\hat{r}}=\sum_{\seq{e} \in [\expr{x}]} v^{\dfct_l^w(\seq{e})} h^{(l|\ast)}_{w\hat{e}} \text{.}
\end{equation*}
%
\end{prop}

\begin{proof}
Note that the second equality follows from the first, by applying Deodhar's defect formula to each $h^{(l)}_{w\patt{r}(w\hat{r})^{-1}}$ and using $\dfct_l^w(\seq{e})=\dfct_l(w\seq{e}(w\hat{r})^{-1})$ for all $\seq{e} \in [\patt{r}]$.
To prove the first equality, we induct on the length of $\expr{x}$. 
When $\len(\expr{x})=0$ the result holds trivially. 
Now suppose $\len(\expr{x})=m$ and that the result holds for expressions length less than $m$. 
Write $\expr{x}=\expr{z s}$ for some $\expr{z} \in \expr{\Ss}$ and $s \in \Ss$. 
Then we have
\begin{align*}
h^{(l|\ast)}_w b^{(\ast)}_{\expr{x}}& =(h^{(l|\ast)}_w  b^{(\ast)}_{\expr{z}}) b^{(\ast)}_s \\
& =\left(\sum_{\patt{q} \in [\expr{z}]_{\ast}(w)} b^{(l)}_{w\patt{q}(w\hat{q})^{-1}} h^{(l|\ast)}_{w\hat{q}}\right)b^{(\ast)}_s \\
& =\sum_{\substack{\patt{q} \in [\expr{z}]_{\ast}(w) \\ \Wl w\hat{q}s=\Wl w\hat{q}}} (b^{(l)}_{w\patt{q}(w\hat{q})^{-1}} b^{(l)}_{(w\hat{q})s(w\hat{q})^{-1}})h^{(l|\ast)}_{w\hat{q}} \\
& \quad +\sum_{\substack{\patt{q} \in [\expr{z}]_{\ast}(w)\\ \Wl w\hat{q}s \neq \Wl w\hat{q}}} (b^{(l)}_{w\patt{q}(w\hat{q})^{-1}}h^{(l|\ast)}_{w\hat{q}s}+b^{(l)}_{w\patt{q}(w\hat{q})^{-1}}h^{(l|\ast)}_{w\hat{q}}) \text{.}
\end{align*}
This sum is equal to
\begin{equation*}
\sum_{\substack{\patt{r} \in [\expr{x}]_{\ast}(w) \\ \text{$\patt{r}_m$ is indeterminate}}}
b^{(l)}_{w\patt{r}(w\hat{r})^{-1}}h^{(l|\ast)}_{w\hat{r}}+\sum_{\substack{\patt{r} \in [\expr{x}]_{\ast}(w) \\ \text{$\patt{r}_m$ is fixed}}}
b^{(l)}_{w\patt{r}(w\hat{r})^{-1}}h^{(l|\ast)}_{w\hat{r}}=\sum_{\patt{r} \in [\expr{x}]_{\ast}(w)}
b^{(l)}_{w\patt{r}(w\hat{r})^{-1}}h^{(l|\ast)}_{w\hat{r}}
\end{equation*}
which proves the result.
\end{proof}

Let $\glsuseri*{linklexpr}$ denote the following subset
\begin{equation*}
\linklexpr=\{\expr{x w} : \expr{x} \in \Sslexpr,\ \expr{w} \in \expr{\Ss}\}
\end{equation*}
of expressions involving generators in $\Ssl$ and $\Ss$. 
We call such expressions \defnemph{$(l|1)$-expressions}. 
As with similar products in $\W$, we will sometimes write expressions in $\linklexpr$ with a bar in the form $\glsuseri*{exprxbarexprw}$ in order to emphasize that $\expr{x} \in \Sslexpr$ and $\expr{w} \in \expr{\Ss}$. 
We set $\len_{l|1}(\expr{x}|\expr{w})=\len_l(\expr{x})+\len(\expr{w})$. 
We also extend the notion of a subsequence to $(l|1)$-expressions in the obvious manner.

Every construction in this section which is described as ``$w$-twisted'' (or is dependent on some $w \in \Wlcosets$) has an untwisted $\linklexpr$-analogue. 
For example, if $\expr{x}|\expr{z} \in \linklexpr$, the $l$-Bruhat stroll for a subsequence $\seq{e}|\seq{f} \in [\expr{x}|\expr{z}]$ has the exact same definition as the $w$-twisted $l$-Bruhat stroll, but with $w=1$.
This has the effect that the $l$-defect $\glsuseri*{dfctl}_l(\seq{e}|\seq{f})$ is just $\dfct_l(\seq{e})+\dfct_l^1(\seq{f})$.
Similarly, we may define the set of patterns associated to $\expr{x}|\expr{z}$ by $\glsuseri*{bexprxzlast}=\expr{x}[\expr{z}]_{\ast}(1)$. 
Here $\expr{x}$ is interpreted as a single pattern with only indeterminate terms, acting on $[\expr{z}]_{\ast}(1)$ via the free monoid structure on patterns.

As with $\heckel$ and $\heckeast$, let $\glspl*{blastexprxbarexprw}^{(l|\ast)}_{\expr{x}|\expr{y}}=b^{(l)}_{\expr{x}} b^{(\ast)}_{\expr{y}}$.
The following $\linklexpr$-analogue of Deodhar's defect formula follows directly from Lemma~\ref{lem:ddfhecke} and Proposition~\ref{prop:ddfheckepasttwisted}.

\begin{cor} \label{cor:ddfheckepastuntwisted}
Let $\expr{x}|\expr{w} \in \linklexpr$. Then
\begin{equation*}
b^{(l|\ast)}_{\expr{x}|\expr{w}}=\sum_{\patt{r} \in [\expr{x}|\expr{w}]_{l|\ast}} b^{(l)}_{\patt{r}\hat{r}^{-1}} h^{(l|\ast)}_{\hat{r}}=\sum_{\seq{e}|\seq{f} \in [\expr{x}|\expr{w}]} v^{\dfct_l(\seq{e}|\seq{f})} h^{(l|\ast)}_{\hat{e}\hat{f}} \text{.}
\end{equation*}
\end{cor}

\section{Diagrammatic Soergel bimodules}

\subsection{The diagrammatic Hecke category}

Let $\gls*{V}$ be a realization of $(\W,\Ss)$. 
In this section we will construct a monoidal category $\dgrm$ from $V$. 
For the purposes of drawing diagrams we identify $\Ss$ with a set of colors.
We set $\gls*{R}=\mathrm{Sym}(V)$, the symmetric algebra in $V$, which we view as a graded algebra with $\glsuseri*{deg}(V)=2$. 
The algebra $R$ inherits a graded $\W$-action from $V$. 

We summarize the construction of the \defnemph{(diagrammatic) Bott--Samelson category} below following \cite[\S 2.2]{amrw-mixedtilting}.

\begin{defn} \label{defn:tildediagBS}
The category $\widetilde{\dgrm}_{\rm BS}$ is the $\field$-linear graded strict monoidal category defined as follows.
\begin{itemize}
\item The objects of $\widetilde{\dgrm}_{\rm BS}$ are the formal symbols $\glsuseri*{Bexprx}$ for each expression $\expr{x} \in \expr{\Ss}$, with tensor product $B_{\expr{x}} \otimes B_{\expr{y}}=B_{\expr{x y}}$.

\item The morphisms in $\widetilde{\dgrm}_{\rm BS}$ are generated (under $\field$-linear combinations, compositions, and tensor products) by the following elementary morphisms. 
We depict these morphisms as string diagrams, with the bottom and top boundaries corresponding to the domain and codomain respectively.
Vertical and horizontal concatenation of diagrams correspond to composition and the tensor product of morphisms respectively.
In the diagrams below $s$ is colored red and $t$ is colored blue.
\begin{itemize}
\item For each homogeneous $f \in R$, there is a morphism 
\begin{gather*}
f:B_{\emptexpr} \longrightarrow B_{\emptexpr} \\
\begin{tikzpicture}[baseline=(origin)]
\coordinate (origin) at (0,0) {};
\node[circle, dashed, minimum size=0.5cm] (spacer) at (origin) {$f$};
\end{tikzpicture}
\end{gather*}
of degree $\deg(f)$.

\item For each $s \in \Ss$ there are morphisms 
\begin{align*}
\begin{gathered}
\glsuseri*{dots} : B_{\expr{s}} \longrightarrow B_{\emptexpr} \text{,} \\
\begin{tikzpicture}[scale=0.5,baseline=(uni.center)]
    \node[dot, fill=red] (uni) at (0,0) {};
    \node[circle, minimum size=1cm] (spacer) at (uni) {};
    \path (uni.center) ++(0,-1) coordinate (unibot);
    \draw[red, line width=\superthick] (uni.center) to (unibot);
\end{tikzpicture}
\end{gathered} & & 
\begin{gathered}
\overline{{\rm dot}_s} : B_{\emptexpr} \longrightarrow B_{\expr{s}} \\
\begin{tikzpicture}[xscale=0.5,yscale=-0.5,baseline=(uni.center)]
    \node[dot, fill=red] (uni) at (0,0) {};
    \path (uni.center) ++(0,-1) coordinate (unibot);
    \node[circle, minimum size=1cm] (spacer) at (uni) {};
    \draw[red, line width=\superthick] (uni.center) to (unibot);
\end{tikzpicture}
\end{gathered} &
\end{align*}
of degree $1$ and 
\begin{align*}
\begin{gathered}
\glsuseri*{forks} : B_{\expr{ss}} \longrightarrow B_{\expr{s}} \text{,} \\
\begin{gathered}
\begin{tikzpicture}[xscale=0.5,yscale=-0.5]
    \clip (-1,-1) rectangle (1,0.65);
    \coordinate (tri) at (0,0);
    \path (tri) ++(45:1) coordinate (abovelefttri);
    \path (tri) ++(135:1) coordinate (aboverighttri);
    \path (tri) ++(270:1) coordinate (belowtri);
    \draw[red, line width=\superthick] (tri) edge (abovelefttri)
                                             edge (aboverighttri)
                                             edge (belowtri);
\end{tikzpicture}
\end{gathered}
\end{gathered} & &
\begin{gathered}
\overline{{\rm fork}_s} : B_{\expr{s}} \longrightarrow B_{\expr{ss}} \\
\begin{gathered}
\begin{tikzpicture}[scale=0.5]
    \clip (-1,-1) rectangle (1,0.65);
    \coordinate (tri) at (0,0);
    \path (tri) ++(45:1) coordinate (abovelefttri);
    \path (tri) ++(135:1) coordinate (aboverighttri);
    \path (tri) ++(270:1) coordinate (belowtri);
    \draw[red, line width=\superthick] (tri) edge (abovelefttri)
                                             edge (aboverighttri)
                                             edge (belowtri);
\end{tikzpicture}
\end{gathered}
\end{gathered} &
\end{align*}
of degree $-1$.

\item For each pair $(s,t) \in \Ss \times \Ss$ with $s \neq t$ and $m_{st}<\infty$, there is a morphism
\begin{gather*}
\glsuseri*{braidst} : B_{\underbrace{\scriptstyle\expr{sts \dotsm }}_{m_{st}}} \longrightarrow B_{\underbrace{\scriptstyle\expr{tst \dotsm }}_{m_{st}}} \\
\begin{aligned}
\begin{gathered}
\begin{tikzpicture}[scale=0.66]
\clip (-1.9,-0.9) rectangle (1.9,0.9);
\coordinate (origin) at (0,0) {};
\coordinate (dom1) at (-2,-1) {};
\coordinate (dom2) at (-1.25,-1) {};
\coordinate (dom3) at (-0.5,-1) {};
\coordinate (domlast) at (2,-1) {};
\coordinate (cod1) at (-2,1) {};
\coordinate (cod2) at (-1.25,1) {};
\coordinate (cod3) at (-0.5,1) {};
\coordinate (codlast) at (2,1) {};
\draw[red, line width=\superthick] (origin) edge (dom1)
                                            edge (dom3)
                                            edge (domlast)
                                            edge (cod2);
\draw[blue, line width=\superthick] (origin) edge (dom2)
                                             edge (cod1)
                                             edge (cod3)
                                             edge (codlast);
\node (domdots) at (0.4,-0.66) {$\dotsm$};
\node (coddots) at (0.4,0.66) {$\dotsm$};
\end{tikzpicture}
\end{gathered} & & 
\text{($m_{st}$ odd)} \\
\begin{gathered}
\begin{tikzpicture}[scale=0.66]
\clip (-1.9,-0.9) rectangle (1.9,0.9);
\coordinate (origin) at (0,0) {};
\coordinate (dom1) at (-2,-1) {};
\coordinate (dom2) at (-1.25,-1) {};
\coordinate (dom3) at (-0.5,-1) {};
\coordinate (domlast) at (2,-1) {};
\coordinate (cod1) at (-2,1) {};
\coordinate (cod2) at (-1.25,1) {};
\coordinate (cod3) at (-0.5,1) {};
\coordinate (codlast) at (2,1) {};
\draw[red, line width=\superthick] (origin) edge (dom1)
                                            edge (dom3)
                                            edge (cod2)
                                            edge (codlast);
\draw[blue, line width=\superthick] (origin) edge (dom2)
                                             edge (domlast)
                                             edge (cod1)
                                             edge (cod3);
\node (domdots) at (0.4,-0.66) {$\dotsm$};
\node (coddots) at (0.4,0.66) {$\dotsm$};
\end{tikzpicture}
\end{gathered} & & 
\text{($m_{st}$ even)}
\end{aligned}
\end{gather*}
of degree $0$.
\end{itemize}
These morphisms are subject to a number of relations, which can be found in e.g.~\cite[(10.8), (10.12)--(10.15)]{emtw}.
\end{itemize}
\end{defn}

We call a string diagram arising from compositions and tensor products of the above elementary morphisms an \defnemph{$\Ss$-diagram}.
For each $s \in \Ss$ we will also use the following shorthand in our $\Ss$-diagrams:
\begin{align*}
\begin{gathered}
{\rm cap}_s={\rm dot}_s \circ {\rm fork}_s : B_{\expr{ss}} \longrightarrow B_{\emptexpr} \text{,} \\
    \begin{tikzpicture}[scale=0.5,baseline=(flex)]
        \coordinate (flex) at (0,0.65);
        \coordinate (right) at (0.65,0);
        \coordinate (left) at (-0.65,0);
        \draw[red, line width=\superthick] (left) 
                                      arc [start angle=180, end angle=0, radius=0.65]
                                        to (right);
    \end{tikzpicture}=\begin{tikzpicture}[xscale=0.5,yscale=-0.5,baseline=(tri)]
        \clip (-0.8,-1) rectangle (0.8,0.65);
        \coordinate (tri) at (0,0);
        \path (tri) ++(45:1) coordinate (abovelefttri);
        \path (tri) ++(135:1) coordinate (aboverighttri);
        \path (tri) ++(270:0.5) node[dot, fill=red] (belowtri) {};
        \draw[red, line width=\superthick] (tri) edge (abovelefttri)
                                                 edge (aboverighttri)
                                                 edge (belowtri.center);
    \end{tikzpicture}
\end{gathered} & & 
\begin{gathered}
{\rm cup}_s=\overline{{\rm fork}_s} \circ \overline{{\rm dot}_s} : B_{\emptexpr} \longrightarrow B_{\expr{ss}} \text{.} \\
    \begin{tikzpicture}[xscale=0.5,yscale=-0.5,baseline=(flex)]
        \coordinate (flex) at (0,0.65);
        \coordinate (right) at (0.65,0);
        \coordinate (left) at (-0.65,0);
        \draw[red, line width=\superthick] (left) 
                                      arc [start angle=180, end angle=0, radius=0.65]
                                        to (right);
    \end{tikzpicture}=\begin{tikzpicture}[scale=0.5,baseline=(tri)]
        \clip (-0.8,-1) rectangle (0.8,0.65);
        \coordinate (tri) at (0,0);
        \path (tri) ++(45:1) coordinate (abovelefttri);
        \path (tri) ++(135:1) coordinate (aboverighttri);
        \path (tri) ++(270:0.5) node[dot, fill=red] (belowtri) {};
        \draw[red, line width=\superthick] (tri) edge (abovelefttri)
                                                 edge (aboverighttri)
                                                 edge (belowtri.center);
    \end{tikzpicture}
\end{gathered} 
\end{align*}
More generally, for 
$\expr{w}=\expr{st \dotsm} \in \expr{\Ss}$ we write 
\begin{align*}
{\rm cap}_{\expr{w}}& ={\rm cap}_s \circ (\ident_{B_{\expr{s}}} \otimes {\rm cap}_t \otimes \ident_{B_{\expr{s}}}) \circ \dotsb
=\begin{gathered}
    \begin{tikzpicture}[scale=1,baseline=(origin)]
        \coordinate (origin) at (0,0);
        \node (dots) at (origin) {$\dotsm$}; 
        \path (origin) ++(1,0) coordinate (farright);
        \path (origin) ++(-1,0) coordinate (farleft) node[anchor=north] {${\color{red} s}$};
        \path (origin) ++(0.5,0) coordinate (right);
        \path (origin) ++(-0.5,0) coordinate (left) node[anchor=north] {${\color{blue} t}$};
        %
        \draw[red,string] (farleft) arc [start angle=180, end angle=0, radius=1] to (farright);
        \draw[blue,string] (left) arc [start angle=180, end angle=0, radius=0.5] to (right);
    \end{tikzpicture}
\end{gathered} \text{,} \\
{\rm cup}_{\expr{w}}& = \dotsb \circ (\ident_{B_{\expr{s}}} \otimes {\rm cup}_t \otimes \ident_{B_{\expr{s}}}) \circ {\rm cup}_s 
=\begin{gathered}
    \begin{tikzpicture}[xscale=1,yscale=-1,baseline=(origin)]
        \coordinate (origin) at (0,0);
        \node (dots) at (origin) {$\dotsm$}; 
        \path (origin) ++(1,0) coordinate (farright);
        \path (origin) ++(-1,0) coordinate (farleft) node[anchor=south] {${\color{red} s}$};
        \path (origin) ++(0.5,0) coordinate (right);
        \path (origin) ++(-0.5,0) coordinate (left) node[anchor=south] {${\color{blue} t}$};
        %
        \draw[red,string] (farleft) arc [start angle=180, end angle=0, radius=1] to (farright);
        \draw[blue,string] (left) arc [start angle=180, end angle=0, radius=0.5] to (right);
    \end{tikzpicture}
\end{gathered}
\end{align*}
for a nested series of caps or cups.

The $\Hom$-spaces in $\widetilde{\dgrm}_{\rm BS}$ have graded left and right $R$-actions induced by taking tensor products with polynomial morphisms on the left or right sides.
Thus $\widetilde{\dgrm}_{\rm BS}$ has the structure of a category enriched in graded $R$-bimodules. 
We write $\Hom_{\widetilde{\dgrm}_{\rm BS}}^i(B_{\expr{x}},B_{\expr{y}})$ to denote the subspace of degree $i$ morphisms in $\Hom_{\widetilde{\dgrm}_{\rm BS}}(B_{\expr{x}}, B_{\expr{y}})$.

\begin{defn}
The category $\glsuseri*{dgrmBS}$ is the $\field$-linear monoidal category defined as follows. 
\begin{itemize}
\item The objects of $\dgrmBS$ are the formal symbols $B_{\expr{x}}(m)$, for $\expr{x} \in \expr{\Ss}$ and $m \in \ZZ$, with tensor product $B_{\expr{x}}(m) \otimes B_{\expr{y}}(n)=B_{\expr{x y}}(m+n)$.

\item The morphisms in $\dgrmBS$ are given by
\begin{equation*}
\Hom_{\dgrmBS}(B_{\expr{x}}(m),B_{\expr{y}}(n))=\Hom_{\widetilde{\dgrm}_{\rm BS}}^{n-m}(B_{\expr{x}},B_{\expr{y}}) \text{,}
\end{equation*}
with composition and tensor product defined via $\widetilde{\dgrm}_{\rm BS}$.
\end{itemize}

The \defnemph{diagrammatic Hecke category} $\gls*{dgrm}$ is the Karoubi envelope of $\dgrmBS$. 
In other words $\dgrm$ is the closure of $\dgrmBS$ with respect to all finite direct sums and all direct summands of objects and morphisms in $\dgrmBS$. 
\end{defn}
 
We call objects in $\widetilde{\dgrm}_{\rm BS}$ or $\dgrmBS$ \defnemph{(diagrammatic) Bott--Samelson bimodules} and objects in $\dgrm$ \defnemph{(diagrammatic) Soergel bimodules}. 
We note the useful fact that the Karoubi envelope construction ensures that $\dgrmBS$ is a \emph{full} subcategory of $\dgrm$, i.e.~
\begin{equation*}
\Hom_{\dgrm}(B_{\expr{x}}(m),B_{\expr{y}}(n))=\Hom_{\dgrmBS}(B_{\expr{x}}(m),B_{\expr{y}}(n))
\end{equation*}
for all $\expr{x},\expr{y} \in \expr{\Ss}$ and $m,n \in \ZZ$.

The category $\dgrmBS$ has a natural autoequivalence $(1)$ (whose $i$th power we denote by $\glsuseri*{pip}$) called the \defnemph{grade shift}, which maps $B_{\expr{x}}(m)$ to $B_{\expr{x}}(m+1)$ for any $\expr{x} \in \expr{\Ss}$ and $m \in \ZZ$.
This autoequivalence extends naturally to $\dgrm$. 
We also define a graded vector space
\begin{equation*}
\glsuseri*{HomdgrmBSbullet}(B_{\expr{x}}(m),B_{\expr{y}}(n))=\bigoplus_{i \in \ZZ} \Hom_{\dgrmBS}(B_{\expr{x}}(m),B_{\expr{y}}(i+n))
\end{equation*}
for any $\expr{x},\expr{y} \in \expr{\Ss}$ and $m,n \in \ZZ$, and similarly for $\dgrm$. 
When $m=n=0$ this is just the corresponding $\Hom$-space in $\widetilde{\dgrm}_{\rm BS}$.
In this way we can recover $\widetilde{\dgrm}_{\rm BS}$ from $\dgrmBS$, and it makes no difference whether we work with graded $\Hom$-spaces in $\widetilde{\dgrm}_{\rm BS}$ or grade shifts in $\dgrmBS$ (see also \cite[\S 2.2]{achar-riche-vay}).
For this reason, we will not always distinguish between morphisms in $\widetilde{\dgrm}_{\rm BS}$ and morphisms in $\dgrmBS$, and we will usually write $B_{\expr{x}}(0)$ simply as $B_{\expr{x}}$.

There is also a monoidal functor $(\overline{\phantom{B}}):\dgrmBS \rightarrow \dgrmBS^{\rm op}$ on $\dgrmBS$ called \defnemph{duality} defined as follows. 
For each $\expr{x} \in \expr{\Ss}$ and $m \in \ZZ$ we set $\overline{B_{\expr{x}}(m)}=B_{\expr{x}}(-m)$. 
For any morphism $\phi: B_{\expr{x}}(m)\rightarrow B_{\expr{y}}(n)$, the morphism $\overline{\phi}:\overline{B_{\expr{y}}(n)} \rightarrow \overline{B_{\expr{x}}(m)}$ corresponds to flipping the diagrams representing $\phi$ upside-down. 
The duality functor extends naturally to an additive monoidal functor on $\dgrm$.


Although we have not explicitly written down the relations defining $\dgrmBS$, we will need to check these relations at various points.
We conclude this subsection with the following indirect version of the Jones--Wenzl relation \cite[(10.8i)]{emtw}, which we will use to construct the Frobenius functor later.

\begin{prop} \label{prop:JWindirect}
Let $s,t \in \Ss$ with $m_{st}<\infty$. 
If ${\rm braid}_{s,t}$ is cyclic \cite[(10.15d)]{emtw} and
\begin{multline*}
({\rm spot}_s \otimes \ident) \overline{{\rm braid}_{s,t}}=
\begin{cases}
\ident \otimes {\rm spot}_t & \text{if $m_{st}$ is odd,} \\
\ident \otimes {\rm spot}_s & \text{if $m_{st}$ is even}
\end{cases} \\
+ (\text{linear combination of other $\Ss$-diagrams generated by spots and forks})
\end{multline*}
then the Jones--Wenzl relation \cite[(10.8i)]{emtw} holds.
\end{prop}

\begin{proof}
By degree considerations, $({\rm spot}_s \otimes \ident) \overline{{\rm braid}_{s,t}}$ must be a linear combination of diagrams of the form
\begin{equation*}
({\rm fork}_t \otimes \ident) \circ (\ident_{B_t} \otimes D)
\end{equation*}
with $D$ a diagram of degree $+2$ generated by spots and forks.
In particular this means that ${\rm braid}_{s,t}$ has the ``death by pitchfork'' property (see \cite[(5.17)]{ew-soergelcalc}). 
This further implies that
\begin{equation*}
({\rm spot}_s \otimes \ident) \circ \overline{{\rm braid}_{s,t}} \circ (\overline{{\rm spot}_t} \otimes \ident)
\end{equation*}
has the death by pitchfork property, one of the defining properties of the Jones--Wenzl morphism. 
The other defining property is that the coefficient of the leading term is $1$, which we have already assumed.
Thus the Jones--Wenzl relation \cite[(10.8i)]{emtw} holds.
\end{proof}

\subsection{Light leaves} \label{sec:lightleaves}

We briefly discuss bases for the $\Hom$-spaces in $\dgrm$, as described in \cite[{\S 6}]{ew-soergelcalc}. 
These bases yield a classification of the indecomposable objects in $\dgrm$.

Let $\expr{x} \in \expr{\Ss}$ and $\seq{e} \in [\expr{x}]$. 
Choose a rex $\expr{w}$ for $\hat{e}$. 
A \defnemph{light leaves morphism} $\glsuseri*{LLseqeexprw}:B_{\expr{x}} \rightarrow B_{\expr{w}}(\dfct(\seq{e}))$ is a morphism in $\dgrmBS$, defined inductively in \cite[{Construction~6.1}]{ew-soergelcalc}. 
This construction depends on several non-canonical choices, and as such $\LL_{\seq{e},\expr{w}}$ is not uniquely defined, but this will not matter for what follows. 
For $\expr{x},\expr{w} \in \expr{\Ss}$ with $\expr{w}$ a rex, let $\glsuseri*{LLbexprxbexprw}$ denote a complete selection of light leaves morphisms $\{\LL_{\seq{e},\expr{w}}\}$ over all subsequences $\seq{e} \in [\expr{x}]$ such that $\hat{e}=w$. 

Now suppose $\expr{x},\expr{y} \in \expr{\Ss}$, and that for each $w \in \W$ we have fixed a corresponding rex $\expr{w}$. 
For subsequences $\seq{e} \in [\expr{x}]$ and $\seq{f} \in [\expr{y}]$ such that $\hat{e}$ and $\hat{f}$ are the same element $w \in \W$, the \defnemph{double leaves morphism} is defined to be 
\begin{equation}
\glsuseri*{LLdblseqeseqf}= \overline{\LL_{\seq{f},\expr{w}}} \circ \LL_{\seq{e},\expr{w}}:B_{\expr{x}} \longrightarrow B_{\expr{y}}(\dfct(\seq{e})+\dfct(\seq{f})) \text{,}
\end{equation}
where $\expr{w}$ is the rex corresponding to $w$. 
We write $\glsuseri*{LLdblbexprxbbexpryb}$ to denote a complete selection of double leaves morphisms $B_{\expr{x}} \rightarrow B_{\expr{y}}$ over all such pairs of subsequences. 
Regardless of the realization of $\W$ and any choices made during the construction of the light leaves morphisms, we have the following theorem.

\begin{thm}[{\cite[Theorem~6.12]{ew-soergelcalc}}]
\label{thm:LLdblbasis}
Let $\expr{x},\expr{y} \in \expr{\Ss}$. 
The double leaves morphisms $\LLdbl_{[\expr{x}]}^{[\expr{y}]}$ form a basis for 
\begin{equation*}
\Hom_{\dgrm}^{\bullet}(B_{\expr{x}},B_{\expr{y}})=\Hom_{\dgrm}^{\bullet}(B_{\expr{x}},B_{\expr{y}})
\end{equation*}
as a graded left (or right) $R$-module.
\end{thm}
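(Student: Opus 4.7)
The plan is to follow the inductive strategy of Elias--Williamson. As a zeroth step I would compute via Lemma~\ref{lem:ddfhecke} that the graded rank of $\Hom^{\bullet}_{\dgrm}(B_{\expr{x}},B_{\expr{y}})$ as a free $R$-module \emph{should} equal $\sum v^{d(\seq{e})+d(\seq{f})}$, summed over pairs $(\seq{e},\seq{f}) \in [\expr{x}] \times [\expr{y}]$ with $e=f$; this is exactly the pairing of $\underline{H}_{\expr{x}}$ and $\underline{H}_{\expr{y}}$ in $\hecke$ that one gets from expanding in the standard basis. The resulting polynomial matches the count and degrees of the proposed basis $\LLdbl_{[\expr{x}]}^{[\expr{y}]}$, so the two halves of the basis statement (spanning and linear independence) are equivalent once the expected rank is known. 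I would nevertheless prove both directly.

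For spanning, I would induct on $\len(\expr{x})$, with the inductive claim being that the light leaves $\LL_{[\expr{x}],\expr{w}}$ span $\Hom^{\bullet}_{\dgrm}(B_{\expr{x}},B_{\expr{w}})$ modulo morphisms factoring through Bott--Samelson bimodules indexed by rexes for elements of $\W$ strictly smaller than $w$ in the Bruhat order. The inductive step writes $\expr{x}=\expr{x'} s$ and caps off the rightmost $s$-strand with either a dot or a fork depending on whether $ws > w$ or $ws < w$, then invokes the inductive hypothesis applied to a suitable subexpression after rewriting with the diagrammatic relations. A separate filtration argument using the Bruhat order upgrades ``modulo lower terms'' to absolute spanning. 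Dualizing via the bar involution gives the analogous statement for $\Hom$ out of $B_{\expr{w}}$, and composing (summing over a compatible choice of rex $\expr{w}$ for each $w \in \W$) yields the spanning of $\LLdbl_{[\expr{x}]}^{[\expr{y}]}$ inside $\Hom^{\bullet}_{\dgrm}(B_{\expr{x}},B_{\expr{y}})$.

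For linear independence I would extend scalars to the fraction field $Q=\Frac R$ and pass to the category $\dgrmstd$ of standard bimodules. There is a natural functor from the localized Bott--Samelson category to $\dgrmstd$ under which $B_{\expr{x}}$ becomes a direct sum of standard bimodules $Q_{e}$ indexed by $\seq{e} \in [\expr{x}]$, with $\Hom$ between summands one-dimensional exactly when the two group elements coincide. Counting dimensions over $Q$ recovers the expected rank. It then remains to check that the matrix of projections of $\LLdbl_{\seq{e}}^{\seq{f}}$ onto the summands $\Hom_{\dgrmstd}(Q_{e'},Q_{f'})$ is triangular with nonzero diagonal in some total order refining the defect.

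I expect the main obstacle to be establishing this triangularity, which is the core of \cite[Section~6]{ew-soergelcalc}. The explicit construction of $\LL_{\seq{e},\expr{w}}$ involves non-canonical choices at each inductive step, and tracking the leading term of its projection to each standard summand $Q_{e'}$ requires a careful match with the Bruhat stroll underlying the defect. The argument will pull in essentially all of the diagrammatic presentation --- the one-colour relations, the Jones--Wenzl relations, and the cyclicity of the $2m_{st}$-valent vertices --- and the Zamolodchikov relations enter when one needs to compare two reductions of the same diagram arising from different orderings of the inductive steps.
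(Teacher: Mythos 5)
This theorem is quoted in the paper directly from \cite[Theorem~6.12]{ew-soergelcalc} without proof, and your outline is a faithful reconstruction of Elias--Williamson's argument: spanning by induction on $\len(\expr{x})$ through the light leaves construction together with the filtration by Bruhat ideals, and linear independence by localizing to $\dgrmstd$ and checking upper-triangularity of the double leaves against the standard summands (in the path dominance order, which is the total order refining the defect that you are looking for). No gaps worth flagging beyond the ones you already identify as the technical core of \cite[\S 6]{ew-soergelcalc}.
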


There is also a light leaves variant of this basis result. 
Let $w \in \W$, and take $I_{\ngeq w}=\{z \in \W : z \ngeq w\} \subseteq \W$ an ideal in the Bruhat order (i.e.~$z \in I$ and $y \leq z$ implies $y \in I_{\ngeq w}$). 
Let $\LLdbl_{\ngeq w}$ denote the span of the $\LLdbl$ maps $\{\LLdbl_{\seq{e}}^{\seq{f}} : \hat{e}=\hat{f} \in I_{\ngeq w}\}$. 
In other words, $\LLdbl_{\ngeq w}$ is spanned by double leaves morphisms which factor through $I_{\ngeq w}$. 
By \cite[Claim~6.19]{ew-soergelcalc} $\LLdbl_{\ngeq w}$ is a $2$-sided ideal of morphisms in $\dgrmBS$. 
We define $\dgrm_{\rm BS}^{\geq w}=\dgrmBS/\LLdbl_{\ngeq w}$ and write $\dgrm^{\geq w}$ for the additive graded Karoubi envelope of $\dgrm_{\rm BS}^{\geq w}$. 
In \cite[{\S 6.5}]{ew-soergelcalc} Elias--Williamson show that the light leaves morphisms give a basis for certain $\Hom$-spaces in $\dgrm_{\rm BS}^{\geq w}$.

\begin{thm} \label{thm:LLbasis}
Let $\expr{x},\expr{w} \in \expr{\Ss}$ with $\expr{w}$ a rex. 
The light leaves morphisms $\LL_{[\expr{x}],\expr{w}}$ form a basis for $\Hom_{\dgrm_{\rm BS}^{\geq w}}^{\bullet}(B_{\expr{x}},B_{\expr{w}})$ as a graded left (or right) $R$-module.
\end{thm}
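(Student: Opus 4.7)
The plan is to deduce Theorem~\ref{thm:LLbasis} from the double leaves basis theorem (Theorem~\ref{thm:LLdblbasis}) by carefully analyzing which basis elements survive in the quotient category $\dgrm_{\rm BS}^{\geq w}=\dgrmBS/\LLdbl_{\ngeq w}$. The key geometric input is the subword property of the Bruhat order: since $\expr{w}$ is reduced, any subsequence $\seq{f}\in[\expr{w}]$ satisfies $f\leq w$, with equality only for the ``all-ones'' subsequence $\seq{f}_w$, which is moreover uniquely determined by this property.

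First I would apply Theorem~\ref{thm:LLdblbasis} to $\Hom^{\bullet}_{\dgrmBS}(B_{\expr{x}},B_{\expr{w}})$, which provides an $R$-basis $\LLdbl_{[\expr{x}]}^{[\expr{w}]}$ indexed by pairs $(\seq{e},\seq{f})$ with $\seq{e}\in[\expr{x}]$, $\seq{f}\in[\expr{w}]$, and $e=f$; call the common value $w'\in\W$. By the subword property applied to $\seq{f}$, every such $w'$ satisfies $w'\leq w$. Since the ideal $\LLdbl_{\ngeq w}$ is spanned by those $\LLdbl_{\seq{e}}^{\seq{f}}$ with $e=f\in I_{\ngeq w}$, precisely the basis vectors with $w'\geq w$ survive in the quotient. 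Combined with $w'\leq w$, this forces $w'=w$, and hence $\seq{f}=\seq{f}_w$ is uniquely determined while $\seq{e}$ ranges over all subsequences of $\expr{x}$ expressing $w$.

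Next, I would arrange the auxiliary data in the double leaves construction so that the fixed rex assigned to $w$ is the given $\expr{w}$, and observe that in the inductive clauses of \cite[Construction~6.1]{ew-soergelcalc} applied to the all-ones subsequence $\seq{f}_w$ of a reduced expression, each step may legitimately be chosen to be the identity. This normalizes $\LL_{\seq{f}_w,\expr{w}}=\ident_{B_{\expr{w}}}$, and hence $\overline{\LL_{\seq{f}_w,\expr{w}}}=\ident_{B_{\expr{w}}}$. The surviving double leaves then reduce to
\begin{equation*}
\LLdbl_{\seq{e}}^{\seq{f}_w}=\overline{\LL_{\seq{f}_w,\expr{w}}}\circ\LL_{\seq{e},\expr{w}}=\LL_{\seq{e},\expr{w}}
\end{equation*}
for all $\seq{e}\in[\expr{x}]$ with $e=w$, i.e.~precisely the maps in the selection $\LL_{[\expr{x}],\expr{w}}$. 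Since the quotient of a graded free $R$-module by a subspace spanned by a subset of a homogeneous basis retains the complementary subset as a basis, $\LL_{[\expr{x}],\expr{w}}$ is an $R$-basis of $\Hom^{\bullet}_{\dgrm_{\rm BS}^{\geq w}}(B_{\expr{x}},B_{\expr{w}})$.

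The main obstacle to be nailed down is the independence-of-choices issue: the theorem asserts this for \emph{any} valid selection $\LL_{[\expr{x}],\expr{w}}$, not only the one compatible with the specific identity-normalization above. To handle this, I would verify that any two valid choices of $\LL_{\seq{e},\expr{w}}$ (for the same $\seq{e}$) differ by a morphism factoring through Bott--Samelson bimodules indexed by rexes for elements $z\ngeq w$ in the ambient Bruhat ideal; such correction terms lie in $\LLdbl_{\ngeq w}$ and hence vanish in $\dgrm_{\rm BS}^{\geq w}$. This change-of-basis property is the technical heart of the argument and must be extracted from the inductive structure of \cite[Construction~6.1]{ew-soergelcalc} by tracking how the recursive clauses introduce only ``lower-order'' modifications.
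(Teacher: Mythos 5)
The paper does not actually prove this statement; it cites \cite[\S 6.5]{ew-soergelcalc} and records the result. Your proposed derivation from Theorem~\ref{thm:LLdblbasis} is therefore a valid standalone route, and the core of your argument (the first two paragraphs) is correct: after applying the double leaves basis theorem to $\Hom^\bullet_{\dgrmBS}(B_{\expr{x}},B_{\expr{w}})$, the subword property for the reduced expression $\expr{w}$ shows that the only subsequence $\seq{f}\in[\expr{w}]$ with $f\geq w$ is the all-ones subsequence $\seq{f}_w$, so the double leaves surviving the quotient by $\LLdbl_{\ngeq w}$ are precisely $\LLdbl_{\seq{e}}^{\seq{f}_w}=\overline{\LL_{\seq{f}_w,\expr{w}}}\circ\LL_{\seq{e},\expr{w}}$, and with the normalization $\LL_{\seq{f}_w,\expr{w}}=\ident_{B_{\expr{w}}}$ (which the inductive construction always permits) these are exactly the maps $\LL_{\seq{e},\expr{w}}$.

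Your final paragraph, however, worries about an issue that largely dissolves. In a double leaf $\LLdbl_{\seq{e}}^{\seq{f}}=\overline{\LL_{\seq{f},\expr{w}}}\circ\LL_{\seq{e},\expr{w}}$, the ``lower'' light leaf $\LL_{\seq{e},\expr{w}}$ (for $\seq{e}\in[\expr{x}]$) and the ``upper'' light leaf $\LL_{\seq{f},\expr{w}}$ (for $\seq{f}\in[\expr{w}]$) involve independent auxiliary choices, and Theorem~\ref{thm:LLdblbasis} holds regardless of all such choices. Given any complete selection $\LL_{[\expr{x}],\expr{w}}$, one may therefore use it wholesale for the lower pieces and the identity-normalized family for the upper pieces --- this works even in the case $\expr{x}=\expr{w}$. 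The surviving basis of the quotient is then exactly the given $\LL_{[\expr{x}],\expr{w}}$, and no change-of-basis argument is needed.

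If you nevertheless want to carry out the change-of-basis route you sketch, it is close but has a genuine gap: a morphism that ``factors through a Bott--Samelson bimodule $B_{\expr{z}}$ with $z\ngeq w$'' is not \emph{by definition} an element of $\LLdbl_{\ngeq w}$, since the latter is defined as the $R$-span of a specific family of double leaves rather than as the ideal of morphisms factoring through lower cells. One still has to show the two coincide on the relevant $\Hom$-space. The standard way is via localization: $B_{\expr{z}}$ localizes to $\bigoplus_{\seq{e}\in[\expr{z}]}Q_e$ with every $e\leq z$, hence $e\ngeq w$; meanwhile each surviving double leaf $\LLdbl_{\seq{e}'}^{\seq{f}'}$ with $e'=f'\geq w$ has nonvanishing $Q_{e'}$-component in its localization matrix (this is the triangularity used to prove Theorem~\ref{thm:LLdblbasis}). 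Comparing these shows that a morphism factoring through $B_{\expr{z}}$ with $z\ngeq w$ has no component along any surviving double leaf, and hence lies in $\LLdbl_{\ngeq w}$. That extra step should be made explicit if you take this route.
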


Another consequence of the double leaves basis is the following classification of the indecomposable Soergel bimodules.

\begin{thm}[{\cite[Theorem~6.26]{ew-soergelcalc}}]
\label{thm:DBSindecomp}
Suppose $w \in \W$, and let $\expr{w}$ be a rex for $w$. 
There is a unique indecomposable summand $\glsuseri*{Bw}$ of $B_{\expr{w}}$ which is not a summand of $B_{\expr{y}}$ for $\expr{y}$ a rex with $y<w$. 
Up to isomorphism, the object $B_w$ does not depend on the choice of rex for $w$. 
Every indecomposable Soergel bimodule is isomorphic to a grade shift of $B_w$ for some $w \in \W$.
\end{thm}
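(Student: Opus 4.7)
The plan is to apply a graded Krull--Schmidt theorem to the Bott--Samelson bimodule $B_{\expr{w}}$, leveraging the double leaves basis (Theorem~\ref{thm:LLdblbasis}) to analyze its graded endomorphism algebra. First, fix a rex $\expr{w}$ for $w$. By the subword property of the Bruhat order, every subsequence $\seq{e} \in [\expr{w}]$ satisfies $e \leq w$, with equality if and only if $\seq{e}$ is the all-ones subsequence, which has defect zero. Consequently, among the $R$-basis $\LLdbl_{[\expr{w}]}^{[\expr{w}]}$ of $\Hom^{\bullet}_{\dgrm}(B_{\expr{w}}, B_{\expr{w}})$, exactly one basis element $\LLdbl_{\seq{1}}^{\seq{1}}$ has $e=f=w$, and it lies in degree $0$. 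A direct inspection of \cite[Construction~6.1]{ew-soergelcalc} for the all-ones subsequence shows that $\LLdbl_{\seq{1}}^{\seq{1}}$ is precisely the identity $\ident_{B_{\expr{w}}}$.

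Next, let $J$ denote the $R$-span of the remaining basis vectors, that is, those $\LLdbl_{\seq{e}}^{\seq{f}}$ with $e=f<w$. By \cite[Claim~6.19]{ew-soergelcalc} these double leaves morphisms factor through $\LLdbl_{\ngeq w}$, which is a two-sided ideal, so $J$ is a graded two-sided ideal of $\End^{\bullet}(B_{\expr{w}})$, and we have a splitting $\End^{\bullet}(B_{\expr{w}}) = R \cdot \ident_{B_{\expr{w}}} \oplus J$ of graded left $R$-modules. Restricting to degree zero and using $R^0 = \field$ gives $\End^0(B_{\expr{w}}) = \field \cdot \ident \oplus J^0$. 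Since elements of $J^0$ factor through Bott--Samelson objects labeled by elements strictly below $w$, and since any composition of such morphisms is captured again by $J$, a well-foundedness argument on Bruhat order shows that $J^0$ is a nil ideal; hence $\End^0(B_{\expr{w}})$ is a local $\field$-algebra with residue field $\field$.

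The graded Krull--Schmidt theorem then decomposes $B_{\expr{w}}$ uniquely (up to isomorphism and reordering) into indecomposables with local endomorphism rings. Exactly one summand, call it $B_w$, has its idempotent projection not lying in $J$; all other summands have their projections in $J$ and therefore factor through objects indexed by rexes for elements $<w$. Induction on $\len(w)$ (with the base case $w=e$ trivial) identifies these other summands as direct sums of grade shifts of $B_y$ for $y<w$, and a counting argument shows $B_w$ itself cannot be a summand of any $B_{\expr{y}}$ for a rex of $y<w$: if it were, inductively it would be a grade shift of some $B_z$ with $z<w$, contradicting the defining property of $B_z$. Uniqueness of such a summand in $B_{\expr{w}}$ is immediate from Krull--Schmidt, and independence of the rex follows by applying Krull--Schmidt to $B_{\expr{w}} \oplus B_{\expr{w}'}$ for two choices of rex: each contributes exactly one top summand not captured by lower $B_y$'s, and these must be isomorphic. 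For the classification, any indecomposable in $\dgrm$ is, by definition, a direct summand of some $B_{\expr{x}}$ up to grade shift; induction on $\len(x)$ via the same decomposition shows it must be isomorphic to a grade shift of $B_y$ for some $y \leq x$.

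The main obstacle is verifying the two "structural" claims supporting the local-ring argument: that the distinguished degree-zero double leaves morphism really equals $\ident_{B_{\expr{w}}}$, and that $J$ really is a two-sided ideal whose degree-zero part is nil. The first reduces to unwinding the inductive definition of light leaves in the trivial case; the second is the substance of \cite[Claim~6.19]{ew-soergelcalc} combined with the observation that any nontrivial composition of "lower" double leaves remains in the span of lower double leaves, so well-foundedness of Bruhat order forces nilpotency in degree zero.
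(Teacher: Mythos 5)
The paper cites this result from \cite[Theorem~6.26]{ew-soergelcalc} without reproducing a proof, so there is no internal argument here to compare against; your proposal reconstructs the Elias--Williamson strategy and gets the outline broadly right, but it contains a genuine error at the crucial step.

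The claim that $J^0$ is a nil ideal and hence that $\End^0_{\dgrm}(B_{\expr{w}})$ is a local $\field$-algebra is false. A local degree-zero endomorphism ring would force $B_{\expr{w}}$ to be indecomposable, but Bott--Samelson bimodules for reduced expressions of length $\geq 3$ are generally decomposable (e.g.~$B_{\expr{sts}} \iso B_{sts} \oplus B_s$ whenever $m_{st}=3$ and the relevant Jones--Wenzl idempotent exists). If $B_y$ with $y<w$ is a summand of $B_{\expr{w}}$, the corresponding projection idempotent factors through a rex for $y$ and therefore lies in $J^0$, yet it is manifestly not nilpotent. Your well-foundedness argument overlooks that a power of an element of $J$ involves interior composites $B_{\expr{y}} \to B_{\expr{w}} \to B_{\expr{y}}$ which can equal $\ident_{B_{\expr{y}}}$; the Bruhat level of the factorizations need not strictly decrease. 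What is true, and is what the argument actually needs, is only that $\End^0/J^0 \iso \field$ is a field. Writing $\ident = \sum e_i$ as a sum of primitive orthogonal idempotents, the images $\bar e_i \in \field$ are orthogonal idempotents; orthogonality forces at most one nonzero and $\sum \bar e_i = 1$ forces exactly one, say $\bar e_{i_0}=1$, whose summand is $B_w$. If $B_w$ were also a summand of some $B_{\expr{y}}$ with $y<w$, then $e_{i_0}$ would factor through $B_{\expr{y}}$ and hence lie in $\LLdbl_{\ngeq w}\cap\End^0 = J^0$, contradicting $\bar e_{i_0}=1$. No locality claim on $\End^0(B_{\expr{w}})$ is needed, and none holds. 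Two secondary imprecisions worth noting: $\LLdbl_{\seq{1}}^{\seq{1}}$ equals $\ident_{B_{\expr{w}}}$ only under a favorable choice of rexes in Construction~6.1 --- in general one only gets $\LLdbl_{\seq{1}}^{\seq{1}} \equiv \ident \pmod{J}$, which is enough --- and the rex-independence should be argued by invoking the one-dimensionality of $\Hom^0_{\dgrm^{\geq w}}(B_{\expr{w}},B_{\expr{w}'})$ from the light-leaves basis to supply a comparison map matching the top summands, rather than by appealing to Krull--Schmidt on $B_{\expr{w}}\oplus B_{\expr{w}'}$ with no comparison map in hand.
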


\begin{exam} \label{exam:singgenindecomp}
Let $s \in \Ss$. 
By Theorem~\ref{thm:LLdblbasis} the endomorphisms
\begin{align*}
\begin{gathered}
    \begin{tikzpicture}[scale=0.5,baseline=(origin)]
        \coordinate (origin) at (0,0);
        \node[circle, minimum size=0.5cm] (spacer) at (0,0) {};
        \path (origin) ++(0,1) coordinate (cod);
        \path (origin) ++(0,-1) coordinate (dom);
        \path (origin) ++(0,0.5) coordinate (id);
        \path (origin) ++(0,-0.5) coordinate (di);
        \draw[red, string] (dom) to (cod);
    \end{tikzpicture}
\end{gathered} \text{,}& &  \begin{gathered}
    \begin{tikzpicture}[scale=0.5,baseline=(spacer)]
        \node[circle, minimum size=0.5cm] (spacer) at (0,0) {};
        \coordinate (above) at (0,1);
        \coordinate (below) at (0,-1);
        \node[dot, fill=red] (updot) at (0,0.5) {};
        \node[dot, fill=red] (downdot) at (0,-0.5) {};
%
        \draw[red, string] (above) to (updot.center)
                           (below) to (downdot.center);
    \end{tikzpicture}
\end{gathered} &
\end{align*}
form a left $R$-module basis for $\End_{\dgrm}^{\bullet}(B_{\expr{s}})$. 
It follows that $\End_{\dgrm}^{\bullet}(B_{\expr{s}}) \iso R[x]/(x^2-a_s x)$ as an $R$-module and as a $\field$-algebra, with $x$ corresponding to the second basis element above. 
As $x$ is of degree $2$, this means that $\End_{\dgrm}^0(B_{\expr{s}}) \iso \field$. 
Thus $\End_{\dgrm}^{\bullet}(B_{\expr{s}})$ contains no non-trivial idempotents, so $B_{\expr{s}}$ is indecomposable and hence $B_s=B_{\expr{s}}$.
\end{exam}

For $B$ a Soergel bimodule, write $\gls*{bBb}$ to denote the isomorphism class of $B$, and $\glsuseri*{bdgrmb}$ for the Grothendieck ring of $\dgrm$. 
Since $\dgrm$ has grade shifts, $[\dgrm]$ has the structure of a $\laur$-algebra, with $v=[R(1)]$. 
Theorem~\ref{thm:LLbasis} tells us that for any rex $\expr{w}$ and any Soergel bimodule $B$, the left $R$-module $\Hom_{\dgrm_{\rm BS}^{\geq w}}^{\bullet}(B,B_{\expr{w}})$ is a graded projective $R$-module. 
Yet $R$ is a polynomial algebra over a field $\field$, so in fact all projective $R$-modules are free. 

The \defnemph{diagrammatic character} \cite[Definition 6.24]{ew-soergelcalc} is the $\laur$-linear map
\begin{align*}
\gls*{charctr} : [\dgrm] & \longrightarrow \hecke \\
[B] & \longmapsto \sum_{w \in \W} \grk \Hom_{\dgrm^{\geq w}}^{\bullet}(B,B_{w})H_w \text{,}
\end{align*}
where $\grk$ denotes the graded rank as a free $R$-module.
A combination of Lemma~\ref{lem:ddfhecke}, Theorem~\ref{thm:LLbasis}, and Theorem~\ref{thm:DBSindecomp} then yields the following categorification result.

\begin{cor}[{\cite[Corollary 6.27]{ew-soergelcalc}}] \label{cor:charctrheckeiso}
The map $\charctr:[\dgrm] \rightarrow \hecke$ is an isomorphism of $\laur$-algebras.
\end{cor}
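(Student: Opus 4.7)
The plan is to identify $\charctr([B_{\expr{x}}])$ with the Bott--Samelson element $\underline{H}_{\expr{x}}$ on the nose, and then leverage both Lemma~\ref{lem:ddfhecke} and Theorem~\ref{thm:DBSindecomp} to deduce bijectivity and multiplicativity. The main input I would use is Theorem~\ref{thm:LLbasis} applied to a rex $\expr{y}$ for $y\in\W$: the quotient category $\dgrm_{\rm BS}^{\geq y}$ kills exactly those Karoubi summands of $B_{\expr{y}}$ indexed by $z<y$, so after Karoubi envelope one has $B_{\expr{y}}\cong B_y$ inside $\dgrm^{\geq y}$ and hence
\begin{equation*}
\Hom_{\dgrm^{\geq y}}^{\bullet}(B_{\expr{x}},B_y)=\Hom_{\dgrm_{\rm BS}^{\geq y}}^{\bullet}(B_{\expr{x}},B_{\expr{y}})\text{.}
\end{equation*}
By Theorem~\ref{thm:LLbasis} this graded $R$-module is free with basis the light leaves $\LL_{\seq{e},\expr{y}}$ indexed by subsequences $\seq{e}\in[\expr{x}]$ with $e=y$, each in degree $d(\seq{e})$. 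Summing the graded ranks over $y\in\W$ yields
\begin{equation*}
\charctr([B_{\expr{x}}])=\sum_{y\in\W}\Bigl(\sum_{\seq{e}\in[\expr{x}],\,e=y}v^{d(\seq{e})}\Bigr)H_y=\sum_{\seq{e}\in[\expr{x}]}v^{d(\seq{e})}H_e=\underline{H}_{\expr{x}}\text{,}
\end{equation*}
where the last equality is Lemma~\ref{lem:ddfhecke}.

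Next I would bootstrap this to multiplicativity. Since $B_{\expr{x}}\otimes B_{\expr{y}}=B_{\expr{xy}}$ and $\underline{H}_{\expr{x}}\underline{H}_{\expr{y}}=\underline{H}_{\expr{xy}}$, the previous paragraph gives $\charctr([B_{\expr{x}}][B_{\expr{y}}])=\charctr([B_{\expr{x}}])\charctr([B_{\expr{y}}])$. To extend from Bott--Samelson classes to all of $[\dgrm]$, I would use Theorem~\ref{thm:DBSindecomp}: the class $[B_{\expr{w}}]$ of any rex expands, by Krull--Schmidt, as $[B_w]$ plus a $\Nneglaur$-combination of classes $[B_z]$ with $z<w$, and inverting this unitriangular relation shows that the $[B_{\expr{x}}]$ already $\laur$-span $[\dgrm]$. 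Since the Bott--Samelson classes are also closed under multiplication, $\laur$-linearity of $\charctr$ then upgrades multiplicativity on them to all of $[\dgrm]$.

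For bijectivity I would set up two triangularity arguments. On the diagrammatic side, for each rex $\expr{w}$ Theorem~\ref{thm:DBSindecomp} gives an expansion $[B_{\expr{w}}]=[B_w]+\sum_{y<w}n_{y,w}(v)[B_y]$ with $n_{y,w}(v)\in\Nneglaur$; on the Hecke side, Lemma~\ref{lem:ddfhecke} applied to $\expr{w}$ gives $\underline{H}_{\expr{w}}=H_w+\sum_{y<w}c_{y,w}(v)H_y$ (the full subsequence of a rex has defect $0$ and evaluates to $w$). Since $\charctr([B_{\expr{w}}])=\underline{H}_{\expr{w}}$, writing both sides in the bases $\{[B_y]\}_{y\in\W}$ and $\{H_y\}_{y\in\W}$ expresses $\charctr$ as a product of two upper unitriangular matrices with respect to the Bruhat order, hence it is itself upper unitriangular and therefore invertible as a $\laur$-linear map. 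Combined with multiplicativity and the obvious $\charctr([R])=H_1=1$, this identifies $\charctr$ as a $\laur$-algebra isomorphism.

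The main obstacle I foresee is nothing computational but rather the careful identification $\Hom_{\dgrm^{\geq y}}^{\bullet}(B_{\expr{x}},B_y)=\Hom_{\dgrm_{\rm BS}^{\geq y}}^{\bullet}(B_{\expr{x}},B_{\expr{y}})$, which relies on Karoubi envelope manipulations and on knowing that all summands of $B_{\expr{y}}$ other than $B_y$ are indexed by $z<y$ and are therefore killed in $\dgrm^{\geq y}$; once that reduction is made, Theorem~\ref{thm:LLbasis} and Deodhar's formula do essentially all of the remaining work.
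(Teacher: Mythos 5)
Your argument is correct and assembles exactly the three ingredients the paper cites for this result --- Deodhar's defect formula (Lemma~\ref{lem:ddfhecke}) to compute $\charctr([B_{\expr{x}}])=\underline{H}_{\expr{x}}$ via the light leaves basis (Theorem~\ref{thm:LLbasis}), and the indecomposables classification (Theorem~\ref{thm:DBSindecomp}) to run the unitriangularity argument. The paper gives no further detail beyond pointing to \cite[Corollary 6.27]{ew-soergelcalc}, so your reconstruction is a faithful expansion of the same approach.
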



\subsection{Mixed diagrams and localization}

\label{sec:localization}
Let $\gls*{Q}$ be the homogeneous fraction field of $R$. 
In other words, $Q$ is the localization of $R$ with respect to all non-zero homogeneous elements. 
We write $Q \otimes \widetilde{\dgrm}_{\rm BS}$ for the (left) scalar extension of $\widetilde{\dgrm}_{\rm BS}$ to a $Q$-linear category. 
In $Q \otimes \widetilde{\dgrm}_{\rm BS}$ diagrams can have elements of $Q$ as left coefficients. 
Since we can ``push'' coefficients through strings in an $\Ss$-diagram using the relation \cite[(10.8d)]{emtw}, this is equivalent to the right scalar extension by $Q$, or indeed the extension of $\widetilde{\dgrm}_{\rm BS}$ where coefficients in $Q$ are allowed in \emph{any} region of a diagram.
For this reason $Q \otimes \widetilde{\dgrm}_{\rm BS}$ inherits the monoidal structure from $\widetilde{\dgrm}_{\rm BS}$.
For convenience we will abuse notation and generally write the object $Q \otimes B_{\expr{x}}$ in $Q \otimes \widetilde{\dgrm}_{\rm BS}$ as $B_{\expr{x}}$ whenever it is unambiguous.
We next construct a certain extension of this category which is closed under direct summands.


\begin{defn}[{\cite[{\S 5.4}]{ew-soergelcalc}}]
Temporarily abusing notation, the category $\glsuseri*{QotimesdgrmtildeBSstd}$ is the following $Q$-linear extension of $Q \otimes \widetilde{\dgrm}_{\rm BS}$.
\begin{itemize}
\item For each $\expr{x} \in \expr{\Ss}$ add the object $\glsuseri*{Qexprx}$, which is called a \defnemph{(diagrammatic) standard bimodule}. 
As with Bott--Samelson bimodules the tensor product of standard bimodules is defined by $Q_{\expr{x}} \otimes Q_{\expr{y}}=Q_{\expr{x y}}$.

\item 
For any $s \in \Ss$ we represent the identity morphism on $Q_{\expr{s}}$ with a dashed $s$-colored vertical string. 
We add two new morphisms between Bott--Samelson bimodules and standard bimodules, which are both of degree $1$:
\begin{align*}
\begin{gathered}
\glsuseri*{bivalents} : B_s \longrightarrow Q_s \text{,} \\
\begin{gathered}
    \begin{tikzpicture}[baseline=(origin)]
        \coordinate (origin) at (0,0);
        \node[circle, minimum size=0.5cm] (spacer) at (origin) {};
        \path (origin) ++(0,0.5) coordinate (cod);
        \path (origin) ++(0,-0.5) coordinate (dom);
        \draw[red, std] (cod) to (origin);
        \draw[red, string] (origin) to (dom);
    \end{tikzpicture}
\end{gathered}
\end{gathered}
 & & 
\begin{gathered}
\overline{{\rm bivalent}_s} : Q_s \longrightarrow B_s \text{.} \\
\begin{gathered}
    \begin{tikzpicture}[baseline=(origin)]
        \coordinate (origin) at (0,0);
        \node[circle, minimum size=0.5cm] (spacer) at (origin) {};
        \path (origin) ++(0,0.5) coordinate (cod);
        \path (origin) ++(0,-0.5) coordinate (dom);
        \draw[red, std] (dom) to (origin);
        \draw[red,string] (origin) to (cod);
    \end{tikzpicture}
\end{gathered}
\end{gathered} &
\end{align*}

\item 
We add the following relations involving ${\rm bivalent}_s$ and $\overline{{\rm bivalent}_s}$:
\begin{equation}
\label{eq:stdBSstd}
\begin{gathered}
    \begin{tikzpicture}[baseline=(origin)]
        \coordinate (origin) at (0,0);
        \node[circle, minimum size=0.5cm] (spacer) at (origin) {};
        \path (origin) ++(0,0.875) coordinate (cod);
        \path (origin) ++(0,-0.875) coordinate (dom);
        \path (origin) ++(0,0.375) coordinate (id);
        \path (origin) ++(0,-0.375) coordinate (di);
        \draw[red, std] (dom) to (di)
                        (id) to (cod);
        \draw[red,string] (id) to (di);
    \end{tikzpicture}
\end{gathered}=\begin{gathered}
    \begin{tikzpicture}[baseline=(origin)]
        \coordinate (origin) at (0,0);
        \node[circle, minimum size=0.8cm] (spacer) at (0.15,0) {};
        \path (origin) ++(0,0.875) coordinate (cod);
        \path (origin) ++(0,-0.875) coordinate (dom);
        \path (origin) ++(0,0.375) coordinate (id);
        \path (origin) ++(0,-0.375) coordinate (di);
        \draw[red, std] (dom) to (cod);
        \node[dot,red,fill=red] (dot1) at (0.3,0.3) {};
        \node[dot,red,fill=red] (dot2) at (0.3,-0.3) {};
        \draw[red,string] (dot1.center) to (dot2.center);
    \end{tikzpicture}
\end{gathered} \text{,}
\end{equation}
\begin{equation}
\label{eq:BSstdBS}
\begin{gathered}
    \begin{tikzpicture}[baseline=(origin)]
        \coordinate (origin) at (0,0);
        \node[circle, minimum size=0.5cm] (spacer) at (flex) {};
        \path (origin) ++(0,0.875) coordinate (cod);
        \path (origin) ++(0,-0.875) coordinate (dom);
        \path (origin) ++(0,0.375) coordinate (id);
        \path (origin) ++(0,-0.375) coordinate (di);
        \draw[red, string] (dom) to (di)
                        (id) to (cod);
        \draw[red,std] (id) to (di);
    \end{tikzpicture}
\end{gathered}=\begin{gathered}
    \begin{tikzpicture}[baseline=(origin)]
        \coordinate (origin) at (0,0);
        \node[circle, minimum size=0.8cm] (spacer) at (0.15,0) {};
        \path (origin) ++(0,0.875) coordinate (cod);
        \path (origin) ++(0,-0.875) coordinate (dom);
        \path (origin) ++(0,0.375) coordinate (id);
        \path (origin) ++(0,-0.375) coordinate (di);
        \draw[red, string] (dom) to (cod);
        \node[dot,red,fill=red] (dot1) at (0.3,0.3) {};
        \node[dot,red,fill=red] (dot2) at (0.3,-0.3) {};
        \draw[red,string] (dot1.center) to (dot2.center);
    \end{tikzpicture}
\end{gathered}- \begin{gathered}
    \begin{tikzpicture}[baseline=(spacer)]
        \node[circle, minimum size=0.5cm] (spacer) at (0,0) {};
        \coordinate (above) at (0,0.875);
        \coordinate (below) at (0,-0.875);
        \node[dot, fill=red] (updot) at (0,0.375) {};
        \node[dot, fill=red] (downdot) at (0,-0.375) {};
%
        \draw[red, string] (above) to (updot.center)
                           (below) to (downdot.center);
    \end{tikzpicture}
\end{gathered}\text{.}
\end{equation} \unskip
\end{itemize}
\end{defn}

We call a string diagram arising from compositions and tensor products of $\Ss$-diagrams and the new morphisms above a \defnemph{mixed diagram}.
We will also use the following shorthand in our mixed diagrams.
For each $s \in \Ss$, we define the \defnemph{$s$-colored standard cap} and \defnemph{cup}
\begin{align*}
\begin{gathered}
\glsuseri*{stdcaps} : Q_{\expr{s s}} \longrightarrow Q_{\emptexpr} \text{,} \\
\begin{gathered}
    \begin{tikzpicture}[scale=0.5,baseline=(origin)]
        \coordinate (origin) at (0,0);
        \path (origin) ++(0.5,0) coordinate (right);
        \path (origin) ++(-0.5,0) coordinate (left);
        \path (left) ++(0,-0.5) coordinate (leftid);
        \path (right) ++(0,-0.5) coordinate (rightid);
        \path (left) ++(0,-1) coordinate (belowleft);
        \path (right) ++(0,-1) coordinate (belowright) {};
        \draw[red,std] (belowleft) to (leftid) to (left) 
                                      arc [start angle=180, end angle=0, radius=0.5]
                                        to (right) to (rightid) to (belowright);
    \end{tikzpicture}
\end{gathered}
=
\begin{gathered}
    \begin{tikzpicture}[scale=0.5,baseline=(origin)]
        \coordinate (origin) at (0,0);
        \path (origin) ++(0.5,0) coordinate (right);
        \path (origin) ++(-0.5,0) coordinate (left);
        \path (left) ++(0,-0.25) coordinate (leftid);
        \path (right) ++(0,-0.25) coordinate (rightid) node[right=2pt] {$a^{-1}_{\color{red} s}$};
        \path (left) ++(0,-1) coordinate (belowleft);
        \path (right) ++(0,-1) coordinate (belowright) {};
        \draw[red,string] (leftid) to (left) arc [start angle=180, end angle=0, radius=0.5]
                    to (right) to (rightid);
        \draw[red,std] (rightid) to (belowright)
                          (leftid) to (belowleft);
    \end{tikzpicture}
\end{gathered}
\end{gathered} & & 
\begin{gathered}
\glsuseri*{stdcups} : Q_{\emptexpr} \longrightarrow Q_{\expr{s s}} \text{,} \\
\begin{gathered}
    \begin{tikzpicture}[xscale=0.5,yscale=-0.5,baseline=(origin)]
        \coordinate (origin) at (0,0);
        \path (origin) ++(0.5,0) coordinate (right);
        \path (origin) ++(-0.5,0) coordinate (left);
        \path (left) ++(0,-0.5) coordinate (leftid);
        \path (right) ++(0,-0.5) coordinate (rightid);
        \path (left) ++(0,-1) coordinate (belowleft);
        \path (right) ++(0,-1) coordinate (belowright) {};
        \draw[red,std] (belowleft) to (leftid) to (left) 
                                      arc [start angle=180, end angle=0, radius=0.5]
                                        to (right) to (rightid) to (belowright);
    \end{tikzpicture}
\end{gathered}
=
\begin{gathered}
    \begin{tikzpicture}[xscale=0.5,yscale=-0.5,baseline=(origin)]
        \coordinate (origin) at (0,0);
        \path (origin) ++(0.5,0) coordinate (right);
        \path (origin) ++(-0.5,0) coordinate (left);
        \path (left) ++(0,-0.25) coordinate (leftid);
        \path (right) ++(0,-0.25) coordinate (rightid) node[right=2pt] {$a^{-1}_{\color{red} s}$};
        \path (left) ++(0,-1) coordinate (belowleft);
        \path (right) ++(0,-1) coordinate (belowright) {};
        \draw[red,string] (leftid) to (left) arc [start angle=180, end angle=0, radius=0.5]
                    to (right) to (rightid);
        \draw[red,std] (rightid) to (belowright)
                          (leftid) to (belowleft);
    \end{tikzpicture}
\end{gathered}
\end{gathered} & 
\end{align*}
which extends to ${\rm stdcap}_{\expr{w}}$ and ${\rm stdcup}_{\expr{w}}$ for arbitrary $\expr{w} \in \expr{\Ss}$ in an analogous manner to ${\rm cup}_{\expr{w}}$ and ${\rm cap}_{\expr{w}}$. 
For all $(s,t) \in \Ss \times \Ss$ with $s \neq t$ and $m_{st}<\infty$, we also define the \defnemph{$s$-and-$t$-colored standard braid}:
\begin{gather*}
\glsuseri*{stdbraidst} : Q_{\underbrace{\scriptstyle\expr{sts \dotsm }}_{m_{st}}} \longrightarrow Q_{\underbrace{\scriptstyle\expr{tst \dotsm }}_{m_{st}}} \\
\begin{aligned}
\begin{gathered}
\begin{tikzpicture}[scale=0.9,baseline=(origin.center),font=\scriptsize]
\coordinate (origin) at (0,0) {} node[rectangle, minimum width=4.5cm] {};
\path (origin) ++(-2,-1) coordinate (dom1) {} ++(0,-0.25) coordinate (below1) {} ++(0,-0.42) coordinate (stddom1) {};
\path (origin) ++(-1.25,-1) coordinate (dom2) {} ++(0,-0.25) coordinate (below2) {} ++(0,-0.42) coordinate (stddom2) {};
\path (origin) ++(-0.5,-1) coordinate (dom3) {} ++(0,-0.25) coordinate (below3) {} ++(0,-0.42) coordinate (stddom3) {};
\path (origin) ++(2,-1) coordinate (domlast) {} ++(0,-0.25) coordinate (belowlast) {} ++(0,-0.42) coordinate (stddomlast) {};
\path (origin) ++(-2,1) coordinate (cod1) {} ++(0,0.25) coordinate (above1) {} ++(0,0.42) coordinate (stdcod1) {};
\path (origin) ++(-1.25,1) coordinate (cod2) {} ++(0,0.25) coordinate (above2) {} ++(0,0.42) coordinate (stdcod2) {};
\path (origin) ++(-0.5,1) coordinate (cod3) {} ++(0,0.25) coordinate (above3) {} ++(0,0.42) coordinate (stdcod3) {};
\path (origin) ++(2,1) coordinate (codlast) {} ++(0,0.25) coordinate (abovelast) {} ++(0,0.42) coordinate (stdcodlast) {};
\draw[red, corner, line width=\superthick, std] (origin) to (dom1) to (below1)
                                   (origin) to (dom3) to (below3)
                                   (origin) to (domlast) to (belowlast)
                                   (origin) to (cod2) to (above2);
\draw[red, line width=\superthick, std] (below1) to (stddom1)
                                        (below3) to (stddom3)
                                        (belowlast) to (stddomlast)
                                        (above2) to (stdcod2);
\draw[blue, corner, line width=\superthick, std] (origin) to (dom2) to (below2)
                                            (origin) to (cod1) to (above1)
                                            (origin) to (cod3) to (above3)
                                            (origin) to (codlast) to (abovelast);
\draw[blue, line width=\superthick, std] (below2) to (stddom2)
                                         (above1) to (stdcod1)
                                         (above3) to (stdcod3)
                                         (abovelast) to (stdcodlast);
\path (origin) ++(0.75,-1.25) node[font=\large] (domdots) {$\dotsm$};
\path (origin) ++(0.75,1.25) node[font=\large] (coddots) {$\dotsm$};
\end{tikzpicture}
\end{gathered}
& =
\begin{gathered}
\begin{tikzpicture}[scale=0.9,baseline=(origin.center),font=\scriptsize]
\coordinate (origin) at (0,0) {} node[rectangle, minimum width=4.5cm] {};
\path (origin) ++(-2,-1) coordinate (dom1) {} ++(0,-0.25) coordinate (below1) {} ++(0,-0.42) coordinate (stddom1) {};
\path (origin) ++(-1.25,-1) coordinate (dom2) {} ++(0,-0.25) coordinate (below2) {} ++(0,-0.42) coordinate (stddom2) {};
\path (origin) ++(-0.5,-1) coordinate (dom3) {} ++(0,-0.25) coordinate (below3) {} ++(0,-0.42) coordinate (stddom3) {};
\path (origin) ++(2,-1) coordinate (domlast) {} ++(0,-0.25) coordinate (belowlast) {} ++(0,-0.42) coordinate (stddomlast) {};
\path (origin) ++(-2,1) coordinate (cod1) {} ++(0,0.25) coordinate (above1) {} node[right] {$a^{-1}_{\color{blue} t}$} ++(0,0.42) coordinate (stdcod1) {};
\path (origin) ++(-1.25,1) coordinate (cod2) {} ++(0,0.25) coordinate (above2) {} node[right] {$a^{-1}_{\color{red} s}$} ++(0,0.42) coordinate (stdcod2) {};
\path (origin) ++(-0.5,1) coordinate (cod3) {} ++(0,0.25) coordinate (above3) {} node[right] {$a^{-1}_{\color{blue} t}$} ++(0,0.42) coordinate (stdcod3) {};
\path (origin) ++(2,1) coordinate (codlast) {} ++(0,0.25) coordinate (abovelast) {} node[right] {$a^{-1}_{\color{blue} t}$} ++(0,0.42) coordinate (stdcodlast) {};
\draw[red, corner, line width=\superthick] (origin) to (dom1) to (below1)
                                   (origin) to (dom3) to (below3)
                                   (origin) to (domlast) to (belowlast)
                                   (origin) to (cod2) to (above2);
\draw[red, line width=\superthick, std] (below1) to (stddom1)
                                        (below3) to (stddom3)
                                        (belowlast) to (stddomlast)
                                        (above2) to (stdcod2);
\draw[blue, corner, line width=\superthick] (origin) to (dom2) to (below2)
                                            (origin) to (cod1) to (above1)
                                            (origin) to (cod3) to (above3)
                                            (origin) to (codlast) to (abovelast);
\draw[blue, line width=\superthick, std] (below2) to (stddom2)
                                         (above1) to (stdcod1)
                                         (above3) to (stdcod3)
                                         (abovelast) to (stdcodlast);
\path (origin) ++(0.75,-1.25) node[font=\large] (domdots) {$\dotsm$};
\path (origin) ++(0.75,1.25) node[font=\large] (coddots) {$\dotsm$};
\end{tikzpicture}
\end{gathered} & & \text{($m_{st}$ odd)} \\
\begin{gathered}
\begin{tikzpicture}[scale=0.9,baseline=(origin.center),font=\scriptsize]
\coordinate (origin) at (0,0) {} node[rectangle, minimum width=4.5cm] {};
\path (origin) ++(-2,-1) coordinate (dom1) {} ++(0,-0.25) coordinate (below1) {} ++(0,-0.42) coordinate (stddom1) {};
\path (origin) ++(-1.25,-1) coordinate (dom2) {} ++(0,-0.25) coordinate (below2) {} ++(0,-0.42) coordinate (stddom2) {};
\path (origin) ++(-0.5,-1) coordinate (dom3) {} ++(0,-0.25) coordinate (below3) {} ++(0,-0.42) coordinate (stddom3) {};
\path (origin) ++(2,-1) coordinate (domlast) {} ++(0,-0.25) coordinate (belowlast) {} ++(0,-0.42) coordinate (stddomlast) {};
\path (origin) ++(-2,1) coordinate (cod1) {} ++(0,0.25) coordinate (above1) {} ++(0,0.42) coordinate (stdcod1) {};
\path (origin) ++(-1.25,1) coordinate (cod2) {} ++(0,0.25) coordinate (above2) {} ++(0,0.42) coordinate (stdcod2) {};
\path (origin) ++(-0.5,1) coordinate (cod3) {} ++(0,0.25) coordinate (above3) {} ++(0,0.42) coordinate (stdcod3) {};
\path (origin) ++(2,1) coordinate (codlast) {} ++(0,0.25) coordinate (abovelast) {} ++(0,0.42) coordinate (stdcodlast) {};
\draw[red, corner, line width=\superthick, std] (origin) to (dom1) to (below1)
                                   (origin) to (dom3) to (below3)
                                   (origin) to (cod2) to (above2)
                                   (origin) to (codlast) to (abovelast);
\draw[red, line width=\superthick, std] (below1) to (stddom1)
                                        (below3) to (stddom3)
                                        (above2) to (stdcod2)
                                        (abovelast) to (stdcodlast);
\draw[blue, corner, line width=\superthick, std] (origin) to (dom2) to (below2)
                                            (origin) to (domlast) to (belowlast)
                                            (origin) to (cod1) to (above1)
                                            (origin) to (cod3) to (above3);
\draw[blue, line width=\superthick, std] (below2) to (stddom2)
                                         (belowlast) to (stddomlast)
                                         (above1) to (stdcod1)
                                         (above3) to (stdcod3);
\path (origin) ++(0.75,-1.25) node[font=\large] (domdots) {$\dotsm$};
\path (origin) ++(0.75,1.25) node[font=\large] (coddots) {$\dotsm$};
\end{tikzpicture}
\end{gathered}
& =
\begin{gathered}
\begin{tikzpicture}[scale=0.9,baseline=(origin.center),font=\scriptsize]
\coordinate (origin) at (0,0) {} node[rectangle, minimum width=4.5cm] {};
\path (origin) ++(-2,-1) coordinate (dom1) {} ++(0,-0.25) coordinate (below1) {} ++(0,-0.42) coordinate (stddom1) {};
\path (origin) ++(-1.25,-1) coordinate (dom2) {} ++(0,-0.25) coordinate (below2) {} ++(0,-0.42) coordinate (stddom2) {};
\path (origin) ++(-0.5,-1) coordinate (dom3) {} ++(0,-0.25) coordinate (below3) {} ++(0,-0.42) coordinate (stddom3) {};
\path (origin) ++(2,-1) coordinate (domlast) {} ++(0,-0.25) coordinate (belowlast) {} ++(0,-0.42) coordinate (stddomlast) {};
\path (origin) ++(-2,1) coordinate (cod1) {} ++(0,0.25) coordinate (above1) {} node[right] {$a^{-1}_{\color{blue} t}$} ++(0,0.42) coordinate (stdcod1) {};
\path (origin) ++(-1.25,1) coordinate (cod2) {} ++(0,0.25) coordinate (above2) {} node[right] {$a^{-1}_{\color{red} s}$} ++(0,0.42) coordinate (stdcod2) {};
\path (origin) ++(-0.5,1) coordinate (cod3) {} ++(0,0.25) coordinate (above3) {} node[right] {$a^{-1}_{\color{blue} t}$} ++(0,0.42) coordinate (stdcod3) {};
\path (origin) ++(2,1) coordinate (codlast) {} ++(0,0.25) coordinate (abovelast) {} node[right] {$a^{-1}_{\color{red} s}$} ++(0,0.42) coordinate (stdcodlast) {};
\draw[red, corner, line width=\superthick] (origin) to (dom1) to (below1)
                                   (origin) to (dom3) to (below3)
                                   (origin) to (cod2) to (above2)
                                   (origin) to (codlast) to (abovelast);
\draw[red, line width=\superthick, std] (below1) to (stddom1)
                                        (below3) to (stddom3)
                                        (above2) to (stdcod2)
                                        (abovelast) to (stdcodlast);
\draw[blue, corner, line width=\superthick] (origin) to (dom2) to (below2)
                                            (origin) to (domlast) to (belowlast)
                                            (origin) to (cod1) to (above1)
                                            (origin) to (cod3) to (above3);
\draw[blue, line width=\superthick, std] (below2) to (stddom2)
                                         (belowlast) to (stddomlast)
                                         (above1) to (stdcod1)
                                         (above3) to (stdcod3);
\path (origin) ++(0.75,-1.25) node[font=\large] (domdots) {$\dotsm$};
\path (origin) ++(0.75,1.25) node[font=\large] (coddots) {$\dotsm$};
\end{tikzpicture}
\end{gathered} & & \text{($m_{st}$ even)}
\end{aligned}
\end{gather*}

\begin{rem} \hfill 
\label{rem:dgrmBsstdsgnconv}
\begin{enumerate}
\item We warn the reader that the morphisms ${\rm bivalent}_s$ and $\overline{{\rm bivalent}_s}$ are \emph{not} cyclic.
In other words, twisting a bivalent vertex by 180 degrees does not always result in the other bivalent vertex, e.g.~
\begin{equation*}
\begin{gathered}
    \begin{tikzpicture}[xscale=-0.5,yscale=0.5,baseline=(flex)]
        \coordinate (flex) at (0,0);
        \node[circle, minimum size=1.5cm] (spacer) at (flex) {};
        \path (flex) ++(0,0.5) coordinate (above);
        \path (flex) ++(0,-0.5) coordinate (below);
        \path (flex) ++(1,-0.5) coordinate (belowright);
        \path (flex) ++(-1,0.5) coordinate (aboveleft);
        \path (aboveleft) ++(0,-2) coordinate (belowleft);
        \path (belowright) ++(0,2) coordinate (aboveright) {};
        \draw[red,std] (belowleft) to (aboveleft) 
                                      arc [start angle=180, end angle=0, radius=0.5]
                                        to (above) to (flex);
        \draw[red, string] (flex) to (below) arc [start angle=180, end angle=360, radius=0.5]
                                        to (belowright) 
                                        to (aboveright);
    \end{tikzpicture}
\end{gathered}
=
-
\begin{gathered}
    \begin{tikzpicture}[scale=0.5,baseline=(flex)]
        \coordinate (flex) at (0,0);
        \node[circle, minimum size=0.5cm] (spacer) at (flex) {};
        \path (flex) ++(0,-1.5) coordinate (below);
        \path (flex) ++(0,1.5) coordinate (above);
        \draw[std, red, line width=\superthick] (below) to (flex);
        \draw[string, red] (flex) to (above);
    \end{tikzpicture}
\end{gathered}
\neq
\begin{gathered}
    \begin{tikzpicture}[scale=0.5,baseline=(flex)]
        \coordinate (flex) at (0,0);
        \node[circle, minimum size=0.5cm] (spacer) at (flex) {};
        \path (flex) ++(0,-1.5) coordinate (below);
        \path (flex) ++(0,1.5) coordinate (above);
        \draw[std, red, line width=\superthick] (below) to (flex);
        \draw[string, red] (flex) to (above);
    \end{tikzpicture}
\end{gathered}
\text{.}
\end{equation*}
Thankfully the failure of cyclicity is only up to a sign change. 
In particular our sign convention differs from that in \cite{ew-soergelcalc,ew-localizedcalc} by a sign. 

\item The ``$Q$'' in the notation $Q_{\expr{x}}$ for a standard bimodule is a holdover from classical Soergel bimodules, where each standard bimodule is isomorphic to $Q$ as a left $Q$-module. 
This provides us with a mnemonic that this category is defined over $Q$. 
\end{enumerate}
\end{rem}

\begin{defn}
The category $\widetilde{\dgrm}_{\rm std}$ is the graded $R$-linear monoidal subcategory of $Q \otimes \widetilde{\dgrm}_{\rm BS,std}$ defined as follows.
\begin{itemize}
\item The objects of $\widetilde{\dgrm}_{\rm std}$ are the standard bimodules.

\item The morphisms in $\widetilde{\dgrm}_{\rm std}$ are generated by
\begin{itemize}
\item ${\rm stdcap}_s$ and ${\rm stdcup}_s$ for each $s \in \Ss$;

\item ${\rm stdbraid}_{s,t}$ for each $(s,t) \in \Ss \times \Ss$ such that $s \neq t$ and $m_{st}<\infty$.
\end{itemize}
\end{itemize}

The category $\widetilde{\dgrm}_{\rm BS,std}$ is the graded $R$-linear monoidal subcategory of $Q \otimes \widetilde{\dgrm}_{\rm BS,std}$ defined as follows. 
\begin{itemize}
\item Every object in $Q \otimes \widetilde{\dgrm}_{\rm BS,std}$ is an object in $\widetilde{\dgrm}_{\rm BS,std}$.

\item The morphisms in $\widetilde{\dgrm}_{\rm BS,std}$ are generated by
\begin{itemize}
\item all morphisms in the $R$-linear subcategory $\widetilde{\dgrm}_{\rm BS}$;

\item all morphisms in the $R$-linear subcategory $\widetilde{\dgrm}_{\rm std}$;

\item ${\rm bivalent}_s$ and $\overline{{\rm bivalent}_s}$ for each $s \in \Ss$.
\end{itemize}
\end{itemize}
\end{defn}

As with $\widetilde{\dgrm}_{\rm BS}$, we may similarly construct categories $\gls*{dgrmstd}$ and $\gls*{dgrmBSstd}$ from $\widetilde{\dgrm}_{\rm std}$ and $\widetilde{\dgrm}_{\rm BS,std}$, which have a grade shift functor instead of graded $\Hom$-spaces, and we will not usually distinguish between the tilde and non-tilde versions of these categories.
We call $\dgrmstd$ the \defnemph{(diagrammatic) standard category} and $\dgrmBSstd$ the \defnemph{mixed category}.
As our notation suggests, the scalar extension of $\widetilde{\dgrm}_{\rm BS,std}$ to a $Q$-linear category is the category we have labeled $Q \otimes \widetilde{\dgrm}_{\rm BS,std}$, which contains the scalar extension $Q \otimes \widetilde{\dgrm}_{\rm std}$ of $\widetilde{\dgrm}_{\rm std}$.
Abusing notation slightly we will write $Q \otimes \dgrmBSstd$ and $Q \otimes \dgrmstd$ for the versions of these categories with grade shift functors instead of graded $\Hom$-spaces.
For consistency with the convention in Remark~\ref{rem:dgrmBsstdsgnconv}, we label standard bimodules in $\widetilde{\dgrm}_{\rm std}$ and $\widetilde{\dgrm}_{\rm BS,std}$ (and their non-tilde counterparts) by $\glsuseri*{Rexprx}$ for $\expr{x} \in \expr{\Ss}$.
We also note that the duality functor on $\dgrmBS$ carries over to all of the categories above in the obvious way.

We call a string diagram arising from compositions and tensor products of standard caps, cups, and braids a \defnemph{basic standard diagram}. 
A \defnemph{standard diagram} is any $Q$-linear multiple of a basic standard diagram.
The morphisms in $\dgrmstd$ have the following description (see \cite[Theorem~4.8]{ew-soergelcalc} and \cite[Proposition~5.23]{ew-soergelcalc}).

\begin{thm} \label{thm:stdhoms}
Let $\expr{x},\expr{y} \in \expr{\Ss}$.
\begin{enumerate}[label={\rm (\roman*)}]
\item If $x=y$, then $\Hom_{\dgrmstd}^{\bullet}(R_{\expr{x}},R_{\expr{y}}) \iso R$. 
Moreover, all basic standard diagrams in this $\Hom$-space are equal, and span the entire $\Hom$-space as a left or right $R$-module.

\item If $x \neq y$, then $\Hom_{\dgrmstd}^{\bullet}(R_{\expr{x}},R_{\expr{y}})=0$. 
\end{enumerate}
\end{thm}

An immediate consequence of this result is that any basic standard diagram is an isomorphism. 
Thus we may label each standard bimodule up to isomorphism by an element of $\W$ instead of an expression, e.g.~$\glsuseri*{Rx}$.

From \eqref{eq:stdBSstd} and \eqref{eq:BSstdBS} we observe that 
\begin{align}
\begin{gathered}
    \begin{tikzpicture}[baseline=(origin)]
        \coordinate (origin) at (0,0) {};
        \node[right=3pt] (label) at (origin) {$a^{-1}_{\color{red} s}$};
        \path (origin) ++(0,0.875) coordinate (cod);
        \path (origin) ++(0,-0.875) coordinate (dom);
        \path (origin) ++(0,0.375) coordinate (dottop);
        \path (origin) ++(0,-0.375) coordinate (dotbot);
        \node[dot,fill=red] (dot1) at (dottop) {};
        \node[dot,fill=red] (dot2) at (dotbot) {};
        \draw[red, string] (cod) to (dottop) 
                           (dotbot) to (dom);
    \end{tikzpicture}
\end{gathered}  \text{,}& &
\begin{gathered}
    \begin{tikzpicture}[baseline=(origin)]
        \coordinate (origin) at (0,0) {};
        \node[right=3pt] (label) at (origin) {$a^{-1}_{\color{red} s}$};
        \path (origin) ++(0,0.875) coordinate (cod);
        \path (origin) ++(0,-0.875) coordinate (dom);
        \path (origin) ++(0,0.375) coordinate (bivtop);
        \path (origin) ++(0,-0.375) coordinate (bivbot);
        \draw[red, std] (bivtop) to (bivbot);
        \draw[red, string] (cod) to (bivtop) 
                           (bivbot) to (dom);
    \end{tikzpicture}
\end{gathered} &  \label{eq:dotbividempotents}
\end{align}
are complementary idempotent endomorphisms in $Q \otimes \widetilde{\dgrm}_{\rm BS,std}$. 
Thus the Bott--Samelson bimodule $B_{\expr{s}}$ decomposes as a direct sum $Q_{\emptexpr} \oplus Q_{\expr{s}}$. 
To fix such a decomposition, we choose factorizations
\begin{align*}
B_{\expr{s}} \xrightarrow{\pi_{s,0}} Q_{\emptexpr} & \xrightarrow{\iota_{s,0}} B_{\expr{s}} \\
B_{\expr{s}} \xrightarrow{\pi_{s,1}} Q_{\expr{s}} & \xrightarrow{\iota_{s,1}} B_{\expr{s}}
\end{align*}
for the idempotents in \eqref{eq:dotbividempotents} following \cite[\S 5.4]{ew-soergelcalc} (see also \cite[\S 4]{ew-localizedcalc}):
\begin{align}
\pi_{{\color{red} s},0}& =\begin{gathered}
    \begin{tikzpicture}[baseline=(origin)]
        \coordinate (origin) at (0,0) {};
        \node[right=3pt] (label) at (origin) {$a^{-1}_{\color{red} s}$};
        \path (origin) ++(0,0.5) coordinate (cod);
        \path (origin) ++(0,-0.5) coordinate (dom);
        \node[dot,fill=red] (dot) at (origin) {};
        \draw[red, string] (origin) to (dom);
    \end{tikzpicture}
\end{gathered}\text{,} & 
\pi_{{\color{red} s},1}& =
\begin{gathered}
    \begin{tikzpicture}[baseline=(origin)]
        \coordinate (origin) at (0,0) {};
        \node[right=3pt] (label) at (origin) {$a^{-1}_{\color{red} s}$};
        \path (origin) ++(0,0.5) coordinate (cod);
        \path (origin) ++(0,-0.5) coordinate (dom);
        \draw[red, std] (cod) to (origin);
        \draw[red, string] (origin) to (dom);
    \end{tikzpicture}
\end{gathered}\text{,} \label{eq:locprojsingle} \\
\iota_{{\color{red} s},0}& =\begin{gathered}
    \begin{tikzpicture}[baseline=(origin)]
        \coordinate (origin) at (0,0) {};
        \node[right=3pt] (label) at (origin) {$1$};
        \path (origin) ++(0,0.5) coordinate (cod);
        \path (origin) ++(0,-0.5) coordinate (dom);
        \node[dot,fill=red] (dot) at (origin) {};
        \draw[red,string] (origin) to (cod);
    \end{tikzpicture}
\end{gathered} \text{,} & 
\iota_{{\color{red} s},1}& =\begin{gathered}
    \begin{tikzpicture}[baseline=(origin)]
        \coordinate (origin) at (0,0) {};
        \node[right=3pt] (label) at (origin) {$1$};
        \path (origin) ++(0,0.5) coordinate (cod);
        \path (origin) ++(0,-0.5) coordinate (dom);
        \draw[red, std] (dom) to (origin);
        \draw[red,string] (origin) to (cod);
    \end{tikzpicture}
\end{gathered} \text{.} \label{eq:locinclsingle}
\end{align}
For $\expr{x} \in \expr{\Ss}$ and $\seq{e}=(\seq{e}_1,\seq{e}_2,\dotsc,\seq{e}_m) \in [\expr{x}]$, we write 
\begin{equation*}
Q_{\seq{e}}= \bigotimes_{i=1}^m \begin{cases}
Q_{\emptexpr} & \text{if $\seq{e}_i=(s_i,0),$} \\
Q_{s_i} & \text{if $\seq{e}_i=(s_i,1)$}
\end{cases}
\end{equation*}
in $Q \otimes \dgrmBSstd$. 
It is clear that $Q_{\seq{e}} \iso Q_{\hat{e}}$.
Similarly we write 
\begin{align*}
\gls*{pie} &= 
\bigotimes_{i=1}^m \pi_{\seq{e}_i} & 
\iota_{\seq{e}} &= 
\bigotimes_{i=1}^m \iota_{\seq{e}_i}
\end{align*}
which are morphisms $B_{\expr{x}} \longrightarrow Q_{\patt{r}}$ and $Q_{\seq{e}} \longrightarrow B_{\expr{x}}$ respectively in $Q \otimes \dgrmBSstd$.
By induction, for $\expr{x} \in \expr{\Ss}$ we obtain a fixed decomposition in $Q \otimes \widetilde{\dgrm}_{\rm BS,std}$ (see also \cite[\S 5.5]{ew-soergelcalc}):
\begin{equation}
B_{\expr{x}} \iso \bigoplus_{\seq{e} \in [\expr{x}]} Q_{\seq{e}} \text{.}
\end{equation}

For any morphism $\phi:B_{\expr{x}} \rightarrow B_{\expr{y}}$ in $\dgrmBS$, the $Q$-scalar extension of $\phi$ decomposes into a $[\expr{y}] \times [\expr{x}]$ matrix of standard morphisms called the \defnemph{localization matrix}.
The $(\seq{f},\seq{e})$-entry in the localization matrix is the composition
\begin{equation*}
Q_{\seq{e}} \xrightarrow{\iota_{\seq{e}}} B_{\expr{x}} \xrightarrow{\phi} B_{\expr{y}} \xrightarrow{\pi_{\seq{f}}} Q_{\seq{f}} \text{.}
\end{equation*}
Diagrammatically, calculating these entries involves adding dots or bivalent vertices to each of the generators in the domain and codomain. 
We say that a generator has been \defnemph{standardized} if a bivalent vertex has been added to it (i.e.~it corresponds to a subsequence term of type $1$).
The most important fact about localization is that it is faithful, i.e.~two morphisms in $\dgrmBS$ are equal if and only if they have the same localization matrices \cite[{\S 5.5}]{ew-soergelcalc}.
When it is clear from context, we will omit the basic standard diagrams and only write the left scalar coefficients in the entries of the localization matrix.


\begin{exam} \label{exam:localizationsingle} \hfill
\begin{enumerate}
\item The localization matrix of ${\rm dot}_{\color{red} s}$ is
\begin{equation*}
\begin{gathered}
    \begin{tikzpicture}[baseline=(origin)]
        \coordinate (origin) at (0,0) {};
        \path (origin) ++(0,0.5) coordinate (cod);
        \path (origin) ++(0,-0.5) coordinate (dom);
        \node[dot,fill=red] (dot) at (origin) {};
        \draw[red, string] (origin) to (dom);
    \end{tikzpicture}
\end{gathered}
\end{equation*}
is 
\begin{equation*}
\begin{bmatrix}
\begin{gathered}
    \begin{tikzpicture}[baseline=(origin)]
        \coordinate (origin) at (0,0) {};
        \path (origin) ++(0,-0.5) coordinate (biv);
        \path (biv)    ++(0,-0.5) coordinate (dom);
        \node[dot,fill=red] (dot) at (origin) {};
        \draw[red, string] (origin) to (biv);
        \draw[red,std] (biv) to (dom);
        \node[right=3pt] (label) at (biv) {};
    \end{tikzpicture}
\end{gathered} & 
\begin{gathered}
    \begin{tikzpicture}[baseline=(origin)]
        \coordinate (origin) at (0,0) {};
        \path (origin) ++(0,-0.5) coordinate (origin2);
        \node[dot,fill=red] (dot) at (origin) {};
        \node[dot,fill=red] (dot2) at (origin2) {};
        \draw[red, string] (origin) to (origin2);
        \node[right=3pt] (label) at (dot2) {};
    \end{tikzpicture}
\end{gathered}
\end{bmatrix}=\begin{bmatrix}
0 & a_{\color{red} s}
\end{bmatrix} \text{.}
\end{equation*}

\item The localization matrix of $\overline{{\rm dot}_{\color{red} s}}$ is
\begin{equation*}
\begin{gathered}
    \begin{tikzpicture}[baseline=(origin),yscale=-1]
        \coordinate (origin) at (0,0) {};
        \path (origin) ++(0,0.5) coordinate (cod);
        \path (origin) ++(0,-0.5) coordinate (dom);
        \node[dot,fill=red] (dot) at (origin) {};
        \draw[red, string] (origin) to (dom);
    \end{tikzpicture}
\end{gathered}
\end{equation*}
is 
\begin{equation*}
\begin{bmatrix}
\begin{gathered}
    \begin{tikzpicture}[baseline=(origin),yscale=-1]
        \coordinate (origin) at (0,0) {};
        \path (origin) ++(0,-0.5) coordinate (biv);
        \path (biv)    ++(0,-0.5) coordinate (dom);
        \node[dot,fill=red] (dot) at (origin) {};
        \draw[red, string] (origin) to (biv);
        \draw[red,std] (biv) to (dom);
        \node[right=3pt] (label) at (biv) {$a^{-1}_{\color{red} s}$};
    \end{tikzpicture}
\end{gathered} \\
\begin{gathered}
    \begin{tikzpicture}[baseline=(origin),yscale=-1]
        \coordinate (origin) at (0,0) {};
        \path (origin) ++(0,-0.5) coordinate (origin2);
        \node[dot,fill=red] (dot) at (origin) {};
        \node[dot,fill=red] (dot2) at (origin2) {};
        \draw[red, string] (origin) to (origin2);
        \node[right=3pt] (label) at (dot2) {$a^{-1}_{\color{red} s}$};
    \end{tikzpicture}
\end{gathered}
\end{bmatrix}=\begin{bmatrix}
0 \\
1
\end{bmatrix} \text{.}
\end{equation*}
\end{enumerate}
\end{exam}

\subsection{Dual-valued scalars}

The previous example shows that localization does not behave well with respect to dualization.
This is because the projection and inclusion maps in \eqref{eq:locprojsingle}--\eqref{eq:locinclsingle} are not dual to each other.
Moreover, we cannot avoid this problem by choosing a different factorization of the idempotents \eqref{eq:dotbividempotents}, because the scalar factor $a^{-1}_{s}$ is not a square in $Q$.
To remedy this inconvenience and to avoid drawing diagrams twice, we will adopt the following notation throughout.

\begin{notn}
Let $f,g \in Q$. 
We write $\gls*{binomfg}$ in a region of a mixed diagram to indicate a \defnemph{dual-valued} scalar, i.e.~a possible choice between the scalars $f$ or $g$ in this region, depending on whether the diagram is viewed right-side-up or upside-down.
We call a mixed diagram with a region containing one or more dual-valued scalars a dual-valued diagram.
We write expressions with dual-valued diagrams as shorthand for \emph{two} expressions.
When the expression (and all the diagrams therein) is viewed ``right-side-up'', we choose the upper scalars $f_1,f_2,\dotsc$ for each dual-valued scalar $\binom{f_1}{g_1},\binom{f_2}{g_2},\dotsc$; when the expression is viewed upside-down, we choose the lower scalars $g_1,g_2,\dots$ instead.
In this manner the notation closely resembles the symbols $\pm$ and $\mp$.
For example, if we write
\begin{equation*}
\pi_{{\color{red} s},0}=\begin{gathered}
    \begin{tikzpicture}[baseline=(origin)]
        \coordinate (origin) at (0,0) {};
        \node[right=3pt] (label) at (origin) {$\binom{a^{-1}_{\color{red} s}}{1}$};
        \path (origin) ++(0,0.5) coordinate (cod);
        \path (origin) ++(0,-0.5) coordinate (dom);
        \node[dot,fill=red] (dot) at (origin) {};
        \draw[red, string] (origin) to (dom);
    \end{tikzpicture}
\end{gathered}\text{,}
\end{equation*}
this is shorthand for setting
\begin{equation*}
\pi_{{\color{red} s},0}=\begin{gathered}
    \begin{tikzpicture}[baseline=(origin)]
        \coordinate (origin) at (0,0) {};
        \node[right=3pt] (label) at (origin) {$a^{-1}_{\color{red} s}$};
        \path (origin) ++(0,0.5) coordinate (cod);
        \path (origin) ++(0,-0.5) coordinate (dom);
        \node[dot,fill=red] (dot) at (origin) {};
        \draw[red, string] (origin) to (dom);
    \end{tikzpicture}
\end{gathered}\text{,}
\qquad \text{and} \qquad
\overline{\pi_{{\color{red} s},0}}=\begin{gathered}
    \begin{tikzpicture}[baseline=(origin),yscale=-1]
        \coordinate (origin) at (0,0) {};
        \node[right=3pt] (label) at (origin) {$1$};
        \path (origin) ++(0,0.5) coordinate (cod);
        \path (origin) ++(0,-0.5) coordinate (dom);
        \node[dot,fill=red] (dot) at (origin) {};
        \draw[red, string] (origin) to (dom);
    \end{tikzpicture}
\end{gathered} \text{.}
\end{equation*}
Note the abuse of notation here: the single-valued diagram $\overline{\pi_{s,0}}$ above is \emph{not} the dual of the single-valued diagram $\pi_{s,0}$, but a separate diagram to be used \emph{instead} of the dual of $\pi_{s,0}$.
\end{notn}

It is easiest whenever possible to perform all calculations in the dual-valued setting, and to subsequently interpret any result as a single-valued diagram if needed.
This makes use of some obvious rules of dual-valued arithmetic, e.g.~$\binom{f_1}{g_1}+\binom{f_2}{g_2}=\binom{f_1+f_2}{g_1+g_2}$, $\binom{f_1}{g_1}\binom{f_2}{g_2}=\binom{f_1f_2}{g_1g_2}$, etc.
It is clear that a single-valued scalar $f$ in any region of a dual-valued diagram is equivalent to the dual-valued scalar $\binom{f}{f}$.
We define the dual of a dual-valued diagram to be the dual-valued diagram obtained by flipping the diagram upside-down and replacing every dual-valued scalar $\binom{f}{g}$ with $\overline{\binom{f}{g}}=\binom{g}{f}$.
For homogeneous $f,g$ we also set $\deg \binom{f}{g}=\frac{1}{2}(\deg f+\deg g)$.

Using our new notation, the following dual-valued diagrams
\begin{align}
\glsuseri*{pisubs}& =\begin{gathered}
    \begin{tikzpicture}[baseline=(origin)]
        \coordinate (origin) at (0,0) {};
        \node[right=3pt] (label) at (origin) {$\binom{a^{-1}_{\color{red} s}}{1}$};
        \path (origin) ++(0,0.5) coordinate (cod);
        \path (origin) ++(0,-0.5) coordinate (dom);
        \node[dot,fill=red] (dot) at (origin) {};
        \draw[red, string] (origin) to (dom);
    \end{tikzpicture}
\end{gathered}\text{,} &
\pi_{{\color{red} s},1} & =\begin{gathered}
    \begin{tikzpicture}[baseline=(origin)]
        \coordinate (origin) at (0,0) {};
        \node[right=3pt] (label) at (origin) {$\binom{a^{-1}_{\color{red} s}}{1}$};
        \path (origin) ++(0,0.5) coordinate (cod);
        \path (origin) ++(0,-0.5) coordinate (dom);
        \draw[red, std] (cod) to (origin);
        \draw[red, string] (origin) to (dom);
    \end{tikzpicture}
\end{gathered}\text{,}  \label{eq:locproj} \\
\overline{\pi_{{\color{red} s},0}}& =\begin{gathered}
    \begin{tikzpicture}[baseline=(origin)]
        \coordinate (origin) at (0,0) {};
        \node[right=3pt] (label) at (origin) {$\binom{1}{a^{-1}_{\color{red} s}}$};
        \path (origin) ++(0,0.5) coordinate (cod);
        \path (origin) ++(0,-0.5) coordinate (dom);
        \node[dot,fill=red] (dot) at (origin) {};
        \draw[red,string] (origin) to (cod);
    \end{tikzpicture}
\end{gathered} \text{,} & 
\overline{\pi_{{\color{red} s},1}}& =\begin{gathered}
    \begin{tikzpicture}[baseline=(origin)]
        \coordinate (origin) at (0,0) {};
        \node[right=3pt] (label) at (origin) {$\binom{1}{a^{-1}_{\color{red} s}}$};
        \path (origin) ++(0,0.5) coordinate (cod);
        \path (origin) ++(0,-0.5) coordinate (dom);
        \draw[red, std] (dom) to (origin);
        \draw[red,string] (origin) to (cod);
    \end{tikzpicture}
\end{gathered} \text{,} \label{eq:locincl}
\end{align}
can be used to calculate the localization of a morphism and its dual at the same time.
We define the \defnemph{dual-valued localization matrix} in an analogous manner to its single-valued counterpart, but using \eqref{eq:locproj}--\eqref{eq:locincl} instead of \eqref{eq:locprojsingle}--\eqref{eq:locinclsingle}.


\begin{exam} \label{exam:localization}
The dual-valued localization matrix of the dot morphism
\begin{equation*}
\begin{gathered}
    \begin{tikzpicture}[baseline=(origin)]
        \coordinate (origin) at (0,0) {};
        \path (origin) ++(0,0.5) coordinate (cod);
        \path (origin) ++(0,-0.5) coordinate (dom);
        \node[dot,fill=red] (dot) at (origin) {};
        \draw[red, string] (origin) to (dom);
    \end{tikzpicture}
\end{gathered}
\end{equation*}
is 
\begin{equation*}
\begin{bmatrix}
\begin{gathered}
    \begin{tikzpicture}[baseline=(origin)]
        \coordinate (origin) at (0,0) {};
        \path (origin) ++(0,-0.5) coordinate (biv);
        \path (biv)    ++(0,-0.5) coordinate (dom);
        \node[dot,fill=red] (dot) at (origin) {};
        \draw[red, string] (origin) to (biv);
        \draw[red,std] (biv) to (dom);
        \node[right=3pt] (label) at (biv) {$\binom{1}{a^{-1}_{\color{red} s}}$};
    \end{tikzpicture}
\end{gathered} & 
\begin{gathered}
    \begin{tikzpicture}[baseline=(origin)]
        \coordinate (origin) at (0,0) {};
        \path (origin) ++(0,-0.5) coordinate (origin2);
        \node[dot,fill=red] (dot) at (origin) {};
        \node[dot,fill=red] (dot2) at (origin2) {};
        \draw[red, string] (origin) to (origin2);
        \node[right=3pt] (label) at (dot2) {$\binom{1}{a^{-1}_{\color{red} s}}$};
    \end{tikzpicture}
\end{gathered}
\end{bmatrix}=\begin{bmatrix}
0 & \binom{a_{\color{red} s}}{1}
\end{bmatrix} \text{.}
\end{equation*}
\end{exam}

Dual-valued localization matrices satisfy the following algebraic properties.

\begin{lem}
Let $\phi$ be a morphism in $\dgrmBS$.
\begin{enumerate}[label={\rm (\roman*)}]
\item If $\phi$ is homogeneous, then all entries of the dual-valued localization matrix of $\phi$ have degree equal to $\deg \phi$.

\item The dual-valued localization matrix of $\overline{\phi}$ is equal to the transpose-dual of the dual-valued localization matrix of $\phi$.
\end{enumerate}
\end{lem}

For the rest of this paper we will use the dual-valued versions of $\pi_{s,0}$ and $\pi_{s,1}$ in \eqref{eq:locproj}--\eqref{eq:locincl}, and we will use the term ``localization matrix'' to refer solely to the dual-valued localization matrix.




We now use $\pi_{s,1}$ to define the following dual-valued morphisms that behave very similarly to ${\rm stdcup}_s$ and ${\rm stdcap}_s$, which will appear extensively in the next section.

\begin{defn}
For $s \in S$ let
\begin{equation*}
\glsuseri*{epsilons} = {\rm cap}_s \circ (\overline{\pi_{s,1}} \otimes \overline{\pi_{s,1}})=
\begin{gathered}
    \begin{tikzpicture}[scale=1,baseline=(origin)]
        \coordinate (origin) at (0,0);
        \path (origin) ++(0.5,0) coordinate (right);
        \path (origin) ++(-0.5,0) coordinate (left);
        \path (left) ++(0,-0.25) coordinate (leftid) node[font=\scriptsize,right=0pt] {$\binom{1}{a^{-1}_{\color{red} s}}$};
        \path (right) ++(0,-0.25) coordinate (rightid) node[font=\scriptsize,right=0pt] {$\binom{1}{a^{-1}_{\color{red} s}}$};
        \path (left) ++(0,-0.75) coordinate (belowleft);
        \path (right) ++(0,-0.75) coordinate (belowright) {};
        \draw[red,string] (leftid) to (left) arc [start angle=180, end angle=0, radius=0.5]
                    to (right) to (rightid);
        \draw[red,std] (rightid) to (belowright)
                          (leftid) to (belowleft);
    \end{tikzpicture}
\end{gathered}=
\begin{gathered}
    \begin{tikzpicture}[scale=1,baseline=(origin)]
        \coordinate (origin) at (0,0);
        \path (origin) ++(0.5,0) coordinate (right);
        \path (origin) ++(-0.5,0) coordinate (left);
        \path (left) ++(0,-0.25) coordinate (leftid);
        \path (right) ++(0,-0.25) coordinate (rightid) node[font=\scriptsize,right=0pt] {$\binom{a_{\color{red} s}}{-a^{-1}_{\color{red} s}}$};
        \path (left) ++(0,-0.75) coordinate (belowleft);
        \path (right) ++(0,-0.75) coordinate (belowright) {};
        \draw[red,std] (belowleft) to (leftid) to (left) 
                                      arc [start angle=180, end angle=0, radius=0.5]
                                        to (right) to (rightid) to (belowright);
    \end{tikzpicture}
\end{gathered}
={\rm stdcap}_s \binom{a_s}{-a^{-1}_s} \text{,}
\end{equation*}
a dual-valued morphism in $Q \otimes \dgrmstd$. 
For $\expr{w}=\expr{st \dotsm} \in \expr{\Ss}$ we similarly define $\epsilon_{\expr{w}}$ in a manner analogous to ${\rm cap}_{\expr{w}}$ or ${\rm stdcap}_{\expr{w}}$.
We call a dual-valued morphism arising from compositions and tensor products of standard braids, $\epsilon_{s}$ and $\overline{\epsilon_{s}}$ for $s \in \Ss$ an \defnemph{$\epsilon$-basic standard diagram}.
\end{defn}

The following lemma shows that $\epsilon_{s}$ behaves well with respect to $\pi_{s,1}$ (see \eqref{eq:locproj}--\eqref{eq:locincl}). 

\begin{lem} \label{lem:epspisubs}
Let $s \in \Ss$.
We have
\begin{align*}
\epsilon_{s} \circ (\ident \otimes \pi_{s,1})& ={\rm cap}_s \circ (\overline{\pi_{s,1}} \otimes \ident) \text{,} & 
\epsilon_{s} \circ (\pi_{s,1} \otimes \ident)& ={\rm cap}_s \circ (\ident \otimes \overline{\pi_{s,1}}) \text{,} \\
(\ident \otimes \overline{\pi_{s,1}}) \circ \overline{\epsilon_{s}}& =(\pi_{s,1} \otimes \ident) \circ {\rm cup}_s \text{,} & 
(\overline{\pi_{s,1}} \otimes \ident) \circ \overline{\epsilon_{s}}& =(\ident \otimes \pi_{s,1}) \circ {\rm cup}_s \text{.}
\end{align*}
\end{lem}

\begin{proof}
We prove the first equation, as the others are very similar.
\begin{equation*}
\begin{gathered}
    \begin{tikzpicture}[scale=1,baseline=(origin)]
        \coordinate (origin) at (0,0);
        \path (origin) ++(0.5,0) coordinate (right);
        \path (origin) ++(-0.5,0) coordinate (left);
        \path (left) ++(0,-0.33) coordinate (leftid) node[font=\scriptsize,right=0pt] {$\binom{1}{a^{-1}_{\color{red} s}}$};
        \path (leftid) ++(0,-0.33) coordinate (leftdi);
        \path (right) ++(0,-0.33) coordinate (rightid) node[font=\scriptsize,above right=0pt] {$\binom{1}{a^{-1}_{\color{red} s}}$};
        \path (rightid) ++(0,-0.33) coordinate (rightdi) node[font=\scriptsize,right=0pt] (pilabel) {$\binom{a^{-1}_{\color{red} s}}{1}$};
        \path (left) ++(0,-1) coordinate (belowleft);
        \path (right) ++(0,-1) coordinate (belowright);
        \draw[red,std] (belowleft) to (leftid) (rightid) to (rightdi);
        \draw[red,string] (leftid) to (left) arc [start angle=180, end angle=0, radius=0.5]
                            to (right) to (rightid) (rightdi) to (belowright);
    \end{tikzpicture}
\end{gathered}=
\begin{gathered}
    \begin{tikzpicture}[scale=1,baseline=(origin)]
        \coordinate (origin) at (0,0);
        \path (origin) ++(0.5,0) coordinate (right);
        \path (origin) ++(-0.5,0) coordinate (left);
        \path (left) ++(0,-0.33) coordinate (leftid) node[font=\scriptsize,right=0pt] {$\binom{1}{a^{-1}_{\color{red} s}}$};
        \path (leftid) ++(0,-0.33) coordinate (leftdi);
        \path (right) ++(0,-0.33) coordinate (rightid);
        \path (rightid) ++(0,-0.33) coordinate (rightdi);
        \path (left) ++(0,-1) coordinate (belowleft);
        \path (right) ++(0,-1) coordinate (belowright);
        \draw[red,std] (belowleft) to (leftid) (rightid) to node[black,font=\scriptsize,right=0pt] {$a^{-1}_{\color{red} s}$} (rightdi);
        \draw[red,string] (leftid) to (left) arc [start angle=180, end angle=0, radius=0.5]
                            to (right) to (rightid) (rightdi) to (belowright);
    \end{tikzpicture}
\end{gathered}=
\begin{gathered}
    \begin{tikzpicture}[scale=1,baseline=(origin)]
        \coordinate (origin) at (0,0);
        \path (origin) ++(0.5,0) coordinate (right);
        \path (origin) ++(-0.5,0) coordinate (left);
        \path (left) ++(0,-0.33) coordinate (leftid) node[font=\scriptsize,right=0pt] {$\binom{1}{a^{-1}_{\color{red} s}}$};
        \path (leftid) ++(0,-0.33) coordinate (leftdi);
        \path (right) ++(0,-0.33) coordinate (rightid);
        \path (rightid) ++(0,-0.33) coordinate (rightdi);
        \path (left) ++(0,-1) coordinate (belowleft);
        \path (right) ++(0,-1) coordinate (belowright);
        \draw[red,std] (belowleft) to (leftid) ;
        \draw[red,string] (leftid) to (left) arc [start angle=180, end angle=0, radius=0.5]
                            to (right) to (rightid) to (rightdi) to (belowright);
    \end{tikzpicture}
\end{gathered}-
\begin{gathered}
    \begin{tikzpicture}[scale=1,baseline=(origin)]
        \coordinate (origin) at (0,0);
        \path (origin) ++(0.5,0) coordinate (right);
        \path (origin) ++(-0.5,0) coordinate (left);
        \path (left) ++(0,-0.33) coordinate (leftid) node[font=\scriptsize,right=0pt] {$\binom{1}{a^{-1}_{\color{red} s}}$};
        \path (leftid) ++(0,-0.33) coordinate (leftdi);
        \path (right) ++(0,-0.33) coordinate (rightid);
        \path (rightid) ++(0,-0.33) coordinate (rightdi);
        \path (left) ++(0,-1) coordinate (belowleft);
        \path (right) ++(0,-1) coordinate (belowright);
        \draw[red,std] (belowleft) to (leftid);
        \node[dot,fill=red] (updot) at (rightid) {};
        \node[dot,fill=red] (downdot) at (rightdi) {};
        \path (rightid) to node[black,font=\scriptsize,right=0pt] {$a^{-1}_{\color{red} s}$} (rightdi);
        \draw[red,string] (leftid) to (left) arc [start angle=180, end angle=0, radius=0.5]
                            to (right) to (rightid) (rightdi) to (belowright);
    \end{tikzpicture}
\end{gathered}
=\begin{gathered}
    \begin{tikzpicture}[scale=1,baseline=(origin)]
        \coordinate (origin) at (0,0);
        \path (origin) ++(0.5,0) coordinate (right);
        \path (origin) ++(-0.5,0) coordinate (left);
        \path (left) ++(0,-0.33) coordinate (leftid) node[font=\scriptsize,right=0pt] {$\binom{1}{a^{-1}_{\color{red} s}}$};
        \path (leftid) ++(0,-0.33) coordinate (leftdi);
        \path (right) ++(0,-0.33) coordinate (rightid);
        \path (rightid) ++(0,-0.33) coordinate (rightdi);
        \path (left) ++(0,-1) coordinate (belowleft);
        \path (right) ++(0,-1) coordinate (belowright);
        \draw[red,std] (belowleft) to (leftid) ;
        \draw[red,string] (leftid) to (left) arc [start angle=180, end angle=0, radius=0.5]
                            to (right) to (rightid) to (rightdi) to (belowright);
    \end{tikzpicture}
\end{gathered} \qedhere
\end{equation*}
\end{proof}

It is then easy to show that $\epsilon_s$ defines a biadjunction, and that a partially standardized braid is cyclic with respect to this biadjunction.

\begin{lem} \hfill \label{lem:epsbiadjbraidcyclic}
\begin{enumerate}[label={\rm (\roman*)}]
\item For any $s \in \Ss$, we have
\begin{align*}
(\epsilon_{s} \otimes \ident_{Q_s}) \circ (\ident_{Q_s} \otimes \overline{\epsilon_{s}})& =\ident_{Q_s} \text{,} & (\ident_{Q_s} \otimes \epsilon_{s}) \circ (\overline{\epsilon_{s}} \otimes \ident_{Q_s}) & =\ident_{Q_s} \text{.}
\end{align*}

\item Suppose $(s,t) \in \Ss \times \Ss$ with $s \neq t$ and $m_{st}<\infty$. If $m_{st}$ is even, then
\begin{multline*}
(\ident \otimes \epsilon_{s}) \circ (\ident_{Q_s} \otimes ((\ident \otimes \pi_{s,1}) \circ {\rm braid}_{s,t} \circ (\overline{\pi_{s,1}} \otimes \ident)) \otimes \ident_{Q_s}) \circ (\overline{\epsilon_{s}} \otimes \ident) \\
 =(\pi_{s,1} \otimes \ident) \circ {\rm braid}_{t,s} \circ (\ident \otimes \overline{\pi_{s,1}}) \text{,} 
\end{multline*}
\begin{multline*}
(\epsilon_{t} \otimes \ident) \circ (\ident_{Q_t} \otimes ((\pi_{t,1} \otimes \ident) \circ {\rm braid}_{s,t} \circ (\ident \otimes \overline{\pi_{t,1}})) \otimes \ident_{Q_t}) \circ (\ident \otimes \overline{\epsilon_{t}}) \\
=(\ident \otimes \pi_{t,1}) \circ {\rm braid}_{t,s} \circ (\overline{\pi_{t,1}} \otimes \ident)\text{.} 
\end{multline*}
If $m_{st}$ is odd, then
\begin{multline*}
(\ident \otimes \epsilon_{t}) \circ (\ident_{Q_s} \otimes ((\ident \otimes \pi_{t,1}) \circ {\rm braid}_{s,t} \circ (\overline{\pi_{s,1}} \otimes \ident)) \otimes \ident_{Q_t}) \circ (\overline{\epsilon_{s}} \otimes \ident) \\
=(\pi_{s,1} \otimes \ident) \circ {\rm braid}_{t,s} \circ (\ident \otimes \overline{\pi_{t,1}}) \text{,} 
\end{multline*}
\begin{multline*}
(\epsilon_{t} \otimes \ident) \circ (\ident_{Q_t} \otimes ((\pi_{t,1} \otimes \ident) \circ {\rm braid}_{s,t} \circ (\ident \otimes \overline{\pi_{s,1}})) \otimes \ident_{Q_s}) \circ (\ident \otimes \overline{\epsilon_{s}}) \\
=(\ident \otimes \pi_{s,1}) \circ {\rm braid}_{t,s} \circ (\overline{\pi_{t,1}} \otimes \ident) \text{.} 
\end{multline*}
\end{enumerate}
\end{lem}

\begin{proof}
To prove the first claim, standardize the usual biadjunction relation in $\dgrmBS$ and apply Lemma~\ref{lem:epspisubs}.
To prove the second claim, standardize one strand in the cyclicity relation for ${\rm braid}_{s,t}$ in $\dgrmBS$ and apply Lemma~\ref{lem:epspisubs}.
\end{proof}

In \cite[Definition~4.3]{ew-soergelcalc} Elias--Williamson give a presentation of $\dgrmstd$. 
The previous result shows that $\epsilon_s$ satisfies many of the same relations that ${\rm stdcap}_s$ does in this presentation.
Since the presentation of $\dgrmstd$ is enough to show that all basic standard diagrams are equal (Theorem~\ref{thm:stdhoms}), a natural question is whether all $\epsilon$-basic standard diagrams are equal too.

\begin{prop} \label{prop:epscupcapinv}
For any $s \in \Ss$, we have
\begin{align*}
\epsilon_{s} \circ \overline{\epsilon_{s}} & =-\ident_Q \text{,} & \overline{\epsilon_{s}} \circ \epsilon_{s} & =-\ident_Q \text{.}
\end{align*}
More generally, all $\epsilon$-basic standard diagrams with the same domain and codomain are equal up to sign. 
%
\end{prop}

\begin{proof}
For any $s \in \Ss$ we check that
\begin{equation*}
\epsilon_{s} \circ \overline{\epsilon_{s}}= 
\begin{gathered}
    \begin{tikzpicture}[scale=0.5,baseline=(origin)]
        \coordinate (origin) at (0,0);
        \path (origin) ++(0.5,0) coordinate (right);
        \path (origin) ++(-0.5,0) coordinate (left);
        \path (right) node[above right=0.1] (uplabel) {$\binom{a_{\color{red} s}}{-a^{-1}_{\color{red} s}}$};
        \path (right) node[below right=0.1] (downlabel) {$\binom{-a^{-1}_{\color{red} s}}{a_{\color{red} s}}$};
        %
        \draw[red,std] (right) arc [start angle=0, end angle=360, radius=0.5];
    \end{tikzpicture}
\end{gathered}
=\begin{gathered}
    \begin{tikzpicture}[scale=0.5,baseline=(origin)]
        \coordinate (origin) at (0,0);
       \node[circle, minimum size=1cm] (spacer) at (origin) {$\binom{-1}{-1}$};
        %
    \end{tikzpicture}
\end{gathered}
=-\ident_Q \text{,}
\end{equation*}
and
\begin{align*}
\overline{\epsilon_{s}} \circ \epsilon_{s} & =\binom{-a^{-1}_s}{a_s} \binom{a_s}{-a^{-1}_s} \ident_{Q_{\expr{ss}}} \\
& = -\ident_{Q_{\expr{ss}}} \text{.}
\end{align*}

Next we check that $\epsilon_s$ and ${\rm stdbraid}_{s,t}$ satisfy the relations \cite[(4.1)--(4.9)]{ew-soergelcalc} as well as the biadjunction and cyclicity relations up to sign. 
We have already proved that \cite[(4.2)--(4.3)]{ew-soergelcalc} holds up to a sign. 
None of the other numbered relations \cite[(4.1), (4.4)--(4.9)]{ew-soergelcalc} involve standard cups or caps, so they follow immediately.
The first part of Lemma~\ref{lem:epsbiadjbraidcyclic} is the biadjunction relation.
Finally, cyclicity of the standard braid follows from the second part of Lemma~\ref{lem:epsbiadjbraidcyclic} after standardizing all strands. 
The second claim then follows by Theorem~\ref{thm:stdhoms}.
\end{proof}

\begin{notn}
%
Suppose $s \in \Ss$ is colored red. 
To save space in our diagrams, we will write 
\begin{align*} 
\inclscal{red}& = \binom{1}{a^{-1}_{\color{red} s}} \text{,} & \epsscal{red} & =\binom{a_{\color{red} s}}{-a^{-1}_{\color{red} s}} \text{,} \\
\projscal{red}& =\binom{a^{-1}_{\color{red} s}}{1} \text{,} & \spescal{red}& =\binom{-a^{-1}_{\color{red} s}}{a_{\color{red} s}} \text{.}
\end{align*}
\end{notn}

\section{The Frobenius functor}
\label{sec:frobfunctor}

For the rest of this paper, we fix $\field$ to be a field of positive characteristic $p \neq 2,3$ and set $l=p$.
Let $\glsuseri*{V}=\Bbbk \otimes V_A$ be the universal realization of $(\W,\Ss)$ over $\field$ with respect to the affine Cartan matrix $A=(a_{st})_{s,t \in \Ss}$, and let $\dgrmBS$, $\dgrm$, $\dgrmBSstd$, and $\dgrmstd$ be the associated categories over this realization defined in the previous section. 
Similarly we write $\gls*{VF}=\Bbbk \otimes V_A^F$ for the $F$-twist of $V$ (see Definition~\ref{defn:Ftwist}). 
Write $\glsuseri*{dgrmBSF}$, $\dgrmF$, and $\dgrm_{\rm std}^F$ for the associated categories over $V^F$. 
In general we will write a superscript $F$ to denote any Soergel bimodule construction which is applied using the realization $V^F$, such as $B^F_s$ for a Bott--Samelson bimodule in $\dgrmBSF$.
In particular, we note that the polynomial ring $R^F=\mathrm{Sym}(V^F)$ is isomorphic to $R$ as a ring, because $V^F$ and $V$ are identical as vector spaces.

The goal of this section is to construct a monoidal functor $F$ which embeds $\dgrmBSF$ inside a scalar extension of $\dgrmBSstd$. 
The action of this functor on the degraded Grothendieck rings of the respective categories (which are both just $\ZZ\W$) is the Frobenius map, so we call $F$ the \defnemph{Frobenius functor}. 
We first describe a ring extension $\hat{R}$ of $R$ over which we will define the functor $F$.
Next, we will construct an alternative embedding of the standard category in $\dgrmBSstd$. 
Finally we will use this to construct our desired embedding of $\dgrmBSF$ inside $\hat{R} \otimes \dgrmBSstd$.

\subsection{A new ring}

The functor $F$ requires working over an extension $\hat{R}$ of $R$ in which certain elements are invertible.
It will be convenient for us to further assume that:
\begin{enumerate}[label=($\hat{R}$\arabic*)]
\item \label{enum:Rhataxiom1} the non-invertible elements in $V \subseteq \hat{R}$ are precisely $\bigoplus_{s \in \Ssf} \field a_s$;

\item \label{enum:Rhataxiom2} $\hat{R}$ is a complete discrete valuation ring.
\end{enumerate}
We can use standard commutative algebra facts to construct such an extension. 
Take the prime ideal $\mathfrak{p}=(a_s : s \in \Ssf)$ in $R$ and consider the localization $R_{\mathfrak{p}}$. 
This ring is a Noetherian local domain, so it is dominated by a discrete valuation ring (e.g.~\cite[{Exercises~11.2--11.3}]{eisenbud}), whose completion $\hat{R}$ satisfies both axioms. 
For completeness we describe this construction in more detail below. 

\begin{defn} \label{defn:Rhat}
The ring $\gls*{hatR}$ is the following iterated extension of $R$. 
First let $R'=R[a_s/a_t : s,t \in \Ssf]$. 
This is the subring of $Q$ generated by $R$ and the fractions 
\begin{equation*}
\{a_s/a_t : s,t \in \Ssf\} \subset Q \text{.}
\end{equation*}
In $R'$, for any $s \in \Ssf$ the ideal $\mathfrak{p}'=R'a_s$ is prime and does not depend on $s$, so we may consider the localization $R'_{\mathfrak{p}'}$ of $R'$ at $\mathfrak{p}'$. 
Finally let $\hat{R}$ be the completion of $R'_{\mathfrak{p}'}$ with respect to $\mathfrak{p}'_{\mathfrak{p}'}$: 
\begin{equation*}
R \subset R'=R[a_s/a_t : s,t \in \Ssf] \subset R'_{\mathfrak{p}'} \subset \hat{R} \text{.}
\end{equation*}
As $\mathfrak{p}'$ is a principal ideal, $R'_{\mathfrak{p}'}$ and therefore $\hat{R}$ are discrete valuation rings. 
Let $\glsuseri*{val}$ be the valuation on $\hat{R}$ normalized so that $\val a_s=2$ for all $s \in \Ssf$ and $\val a_{\tilde{s}}=0$. 
For $f,g \in \hat{R}$ we also set $\val \binom{f}{g}=\frac{1}{2}(\val f+\val g)$. 
Finally, we write $\gls*{hatQ}$ for the fraction field of $\hat{R}$.
\end{defn}


\begin{rem}
The structure of the maximal ideal of $\hat{R}$ ensures that for each $s \in \Ss$, the object $B^{(F)}_s$ (defined later in \S\ref{sec:Frobdotsforks}) is indecomposable because $a_{F(s)}$ is not invertible in $\hat{R}$. 
This is analogous to $B_s$ being indecomposable in $\dgrm$ because $a_s$ is not invertible in $R$ (see Example~\ref{exam:singgenindecomp}). 
\end{rem}

The valuation in $\hat{R}$ gives a useful invariant, playing a role largely analogous to degree in $R$. 
To emphasize this connection (and to avoid confusion with the generator of $\laur$) we have chosen the notation $\val$ instead of the more usual $v$ or $\nu$. 
It is also important that $\hat{R}$ is a complete local ring so that when we later define the category $\dgrmpast$ over $\hat{R}$, it retains the Krull--Schmidt property (cf.~\cite[Lemma~6.25]{ew-soergelcalc}).

In order for $\hat{R}$ to take the role of $R$ in later sections, we will require a suitable replacement for the notion of a graded $R$-module. 
The valuation on $\hat{R}$ gives rise to a (non-archimedean) absolute value. 
It turns out that the appropriate replacement for ``graded $R$-modules'' is ``normed $\hat{R}$-modules''. 
We give a brief overview of the relevant theory, translated into the language of valuations (see also e.g.~\cite[\S 2.2]{ardakov-wadsley}).

\begin{defn} \label{defn:valmod} \hfill
\begin{enumerate}
\item Let $M$ be an $\hat{R}$-module. 
A \defnemph{valuation} on $M$ is a function $\val : M \rightarrow \ZZ \cup \{\infty\}$ which satisfies the following properties for all $m,m' \in M$ and $f \in \hat{R}$:
\begin{itemize}
\item $\val m=\infty$ if and only if $m=0$;

\item $\val(m+m') \geq \min(\val m,\val m')$;

\item $\val(fm)=\val f + \val m$.
\end{itemize}
An \defnemph{$\hat{R}$-valuation module} is an $\hat{R}$-module equipped with a valuation. 
We say that two $\hat{R}$-valuation modules $M$ and $N$ are \defnemph{valuation isomorphic} if there are mutually inverse $\hat{R}$-module isomorphisms $M \rightarrow N$ and $N \rightarrow M$ which preserve the valuation.

\item Let $\mathcal{M}$ be a category enriched in $\hat{R}$-modules. 
A \defnemph{valuation} on the $\Hom$-spaces of $\mathcal{M}$ is a collection of functions $\val: \Hom_{\mathcal{M}}(M,N) \rightarrow \ZZ \cup \{\infty\}$ for all objects $M,N$ in $\mathcal{M}$ which give each $\Hom$-space the structure of a valuation module, with the following additional properties for all morphisms $\beta$ and $\gamma$ in $\mathcal{M}$:
\begin{itemize}
\item $\val(\ident_M)=0$;

\item $\val(\beta \circ \gamma) \geq \val \beta+\val \gamma$ if $\beta$ and $\gamma$ are composable.
\end{itemize}
We write $\Hom_{\mathcal{M}}^{\blacklozenge}$ to refer to the $\Hom$-space considered as a valuation module. 
We say that two objects $M$ and $N$ are \defnemph{valuation isomorphic} in $\mathcal{M}$ if there are two mutually inverse isomorphisms $M \rightarrow N$ and $N \rightarrow M$ in $\mathcal{M}$ of valuation $0$.
\end{enumerate}
\end{defn}

\begin{exam} \label{exam:valmodind} \hfill
\begin{enumerate}
\item The ring $\hat{R}$ as a left $\hat{R}$-module is a valuation module, with valuation equal to the ring valuation. 
More generally, any finitely generated free $\hat{R}$-module $\hat{R}^{\oplus m}$ is a valuation module with valuation 
\begin{equation*}
\val (r_1, r_2,\dotsc,r_m)=\min_{i} \val(r_i) \text{.}
\end{equation*}

\item Consider the category $\mathcal{M}$ of all $\hat{R}$-valuation modules. For $\beta : M \rightarrow N$ a morphism in this category, let 
\begin{equation*}
\val_{\mathcal{M}} \beta=\inf_{m \in M \setminus \{0\}} (\val_N(\beta(m))-\val_M m) \text{.} 
\end{equation*}
Then $\val_{\mathcal{M}}$ is a valuation on $\mathcal{M}$, called the \defnemph{induced valuation}. 
Moreover, if $\beta$ is an isomorphism then $\beta$ preserves the valuation if and only if $\val_{\mathcal{M}} \beta=0$. 
In other words, the two notions of valuation isomorphism above coincide.
\end{enumerate}
\end{exam}

Let $M$ be a valuation module with valuation $\val_M$. 
For $i \in \ZZ$, the \defnemph{valuation shift} $M\glsplural{pip}$ is the valuation module given by $M$ with the new valuation $\val_{M\langle i\rangle} m=i+\val m$. 
Valuation shift plays a similar role for valuation modules as the grade shift functor does for graded modules. 

For valuation modules $M,N$ with valuations $\val_M,\val_N$ respectively, the direct sum $M \oplus N$ is a valuation module with valuation 
\begin{equation*}
\val_{M \oplus N}(m \oplus n)=\min(\val m,\val n) \text{.}
\end{equation*} 
This is a generalization of the valuations on free $\hat{R}$-modules from the previous example. 

We call a valuation module a \defnemph{valuation free module} if it is valuation isomorphic to a valuation module of the form $\bigoplus_i \hat{R}\langle d_i \rangle$ for some $d_i \in \ZZ$. 
We call a basis $(b_i)$ for a valuation free module $M$ a \defnemph{valuation basis} if for some $d_i \in \ZZ$ the $\hat{R}$-module homomorphism
\begin{align*}
\bigoplus_i \hat{R}\langle d_i\rangle & \longrightarrow M\\
e_i & \longmapsto b_i
\end{align*}
is a valuation isomorphism.
Not every basis is a valuation basis, as the following example shows.

\begin{exam}
Let $M=\hat{R} \oplus \hat{R} \langle 1\rangle$.
Clearly $\{(a,b),(c,d)\}$ is a basis of $M$ if and only if $ad-bc$ is invertible in $\hat{R}$. 
However, $\{(a,b),(c,d)\}$ is a valuation basis of $M$ only if in addition either $b=0$ or $d=0$.
\end{exam}

\begin{rem} \hfill
\begin{enumerate}
\item The ring $\hat{R}$ has the structure of a filtered $\field$-algebra, with decreasing filtration given by
\begin{equation*}
\hat{R}^i=\{f \in \hat{R} : \val f \geq i\} \text{.}
\end{equation*}
Every $\hat{R}$-module $M$ is filtered in a similar way, giving it the structure of a filtered $\hat{R}$-module as defined in e.g.~\cite[Definition~1.2]{rigid-tilting}. In this context valuation isomorphisms and valuation shifts are special cases of filtered isomorphisms and filtration shifts respectively. 
\item Valuation free modules are analogous to graded free modules. 
Just as a graded module which is free as an ungraded module is not necessarily graded free, a valuation module which is a free module is not necessarily a valuation free module. 
(In fact, from the axioms it is clear that \emph{any} finitely generated valuation module is torsion free and thus free, because $\hat{R}$ is a discrete valuation ring.)
\end{enumerate}
\end{rem}

\subsection{One-color generators}
\label{sec:Frobdotsforks}


Let $\hat{R} \otimes \dgrmBSstd$ denote the scalar extension of $\dgrmBSstd$ to an $\hat{R}$-linear category. 
As with the mixed category we will generally omit the ``$\hat{R} \otimes (-)$'' when describing objects in $\hat{R} \otimes \dgrmBSstd$ whenever possible.
We proceed to construct the Frobenius functor as an embedding of $\dgrmBSF$ inside $\hat{R} \otimes \dgrmBSstd$.
This construction is straightforward for dots and forks.

Recall from \eqref{eq:tildesp} that
\begin{equation*}
\tilde{s}_p=F(\tilde{s})=(\tilde{s}\sh)^{(p-1)/2}\tilde{s} (\sh\tilde{s})^{(p-1)/2} \text{,}
\end{equation*}
with the reflection $\sh \in \Wf$ defined as in Proposition~\ref{prop:tildes-defn}. 
The following objects are essentially ``$p$-dilated'' versions of $B^F_{s}$ inside $\hat{R} \otimes \dgrmBSstd$. 

\begin{defn}
Fix a rex $\glsuseri*{exprsh} \in \expr{\Ss}_{\rm f}$ for the reflection $\sh$. 
For each $s \in \Ss$ we write $\hat{R}^{(F)}_s$ and $B^{(F)}_s$ to mean the following objects in $\hat{R} \otimes \dgrmBSstd$:
\begin{align*}
\hat{R}^{(F)}_s& =\begin{cases}
\hat{R}_s & \text{if $s \in \Ssf$,} \\
\hat{R}_{(\expr{\tilde{s}\sh})^{(p-1)/2}\expr{\tilde{s}} (\expr{\sh\tilde{s}})^{\otimes (p-1)/2}} & \text{if $s=\tilde{s}$,}
\end{cases} \\
B^{(F)}_s& =\begin{cases}
B_s & \text{if $s \in \Ssf$,} \\
(\hat{R}_{\expr{\tilde{s}\sh}})^{\otimes (p-1)/2} \otimes B_{\tilde{s}} \otimes (\hat{R}_{\expr{\sh\tilde{s}}})^{\otimes (p-1)/2} & \text{if $s=\tilde{s}$.}
\end{cases}
\end{align*}
For $\expr{x}=\expr{s_1 s_2 \dotsm s_m} \in \expr{\Ss}$ we write $\glsuseri*{BexprxpFp}=B^{(F)}_{s_1} \otimes B^{(F)}_{s_2} \otimes \dotsm B^{(F)}_{s_m}$.
\end{defn}

The following morphisms in $\hat{R} \otimes \dgrmBSstd$ are analogous to ${\rm dot}^F_s$, ${\rm fork}^F_s$, and ${\rm bivalent}^F_s$.

\begin{defn} 
For each $s \in \Ss$, we write 
\begin{align*}
\glsuseri*{dotspFp} & : B^{(F)}_s  \longrightarrow B^{(F)}_{\emptexpr} \text{,} \\
\glsuseri*{forkspFp} & : B^{(F)}_s \otimes B^{(F)}_s  \longrightarrow B^{(F)}_s \text{,} \\
{\rm bivalent}^{(F)}_s & : B^{(F)}_s  \longrightarrow \hat{R}^{(F)}_s
\end{align*}
for the following dual-valued morphisms defined in $\hat{R} \otimes \dgrmBSstd$.
If $s \in \Ssf$ then ${\rm dot}^{(F)}_s={\rm dot}_s$, ${\rm fork}^{(F)}_s={\rm fork}_s$, and ${\rm bivalent}^{(F)}_s={\rm bivalent}_s$.
Otherwise if $s=\tilde{s}$ we set
\begin{align*}
{\rm dot}^{(F)}_{\tilde{s}}& =\epsilon_{(\expr{\tilde{s}\sh})^{(p-1)/2}} \circ (\ident_{\hat{R}_{\expr{\tilde{s}\sh}}}^{\otimes (p-1)/2} \otimes {\rm dot}_{\tilde{s}} \otimes \ident_{\hat{R}_{\expr{\sh\tilde{s}}}}^{\otimes (p-1)/2}) \text{,} \\
{\rm fork}^{(F)}_{\tilde{s}}& =(\ident_{\hat{R}_{\expr{\tilde{s}\sh}}}^{\otimes (p-1)/2} \otimes {\rm fork}_{\tilde{s}} \otimes \ident_{\hat{R}_{\expr{\sh\tilde{s}}}}^{\otimes (p-1)/2}) \\
& \quad \circ (\ident_{\hat{R}_{\expr{\tilde{s}\sh}}}^{\otimes (p-1)/2} \otimes \ident_{\hat{R}_{\expr{\tilde{s}}}} \otimes \epsilon_{(\expr{\sh\tilde{s}})^{(p-1)/2}} \otimes \ident_{\hat{R}_{\expr{\tilde{s}}}} \otimes \ident_{\hat{R}_{\expr{\sh\tilde{s}}}}^{\otimes (p-1)/2}) \text{,} \\
{\rm bivalent}^{(F)}_{\tilde{s}}& =\ident_{\hat{R}_{\expr{\tilde{s}\sh}}^{\otimes (p-1)/2}} \otimes {\rm bivalent}_{\tilde{s}} \otimes \ident_{\hat{R}_{\expr{\sh\tilde{s}}}^{\otimes (p-1)/2}} \text{.}
\end{align*}
\end{defn}


\begin{exam} \label{exam:frobdotfork}
Let $p=3$ and $\W$ be of type $\widetilde{A_2}$, and let us follow the same convention as Example~\ref{exam:w-twisted-p-defect} in labeling the generators. 
Set $\expr{\sh}=\expr{121}$, and color the generators $0,1,2$ blue, red, and green respectively.
Figure~\ref{fig:affpdotfork} depicts ${\rm dot}^{(F)}_0$ and ${\rm fork}^{(F)}_0$. 
\begin{figure}
    \centering
    \begin{subfigure}{0.35\textwidth}
        \centering
        \begin{tikzpicture}[scale=0.5]
        \node[anchor=south west,inner sep=0] at (0,0) {\includegraphics[scale=0.85]{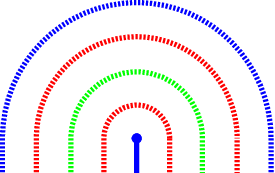}};
         \node[font=\footnotesize] at (150pt,15pt){$\epsscal{red}$};
         \node[font=\footnotesize] at (178pt,15pt){$\epsscal{green}$};
         \node[font=\footnotesize] at (206pt,15pt){$\epsscal{red}$};
         \node[font=\footnotesize] at (234pt,15pt){$\epsscal{blue}$};
        \end{tikzpicture}
        \caption{${\rm dot}^{(F)}_0$}
    \end{subfigure}
    ~
    \begin{subfigure}{0.63\textwidth}
        \centering
        \begin{tikzpicture}[scale=0.6]
        \node[anchor=south west,inner sep=0] at (0,0) {\includegraphics[scale=0.85]{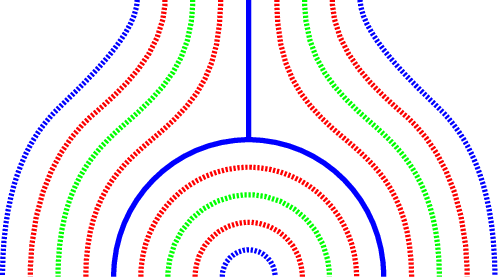}};
         \node[font=\footnotesize] at (197pt,7pt){$\epsscal{blue}$};
         \node[font=\footnotesize] at (216pt,7pt){$\epsscal{red}$};
         \node[font=\footnotesize] at (235pt,7pt){$\epsscal{green}$};
         \node[font=\footnotesize] at (254pt,7pt){$\epsscal{red}$};
        \end{tikzpicture}
        \caption{${\rm fork}^{(F)}_0$}
    \end{subfigure}
    \caption{Examples of ${\rm dot}^{(F)}_s$ and ${\rm fork}^{(F)}_s$ ($p=3$, $\W$ of type $\widetilde{A_2}$).}
    \label{fig:affpdotfork}
\end{figure}
\end{exam}

\begin{rem} \label{rem:Rhatmorphisms}
We are implicitly using Lemma~\ref{lem:rlzcoefs} in a crucial way whenever we claim that ${\rm dot}^{(F)}_s$ and ${\rm fork}^{(F)}_s$ (and later ${\rm braid}^{(F)}_{st}$) are morphisms in $\hat{R} \otimes \dgrmBSstd$. 
For example, the leftmost $\epsscal{red}$ in ${\rm dot}^{(F)}_0$ in Figure~\ref{fig:affpdotfork} can be pushed all the way to the left to yield ${0}{1}{2}\binom{a_{1}}{-a^{-1}_{1}}$. 
The resulting dual-valued scalar lies in $\hat{R}$ by Lemma~\ref{lem:rlzcoefs} because $012 \in \Wpcosets$ and $\Wp 0121 \neq \Wp 012$. 
\end{rem}

\begin{prop} \label{prop:dotforkFrelns} 
The morphisms ${\rm dot}^{(F)}_s$, ${\rm fork}^{(F)}_s$, and ${\rm bivalent}^{(F)}_s$ for all $s \in \Ss$ satisfy all the relations in $\dgrmBSstdF$ which do not involve braids. 
In other words, the relations \cite[(10.8a)--(10.8e)]{emtw} and \eqref{eq:stdBSstd}--\eqref{eq:BSstdBS}
hold when the morphisms ${\rm dot}^{(F)}_s$, ${\rm fork}^{(F)}_s$, and ${\rm bivalent}^{(F)}_s$ are substituted for all corresponding dots, forks, and bivalent vertices respectively.
\end{prop}

\begin{proof}
The only non-trivial relations to check are those which involve $\tilde{s}$-colored dots, forks, and bivalent vertices.
Following Example~\ref{exam:frobdotfork} we will draw diagrams for the case $p=3$ and $\W$ of type $\widetilde{A_2}$.

Checking the one-color relations \cite[(10.8a), (10.8b), (10.8e)]{emtw} is quite straightforward. 
For example, one form of the relation \cite[(10.8a)]{emtw} looks like
\begin{equation*}
\begin{gathered}
    \begin{tikzpicture}[scale=0.85]
    \node[anchor=south west,inner sep=0] at (0,0) {\includegraphics[scale=0.85]{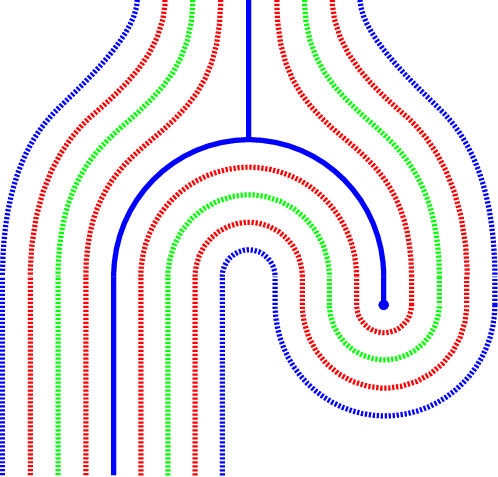}};
     \node[font=\footnotesize] at (139pt,100pt){$\epsscal{blue}$};
     \node[font=\footnotesize] at (152pt,100pt){$\epsscal{red}$};
     \node[font=\footnotesize] at (165pt,100pt){$\epsscal{green}$};
     \node[font=\footnotesize] at (178pt,100pt){$\epsscal{red}$};
     \node[font=\footnotesize] at (205pt,93pt){$\spescal{red}$};
     \node[font=\footnotesize] at (218pt,93pt){$\spescal{green}$};
     \node[font=\footnotesize] at (231pt,93pt){$\spescal{red}$};
     \node[font=\footnotesize] at (244pt,93pt){$\spescal{blue}$};
    \end{tikzpicture}
\end{gathered}=
\begin{gathered}
    \begin{tikzpicture}[scale=0.85]
    \node[anchor=south west,inner sep=0] at (0,0) {\includegraphics[scale=0.85]{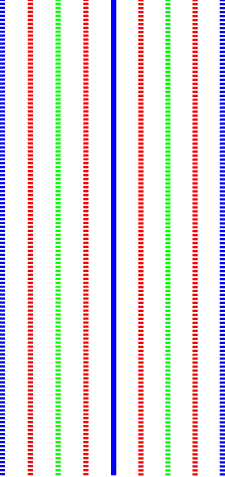}};
    \end{tikzpicture}
\end{gathered}
\end{equation*}
In general, the number of sign changes in these computations (which come from applications of Proposition~\ref{prop:epscupcapinv}) is a multiple of $\len((\tilde{s} \sh)^{(p-1)/2})$, which is even.

For the barbell relation \cite[(10.8c)]{emtw}, we use the fact that $a_{\tilde{s}_p}=(\tilde{s} \sh)^{(p-1)/2}(a_{\tilde{s}})$:
\begin{equation*}
\begin{gathered}
    \begin{tikzpicture}[scale=0.5]
    \node[anchor=south west,inner sep=0] at (0,0) {\includegraphics[scale=0.85]{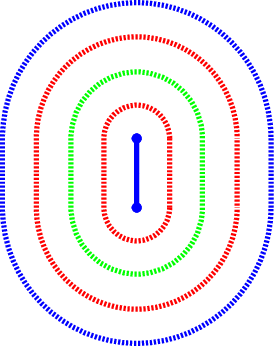}};
     \node[font=\footnotesize] at (150pt,128pt){$\spescal{red}$};
     \node[font=\footnotesize] at (178pt,128pt){$\spescal{green}$};
     \node[font=\footnotesize] at (206pt,128pt){$\spescal{red}$};
     \node[font=\footnotesize] at (234pt,128pt){$\spescal{blue}$};
     \node[font=\footnotesize] at (150pt,154pt){$\epsscal{red}$};
     \node[font=\footnotesize] at (178pt,154pt){$\epsscal{green}$};
     \node[font=\footnotesize] at (206pt,154pt){$\epsscal{red}$};
     \node[font=\footnotesize] at (234pt,154pt){$\epsscal{blue}$};
    \end{tikzpicture}
\end{gathered}
=(-1)^{4} a_{\tilde{s}_p}=a_{F(\tilde{s})}
\end{equation*}
For the relation \cite[(10.8d)]{emtw}, we recall that $F(\tilde{s})=(\tilde{s}\sh)^{(p-1)/2} \tilde{s} (\sh \tilde{s})^{(p-1)/2}$:
\begin{multline*}
f
\begin{gathered}
    \begin{tikzpicture}[scale=1]
    \node[anchor=south west,inner sep=5pt] at (0,0) {\includegraphics[scale=1]{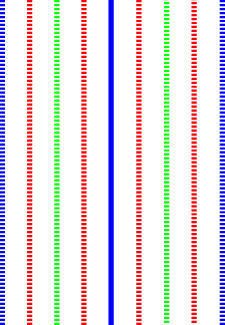}};
    \end{tikzpicture}
\end{gathered}
-
\begin{gathered}
    \begin{tikzpicture}[scale=1]
    \node[anchor=south west,inner sep=5pt] at (0,0) {\includegraphics[scale=1]{fig/affdemazure2}};
    \end{tikzpicture}
\end{gathered}
F(\tilde{s})(f) \\
\begin{aligned}
& =
\begin{gathered}
    \begin{tikzpicture}[scale=1]
    \node[anchor=south west,inner sep=5pt] at (0,0) {\includegraphics[scale=1]{fig/affdemazure2}};
     \node at (53pt,83pt){$f'$};
    \end{tikzpicture}
\end{gathered}
-
\begin{gathered}
    \begin{tikzpicture}[scale=1]
    \node[anchor=south west,inner sep=5pt] at (0,0) {\includegraphics[scale=1]{fig/affdemazure2}};
     \node at (66pt,83pt){$f''$};
    \end{tikzpicture}
\end{gathered} \\
& =
\begin{gathered}
    \begin{tikzpicture}[scale=1]
    \node[anchor=south west,inner sep=5pt] at (0,0) {\includegraphics[scale=1]{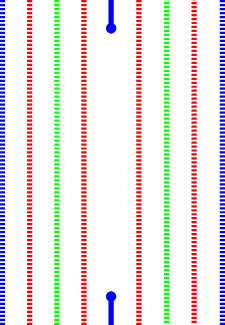}};
     \node at (59pt,83pt){$\partial_{\tilde{s}}(f')$};
    \end{tikzpicture}
\end{gathered}
=
\begin{gathered}
    \begin{tikzpicture}[scale=1]
    \node[anchor=south west,inner sep=5pt] at (0,0) {\includegraphics[scale=1]{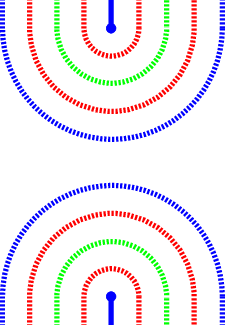}};
     \node at (59pt,83pt){$(-1)^4 \partial_{\tilde{s}}^F(f)$};
     \node[font=\footnotesize] at (77pt,11pt){$\epsscal{red}$};
     \node[font=\footnotesize] at (91pt,11pt){$\epsscal{green}$};
     \node[font=\footnotesize] at (104pt,11pt){$\epsscal{red}$};
     \node[font=\footnotesize] at (118pt,11pt){$\epsscal{blue}$};
     \node[font=\footnotesize] at (77pt,150pt){$\spescal{red}$};
     \node[font=\footnotesize] at (91pt,150pt){$\spescal{green}$};
     \node[font=\footnotesize] at (104pt,150pt){$\spescal{red}$};
     \node[font=\footnotesize] at (118pt,150pt){$\spescal{blue}$};
    \end{tikzpicture}
\end{gathered}
\end{aligned}
\end{multline*}
where $f'=(\sh \tilde{s})^{(p-1)/2}(f)$, $f''=\tilde{s}(\sh \tilde{s})^{(p-1)/2}(f)$, $\partial_{\tilde{s}}(f')=a^{-1}_{\tilde{s}}(f'-\tilde{s}(f'))$ and $\partial_{\tilde{s}}^F(f)=a^{-1}_{\tilde{s}_p}(f-\tilde{s}_p(f))$.
Checking the mixed relations \eqref{eq:stdBSstd}--\eqref{eq:BSstdBS} is essentially identical.
\end{proof}

In light of Proposition~\ref{prop:dotforkFrelns} we may similarly define ${\rm cap}^{(F)}_s={\rm dot}^{(F)}_s \circ {\rm fork}^{(F)}_s$, ${\rm stdcap}^{(F)}_s={\rm cap}^{(F)}_s \circ (\overline{{\rm bivalent}^{(F)}_s} \otimes \overline{{\rm bivalent}^{(F)}_s} (a_{F(s)})^{-1})$, etc.

\begin{rem}
From the proof of Proposition~\ref{prop:dotforkFrelns} it is apparent that a similar result would hold if we had defined ${\rm dot}^{(F)}_{\tilde{s}}$ and ${\rm fork}^{(F)}_{\tilde{s}}$ using standard caps instead of $\epsilon_{\expr{\tilde{s}\sh}}$.
Ultimately, the reason for preferring the latter is to ensure that every menorah vertex in \S\ref{sec:Dpastconstr} is semi-cyclic.
\end{rem}

\subsection{Braids}

We continue our construction of the Frobenius functor by defining its image on braids. 
As with dots and forks, this is entirely trivial for braids only involving $\Ssf$; when $\tilde{s}$ is involved more work is needed.

First we introduce some graph-theoretic terminology for braids from \cite{straightahead}. 
Suppose $(s,t) \in \Ss \times \Ss$ with $s \neq t$ and $m_{st}<\infty$. 
Recall that diagrammatically, the morphism ${\rm braid}_{s,t}$ is a vertex of valency $2m_{st}$.
We call two strands of ${\rm braid}_{s,t}$ \defnemph{opposite} if they are $m_{st}$ strands apart in the cyclic ordering of the strands around the braid vertex.
If $m_{st}$ is odd, then opposite strands have distinct colors; if $m_{st}$ is even, then opposite strands have the same color.

The basic building block in this section is the braid standardized at all strands except for a pair of opposite strands. 
For example, we define
\begin{align*}
\begin{gathered}
{\rm conjbraid}_{s,t} : B_s \otimes \hat{R}_{\underbrace{\scriptstyle\expr{tst \dotsm s}}_{m_{st}-1}} \longrightarrow \hat{R}_{\underbrace{\scriptstyle\expr{tst \dotsm s}}_{m_{st}-1}} \otimes B_t \\
\begin{gathered}
\begin{tikzpicture}[xscale=0.5,yscale=0.66,baseline=(origin.center),font=\footnotesize]
\coordinate (origin) at (0,0) {} node[rectangle, minimum width=4.5cm] {};
\path (origin) ++(-2,-1) coordinate (dom1) {} ++(0,-0.25) coordinate (below1) {} node[right] {} ++(0,-0.42) coordinate (stddom1) {};
\path (origin) ++(-1.25,-1) coordinate (dom2) {} ++(0,-0.25) coordinate (below2) {} node[right=-2pt] {$\inclscal{blue}$} ++(0,-0.42) coordinate (stddom2) {};
\path (origin) ++(-0.5,-1) coordinate (dom3) {} ++(0,-0.25) coordinate (below3) {} node[right=-2pt] {$\inclscal{red}$} ++(0,-0.42) coordinate (stddom3) {};
\path (origin) ++(2,-1) coordinate (domlast) {} ++(0,-0.25) coordinate (belowlast) {} node[right=-2pt] {$\inclscal{red}$} ++(0,-0.42) coordinate (stddomlast) {};
\path (origin) ++(-2,1) coordinate (cod1) {} ++(0,0.25) coordinate (above1) {} node[right=-2pt] {$\projscal{blue}$} ++(0,0.42) coordinate (stdcod1) {};
\path (origin) ++(-1.25,1) coordinate (cod2) {} ++(0,0.25) coordinate (above2) {} node[right=-2pt] {$\projscal{red}$} ++(0,0.42) coordinate (stdcod2) {};
\path (origin) ++(-0.5,1) coordinate (cod3) {} ++(0,0.25) coordinate (above3) {} node[right=-2pt] {$\projscal{blue}$} ++(0,0.42) coordinate (stdcod3) {};
\path (origin) ++(2,1) coordinate (codlast) {} ++(0,0.25) coordinate (abovelast) {} node[right] {} ++(0,0.42) coordinate (stdcodlast) {};
\draw[red, corner, line width=\superthick] (origin) to (dom1) to (below1)
                                   (below1) to (stddom1)
                                   (origin) to (dom3) to (below3)
                                   (origin) to (domlast) to (belowlast)
                                   (origin) to (cod2) to (above2);
\draw[red, line width=\superthick, std] 
                                        (below3) to (stddom3)
                                        (belowlast) to (stddomlast)
                                        (above2) to (stdcod2);
\draw[blue, corner, line width=\superthick] (origin) to (dom2) to (below2)
                                            (origin) to (cod1) to (above1)
                                            (origin) to (cod3) to (above3)
                                            (origin) to (codlast) to (abovelast)
                                            (abovelast) to (stdcodlast);
\draw[blue, line width=\superthick, std] (below2) to (stddom2)
                                         (above1) to (stdcod1)
                                         (above3) to (stdcod3);
\path (origin) ++(0.75,-1.25) node[font=\large] (domdots) {$\dotsm$};
\path (origin) ++(0.75,1.25) node[font=\large] (coddots) {$\dotsm$};
\end{tikzpicture}
\end{gathered}
\end{gathered} & & & \text{($m_{st}$ odd)} \\
\begin{gathered}
{\rm conjbraid}_{s,t} : B_s \otimes \hat{R}_{\underbrace{\scriptstyle\expr{tst \dotsm t}}_{m_{st}-1}} \longrightarrow \hat{R}_{\underbrace{\scriptstyle\expr{tst \dotsm t}}_{m_{st}-1}} \otimes B_s \\
\begin{gathered}
\begin{tikzpicture}[xscale=0.5,yscale=0.66,baseline=(origin.center),font=\footnotesize]
\coordinate (origin) at (0,0) {} node[rectangle, minimum width=4.5cm] {};
\path (origin) ++(-2,-1) coordinate (dom1) {} ++(0,-0.25) coordinate (below1) {} node[right] {} ++(0,-0.42) coordinate (stddom1) {};
\path (origin) ++(-1.25,-1) coordinate (dom2) {} ++(0,-0.25) coordinate (below2) {} node[right=-2pt] {$\inclscal{blue}$} ++(0,-0.42) coordinate (stddom2) {};
\path (origin) ++(-0.5,-1) coordinate (dom3) {} ++(0,-0.25) coordinate (below3) {} node[right=-2pt] {$\inclscal{red}$} ++(0,-0.42) coordinate (stddom3) {};
\path (origin) ++(2,-1) coordinate (domlast) {} ++(0,-0.25) coordinate (belowlast) {} node[right=-2pt] {$\inclscal{blue}$} ++(0,-0.42) coordinate (stddomlast) {};
\path (origin) ++(-2,1) coordinate (cod1) {} ++(0,0.25) coordinate (above1) {} node[right=-2pt] {$\projscal{blue}$} ++(0,0.42) coordinate (stdcod1) {};
\path (origin) ++(-1.25,1) coordinate (cod2) {} ++(0,0.25) coordinate (above2) {} node[right=-2pt] {$\projscal{red}$} ++(0,0.42) coordinate (stdcod2) {};
\path (origin) ++(-0.5,1) coordinate (cod3) {} ++(0,0.25) coordinate (above3) {} node[right=-2pt] {$\projscal{blue}$} ++(0,0.42) coordinate (stdcod3) {};
\path (origin) ++(2,1) coordinate (codlast) {} ++(0,0.25) coordinate (abovelast) {} node[right] {} ++(0,0.42) coordinate (stdcodlast) {};
\draw[red, corner, line width=\superthick] (origin) to (dom1) to (below1)
                                    (below1) to (stddom1)
                                   (origin) to (dom3) to (below3)
                                   (origin) to (cod2) to (above2)
                                   (origin) to (codlast) to (abovelast)
                                   (abovelast) to (stdcodlast);
\draw[red, line width=\superthick, std] 
                                        (below3) to (stddom3)
                                        (above2) to (stdcod2);
\draw[blue, corner, line width=\superthick] (origin) to (dom2) to (below2)
                                            (origin) to (domlast) to (belowlast)
                                            (origin) to (cod1) to (above1)
                                            (origin) to (cod3) to (above3);
\draw[blue, line width=\superthick, std] (below2) to (stddom2)
                                         (belowlast) to (stddomlast)
                                         (above1) to (stdcod1)
                                         (above3) to (stdcod3);
\path (origin) ++(0.75,-1.25) node[font=\large] (domdots) {$\dotsm$};
\path (origin) ++(0.75,1.25) node[font=\large] (coddots) {$\dotsm$};
\end{tikzpicture}
\end{gathered}
\end{gathered} & & & \text{($m_{st}$ even)}
\end{align*}
A \defnemph{twist} of ${\rm conjbraid}_{s,t}$ is simply a morphism of the form
\begin{equation*}
(\epsilon_{\expr{x}} \otimes \ident) \circ (\ident_{\hat{R}_{\expr{x}^{-1}}} \otimes {\rm conjbraid}_{s,t}\otimes \ident_{\hat{R}_{\expr{y}}}) \circ (\ident \otimes \overline{\epsilon_{\expr{y}}})
\end{equation*}
for some $\expr{x},\expr{y} \in \expr{\Ss}$. 
We will frequently redraw twists of ${\rm conjbraid}_{s,t}$ using Lemma~\ref{lem:epsbiadjbraidcyclic}.
We will use these twists to define the following morphisms in $\dgrmBSstd$.

\begin{defn} \label{defn:rhomaps}
Suppose $w \in \Wpcosets$ and $t,t' \in \Ss$ such that $wt'w^{-1}=F(t)$. 
Fix a reduced expression $\expr{w}$ for $w$ and an $\epsilon$-basic standard diagram
\begin{equation*}
\sigma_{t}^{\expr{w},t'}: 
\hat{R}^{(F)}_{t} \longrightarrow \hat{R}_{\expr{w} \expr{t'} \expr{w}^{-1}}
\end{equation*}
in $\hat{R} \otimes \dgrmstd$. 
The \defnemph{straight-ahead walk} in $\sigma_t^{\expr{w},t'}$ is an Eulerian walk through the underlying graph defined as follows (see also \cite{straightahead}).
At the bottom of the diagram, start from the middle strand (i.e.~the middle $\tilde{s}$-colored strand if $t=\tilde{s}$, or the only strand if $t \in \Ssf$). 
If this strand does not reach the top of the diagram, then it adjoins some braid vertex. 
Move to the opposite strand; if it does not reach the top of the diagram then it must adjoin some new braid vertex.
In this manner continue passing from strand to opposite strand until the top of the diagram is reached. 

Call the braids which the straight-ahead walk passes through \defnemph{central}.
We define a dual-valued morphism in $\hat{R} \otimes \dgrmBSstd$
\begin{equation*}
\gls*{rhotexprwtd} : B^{(F)}_{t} \longrightarrow \hat{R}_{\expr{w}} \otimes B_{t'} \otimes \hat{R}_{\expr{w}^{-1}} 
\end{equation*}
by replacing central standard braids in $\sigma_{t}^{\expr{w},t'}$ with twists of ${\rm conjbraid}$. 
(By definition there is only one consistent way to do this.)
\end{defn}

It can be shown that $\rho_{t}^{\expr{w},t'}$is well defined up to sign, but this will not matter for the sequel. 

Suppose $s \in \Ssf$ such that $m_{s\tilde{s}}<\infty$. 
By Lemma~\ref{lem:conjbraids} there exist $s',\tilde{s}' \in \Ss$ and $w \in \Wpcosets$ such that $ws'w^{-1}=s=F(s)$ and $w\tilde{s}'w^{-1}=\tilde{s}_p=F(\tilde{s})$. 
In this section we are primarily interested in the morphisms $\rho_{s}^{\expr{w},s'}$ and $\rho_{\tilde{s}}^{\expr{w},\tilde{s}'}$ for some fixed rex $\expr{w}$ for $w$. 
Later we will use $\rho_t^{\expr{w},t'}$ in full generality to define a categorification of $\heckepast$.

\begin{exam} \label{exam:affprho} \hfill
\begin{enumerate}
\item Let $p=3$ and $\W$ be of type $\widetilde{A_2}$, following the same coloring conventions as Example~\ref{exam:frobdotfork}. 
Suppose $s=1$, and set $\expr{w}=\expr{0120}$, $\tilde{s}'=1$, and $s'=2$. 
Figure~\ref{fig:affprho} depicts $\rho_{\tilde{s}}^{\expr{w},\tilde{s}'}$ and $\rho_{s}^{\expr{w},s'}$. 

\item Let $p=3$ and $\W$ be of type $\widetilde{C_2}$.
As with $\widetilde{A}_2$ we label the generators $\tilde{s}=0$, $1$, and $2$ and color them blue, red, and green respectively, such that $0101=1010$ and $02=20$. 
Suppose $s=1$, and set $\expr{w}=\expr{012012}$, $\tilde{s}'=0$, and $s'=1$. 
Figure~\ref{fig:affprhotypeB} depicts $\rho_{\tilde{s}}^{\expr{w},\tilde{s}'}$ and $\rho_{s}^{\expr{w},s'}$.
\end{enumerate}
In both cases we have used Lemma~\ref{lem:epsbiadjbraidcyclic} in order to twist some of the solid strands. 
This ensures that the straight-ahead walk in each diagram starts from the middle strand at the bottom and proceeds vertically upwards in a straight line. 
\end{exam}

\begin{figure}
    \centering
    \begin{subfigure}{0.4\textwidth}
        \centering
        \begin{tikzpicture}
        \node[anchor=south west,inner sep=0] at (0,0) {\includegraphics{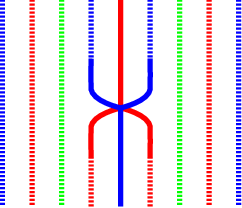}};
        \node[font=\footnotesize] at (80pt,23pt){$\inclscal{red}$};
        \node[font=\footnotesize] at (80pt,72pt){$\projscal{blue}$};
        \node[font=\footnotesize] at (51pt,23pt){$\inclscal{red}$};
        \node[font=\footnotesize] at (51pt,72pt){$\projscal{blue}$};
        \end{tikzpicture}
        \caption{$\rho_{\tilde{s}}^{\expr{w},\tilde{s}'}$}
    \end{subfigure} \qquad
    \begin{subfigure}{0.4\textwidth}
        \centering
        \begin{tikzpicture}[font=\footnotesize]
        \node[anchor=south west,inner sep=0] at (0,0) {\includegraphics{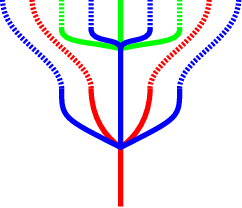}};
        \node at (37pt,58pt) {$\projscal{blue}$};
        \node at (51pt,58pt) {$\projscal{red}$};
        \node at (37pt,88pt) {$\projscal{green}$};
        \node at (51pt,88pt) {$\projscal{blue}$};
        \node at (79pt,88pt) {$\projscal{blue}$};
        \node at (93pt,93pt) {$\projscal{green}$};
        \node at (79pt,58pt) {$\projscal{red}$};
        \node at (94pt,58pt) {$\projscal{blue}$};
        \end{tikzpicture}
        \caption{$\rho_{s}^{\expr{w},s'}$}
    \end{subfigure}
    \caption{Examples of $\rho_{\tilde{s}}^{\expr{w},\tilde{s}'}$ and $\rho_{s}^{\expr{w},s'}$ for $p=3$ in type $\widetilde{A_2}$.}
    \label{fig:affprho}
\end{figure}

\begin{figure}
    \centering
    \begin{subfigure}{0.4\textwidth}
        \centering
        \begin{tikzpicture}[scale=0.8, font=\footnotesize]
         \node[anchor=south west,inner sep=0] at (0,0) {\scalebox{0.8}[0.8]{\includegraphics{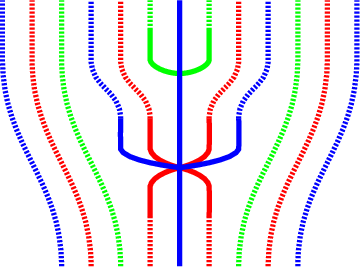}}};
         \node at (79pt,24pt){$\inclscal{red}$};
         \node at (107pt,24pt){$\inclscal{red}$};
         \node at (64pt,72pt){$\projscal{blue}$};
         \node at (79pt,72pt){$\projscal{red}$};
         \node at (107pt,72pt){$\projscal{red}$};
         \node at (121pt,72pt){$\projscal{blue}$};
         \node at (79pt,114pt){$\projscal{green}$};
         \node at (107pt,114pt){$\projscal{green}$};
        \end{tikzpicture}
        \caption{$\rho_{\tilde{s}}^{\expr{w},\tilde{s}'}$}
    \end{subfigure} \qquad
    \begin{subfigure}{0.4\textwidth}
        \centering
        \begin{tikzpicture}[scale=0.8,font=\footnotesize]
         \node[anchor=south west,inner sep=0] at (0,0) {\scalebox{0.8}[0.8]{\includegraphics{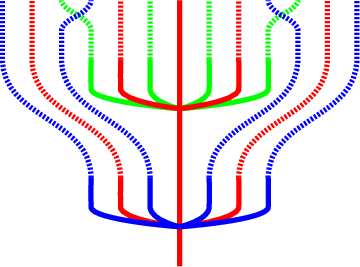}}};
         \node at (50pt,42pt){$\projscal{blue}$};
         \node at (64pt,42pt){$\projscal{red}$};
         \node at (78pt,42pt){$\projscal{blue}$};
         \node at (107pt,42pt){$\projscal{blue}$};
         \node at (121pt,42pt){$\projscal{red}$};
         \node at (135pt,42pt){$\projscal{blue}$};
         \node at (50pt,100pt){$\projscal{green}$};
         \node at (64pt,100pt){$\projscal{red}$};
         \node at (78pt,100pt){$\projscal{green}$};
         \node at (107pt,100pt){$\projscal{green}$};
         \node at (121pt,100pt){$\projscal{red}$};
         \node at (135pt,100pt){$\projscal{green}$};
        \end{tikzpicture}
        \caption{$\rho_{s}^{\expr{w},s'}$}
    \end{subfigure}
    \caption{Examples of $\rho_{\tilde{s}}^{\expr{w},\tilde{s}'}$ and $\rho_{s}^{\expr{w},s'}$ for $p=3$ in type $\widetilde{C_2}$.}
    \label{fig:affprhotypeB}
\end{figure}

\begin{lem} \label{lem:rhomapsdotsforks}
Suppose we have $w \in \Wpcosets$ (with a rex $\expr{w}$) and $t,t' \in \Ss$ as in Definition~\ref{defn:rhomaps}.
Then there exists a sign $\delta_{t}^{\expr{w},t'} \in \{\pm 1\}$ such that
\begin{gather}
\rho_{t}^{\expr{w},t'} \circ \overline{{\rm dot}^{(F)}_t} = \delta_{t}^{\expr{w},t'} (\ident_{\hat{R}_{\expr{w}}} \otimes \overline{{\rm dot}_{t'}} \otimes \ident_{\hat{R}_{\expr{w}^{-1}}}) \circ \overline{\epsilon_{\expr{w}}} \text{,}  \label{eq:rhomapsdots1}\\
(\ident_{\hat{R}_{\expr{w}}} \otimes {\rm dot}_{t'} \otimes \ident_{\hat{R}_{\expr{w}^{-1}}}) \circ \rho_{t}^{\expr{w},t'}= \delta_{t}^{\expr{w},t'}\overline{\epsilon_{\expr{w}}} \circ {\rm dot}^{(F)}_{t} \text{,} \label{eq:rhomapsdots2} \\
\begin{multlined}
\overline{\rho_{t}^{\expr{w},t'}} \circ (\ident_{\hat{R}_{\expr{w}}} \otimes {\rm fork}_{t'} \otimes \ident_{\hat{R}_{\expr{w}^{-1}}}) \circ (\ident_{\hat{R}_{\expr{w}}} \otimes \ident_{B_{t'}} \otimes \epsilon_{\expr{w}^{-1}} \otimes \ident_{B_{t'}} \otimes \ident_{\hat{R}_{\expr{w}^{-1}}}) \\
\circ (\rho_{t}^{\expr{w},t'} \otimes \ident) =\delta_{t}^{\expr{w},t'} {\rm fork}^{(F)}_{t} \circ (\ident \otimes \overline{\rho_{t}^{\expr{w},t'}}) \text{.}
\end{multlined}
\label{eq:rhomapsforks}
\end{gather}
\end{lem}

\begin{proof}
For any $s_1,s_2 \in \Ss$ we have
\begin{equation}
\begin{aligned}
(\ident \otimes {\rm dot}_{s_2})\circ {\rm conjbraid}_{s_1,s_2} & ={\rm dot}_{s_1} \otimes \ident & & \text{if $m_{s_1 s_2}$ is odd,} \\
(\ident \otimes {\rm dot}_{s_1})\circ {\rm conjbraid}_{s_1,s_2} & ={\rm dot}_{s_1} \otimes \ident & & \text{if $m_{s_1 s_2}$ is even,}
\end{aligned} \label{eq:singletermJW}
\end{equation}
by the Jones--Wenzl relation \cite[(10.8i)]{emtw}. 
Without loss of generality we will assume $t=\tilde{s}$ (the general case is very similar).
Following Example~\ref{exam:affprho} we will draw diagrams for the case where $p=3$ and $\W$ is of type $\widetilde{C_2}$, with $\expr{w}=\expr{012012}$ and $\tilde{s}'=0$.
We can repeatedly apply \eqref{eq:singletermJW} to show that
\begin{gather*}
\rho_{\tilde{s}}^{\expr{w},\tilde{s}'} \circ (\ident_{\hat{R}_{(\expr{\tilde{s}\sh})^{(p-1)/2}}} \otimes \overline{{\rm dot}_{\tilde{s}}} \otimes \ident_{\hat{R}_{(\expr{\sh\tilde{s}})^{(p-1)/2}}}) = (\ident_{\hat{R}_{\expr{w}}} \otimes \overline{{\rm dot}_{\tilde{s}'}} \otimes \ident_{\hat{R}_{\expr{w}^{-1}}}) \circ \sigma \\
\begin{gathered}
    \begin{tikzpicture}[scale=0.9, font=\footnotesize]
     \node[anchor=south west,inner sep=0] at (0,0) {\scalebox{0.9}[0.9]{\includegraphics{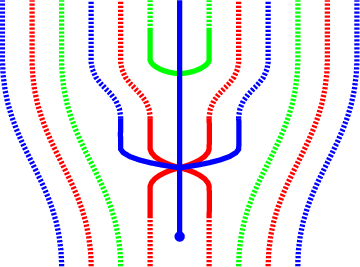}}};
     \node at (79pt,24pt){$\inclscal{red}$};
     \node at (107pt,24pt){$\inclscal{red}$};
     \node at (64pt,72pt){$\projscal{blue}$};
     \node at (79pt,72pt){$\projscal{red}$};
     \node at (107pt,72pt){$\projscal{red}$};
     \node at (121pt,72pt){$\projscal{blue}$};
     \node at (79pt,114pt){$\projscal{green}$};
     \node at (107pt,114pt){$\projscal{green}$};
    \end{tikzpicture}
\end{gathered}
=
\begin{gathered}
    \begin{tikzpicture}[scale=0.9, font=\footnotesize]
     \node[anchor=south west,inner sep=0] at (0,0) {\scalebox{0.9}[0.9]{\includegraphics{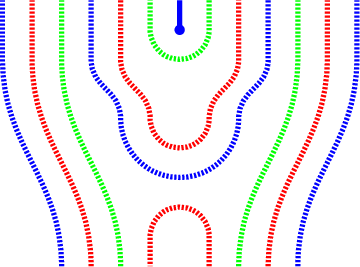}}};
     \node at (107pt,24pt){$\epsscal{red}$};
     \node at (107pt,72pt){$\spescal{red}$};
     \node at (121pt,72pt){$\spescal{blue}$};
     \node at (107pt,114pt){$\spescal{green}$};
    \end{tikzpicture}
\end{gathered}
\end{gather*}
Here $\sigma$ denotes the $\epsilon$-basic standard diagram obtained by deleting strands and braids along the straight-ahead walk and connecting any remaining strands with dashed lines. 
In a similar way, we can show that
\begin{gather*}
(\ident_{\hat{R}_{\expr{w}}} \otimes {\rm dot}_{\tilde{s}'} \otimes \ident_{\hat{R}_{\expr{w}^{-1}}}) \circ \rho_{\tilde{s}}^{\expr{w},\tilde{s}'}= \sigma \circ (\ident_{\hat{R}_{(\expr{\tilde{s}\sh})^{(p-1)/2}}} \otimes {\rm dot}_{\tilde{s}} \otimes \ident_{\hat{R}_{(\expr{\sh\tilde{s}})^{(p-1)/2}}}) \\
\begin{gathered}
    \begin{tikzpicture}[scale=0.9, font=\footnotesize]
     \node[anchor=south west,inner sep=0] at (0,0) {\scalebox{0.9}[0.9]{\includegraphics{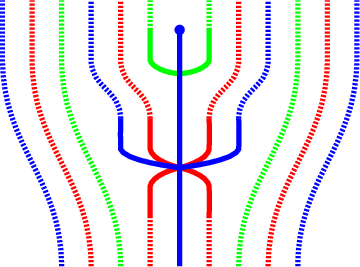}}};
     \node at (79pt,24pt){$\inclscal{red}$};
     \node at (107pt,24pt){$\inclscal{red}$};
     \node at (64pt,72pt){$\projscal{blue}$};
     \node at (79pt,72pt){$\projscal{red}$};
     \node at (107pt,72pt){$\projscal{red}$};
     \node at (121pt,72pt){$\projscal{blue}$};
     \node at (79pt,114pt){$\projscal{green}$};
     \node at (107pt,114pt){$\projscal{green}$};
    \end{tikzpicture}
\end{gathered}
=
\begin{gathered}
    \begin{tikzpicture}[scale=0.9, font=\footnotesize]
     \node[anchor=south west,inner sep=0] at (0,0) {\scalebox{0.9}[0.9]{\includegraphics{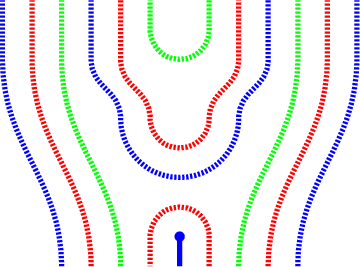}}};
     \node at (107pt,24pt){$\epsscal{red}$};
     \node at (107pt,72pt){$\spescal{red}$};
     \node at (121pt,72pt){$\spescal{blue}$};
     \node at (107pt,114pt){$\spescal{green}$};
    \end{tikzpicture}
\end{gathered}
\end{gather*}
By Proposition~\ref{prop:epscupcapinv} we must have
\begin{equation*}
\sigma \in \{\pm \overline{\epsilon_{\expr{w}}} \circ \epsilon_{(\expr{\tilde{s}\sh})^{(p-1)/2}} \} \text{,}
\end{equation*}
so \eqref{eq:rhomapsdots1} and \eqref{eq:rhomapsdots2} hold with the same sign $\delta_{\tilde{s}}^{\expr{w},s'}$.

Similarly, for any $s_1,s_2 \in \Ss$ we have
\begin{multline}
\overline{{\rm conjbraid}_{s_1,s_2}} \circ (\ident \otimes {\rm fork}_{s_2}) \circ ({\rm conjbraid}_{s_1,s_2} \otimes \ident_{B_{s_2}}) \\
=({\rm fork}_{s_1} \otimes \ident) \circ (\ident_{B_{s_1}} \otimes \overline{{\rm conjbraid}_{s_1,s_2}}) \label{eq:twocolassocconjbraid}
\end{multline}
by two-color associativity \cite[(9.27c)]{emtw}. 
Applying this result (and Proposition~\ref{prop:epscupcapinv}) several times gives \eqref{eq:rhomapsforks} up to sign. 
We leave it to the reader to check that the sign here is also equal to $\delta_{t}^{\expr{w},t'}$. 
\end{proof}

\begin{lem} \label{lem:rhomapsid}
Suppose we have $w \in \Wpcosets$ (with a rex $\expr{w}$) and $t,t' \in \Ss$ as in Definition~\ref{defn:rhomaps}.
Then
\begin{equation*}
\overline{\rho_{t}^{\expr{w},t'}} \circ \rho_{t}^{\expr{w},t'}=(-1)^{\len(w)} \ident_{B^{(F)}_t} \text{.}
\end{equation*}
\end{lem}

In particular, note that unlike $\delta_{t}^{\expr{w},t'}$ the sign here is \emph{not} dependent on $t$.

\begin{proof}
Composing \eqref{eq:twocolassocconjbraid} with $(\ident \otimes \overline{{\rm dot}_{s_2}})$ on the bottom and applying \eqref{eq:singletermJW} gives
\begin{equation}
\overline{{\rm conjbraid}_{s_1,s_2}} \circ {\rm conjbraid}_{s_1,s_2}=\ident \label{eq:conjbraidinv}
\end{equation}
Again, we will assume $t=\tilde{s}$ and draw diagrams for the case where $p=3$ and $\W$ is of type $\widetilde{C_2}$ with $\expr{w}=\expr{012012}$ and $\tilde{s}'=0$. 
By repeatedly applying \eqref{eq:conjbraidinv} with Proposition~\ref{prop:epscupcapinv} we obtain the result up to sign:
\begin{equation*}
\begin{gathered}
    \begin{tikzpicture}[scale=0.9, font=\footnotesize]
     \node[anchor=south west,inner sep=0] at (0,0) {\scalebox{0.9}[0.9]{\includegraphics{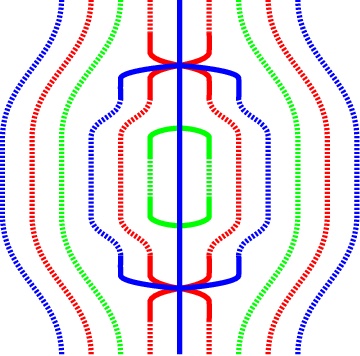}}};
     \node at (79pt,16pt){$\inclscal{red}$};
     \node at (107pt,16pt){$\inclscal{red}$};
     \node at (64pt,48pt){$\projscal{blue}$};
     \node at (79pt,48pt){$\projscal{red}$};
     \node at (107pt,48pt){$\projscal{red}$};
     \node at (121pt,48pt){$\projscal{blue}$};
     \node at (79pt,76pt){$\projscal{green}$};
     \node at (107pt,76pt){$\projscal{green}$};
     \node at (79pt,154pt){$\projscal{red}$};
     \node at (107pt,154pt){$\projscal{red}$};
     \node at (64pt,124pt){$\inclscal{blue}$};
     \node at (79pt,124pt){$\inclscal{red}$};
     \node at (107pt,124pt){$\inclscal{red}$};
     \node at (121pt,124pt){$\inclscal{blue}$};
     \node at (79pt,95pt){$\inclscal{green}$};
     \node at (107pt,95pt){$\inclscal{green}$};
    \end{tikzpicture}
\end{gathered}
=
\begin{gathered}
    \begin{tikzpicture}[scale=0.9, font=\footnotesize]
     \node[anchor=south west,inner sep=0] at (0,0) {\scalebox{0.9}[0.9]{\includegraphics{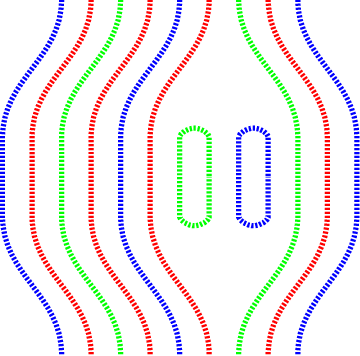}}};
     \node at (107pt,76pt){$\spescal{green}$};
     \node at (107pt,95pt){$\epsscal{green}$};
     \node at (135pt,76pt){$\spescal{blue}$};
     \node at (135pt,95pt){$\epsscal{blue}$};
    \end{tikzpicture}
\end{gathered} \text{.}
\end{equation*}
Here the sign comes from the parity of the number of circles in the final diagram, which is $\len(w)-\len((\tilde{s}\sh)^{(p-1)/2}) \equiv \len(w) \pmod{2}$. 
\end{proof}

\begin{defn}
For each pair $(s,t) \in \Ss \times \Ss$ with $s \neq t$ and $m_{st}<\infty$, we write 
\begin{equation*}
\glsuseri*{braidstpFp} : B^{(F)}_{\underbrace{\scriptstyle\expr{sts \dotsm s}}_{m_{st}}} \rightarrow B^{(F)}_{\underbrace{\scriptstyle\expr{tst \dotsm t}}_{m_{st}}}
\end{equation*}
for the following dual-valued morphism defined in $\hat{R} \otimes \dgrmBSstd$. 
If $s,t \in \Ssf$ then ${\rm braid}^{(F)}_{s,t}={\rm braid}_{s,t}$. 
Otherwise, suppose that $t=\tilde{s}$. 
Let $s',\tilde{s}' \in \Ss$ and $w \in \Wpcosets$ as in Lemma~\ref{lem:conjbraids}, and fix a rex $\expr{w}$ for $w$.
Write
\begin{align*}
\mu^{s',\tilde{s}'}_{\expr{w}}& =\ident_{\hat{R}_{\expr{w}}} \otimes (\ident_{B_{s'}} \otimes \epsilon_{\expr{w}^{-1}} \otimes \ident_{B_{\tilde{s}'}} \otimes \epsilon_{\expr{w}^{-1}} \otimes \ident_{B_{s'}} \otimes \dotsb) \otimes \ident_{\hat{R}_{\expr{w}^{-1}}}\text{,} \\
\mu^{\tilde{s}',s'}_{\expr{w}}& =\ident_{\hat{R}_{\expr{w}}} \otimes (\ident_{B_{\tilde{s}'}} \otimes \epsilon_{\expr{w}^{-1}} \otimes \ident_{B_{s'}} \otimes \epsilon_{\expr{w}^{-1}} \otimes \ident_{B_{\tilde{s}'}} \otimes \dotsb) \otimes \ident_{\hat{R}_{\expr{w}^{-1}}}
\end{align*}
where each morphism contains $(m_{s\tilde{s}}-1)$ copies of $\epsilon_{\expr{w}^{-1}}$ and $m_{s\tilde{s}}$ total copies of $\ident_{B_{s'}}$ and $\ident_{B_{\tilde{s}'}}$.
We define
\begin{equation*}
{\rm braid}^{(F)}_{s,\tilde{s}} = \delta (\overline{\rho_{\tilde{s}}^{\expr{w},\tilde{s}'}} \otimes \overline{\rho_{s}^{\expr{w},s'}} \otimes \overline{\rho_{\tilde{s}}^{\expr{w},\tilde{s}'}} \dotsb) \circ \overline{\mu^{\tilde{s}',s'}_{\expr{w}}} \circ {\rm braid}_{s',\tilde{s}'} \circ \mu^{s',\tilde{s}'}_{\expr{w}} \circ (\rho_{s}^{\expr{w},s'} \otimes \rho_{\tilde{s}}^{\expr{w},\tilde{s}'} \otimes \rho_{s}^{\expr{w},s'} \otimes \dotsb)
\end{equation*}
where
\begin{equation*}
\delta=
\begin{cases}
(-1)^{\len(w)}\delta_{s}^{\expr{w},s'} \delta_{\tilde{s}}^{\expr{w},\tilde{s}'} & \text{if $m_{s\tilde{s}}$ is odd,} \\
(-1)^{\len(w)} & \text{if $m_{s\tilde{s}}$ is even,}
\end{cases}
\end{equation*}
with $\delta_{s}^{\expr{w},s'}$ and $\delta_{\tilde{s}}^{\expr{w},\tilde{s}'}$ the signs from Lemma~\ref{lem:rhomapsdotsforks}.
We similarly set ${\rm braid}^{(F)}_{\tilde{s},s}=\overline{{\rm braid}^{(F)}_{s,\tilde{s}}}$. 
We also define 
\begin{multline*}
{\rm stdbraid}^{(F)}_{s,t}=({\rm bivalent}^{(F)}_t a^{-1}_{F(t)} \otimes {\rm bivalent}^{(F)}_s a^{-1}_{F(s)} \otimes {\rm bivalent}^{(F)}_t a^{-1}_{F(t)} \otimes \dotsb) \\
\circ {\rm braid}^{(F)}_{s,t} \circ (\overline{{\rm bivalent}^{(F)}_s} \otimes \overline{{\rm bivalent}^{(F)}_t} \otimes \overline{{\rm bivalent}^{(F)}_s} \otimes \dotsb) \text{.}
\end{multline*}
%
\end{defn}

\begin{exam} \hfill
\begin{enumerate}
\item Let $p=3$ and $\W$ be of type $\widetilde{A_2}$, following the conventions in Example~\ref{exam:affprho}. 
Figure~\ref{fig:affpbraidtypeA} depicts ${\rm braid}^{(F)}_{1,0}$. 
Here we have $\delta_{1}=+1$, $\delta_{0}=-1$, and $(-1)^{\len(w)}=(-1)^4=+1$, so $\delta=-1$ as $m_{10}=3$ is odd.

\item Let $p=3$ and $\W$ be of type $\widetilde{C_2}$.
As with $\widetilde{A}_2$ we label the generators $\tilde{s}=0$, $1$, and $2$ and color them blue, red, and green respectively, such that $0101=1010$ and $02=20$. 
Figure~\ref{fig:affpbraidtypeB} depicts ${\rm braid}^{(F)}_{1,0}$. 
Here we have $(-1)^{\len(w)}=(-1)^6=+1$, so $\delta=+1$ as $m_{10}=4$ is even.
\end{enumerate}
\end{exam}

\begin{figure}
    \centering
\begin{tikzpicture}[xscale=0.9,yscale=0.76]
\node[anchor=south west,inner sep=0] at (0,0) {\scalebox{0.9}[0.9]{\includegraphics{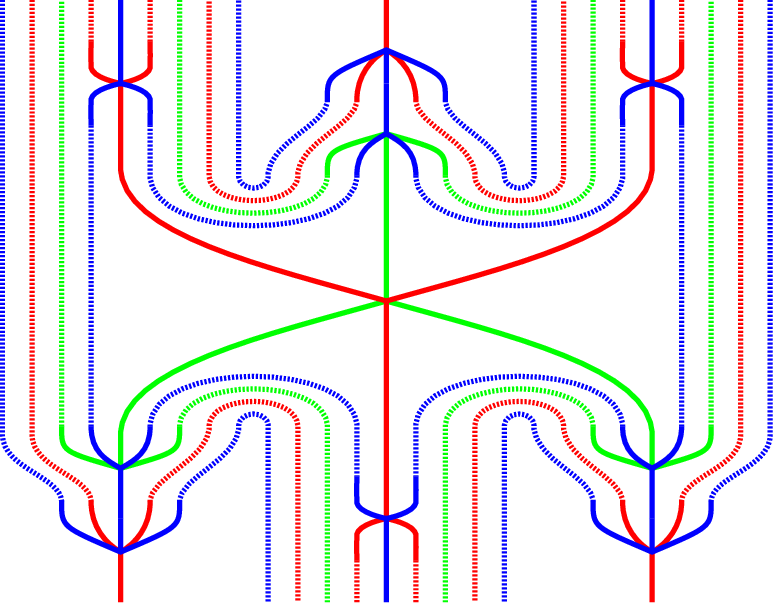}}};
 \node at (-7pt,171pt) {$-$};
 \node[font=\footnotesize] at (35pt,58pt){$\projscal{blue}$};
 \node[font=\footnotesize] at (50pt,58pt){$\projscal{red}$};
 \node[font=\footnotesize] at (35pt,102pt){$\projscal{green}$};
 \node[font=\footnotesize] at (50pt,102pt){$\projscal{blue}$};
 \node[font=\footnotesize] at (79pt,102pt){$\projscal{blue}$};
 \node[font=\footnotesize] at (92pt,102pt){$\projscal{green}$};
 \node[font=\footnotesize] at (79pt,58pt){$\projscal{red}$};
 \node[font=\footnotesize] at (92pt,58pt){$\projscal{blue}$};
 \node[font=\footnotesize] at (177pt,24pt){$\inclscal{red}$};
 \node[font=\footnotesize] at (206pt,24pt){$\inclscal{red}$};
 \node[font=\footnotesize] at (177pt,73pt){$\projscal{blue}$};
 \node[font=\footnotesize] at (206pt,73pt){$\projscal{blue}$};
 \node[font=\footnotesize] at (292pt,58pt){$\projscal{blue}$};
 \node[font=\footnotesize] at (306pt,58pt){$\projscal{red}$};
 \node[font=\footnotesize] at (292pt,101pt){$\projscal{green}$};
 \node[font=\footnotesize] at (306pt,101pt){$\projscal{blue}$};
 \node[font=\footnotesize] at (334pt,101pt){$\projscal{blue}$};
 \node[font=\footnotesize] at (348pt,101pt){$\projscal{green}$};
 \node[font=\footnotesize] at (334pt,58pt){$\projscal{red}$};
 \node[font=\footnotesize] at (348pt,58pt){$\projscal{blue}$};
 \node[font=\footnotesize] at (50pt,319pt){$\projscal{red}$};
 \node[font=\footnotesize] at (79pt,319pt){$\projscal{red}$};
 \node[font=\footnotesize] at (50pt,271pt){$\inclscal{blue}$};
 \node[font=\footnotesize] at (79pt,271pt){$\inclscal{blue}$};
 \node[font=\footnotesize] at (163pt,284pt){$\inclscal{blue}$};
 \node[font=\footnotesize] at (177pt,284pt){$\inclscal{red}$};
 \node[font=\footnotesize] at (163pt,242pt){$\inclscal{green}$};
 \node[font=\footnotesize] at (177pt,242pt){$\inclscal{blue}$};
 \node[font=\footnotesize] at (206pt,242pt){$\inclscal{blue}$};
 \node[font=\footnotesize] at (220pt,242pt){$\inclscal{green}$};
 \node[font=\footnotesize] at (207pt,284pt){$\inclscal{red}$};
 \node[font=\footnotesize] at (220pt,284pt){$\inclscal{blue}$};
 \node[font=\footnotesize] at (305pt,319pt){$\projscal{red}$};
 \node[font=\footnotesize] at (334pt,319pt){$\projscal{red}$};
 \node[font=\footnotesize] at (305pt,271pt){$\inclscal{blue}$};
 \node[font=\footnotesize] at (334pt,271pt){$\inclscal{blue}$};
 \node[font=\footnotesize] at (135pt,103pt){$\epsscal{blue}$};
 \node[font=\footnotesize] at (149pt,103pt){$\epsscal{red}$};
 \node[font=\footnotesize] at (163pt,103pt){$\epsscal{green}$};
 \node[font=\footnotesize] at (177pt,103pt){$\epsscal{blue}$};
 \node[font=\footnotesize] at (263pt,103pt){$\epsscal{blue}$};
 \node[font=\footnotesize] at (276pt,108pt){$\epsscal{red}$};
 \node[font=\footnotesize] at (288pt,113pt){$\epsscal{green}$};
 \node[font=\footnotesize] at (300pt,117pt){$\epsscal{blue}$};
 \node[font=\footnotesize] at (135pt,241pt){$\spescal{blue}$};
 \node[font=\footnotesize] at (148pt,237pt){$\spescal{red}$};
 \node[font=\footnotesize] at (159pt,231pt){$\spescal{green}$};
 \node[font=\footnotesize] at (168pt,223pt){$\spescal{blue}$};
 \node[font=\footnotesize] at (263pt,241pt){$\spescal{blue}$};
 \node[font=\footnotesize] at (277pt,241pt){$\spescal{red}$};
 \node[font=\footnotesize] at (291pt,241pt){$\spescal{green}$};
 \node[font=\footnotesize] at (305pt,241pt){$\spescal{blue}$};
\end{tikzpicture}
    \caption{The morphism ${\rm braid}^{(F)}_{1,0}$ for $p=3$ in type $\widetilde{A_2}$.}
    \label{fig:affpbraidtypeA}
\end{figure}

\begin{sidewaysfigure}
\centering
\begin{minipage}{7.5in}
\centering
    \begin{tikzpicture}[scale=0.65]
    \node[anchor=south west,inner sep=0] at (0,0) {\includegraphics[scale=0.65]{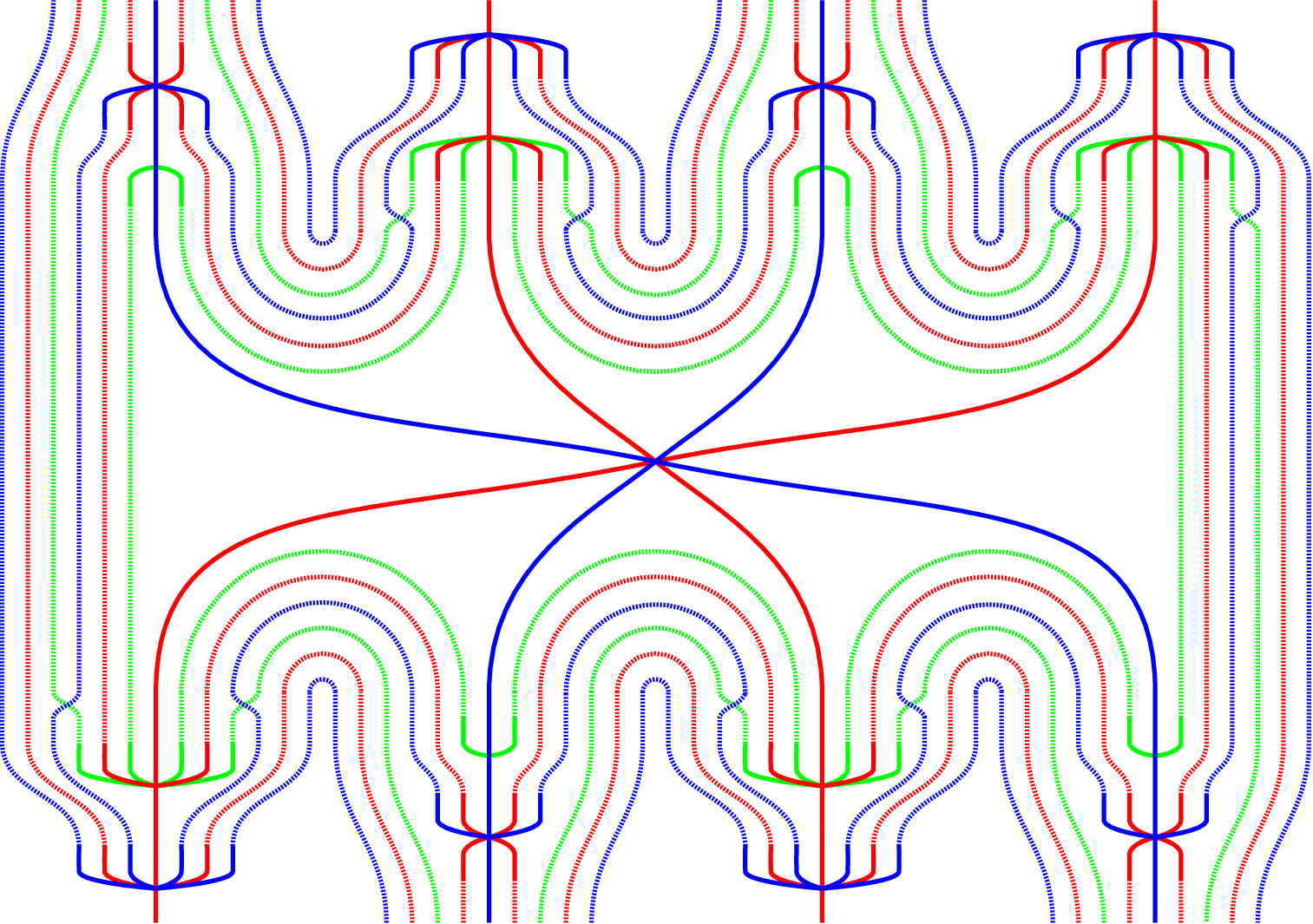}};
     \node[font=\footnotesize] at (50pt,44pt){$\projscal{blue}$};
     \node[font=\footnotesize] at (65pt,44pt){$\projscal{red}$};
     \node[font=\footnotesize] at (79pt,44pt){$\projscal{blue}$};
     \node[font=\footnotesize] at (50pt,100pt){$\projscal{green}$};
     \node[font=\footnotesize] at (65pt,100pt){$\projscal{red}$};
     \node[font=\footnotesize] at (79pt,100pt){$\projscal{green}$};
     \node[font=\footnotesize] at (107pt,100pt){$\projscal{green}$};
     \node[font=\footnotesize] at (122pt,100pt){$\projscal{red}$};
     \node[font=\footnotesize] at (136pt,100pt){$\projscal{green}$};
     \node[font=\footnotesize] at (107pt,44pt){$\projscal{blue}$};
     \node[font=\footnotesize] at (122pt,44pt){$\projscal{red}$};
     \node[font=\footnotesize] at (136pt,44pt){$\projscal{blue}$};
     \node[font=\footnotesize] at (193pt,130pt){$\epsscal{blue}$};
     \node[font=\footnotesize] at (207pt,130pt){$\epsscal{red}$};
     \node[font=\footnotesize] at (221pt,130pt){$\epsscal{green}$};
     \node[font=\footnotesize] at (235pt,130pt){$\epsscal{blue}$};
     \node[font=\footnotesize] at (250pt,130pt){$\epsscal{red}$};
     \node[font=\footnotesize] at (264pt,130pt){$\epsscal{green}$};
     \node[font=\footnotesize] at (264pt,25pt){$\inclscal{red}$};
     \node[font=\footnotesize] at (292pt,25pt){$\inclscal{red}$};
     \node[font=\footnotesize] at (250pt,73pt){$\projscal{blue}$};
     \node[font=\footnotesize] at (264pt,73pt){$\projscal{red}$};
     \node[font=\footnotesize] at (264pt,115pt){$\projscal{green}$};
     \node[font=\footnotesize] at (292pt,115pt){$\projscal{green}$};
     \node[font=\footnotesize] at (292pt,73pt){$\projscal{red}$};
     \node[font=\footnotesize] at (306pt,73pt){$\projscal{blue}$};
     \node[font=\footnotesize] at (378pt,130pt){$\epsscal{blue}$};
     \node[font=\footnotesize] at (392pt,130pt){$\epsscal{red}$};
     \node[font=\footnotesize] at (407pt,130pt){$\epsscal{green}$};
     \node[font=\footnotesize] at (420pt,130pt){$\epsscal{blue}$};
     \node[font=\footnotesize] at (434pt,130pt){$\epsscal{red}$};
     \node[font=\footnotesize] at (449pt,130pt){$\epsscal{green}$};
     \node[font=\footnotesize] at (420pt,44pt){$\projscal{blue}$};
     \node[font=\footnotesize] at (434pt,44pt){$\projscal{red}$};
     \node[font=\footnotesize] at (449pt,44pt){$\projscal{blue}$};
     \node[font=\footnotesize] at (420pt,100pt){$\projscal{green}$};
     \node[font=\footnotesize] at (434pt,100pt){$\projscal{red}$};
     \node[font=\footnotesize] at (449pt,100pt){$\projscal{green}$};
     \node[font=\footnotesize] at (477pt,100pt){$\projscal{green}$};
     \node[font=\footnotesize] at (491pt,100pt){$\projscal{red}$};
     \node[font=\footnotesize] at (506pt,100pt){$\projscal{green}$};
     \node[font=\footnotesize] at (477pt,44pt){$\projscal{blue}$};
     \node[font=\footnotesize] at (491pt,44pt){$\projscal{red}$};
     \node[font=\footnotesize] at (506pt,44pt){$\projscal{blue}$};
     \node[font=\footnotesize] at (563pt,130pt){$\epsscal{blue}$};
     \node[font=\footnotesize] at (577pt,130pt){$\epsscal{red}$};
     \node[font=\footnotesize] at (591pt,130pt){$\epsscal{green}$};
     \node[font=\footnotesize] at (605pt,130pt){$\epsscal{blue}$};
     \node[font=\footnotesize] at (620pt,130pt){$\epsscal{red}$};
     \node[font=\footnotesize] at (634pt,130pt){$\epsscal{green}$};
     \node[font=\footnotesize] at (634pt,25pt){$\inclscal{red}$};
     \node[font=\footnotesize] at (662pt,25pt){$\inclscal{red}$};
     \node[font=\footnotesize] at (620pt,73pt){$\projscal{blue}$};
     \node[font=\footnotesize] at (634pt,73pt){$\projscal{red}$};
     \node[font=\footnotesize] at (634pt,115pt){$\projscal{green}$};
     \node[font=\footnotesize] at (662pt,115pt){$\projscal{green}$};
     \node[font=\footnotesize] at (662pt,73pt){$\projscal{red}$};
     \node[font=\footnotesize] at (676pt,73pt){$\projscal{blue}$};
     \node[font=\footnotesize] at (79pt,488pt){$\projscal{red}$};
     \node[font=\footnotesize] at (107pt,488pt){$\projscal{red}$};
     \node[font=\footnotesize] at (65pt,441pt){$\inclscal{blue}$};
     \node[font=\footnotesize] at (79pt,441pt){$\inclscal{red}$};
     \node[font=\footnotesize] at (79pt,398pt){$\inclscal{green}$};
     \node[font=\footnotesize] at (107pt,398pt){$\inclscal{green}$};
     \node[font=\footnotesize] at (107pt,441pt){$\inclscal{red}$};
     \node[font=\footnotesize] at (122pt,441pt){$\inclscal{blue}$};
     \node[font=\footnotesize] at (193pt,380pt){$\spescal{blue}$};
     \node[font=\footnotesize] at (207pt,380pt){$\spescal{red}$};
     \node[font=\footnotesize] at (221pt,380pt){$\spescal{green}$};
     \node[font=\footnotesize] at (235pt,380pt){$\spescal{blue}$};
     \node[font=\footnotesize] at (250pt,380pt){$\spescal{red}$};
     \node[font=\footnotesize] at (264pt,380pt){$\spescal{green}$};
     \node[font=\footnotesize] at (235pt,469pt){$\inclscal{blue}$};
     \node[font=\footnotesize] at (250pt,469pt){$\inclscal{red}$};
     \node[font=\footnotesize] at (264pt,469pt){$\inclscal{blue}$};
     \node[font=\footnotesize] at (235pt,413pt){$\inclscal{green}$};
     \node[font=\footnotesize] at (250pt,413pt){$\inclscal{red}$};
     \node[font=\footnotesize] at (264pt,413pt){$\inclscal{green}$};
     \node[font=\footnotesize] at (292pt,413pt){$\inclscal{green}$};
     \node[font=\footnotesize] at (306pt,413pt){$\inclscal{red}$};
     \node[font=\footnotesize] at (320pt,413pt){$\inclscal{green}$};
     \node[font=\footnotesize] at (292pt,469pt){$\inclscal{blue}$};
     \node[font=\footnotesize] at (306pt,469pt){$\inclscal{red}$};
     \node[font=\footnotesize] at (320pt,469pt){$\inclscal{blue}$};
     \node[font=\footnotesize] at (378pt,380pt){$\spescal{blue}$};
     \node[font=\footnotesize] at (392pt,380pt){$\spescal{red}$};
     \node[font=\footnotesize] at (406pt,380pt){$\spescal{green}$};
     \node[font=\footnotesize] at (420pt,380pt){$\spescal{blue}$};
     \node[font=\footnotesize] at (434pt,380pt){$\spescal{red}$};
     \node[font=\footnotesize] at (449pt,380pt){$\spescal{green}$};
     \node[font=\footnotesize] at (449pt,488pt){$\projscal{red}$};
     \node[font=\footnotesize] at (477pt,488pt){$\projscal{red}$};
     \node[font=\footnotesize] at (434pt,441pt){$\inclscal{blue}$};
     \node[font=\footnotesize] at (449pt,441pt){$\inclscal{red}$};
     \node[font=\footnotesize] at (449pt,398pt){$\inclscal{green}$};
     \node[font=\footnotesize] at (477pt,398pt){$\inclscal{green}$};
     \node[font=\footnotesize] at (477pt,441pt){$\inclscal{red}$};
     \node[font=\footnotesize] at (491pt,441pt){$\inclscal{blue}$};
     \node[font=\footnotesize] at (563pt,380pt){$\spescal{blue}$};
     \node[font=\footnotesize] at (577pt,380pt){$\spescal{red}$};
     \node[font=\footnotesize] at (591pt,380pt){$\spescal{green}$};
     \node[font=\footnotesize] at (605pt,380pt){$\spescal{blue}$};
     \node[font=\footnotesize] at (620pt,380pt){$\spescal{red}$};
     \node[font=\footnotesize] at (634pt,380pt){$\spescal{green}$};
     \node[font=\footnotesize] at (605pt,469pt){$\inclscal{blue}$};
     \node[font=\footnotesize] at (620pt,469pt){$\inclscal{red}$};
     \node[font=\footnotesize] at (634pt,469pt){$\inclscal{blue}$};
     \node[font=\footnotesize] at (605pt,413pt){$\inclscal{green}$};
     \node[font=\footnotesize] at (620pt,413pt){$\inclscal{red}$};
     \node[font=\footnotesize] at (634pt,413pt){$\inclscal{green}$};
     \node[font=\footnotesize] at (662pt,413pt){$\inclscal{green}$};
     \node[font=\footnotesize] at (676pt,413pt){$\inclscal{red}$};
     \node[font=\footnotesize] at (690pt,413pt){$\inclscal{green}$};
     \node[font=\footnotesize] at (662pt,469pt){$\inclscal{blue}$};
     \node[font=\footnotesize] at (676pt,469pt){$\inclscal{red}$};
     \node[font=\footnotesize] at (690pt,469pt){$\inclscal{blue}$};
    \end{tikzpicture}
\end{minipage}
    \caption{The morphism ${\rm braid}^{(F)}_{1,0}$ for $p=3$ in type $\widetilde{C_2}$.}
    \label{fig:affpbraidtypeB}
\end{sidewaysfigure}

We are now ready for long-awaited construction of the Frobenius functor $F$. 
We will first define $F$ on $\dgrmstdF$ and then extend to $\dgrmBSstdF$.

\begin{thm}
\label{thm:frobfunctorstd}
There is a well-defined faithful $R$-linear monoidal functor 
\begin{equation*}
\gls*{F} : \dgrmstdF \longrightarrow \hat{R} \otimes \dgrmstd
\end{equation*}
defined on generating objects by
\begin{equation*}
R^F_s \longmapsto \hat{R}^{(F)}_s
\end{equation*}
for all $s \in \Ss$, and defined on generating morphisms by
\begin{align*}
{\rm stdcap}^F_s & \longmapsto {\rm stdcap}^{(F)}_s \\
{\rm stdcup}^F_s & \longmapsto {\rm stdcup}^{(F)}_s
\end{align*}
for all $s \in \Ss$, and
\begin{equation*}
{\rm stdbraid}^F_{s,t} \longmapsto {\rm stdbraid}^{(F)}_{s,t}
\end{equation*}
for all $(s,t) \in \Ss \times \Ss$ with $s \neq t$ such that $m_{st}<\infty$. 
\end{thm}

\begin{proof}
We must show that ${\rm stdcap}^{(F)}$, ${\rm stdcup}^{(F)}$, and ${\rm stdbraid}^{(F)}$ satisfy the relations \cite[(4.1)--(4.9)]{ew-soergelcalc} defining a presentation of $\dgrmstdF$. 
Each relation involving only standard caps and cups follows immediately from Proposition~\ref{prop:dotforkFrelns}, so we turn our attention to relations involving standard braids.

The first relation to check is cyclicity of the standard braid. 
First we claim that ${\rm braid}^{(F)}_{s,t}$ is cyclic with respect to ${\rm cap}^{(F)}$ and ${\rm cup}^{(F)}$. 
This follows from Lemma~\ref{lem:epsbiadjbraidcyclic}, because each of the generators in the definition of ${\rm braid}^{(F)}_{s,t}$ (i.e.~${\epsilon}$, ${\rm conjbraid}$, ${\rm braid}$) is cyclic with respect to $\epsilon$-morphisms or cups/caps.
By standardizing using ${\rm bivalent}^{(F)}$ it follows that ${\rm stdbraid}^{(F)}$ is cyclic with respect to ${\rm stdcap}^{(F)}$ and ${\rm stdcup}^{(F)}$.

%
%

The remaining relations \cite[(4.4)--(4.9)]{ew-soergelcalc} only involve standard braids. 
By definition ${\rm stdbraid}^{(F)}$ is an $\epsilon$-basic standard diagram, so these relations must be satisfied up to sign by Proposition~\ref{prop:epscupcapinv}.
It is easy to check that the relations \cite[(4.4), (4.6)--(4.9)]{ew-soergelcalc} are invariant under arbitrary sign changes of braids, because both the left- and right-hand sides of each of these relations have the same number of braids of each color. 
For the remaining relation \cite[(4.5)]{ew-soergelcalc}, it is enough to show that if $m_{s\tilde{s}}=2$ then $\len(w)$ is even. 
In this case, the reflection hyperplanes of $\tilde{s}$ and $\sh$ (which are parallel) are orthogonal to the reflection hyperplane of $s$. 
Thus $\sh s \sh=s$, so $(\tilde{s}\sh)^{(p-1)/2} s (\sh \tilde{s})^{(p-1)/2}=s$ and we may take $w=(\tilde{s} \sh)^{(p-1)/2}$, which has even length. 

Thus $F$ is well defined. 
For faithfulness, we recall that $\Hom$-spaces in $\dgrmstdF$ are either $0$ or are spanned by a basic standard diagram (Theorem~\ref{thm:stdhoms}). 
Since the $F$-image of these basic standard diagrams is obviously non-zero this completes the proof.
\end{proof}

\begin{thm} \label{thm:frobfunctor}
The functor $F$ above extends to a faithful $R$-linear monoidal functor 
\begin{equation*}
F : \dgrmBSstdF \longrightarrow \hat{R} \otimes \dgrmBSstd
\end{equation*}
defined on Bott--Samelson generating objects by
\begin{equation*}
B^F_s \longmapsto B^{(F)}_s
\end{equation*}
for all $s \in \Ss$, and defined on Bott--Samelson generating morphisms by
\begin{align*}
{\rm dot}^F_s & \longmapsto {\rm dot}^{(F)}_s \\
{\rm fork}^F_s & \longmapsto {\rm fork}^{(F)}_s \\
{\rm bivalent}^F_s & \longmapsto {\rm bivalent}^{(F)}_s
\end{align*}
for all $s \in \Ss$, and
\begin{equation*}
{\rm braid}^F_{s,t} \longmapsto {\rm braid}^{(F)}_{s,t}
\end{equation*}
for all distinct $s,t \in \Ss$ such that $m_{st}<\infty$ (and similarly on their duals).
\end{thm}

\begin{proof}
Instead of checking the relations defining $\dgrmBSF$ as in the proofs of Prop~\ref{prop:dotforkFrelns} and Theorem~\ref{thm:frobfunctorstd}, we will use localization. 
This indirect strategy avoids having to check the Zamolodchikov relations by hand. 
The first step is to show that localization in the image of $F$ makes sense.

Using Theorem~\ref{thm:frobfunctorstd} we can extend $F$ to a $Q$-linear monoidal functor
\begin{equation*}
Q \otimes_R \dgrmstdF \longrightarrow Q \otimes_R \hat{R} \otimes_R \dgrmBSstd \longrightarrow \hat{Q} \otimes \dgrmBSstd \text{,}
\end{equation*}
where the first arrow is an extension by scalars to $Q$, and the second arrow uses the embedding $Q \otimes_R \hat{R} \rightarrow \hat{Q}$.
Now consider the putative extension of $F$ to Bott--Samelson morphisms. 
We can localize the image of these morphisms using tensor products of $\pi_{t,0}^{(F)}$ and $\pi_{t,1}^{(F)}$ (and their duals) for $t \in \Ss$. 
Localization in this manner produces matrices of maps in the $F$-image of $Q \otimes \dgrmstdF$. 

By Proposition~\ref{prop:dotforkFrelns}, the dual-valued localization matrices of ${\rm dot}^{(F)}_s$ and ${\rm fork}^{(F)}_s$ coincide with the $F$-images of the dual-valued localization matrices of ${\rm dot}^F_s$ and ${\rm fork}^F_s$ for all $s \in \Ss$. 
We claim it is enough to show that the same holds true for ${\rm braid}^{(F)}_{s,t}$ for all $s,t \in \Ss$ with $m_{st}<\infty$. 
For if so, then the localizations of the remaining relations in $\dgrmBSF$ (those involving braids) hold too.
As localization is faithful, this will complete the proof.

To compute localization matrices it is enough to show that the Jones--Wenzl relation \cite[(10.8i)]{emtw} holds for ${\rm braid}^{(F)}_{s,t}$.
This is immediate if both $s,t \in \Ssf$.
So suppose $s \in \Ssf$ with $m_{s\tilde{s}}<\infty$. 
To check the Jones--Wenzl relation for ${\rm braid}^{(F)}_{s,\tilde{s}}$, we apply Lemma~\ref{lem:rhomapsdotsforks}. 
If we place a ${\rm dot}^{(F)}_{s}$ or ${\rm dot}^{(F)}_{\tilde{s}}$ on top of ${\rm braid}^{(F)}_{s,\tilde{s}}$ the dot ``propagates'' down along the straight-ahead walk until it reaches ${\rm braid}_{s',\tilde{s}'}$, where we can apply the ordinary Jones--Wenzl relation to yield a sum of diagrams. 
The ``leading'' diagram in the Jones--Wenzl relation has no forks and one dot (this is a defining feature of the Jones--Wenzl idempotent). 
We then apply Lemma~\ref{lem:rhomapsdotsforks} again to continue propagating this dot down to the bottom of the diagram. 
See Figure~\ref{fig:affpbraidtypeAspot} for an example.
Finally, we apply Proposition~\ref{prop:epscupcapinv} and Lemma~\ref{lem:rhomapsid} several times to yield $\pm {\rm dot}^{(F)}_t \otimes \ident$ for some $t \in \{s,\tilde{s}\}$. 
A careful count of the sign changes shows that this process changes the sign by $\delta$, so we actually have ${\rm dot}^{(F)}_t \otimes \ident$.
We can similarly simplify other terms in the sum (using \eqref{eq:rhomapsforks} for any forks) to show that the resulting diagram can be generated by ${\rm dot}^{(F)}_t$ and ${\rm fork}^{(F)}_t$ for $t \in \{s,\tilde{s}\}$.
These two facts are enough to imply the Jones--Wenzl relation by Proposition~\ref{prop:JWindirect} (note that we already proved the cyclicity of ${\rm braid}^{(F)}_{s\tilde{s}}$ separately in the proof of Theorem~\ref{thm:frobfunctorstd}).
\end{proof}

\begin{figure}
    \centering
\begin{tikzpicture}[scale=0.9]
\node[anchor=south west,inner sep=0] at (0,0) {\includegraphics[scale=0.9]{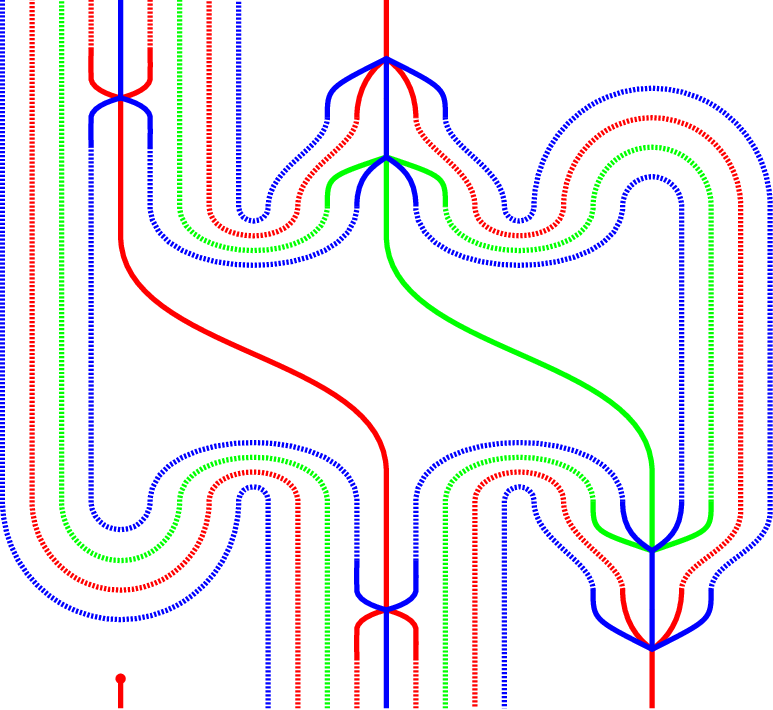}};
 \node at (-7pt,171pt) {$\delta$};
 \node[font=\footnotesize] at (78pt,101pt){$\spescal{blue}$};
 \node[font=\footnotesize] at (93pt,101pt){$\spescal{green}$};
 \node[font=\footnotesize] at (107pt,101pt){$\spescal{red}$};
 \node[font=\footnotesize] at (121pt,101pt){$\spescal{blue}$};
 \node at (60pt,28pt){$\delta_{\color{red} 1}$};
 \node[font=\footnotesize] at (177pt,24pt){$\inclscal{red}$};
 \node[font=\footnotesize] at (206pt,24pt){$\inclscal{red}$};
 \node[font=\footnotesize] at (177pt,73pt){$\projscal{blue}$};
 \node[font=\footnotesize] at (206pt,73pt){$\projscal{blue}$};
 \node[font=\footnotesize] at (292pt,58pt){$\projscal{blue}$};
 \node[font=\footnotesize] at (306pt,58pt){$\projscal{red}$};
 \node[font=\footnotesize] at (292pt,101pt){$\projscal{green}$};
 \node[font=\footnotesize] at (306pt,101pt){$\projscal{blue}$};
 \node[font=\footnotesize] at (334pt,101pt){$\projscal{blue}$};
 \node[font=\footnotesize] at (348pt,101pt){$\projscal{green}$};
 \node[font=\footnotesize] at (334pt,58pt){$\projscal{red}$};
 \node[font=\footnotesize] at (348pt,58pt){$\projscal{blue}$};
 \node[font=\footnotesize] at (50pt,319pt){$\projscal{red}$};
 \node[font=\footnotesize] at (79pt,319pt){$\projscal{red}$};
 \node[font=\footnotesize] at (50pt,271pt){$\inclscal{blue}$};
 \node[font=\footnotesize] at (79pt,271pt){$\inclscal{blue}$};
 \node[font=\footnotesize] at (163pt,284pt){$\inclscal{blue}$};
 \node[font=\footnotesize] at (177pt,284pt){$\inclscal{red}$};
 \node[font=\footnotesize] at (163pt,242pt){$\inclscal{green}$};
 \node[font=\footnotesize] at (177pt,242pt){$\inclscal{blue}$};
 \node[font=\footnotesize] at (206pt,242pt){$\inclscal{blue}$};
 \node[font=\footnotesize] at (220pt,242pt){$\inclscal{green}$};
 \node[font=\footnotesize] at (206pt,284pt){$\inclscal{red}$};
 \node[font=\footnotesize] at (220pt,284pt){$\inclscal{blue}$};
 \node[font=\footnotesize] at (135pt,103pt){$\epsscal{blue}$};
 \node[font=\footnotesize] at (149pt,103pt){$\epsscal{red}$};
 \node[font=\footnotesize] at (163pt,103pt){$\epsscal{green}$};
 \node[font=\footnotesize] at (177pt,103pt){$\epsscal{blue}$};
 \node[font=\footnotesize] at (263pt,103pt){$\epsscal{blue}$};
 \node[font=\footnotesize] at (276pt,108pt){$\epsscal{red}$};
 \node[font=\footnotesize] at (287pt,113pt){$\epsscal{green}$};
 \node[font=\footnotesize] at (299pt,118pt){$\epsscal{blue}$};
 \node[font=\footnotesize] at (135pt,241pt){$\spescal{blue}$};
 \node[font=\footnotesize] at (147pt,235pt){$\spescal{red}$};
 \node[font=\footnotesize] at (158pt,229pt){$\spescal{green}$};
 \node[font=\footnotesize] at (168pt,223pt){$\spescal{blue}$};
 \node[font=\footnotesize] at (263pt,241pt){$\spescal{blue}$};
 \node[font=\footnotesize] at (277pt,241pt){$\spescal{red}$};
 \node[font=\footnotesize] at (291pt,241pt){$\spescal{green}$};
 \node[font=\footnotesize] at (349pt,241pt){$\epsscal{green}$};
 \node[font=\footnotesize] at (363pt,241pt){$\epsscal{red}$};
 \node[font=\footnotesize] at (377pt,241pt){$\epsscal{blue}$};
 \node at (320pt,319pt){$\delta_{\color{blue} 0}$};
 \node[font=\footnotesize] at (306pt,241pt){$\spescal{blue}$};
 \node[font=\footnotesize] at (334pt,241pt){$\epsscal{blue}$};
\end{tikzpicture}
    \caption{The leading term in the Jones--Wenzl relation for ${\rm braid}^{(F)}_{1,0}$ (with $p=3$ in type $\widetilde{A_2}$).}
    \label{fig:affpbraidtypeAspot}
\end{figure}

\section{The category \texorpdfstring{$\dgrmpast$}{D(p|star)}}

By Theorem~\ref{thm:frobfunctor} the Frobenius functor induces a monoidal equivalence of categories between $\dgrmBSF$ and its image. 
In this section we will define a category $\dgrmpast$ extending this image and show that it categorifies the bimodule $\heckepast$.

\subsection{Construction}
\label{sec:Dpastconstr}

Let $\glsuseri*{dgrmBSdeg}$ denote the \defnemph{degrading} of $\dgrmBS$, which is just the category $\dgrmBS$ but without the grading on the $\Hom$-spaces. Similarly let $\dgrmdeg$ denote the additive Karoubi envelope of $\dgrmBSdeg$. 

\begin{defn} \label{defn:menorah}
Suppose $w \in \Wpcosets$ and $t' \in \Ss$ such that $wt'w^{-1}=F(\tilde{s})$ (as in Definition~\ref{defn:rhomaps}, with $t=\tilde{s}$). 
Let $\expr{w}$ be a rex for $w$. 
We define the \defnemph{menorah morphism} 
\begin{align*}
\glsuseri*{menorahstildeexprwtd} & : B^{(F)}_{\tilde{s}} \longrightarrow B_{\expr{w} \expr{t'} \expr{w}^{-1}} \\
\intertext{in $\hat{R} \otimes \dgrmBSstd$ by}
{\rm menorah}_{\tilde{s}}^{\expr{w},t'} & =(\overline{\pi_{\expr{w}}} \otimes \ident_{B_{t'}} \otimes \overline{\pi_{\expr{w}^{-1}}}) \circ \rho_{\tilde{s}}^{\expr{w},t'} \text{,}
\end{align*}
where $\pi_{\expr{w}}$ denotes $\pi_{\seq{e}}$ for the subsequence $\seq{e} \in [\expr{w}]$ whose type is all $1$s.
\end{defn}

\begin{exam} \hfill
\begin{enumerate}
\item Let $p=3$ and $\W$ be of type $\widetilde{A_2}$. 
Following the same diagrammatic convention as Example~\ref{exam:frobdotfork}, we depict two menorah morphisms in Figure~\ref{fig:menorahexpand}.

\item Let $p=3$ and $\W$ be of type $\widetilde{C_2}$. 
Following the same diagrammatic convention as Example~\ref{exam:affprho}, we depict two menorah morphisms in Figure~\ref{fig:menorahexpandtypeB}. 
(Moreover, the diagram in Figure~\ref{subfig:orthomenorah}, when viewed as a type $\widetilde{C_2}$ diagram, also depicts a menorah morphism for this case.)
\end{enumerate}
\end{exam}

\begin{figure}
\centering
\begin{subfigure}{0.4\textwidth}
\centering
\begin{tikzpicture}
\node[anchor=south west,inner sep=0] at (0,0) {\includegraphics{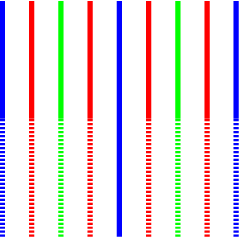}};
 \node[font=\footnotesize] at (7pt,57pt){$\inclscal{blue}$};
 \node[font=\footnotesize] at (21pt,57pt){$\inclscal{red}$};
 \node[font=\footnotesize] at (35pt,57pt){$\inclscal{green}$};
 \node[font=\footnotesize] at (49pt,57pt){$\inclscal{red}$};
 \node[font=\footnotesize] at (77pt,57pt){$\inclscal{red}$};
 \node[font=\footnotesize] at (91pt,57pt){$\inclscal{green}$};
 \node[font=\footnotesize] at (105pt,57pt){$\inclscal{red}$};
 \node[font=\footnotesize] at (119pt,57pt){$\inclscal{blue}$};
\end{tikzpicture}
\caption{$\expr{w}=\expr{0121}$, $t'=0$} \label{subfig:orthomenorah}
\end{subfigure} \qquad
\begin{subfigure}{0.4\textwidth}
\centering
\begin{tikzpicture}
\node[anchor=south west,inner sep=0] at (0,0) {\includegraphics{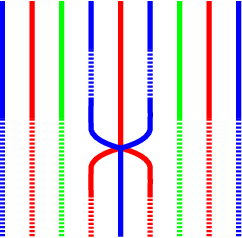}};
 \node[font=\footnotesize] at (7pt,57pt){$\inclscal{blue}$};
 \node[font=\footnotesize] at (21pt,57pt){$\inclscal{red}$};
 \node[font=\footnotesize] at (35pt,57pt){$\inclscal{green}$};
 \node[font=\footnotesize] at (49pt,20pt){$\inclscal{red}$};
 \node[font=\footnotesize] at (77pt,20pt){$\inclscal{red}$};
 \node[font=\footnotesize] at (49pt,67pt){$\projscal{blue}$};
 \node[font=\footnotesize] at (77pt,67pt){$\projscal{blue}$};
 \node[font=\footnotesize] at (49pt,90pt){$\inclscal{blue}$};
 \node[font=\footnotesize] at (77pt,90pt){$\inclscal{blue}$};
 \node[font=\footnotesize] at (92pt,57pt){$\inclscal{green}$};
 \node[font=\footnotesize] at (106pt,57pt){$\inclscal{red}$};
 \node[font=\footnotesize] at (120pt,57pt){$\inclscal{blue}$};
\end{tikzpicture}
\caption{$\expr{w}=\expr{0120}$, $t'=1$} \label{subfig:braidmenorah}
\end{subfigure}
\caption{Two menorah morphisms for $p=3$ in type $\widetilde{A_2}$.}
\label{fig:menorahexpand}
\end{figure}

\begin{figure}
\centering
\begin{subfigure}{0.4\textwidth}
\centering
\begin{tikzpicture}[scale=0.8,font=\footnotesize]
\node[anchor=south west,inner sep=0] at (0,0) {\scalebox{0.8}[0.8]{\includegraphics{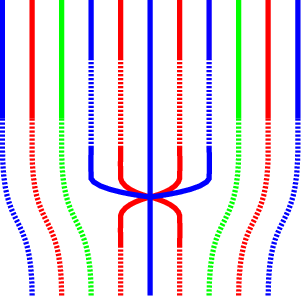}}};
         \node at (64pt,24pt){$\inclscal{red}$};
         \node at (93pt,24pt){$\inclscal{red}$};
         \node at (50pt,72pt){$\projscal{blue}$};
         \node at (64pt,72pt){$\projscal{red}$};
         \node at (93pt,72pt){$\projscal{red}$};
         \node at (107pt,72pt){$\projscal{blue}$};
         \node at (64pt,114pt){$\inclscal{red}$};
         \node at (93pt,114pt){$\inclscal{red}$};
         \node at (50pt,114pt){$\inclscal{blue}$};
         \node at (107pt,114pt){$\inclscal{blue}$};
         \node at (36pt,87pt){$\inclscal{green}$};
         \node at (121pt,87pt){$\inclscal{green}$};
         \node at (22pt,87pt){$\inclscal{red}$};
         \node at (135pt,87pt){$\inclscal{red}$};
         \node at (7pt,87pt){$\inclscal{blue}$};
         \node at (150pt,87pt){$\inclscal{blue}$};
\end{tikzpicture}
\caption{$\expr{w}=\expr{01201}$, $t'=0$}
\end{subfigure} \qquad
\begin{subfigure}{0.4\textwidth}
\centering
\begin{tikzpicture}[scale=0.8,font=\footnotesize]
\node[anchor=south west,inner sep=0] at (0,0) {\scalebox{0.8}[0.8]{\includegraphics{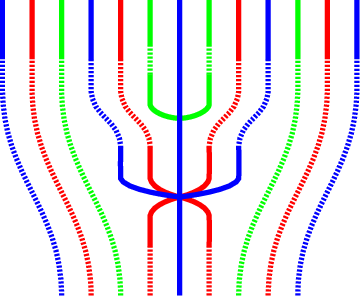}}};
         \node at (79pt,24pt){$\inclscal{red}$};
         \node at (107pt,24pt){$\inclscal{red}$};
         \node at (64pt,72pt){$\projscal{blue}$};
         \node at (79pt,72pt){$\projscal{red}$};
         \node at (107pt,72pt){$\projscal{red}$};
         \node at (121pt,72pt){$\projscal{blue}$};
         \node at (64pt,114pt){$\inclscal{red}$};
         \node at (121pt,114pt){$\inclscal{red}$};
         \node at (50pt,114pt){$\inclscal{blue}$};
         \node at (135pt,114pt){$\inclscal{blue}$};
         \node at (36pt,114pt){$\inclscal{green}$};
         \node at (150pt,114pt){$\inclscal{green}$};
         \node at (22pt,114pt){$\inclscal{red}$};
         \node at (164pt,114pt){$\inclscal{red}$};
         \node at (7pt,114pt){$\inclscal{blue}$};
         \node at (178pt,114pt){$\inclscal{blue}$};
         \node at (79pt,105pt){$\projscal{green}$};
         \node at (107pt,105pt){$\projscal{green}$};
         \node at (79pt,122pt){$\inclscal{green}$};
         \node at (107pt,122pt){$\inclscal{green}$};
\end{tikzpicture}
\caption{$\expr{w}=\expr{012012}$, $t'=0$}
\end{subfigure}
\caption{Two menorah morphisms for $p=3$ in type $\widetilde{C_2}$.}
\label{fig:menorahexpandtypeB}
\end{figure}

\begin{defn}
The category $\widetilde{\dgrm}_{{\rm BS},p|\ast}$ is an $\hat{R}$-linear subcategory of $\hat{R} \otimes \dgrmBSstd$ equipped with the structure of a $(\dgrmBSF,\dgrmBSdeg)$-bimodule defined as follows.
 
\begin{itemize}
\item The objects of $\widetilde{\dgrm}_{{\rm BS},p|\ast}$ are of the form  $\glsuseri*{Bppastpexprxexprw}=B^{(F)}_{F^{-1}(\expr{x})} \otimes B_{\expr{w}}$ for any $\expr{x}|\expr{w} \in \linkexpr$. 
The $(\dgrmBSF,\dgrmBSdeg)$-bimodule structure is defined by
\begin{equation*}
B^F_{\expr{y}} \otimes B^{(p|\ast)}_{\expr{x}|\expr{w}} \otimes B_{\expr{z}}=B^{(p|\ast)}_{F(\expr{y})\expr{x}|\expr{w z}} \text{.}
\end{equation*}

\item The morphisms of $\widetilde{\dgrm}_{{\rm BS},p|\ast}$ are generated (under $\hat{R}$-linear combinations, compositions, and the $(\dgrmBSF,\dgrmBSdeg)$-bimodule structure) by all possible menorah morphisms. 
In particular, $\widetilde{\dgrm}_{{\rm BS},p|\ast}$ contains all morphisms in $\dgrmBSdeg$ and all morphisms in the image of $F$, via the $(\dgrmBSF,\dgrmBSdeg)$-bimodule structure acting on $\ident_{\hat{R}}$.
\end{itemize}
\end{defn}

We call objects in $\widetilde{\dgrm}_{{\rm BS},p|\ast}$ \defnemph{(diagrammatic) $(p|\ast)$-Bott--Samelson bimodules}.
For each $\expr{x} \in \Sspexpr$, we also write $B^{(p)}_{\expr{x}}$ for the object $B^{(F)}_{F^{-1}(\expr{x})}$ in $\widetilde{\dgrm}_{{\rm BS},p|\ast}$.

\begin{notn} \label{notn:lightercol}
We assign a lighter version of the color corresponding to $\tilde{s}$ to the $\Wp$-generator $\tilde{s}_p$ (e.g.~if $\tilde{s}$ is colored blue then $\tilde{s}_p$ is colored cyan). In the diagrams we use this color to abbreviate morphisms which involve $B^{(F)}_{\tilde{s}}$ or $B^F_{\tilde{s}}$, by using solid $\tilde{s}_p$-colored lines. For example, the morphisms corresponding to ${\rm dot}^{(F)}_{\tilde{s}}$, ${\rm fork}^{(F)}_{\tilde{s}}$, and ${\rm braid}^{(F)}_{\tilde{s}s}$ abbreviate to 
\begin{center}
\begin{tabular}{ccc}
\includegraphics{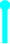} & 
\includegraphics{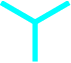} & 
\includegraphics{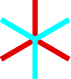} \\
$p$-dot & $p$-fork & $p$-braid
\end{tabular}
\end{center}
so that they look similar to their counterparts in $\dgrmF$. Similarly, the menorah morphism in Figure~\ref{subfig:orthomenorah} abbreviates to
\begin{center}
\begin{tabular}{c}
\includegraphics{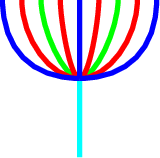} \\
menorah
\end{tabular}
\end{center}
For this reason we will also call these morphisms ``vertices''. 
%
\end{notn}

\begin{rem} \hfill
\begin{enumerate}
\item Only a subset of the menorah morphisms are actually needed to generate $\widetilde{\dgrm}_{{\rm BS},p|\ast}$. 
For example, the menorah morphism in Figure~\ref{subfig:orthomenorah} also generates (using braids, dots and forks) the menorah morphism in Figure~\ref{subfig:braidmenorah}.
We have included such superfluous morphisms in our definitions to simplify the basis construction in \S\ref{sec:pstarlightleaves}.


\item Menorah vertices are, strictly speaking, not cyclic, since some rotations of a menorah vertex do not correspond to a morphism in $\widetilde{\dgrm}_{{\rm BS},p|\ast}$. 
However, menorah vertices are what we call \defnemph{semi-cyclic}. 
In other words, if we twist such a menorah vertex by $180$ degrees clockwise we get its dual, and vice-versa. 

\item More generally, consider diagrams in $\widetilde{\dgrm}_{{\rm BS},p|\ast}$ drawn according to the convention in Notation~\ref{notn:lightercol}, and consider isotopies of such diagrams which never factor through a non-$(p|\ast)$-Bott--Samelson bimodule. 
Then any such isotopy class defines a unique morphism (not just up to sign).
\end{enumerate}
\end{rem}

The grading on $\widetilde{\dgrm}_{{\rm BS},p|\ast}$ inherited from $\dgrmBSstd$ is not a very useful invariant because $\hat{R}$ is no longer meaningfully graded. However we can define a valuation on morphisms in the sense of Definition~\ref{defn:valmod}. 

\begin{defn}
Let $L$ be a (possibly dual-valued) morphism in $\widetilde{\dgrm}_{{\rm BS},p|\ast}$. 
Localize $\Ss$-colored strings as in \S\ref{sec:localization} and the $\Ssp$-colored strings as in the proof of Theorem~\ref{thm:frobfunctor} to obtain a dual-valued matrix of standard morphisms. 
We can simplify the entries of this matrix so that they each consist of a single basic standard diagram with a dual-valued coefficient on the left. 
The \defnemph{valuation} of $L$, which we write as $\glsuseri*{val} L$, is defined to be the minimal valuation of all these coefficients.
\end{defn}

Note that this valuation is a special case of the induced valuation from Example~\ref{exam:valmodind}.

\begin{exam} \hfill
\begin{enumerate}
\item In Example~\ref{exam:localization} we showed that for each $s \in \Ss$ the localization matrix of ${\rm dot}_s$ is 
\begin{equation*}
\begin{bmatrix}
0 & \binom{a_s}{1}
\end{bmatrix} \text{.}
\end{equation*}
Recall that $\val a_s=2$ if $s \in \Ssf$ but $\val a_{\tilde{s}}=0$. Thus
\begin{equation*}
\val {\rm dot}_s=\begin{cases}
1 & \text{if $s \in \Ssf$,} \\
0 & \text{if $s=\tilde{s}$.}
\end{cases}
\end{equation*}

\item By Proposition~\ref{prop:dotforkFrelns}, the localization matrix of ${\rm fork}^{(F)}_{\tilde{s}}$ coincides with that of ${\rm fork}^F_{\tilde{s}}$:
\begin{equation*}
\begin{bmatrix}
0 & \binom{1}{-(a_{F(\tilde{s})})^{-1}} & \binom{1}{(a_{F(\tilde{s})})^{-1}} & 0 \\
\binom{1}{-(a_{F(\tilde{s})})^{-1}} & 0 & 0 & \binom{1}{(a_{F(\tilde{s})})^{-1}}
\end{bmatrix} \text{.}
\end{equation*}
Since $\val a_{F(\tilde{s})}=2$, this means that $\val({\rm fork}^{(F)}_{\tilde{s}})=-1$.
\end{enumerate}
\end{exam}

The previous example generalizes as follows.

\begin{lem} \hfill \label{lem:valF}
\begin{enumerate}[label={\rm (\roman*)}]
\item Let $\LLdbl$ be a double leaves map in $\dgrmBSF$. Then $\val F(\LLdbl) =\deg \LLdbl$. \label{enum:valLLdbl}

\item Let $L$ be a homogeneous morphism in $\dgrmBSF$ and $P$ a morphism in $\widetilde{\dgrm}_{{\rm BS},p|\ast}$. Then $\val(F(L) \otimes P)=\val F(L)+\val P$.
\end{enumerate}
\end{lem}

\begin{proof} \hfill
\begin{enumerate}[label={(\roman*)}]
\item Every double leaves map is the composition of some combination of tensor products of dots, forks, and braids. 
In particular, there are no non-trivial scalars in any region of the $\Ss$-graph for $\LLdbl$. 
The localization matrices of all dots, forks, and braids have entries in $\field(a_{F(s)} : s \in \Ss) \subseteq Q$. 
But for all $s \in \Ss$ we see that $\val a_{F(s)}=2=\deg a_{F(s)}$, and the localization matrix of $F(\LLdbl)$ is identical to that of $\LLdbl$ by the proof of Theorem~\ref{thm:frobfunctor}. 
This means that every left coefficient in the localization matrix for $F(\LLdbl)$ has valuation equal to the degree of $\LLdbl$.

\item The left coefficients in the localization matrix of $F(L) \otimes P$ are all of the form $fw(g)$, where $f$ is the left coefficient of some basic standard diagram with domain $\hat{Q}_w$ in the localization matrix of $F(L)$, while $g$ is a left coefficient from the localization matrix of $P$. 
From \ref{enum:valLLdbl} the only generating morphism of $\dgrmBSF$ whose valuation is \emph{not} equal to its degree is $a_{\tilde{s}} \ident_{R^F}$, which has degree $2$ but valuation $0$. 
Thus all left coefficients in the localization matrix of $F(L)$ have the same valuation. 
Moreover, each entry in the localization matrix of $F(L)$ has domain isomorphic to $\hat{Q}_{F(x)}$ for some $x \in \W$, i.e.~$w=F(x) \in \Wp$. 
For any $s \in \Ss$ and any $x \in \W$
\begin{equation*}
F(x)(a_s) \in a_s+\bigoplus_{t \in \Ss} \field a_{F(t)}=a_s+\bigoplus_{t \in \Ssf} \field a_t \text{,}
\end{equation*}
so $\val a_s=\val(F(x)(a_s))$. This means that $\val w(g)=\val g$, so $\val(fw(g))=\val f+\val g$ has minimal value $\val F(L)+\val P$. \qedhere
\end{enumerate}
\end{proof}

\begin{defn}
Let $\gls*{dgrmBSpast}$ be the following extension of $\widetilde{\dgrm}_{{\rm BS},p|\ast}$. 
The objects of $\dgrmBSpast$ are the formal symbols $B\langle m\rangle$, for each $m \in \ZZ$ and each object $B$ in $\widetilde{\dgrm}_{{\rm BS},p|\ast}$. 
The morphisms in $\dgrmBSpast$ are given by
\begin{equation*}
\Hom_{\dgrmBSpast}^{\blacklozenge}(B\langle m\rangle,B'\langle n\rangle)=(\Hom_{\widetilde{\dgrm}_{{\rm BS},p|\ast}}^{\blacklozenge}(B,B'))\langle n-m\rangle
\end{equation*}
for all objects $B,B'$ in $\widetilde{\dgrm}_{{\rm BS},p|\ast}$ and all $m,n \in \ZZ$. 
As with $\dgrmBS$, we will not always distinguish between morphisms in $\widetilde{\dgrm}_{{\rm BS},p|\ast}$ and morphisms in $\dgrmBSpast$.

We define the \defnemph{diagrammatic $(p|\ast)$-Hecke category} $\gls*{dgrmpast}$ to be the additive Karoubi envelope of $\dgrmBSpast$. In other words, $\dgrmpast$ is the closure of $\dgrmBSpast$ with respect to all finite direct sums, direct summands, and valuation shifts. 
\end{defn}


\begin{rem} \hfill
\begin{enumerate}
\item Even with the valuation structure on $\dgrmBSpast$, the right $\dgrmBSdeg$-module structure on $\dgrmBSpast$ does \emph{not} extend to a graded $\dgrmBS$-module structure. The primary reason for this is a mismatch between the degree and valuation of morphisms in $\dgrmBS$ (e.g.~$\deg({\rm dot}_{\tilde{s}})=1 \neq 0=\val({\rm dot}_{\tilde{s}})$). Unlike degree, valuation does not behave well algebraically in $\dgrmBS$; for example there are cases where $\val(L \otimes L') \neq \val L+\val L'$ for $L,L'$ two morphisms in $\dgrmBS$. We will return to this problem in \S\ref{sec:decat}.

\item Unlike with $\Ss$-graphs, the valuation of a diagram in $\dgrmBSpast$ is not necessarily the sum of the valuations of all vertices and scalars in the diagram. This is because for $L,L'$ two composable morphisms in $\dgrmBSpast$, we only have an inequality $\val(L \circ L') \geq \val L+\val L'$ rather than an equality.
\end{enumerate}
\end{rem}

\subsection{\texorpdfstring{$(p|\ast)$}{(p|star)}-light leaves}
\label{sec:pstarlightleaves}

We will construct a basis for the $\Hom$-spaces in $\dgrmBSpast$ analogous to the light leaves basis for $\dgrmBS$. Before we describe the construction it is necessary to generalize the notion of a rex move in $\dgrmBS$ to $\dgrmBSpast$. 

\begin{defn}
A \defnemph{braid-like menorah} is a morphism in $\dgrmBSpast$ of the form
\begin{equation*}
B^{(F)}_{\tilde{s}} \otimes B_{\expr{w}} \xrightarrow{{\rm menorah}_{\tilde{s}}^{\expr{w},t'} \otimes \ident} B_{\expr{w}} \otimes B_{t'} \otimes B_{\expr{w}^{-1}} \otimes B_{\expr{w}} \xrightarrow{ \ident \otimes {\rm cap}_{\expr{w}^{-1}}} B_{\expr{w}} \otimes B_{t'}
\end{equation*}
or its dual. 
We call a morphism in $\dgrmBSpast$ an \defnemph{mrex move} if it can be generated (using composition and the tensor product) from identity morphisms, braid morphisms, and braid-like menorahs.
\end{defn}

In other words, mrex moves correspond to morphisms in $\dgrmBSpast$ which do not factor through $(p|\ast)$-Bott--Samelson bimodules of shorter length than the domain or codomain.

\begin{exam}
Let $p=3$ and $\W$ be of type $\widetilde{A_2}$. 
Following the same diagrammatic convention as Example~\ref{exam:frobdotfork}, Figure~\ref{fig:mrexshamash} depicts two braid-like menorahs.
\end{exam}

\begin{figure}
    \begin{subfigure}{0.4\textwidth}
        \centering
        \includegraphics[scale=0.85]{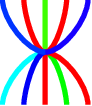}
    \end{subfigure}
    ~
    \begin{subfigure}{0.4\textwidth}
        \centering
        \includegraphics[scale=0.85]{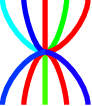}
    \end{subfigure}
    \caption{Two braid-like menorahs, for $p=3$ in type $\widetilde{A_2}$.}
    \label{fig:mrexshamash}
\end{figure}

\begin{constr} 
Let $\expr{x} \in \linkexpr$, $\patt{r} \in [\expr{x}]_{p|\ast}$ and $\seq{e} \in [\patt{r}]$. 
Suppose $\expr{w} \in \Sspexpr$ and $\expr{z} \in \expr{\Ss}$ are rexes for $\hat{e}\hat{r}^{-1}$ and $\hat{r}$. 
The \defnemph{$(p|\ast)$-light leaves morphism} $\glsuseri*{pastLLseqeexprwexprz} :B^{(p|\ast)}_{\expr{x}} \rightarrow B^{(p)}_{\expr{w}} \otimes \hat{R}_{\expr{z}}$ is a morphism in $\dgrmpast$ constructed inductively as follows. 
For each non-negative integer $i \leq \len(\expr{x})$ let $\expr{x}_{\leq i}$, $\patt{r}_{\leq i}$, and $\seq{e}_{\leq i}$ consist of the first $i$ terms of $\expr{x}$, $\patt{r}$, and $\seq{e}$ respectively. 
Fix rexes $\expr{w}_{\leq i} \in \Sspexpr$ and $\expr{z}_{\leq i} \in \expr{\Ss}$ for $\widehat{e_{\leq i}}(\widehat{r_{\leq i}})^{-1}$ and $\widehat{r_{\leq i}}$ respectively such that $\expr{w}=\expr{w}_{\leq \len(\expr{x})}$ and $\expr{z}=\expr{z}_{\leq \len(\expr{x})}$. 
We set $\pastLL_i=\pastLL_{\seq{e}_{\leq i},\expr{w}_{\leq i},\expr{z}_{\leq i}}$ and define $\pastLL_i=\phi_i \circ (\pastLL_{i-1} \otimes \ident_{B^{(p|\ast)}_{s_i}})$, where $\phi_i$ depends on the decorated type of $\seq{e}_i$. 
There are six possibilities for $\phi_i$, which are illustrated in Figure~\ref{fig:pastLL}. 
In these pictures, boxes labeled ``mrex'' are mrex moves, and trapezoids labeled ``std'' are basic standard diagrams with codomain corresponding to a rex. 
\end{constr}

\begin{figure}
    \begin{subfigure}[b]{0.3\textwidth}
        \centering
        \begin{tikzpicture}
        \node[anchor=south west,inner sep=0] at (0,0) {\includegraphics{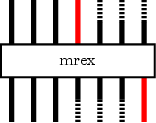}};
         \node[font=\footnotesize] at (43pt,10pt){$\inclscal{black}$};
         \node[font=\footnotesize] at (53pt,10pt){$\inclscal{black}$};
         \node[font=\footnotesize] at (64pt,10pt){$\inclscal{black}$};
         \node[font=\footnotesize] at (53pt,50pt){$\projscal{black}$};
         \node[font=\footnotesize] at (64pt,50pt){$\projscal{black}$};
         \node[font=\footnotesize] at (75pt,50pt){$\projscal{black}$};
        \end{tikzpicture}
        \caption{$\upseq 1$}
    \end{subfigure}
    ~
    \begin{subfigure}[b]{0.3\textwidth}
        \centering
        \begin{tikzpicture}
        \node[anchor=south west,inner sep=0] at (0,0) {\includegraphics{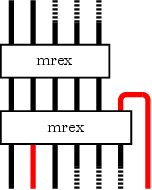}};
         \node[font=\footnotesize] at (42pt,10pt){$\inclscal{black}$};
         \node[font=\footnotesize] at (53pt,10pt){$\inclscal{black}$};
         \node[font=\footnotesize] at (64pt,10pt){$\inclscal{black}$};
         \node[font=\footnotesize] at (32pt,81pt){$\projscal{black}$};
         \node[font=\footnotesize] at (42pt,81pt){$\projscal{black}$};
         \node[font=\footnotesize] at (53pt,81pt){$\projscal{black}$};
        \end{tikzpicture}
        \caption{$\downseq 1$}
    \end{subfigure}
    ~
    \begin{subfigure}[b]{0.3\textwidth}
        \centering
        \begin{tikzpicture}
        \node[anchor=south west,inner sep=0] at (0,0) {\includegraphics{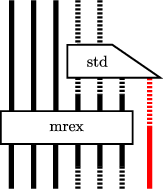}};
         \node[font=\footnotesize] at (43pt,10pt){$\inclscal{black}$};
         \node[font=\footnotesize] at (53pt,10pt){$\inclscal{black}$};
         \node[font=\footnotesize] at (64pt,10pt){$\inclscal{black}$};
         \node[font=\footnotesize] at (43pt,46pt){$\projscal{black}$};
         \node[font=\footnotesize] at (53pt,46pt){$\projscal{black}$};
         \node[font=\footnotesize] at (64pt,46pt){$\projscal{black}$};
         \node[font=\footnotesize] at (79pt,30pt){$\projscal{red}$};
        \end{tikzpicture}
        \caption{$\emptyset 1$}
    \end{subfigure}
    \begin{subfigure}[b]{0.3\textwidth}
        \centering
        \begin{tikzpicture}
        \node[anchor=south west,inner sep=0] at (0,0) {\includegraphics{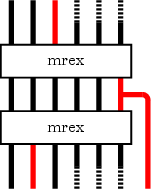}};
         \node[font=\footnotesize] at (42pt,10pt){$\inclscal{black}$};
         \node[font=\footnotesize] at (53pt,10pt){$\inclscal{black}$};
         \node[font=\footnotesize] at (64pt,10pt){$\inclscal{black}$};
         \node[font=\footnotesize] at (42pt,81pt){$\projscal{black}$};
         \node[font=\footnotesize] at (53pt,81pt){$\projscal{black}$};
         \node[font=\footnotesize] at (64pt,81pt){$\projscal{black}$};
        \end{tikzpicture}
        \caption{$\downseq 0$}
    \end{subfigure}
    ~
    \begin{subfigure}[b]{0.3\textwidth}
        \centering
        \begin{tikzpicture}
        \node[anchor=south west,inner sep=0] at (0,0) {\includegraphics{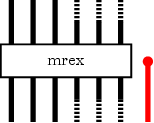}};
         \node[font=\footnotesize] at (43pt,10pt){$\inclscal{black}$};
         \node[font=\footnotesize] at (53pt,10pt){$\inclscal{black}$};
         \node[font=\footnotesize] at (64pt,10pt){$\inclscal{black}$};
         \node[font=\footnotesize] at (43pt,50pt){$\projscal{black}$};
         \node[font=\footnotesize] at (53pt,50pt){$\projscal{black}$};
         \node[font=\footnotesize] at (64pt,50pt){$\projscal{black}$};
        \end{tikzpicture}
        \caption{$\upseq 0$}
    \end{subfigure}
    ~
    \begin{subfigure}[b]{0.3\textwidth}
        \centering
        \begin{tikzpicture}
        \node[anchor=south west,inner sep=0] at (0,0) {\includegraphics{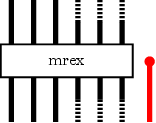}};
         \node[font=\footnotesize] at (43pt,10pt){$\inclscal{black}$};
         \node[font=\footnotesize] at (53pt,10pt){$\inclscal{black}$};
         \node[font=\footnotesize] at (64pt,10pt){$\inclscal{black}$};
         \node[font=\footnotesize] at (43pt,50pt){$\projscal{black}$};
         \node[font=\footnotesize] at (53pt,50pt){$\projscal{black}$};
         \node[font=\footnotesize] at (64pt,50pt){$\projscal{black}$};
         \node[font=\footnotesize] at (79pt,30pt){$\projscal{red}$};
        \end{tikzpicture}
        \caption{$\emptyset 0$}
    \end{subfigure}
    \caption{Six maps for constructing $(p|\ast)$-light leaves.}
    \label{fig:pastLL}
\end{figure}

As with the light leaves morphisms, the construction of $\pastLL_{\seq{e},\expr{w},\expr{z}}$ depends on several non-canonical choices and as such is not uniquely defined. 

\begin{exam}
Let $p=3$ and $\W$ be of type $\widetilde{A_2}$. Following the notational convention in Example~\ref{exam:w-twisted-p-defect} set $\expr{x}=\expr{0}_p|\expr{101202122} \in \linkexpr$. 
We depict a light leaves morphism for the subsequence of type $1111110110$ for the pattern $\ast \ast 1111 \ast \ast \ast \ast \in [\expr{x}]_{p|\ast}$ in Figure~\ref{fig:LLexample}, following the same diagrammatic conventions as Example~\ref{exam:frobdotfork}.
\begin{figure}
    \centering
    \begin{tikzpicture}
    \node[anchor=south west,inner sep=0] at (0,0) {\includegraphics{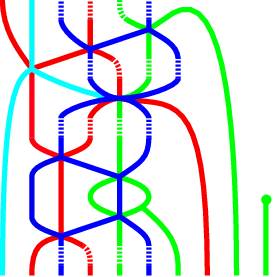}};
     \node[font=\footnotesize] at (35pt,5pt){$\projscal{blue}$};
     \node[font=\footnotesize] at (35pt,13pt){$\inclscal{blue}$};
     \node[font=\footnotesize] at (49pt,6pt){$\projscal{red}$};
     \node[font=\footnotesize] at (49pt,15pt){$\inclscal{red}$};
     \node[font=\footnotesize] at (63pt,6pt){$\projscal{green}$};
     \node[font=\footnotesize] at (63pt,15pt){$\inclscal{green}$};
     \node[font=\footnotesize] at (77pt,6pt){$\projscal{blue}$};
     \node[font=\footnotesize] at (77pt,15pt){$\inclscal{blue}$};
     \node[font=\footnotesize] at (35pt,68pt){$\projscal{blue}$};
     \node[font=\footnotesize] at (35pt,76pt){$\inclscal{blue}$};
     \node[font=\footnotesize] at (49pt,68pt){$\projscal{red}$};
     \node[font=\footnotesize] at (49pt,76pt){$\inclscal{red}$};
     \node[font=\footnotesize] at (63pt,68pt){$\projscal{green}$};
     \node[font=\footnotesize] at (63pt,76pt){$\inclscal{green}$};
     \node[font=\footnotesize] at (77pt,68pt){$\projscal{blue}$};
     \node[font=\footnotesize] at (77pt,76pt){$\inclscal{blue}$};
     \node[font=\footnotesize] at (49pt,95pt){$\projscal{blue}$};
     \node[font=\footnotesize] at (49pt,102pt){$\inclscal{blue}$};
     \node[font=\footnotesize] at (63pt,96pt){$\projscal{red}$};
     \node[font=\footnotesize] at (63pt,105pt){$\inclscal{red}$};
     \node[font=\footnotesize] at (77pt,96pt){$\projscal{green}$};
     \node[font=\footnotesize] at (77pt,105pt){$\inclscal{green}$};
     \node[font=\footnotesize] at (92pt,96pt){$\projscal{blue}$};
     \node[font=\footnotesize] at (92pt,105pt){$\inclscal{blue}$};
     \node[font=\footnotesize] at (35pt,125pt){$\projscal{blue}$};
     \node[font=\footnotesize] at (49pt,125pt){$\projscal{red}$};
     \node[font=\footnotesize] at (63pt,129pt){$\projscal{green}$};
     \node[font=\footnotesize] at (77pt,129pt){$\projscal{blue}$};
    \end{tikzpicture}
    \caption{A $(p|\ast)$-light leaves morphism.}
    \label{fig:LLexample}
\end{figure}
\end{exam}

\begin{lem} \label{lem:valpastLL}
Let $\expr{x} \in \linkexpr$, $\patt{r} \in [\expr{x}]_{p|\ast}$, and $\seq{e} \in [\patt{r}]$, and suppose $\expr{w} \in \Sspexpr$ and $\expr{z} \in \expr{\Ss}$ are rexes for $\hat{e}\hat{r}^{-1}$ and $\hat{r}$ respectively. 
Then $\val \pastLL_{\seq{e},\expr{w},\expr{z}}=\deg \pastLL_{\seq{e},\expr{w},\expr{z}}=\dfct_p(\seq{e})$.
\end{lem}

\begin{proof}
The localization matrices of the morphisms in Figure~\ref{fig:pastLL} have left coefficients lying in the subfield of $\hat{Q}$ generated by $\{a_{F(s)} : s \in \Ss\}$, which is also generated by $\{a_s : s \in \Ssf\}$. 
This means that the valuation of these morphisms is equal to the degree. 
It is easy to check that the $\upseq 0$ morphism has degree $+1$, the $\downseq 0$ morphism has degree $-1$, and all other morphisms have degree $0$. 
The total degree of $\pastLL_{\seq{e},\expr{w},\expr{z}}$ is the sum of these degrees and therefore equals $\dfct_p(\seq{e})$.
\end{proof}

Let $\expr{w} \in \Sspexpr$ and $\expr{z} \in \expr{\Ss}$ be rexes. 
We write $\pastLL_{[\expr{x}],\expr{w},\expr{z}}$ to denote a complete selection of $(p|\ast)$-light leaves morphisms $\pastLL_{\seq{e},\expr{w},\expr{z}}$ over all patterns $\patt{r} \in [\expr{x}]_{p|\ast}$ and all $\seq{e} \in [\patt{r}]$ such that $\hat{e}\hat{r}^{-1}=w$ and $\hat{r}=z$. 
As with ordinary light leaves, for expressions $\expr{x},\expr{y} \in \linkexpr$ and patterns $\patt{q} \in [\expr{x}]_{p|\ast}$ and $\patt{r} \in [\expr{y}]_{p|\ast}$, if we have $\seq{e} \in [\patt{q}]$ and $\seq{f} \in [\patt{r}]$ such that $\hat{e}=\hat{f}$, then we can construct the \defnemph{$(p|\ast)$-double leaves morphism} $\glsuseri*{pastLLdblseqeseqf}= \overline{\pastLL_{\seq{f},\expr{w},\expr{z}}} \circ \pastLL_{\seq{e},\expr{w},\expr{z}} : B^{(p|\ast)}_{\expr{x}} \rightarrow B^{(p|\ast)}_{\expr{y}}$ in $\dgrmBSpast$, where $\expr{w}$ and $\expr{z}$ are rexes for $\hat{e}\hat{q}^{-1}=\hat{f}\hat{r}^{-1}$ and $\hat{q}=\hat{r}$ respectively. 
Now suppose for each $w \in \Wp$ and $z \in \Wpcosets$ we have fixed corresponding rexes $\expr{w} \in \Sspexpr$ and $\expr{z} \in \expr{\Ss}$. 
We write $\pastLLdbl_{[\expr{x}]}^{[\expr{y}]}$ to denote a complete selection of $(p|\ast)$-double leaves morphisms $B^{(p|\ast)}_{\expr{x}} \rightarrow B^{(p|\ast)}_{\expr{y}}$.

For $\expr{x} \in \linkexpr$, write $\expr{x}_1 \in \expr{\Ss}$ for the expanded $\Ss$-generator form of $\expr{x}$, where $(\expr{\tilde{s}\sh})^{(p-1)/2}\expr{\tilde{s}}(\expr{\sh\tilde{s}})^{(p-1)/2}$ is substituted for each $\tilde{s}_p$. 
For $\seq{e} \in [\expr{x}]$ there is a corresponding $\Ss$-expansion $\seq{e}_1 \in [\expr{x}_1]$, given as follows.
Each $\tilde{s}_p$-term of type $t \in \{0,1\}$ in $\seq{e}$ corresponds to a subsequence of terms $(\expr{\tilde{s}\sh})^{(p-1)/2}\expr{\tilde{s}}(\expr{\sh\tilde{s}})^{(p-1)/2}$ in $\seq{e}_1$, with the middle $\tilde{s}$-term of type $t$ and the other terms of type $1$.

\begin{thm} \label{thm:pastLLbasis}
Let $\expr{x} \in \linkexpr$, and let $\expr{w} \in \Sspexpr$ and $\expr{z} \in \expr{\Ss}$ be rexes. 
Suppose we have fixed a complete selection $\pastLL_{[\expr{x}],\expr{w},\expr{z}}$ of $(p|\ast)$-light leaves morphisms. 
Fix a rex $\expr{y} \in \expr{\Ss}$ for $wz$. 
Then there exists a complete set of partially standardized ordinary light leaves morphisms $\LL'_{[\expr{x}_1],\expr{y}}$, each of the form
\begin{gather*}
\LL'_{\seq{e}_1,\expr{y}}:B^{(p|\ast)}_{\expr{x}} \xrightarrow{\text{inclusion}} B_{\expr{x}_1} \xrightarrow{\LL} B_{\expr{y}} \xrightarrow{\text{projection}} \hat{R}_{\expr{y}} \\ 
\begin{gathered}
\begin{tikzpicture}
\node[anchor=south west,inner sep=0] at (0,0) {\includegraphics{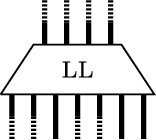}};
 \node[font=\footnotesize] at (11pt,10pt){$\inclscal{black}$};
 \node[font=\footnotesize] at (21pt,10pt){$\inclscal{black}$};
 \node[font=\footnotesize] at (43pt,10pt){$\inclscal{black}$};
 \node[font=\footnotesize] at (53pt,10pt){$\inclscal{black}$};
 \node[font=\footnotesize] at (26pt,58pt){$\projscal{black}$};
 \node[font=\footnotesize] at (37pt,58pt){$\projscal{black}$};
 \node[font=\footnotesize] at (48pt,58pt){$\projscal{black}$};
 \node[font=\footnotesize] at (59pt,58pt){$\projscal{black}$};
\end{tikzpicture}
\end{gathered} \text{,}
\end{gather*}
which is spanned by partially standardized $(p|\ast)$-light leaves morphisms $\pastLL'_{[\expr{x}],\expr{w},\expr{z}}$, each of the form
\begin{gather*}
\pastLL'_{\seq{e},\expr{w},\expr{z}} : B^{(p|\ast)}_{\expr{x}} \xrightarrow{\pastLL} B^{(p)}_{\expr{w}} \otimes \hat{R}_{\expr{z}} \xrightarrow{\text{projection}} \hat{R}_{\expr{w}_1 \expr{z}} \xrightarrow{\text{standard}} \hat{R}_{\expr{y}} \\
\begin{gathered}
\begin{tikzpicture}
\node[anchor=south west,inner sep=0] at (0,0) {\includegraphics{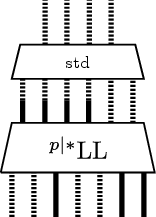}};
 \node[font=\footnotesize] at (16pt,57pt){$\projscal{black}$};
 \node[font=\footnotesize] at (27pt,57pt){$\projscal{black}$};
 \node[font=\footnotesize] at (37pt,57pt){$\projscal{black}$};
 \node[font=\footnotesize] at (48pt,57pt){$\projscal{black}$};
\end{tikzpicture}
\end{gathered}  \text{.}
\end{gather*}
\end{thm}

\begin{proof}
First we determine the effect of partially standardizing an mrex move. 
Projectors placed on top of a braid ``propagate'' through the braid (cf.~\cite[(5.28)]{ew-soergelcalc}):
\input{fig/braidpropagate.tex} \unskip
If we place projectors on a braid-like menorah then the resulting morphism is just the composition of several projectors and an $\epsilon$-basic standard morphism, e.g.~
\begin{align*}
\begin{gathered}
\begin{tikzpicture}
\node[anchor=south west,inner sep=0] at (0,0) {\includegraphics{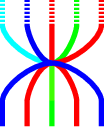}};
 \node[font=\footnotesize] at (6pt, 50pt){$\projscal{realcyan}$};
 \node[font=\footnotesize] at (18pt,50pt){$\projscal{blue}$};
 \node[font=\footnotesize] at (30pt,50pt){$\projscal{red}$};
 \node[font=\footnotesize] at (42pt,50pt){$\projscal{green}$};
 \node[font=\footnotesize] at (54pt,50pt){$\projscal{red}$};
\end{tikzpicture}
\end{gathered}
& =
\begin{gathered}
\begin{tikzpicture}
\node[anchor=south west,inner sep=0] at (0,0) {\includegraphics{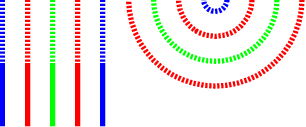}};
 \node[font=\footnotesize] at (6pt, 30pt){$\projscal{blue}$};
 \node[font=\footnotesize] at (19pt,30pt){$\projscal{red}$};
 \node[font=\footnotesize] at (31pt,30pt){$\projscal{green}$};
 \node[font=\footnotesize] at (43pt,30pt){$\projscal{red}$};
 \node[font=\footnotesize] at (55pt,30pt){$\projscal{blue}$};
 \node[font=\footnotesize] at (114pt,56pt){$\spescal{blue}$};
 \node[font=\footnotesize] at (126pt,56pt){$\spescal{red}$};
 \node[font=\footnotesize] at (139pt,56pt){$\spescal{green}$};
 \node[font=\footnotesize] at (151pt,56pt){$\spescal{red}$};
\end{tikzpicture}
\end{gathered}
 \text{,} \\
\begin{gathered}
\begin{tikzpicture}
\node[anchor=south west,inner sep=0] at (0,0) {\includegraphics{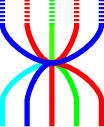}};
 \node[font=\footnotesize] at (6pt, 50pt){$\projscal{blue}$};
 \node[font=\footnotesize] at (18pt,50pt){$\projscal{red}$};
 \node[font=\footnotesize] at (30pt,50pt){$\projscal{green}$};
 \node[font=\footnotesize] at (42pt,50pt){$\projscal{red}$};
 \node[font=\footnotesize] at (54pt,50pt){$\projscal{blue}$};
\end{tikzpicture}
\end{gathered}
& =
\begin{gathered}
\begin{tikzpicture}
\node[anchor=south west,inner sep=0] at (0,0) {\includegraphics{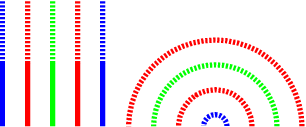}};
 \node[font=\footnotesize] at (6pt, 30pt){$\projscal{blue}$};
 \node[font=\footnotesize] at (19pt,30pt){$\projscal{red}$};
 \node[font=\footnotesize] at (31pt,30pt){$\projscal{green}$};
 \node[font=\footnotesize] at (43pt,30pt){$\projscal{red}$};
 \node[font=\footnotesize] at (55pt,30pt){$\projscal{blue}$};
 \node[font=\footnotesize] at (114pt,6pt){$\epsscal{blue}$};
 \node[font=\footnotesize] at (126pt,6pt){$\epsscal{red}$};
 \node[font=\footnotesize] at (139pt,6pt){$\epsscal{green}$};
 \node[font=\footnotesize] at (151pt,6pt){$\epsscal{red}$};
\end{tikzpicture}
\end{gathered}
 \text{.}
\end{align*}

Similarly, using the Jones--Wenzl relations we can ``pull'' a dot placed on the top of an ordinary braid or a $p$-affine braid through the braid to get a rex move on a smaller expression, plus a dot on the bottom. 
The same is true for dots on a braid-like menorah. 

Next we observe the effects of partially standardizing the maps $\phi_i$ above. 
Fix some $\seq{e} \in [\expr{x}]$. 
As in Figure~\ref{fig:pastLL}, boxes labeled ``rex'' are rex moves between two ordinary reduced expressions, boxes labeled  ``mrex'' are mrex moves between two reduced $(p|1)$-expressions, and boxes labeled ``std'' are basic standard diagrams to a standard bimodule corresponding to some reduced expression. 
We will show that in each case, we get either a partially standardized verson of one of the four maps used for defining ordinary light leaves \cite[Figure~2]{ew-soergelcalc}, or a linear combination of such maps. 

When $i$ is an indeterminate index with decoration $\upseq$, we can easily show that the partially standardized version of $\phi_i$ is nearly the same as that in the ordinary case. For example, when $i$ is of decorated type $\upseq 1$ we have
\begin{equation*}
\begin{tikzpicture}[scale=0.4,baseline=(btwnc2.center)] 
\def\strings{6}
\pgfmathsetmacro{\stringgap}{1.0/\strings}
\node (mrex) at (0,0) [draw, rectangle, minimum width=2.4cm, minimum height=0.6cm] {mrex};
\path (mrex) ++(0,4.5cm) node (std) [draw, rectangle, minimum width=2.4cm, minimum height=0.6cm] {std};
\foreach \i in {1,...,\strings}
{
    \pgfmathsetmacro{\xoffset}{(\i-0.5)*\stringgap}
    \path ($(std.north west)!\xoffset!(std.north east)$) node (stdb\i) {};
    \path (stdb\i) ++(0,1cm) node (stda\i) {};
    \path ($(std.south west)!\xoffset!(std.south east)$) node (btwna\i) {};
    \path ($(mrex.north west)!\xoffset!(mrex.north east)$) node (btwnd\i) {};
    \path ($(btwna\i.center)!0.33!(btwnd\i.center)$) node (btwnb\i) {};
    \path ($(btwna\i.center)!0.67!(btwnd\i.center)$) node (btwnc\i) {};
    \path ($(mrex.south west)!\xoffset!(mrex.south east)$) node (mrexa\i) {};
    \path (mrexa\i) ++(0,-1cm) node (mrexb\i) {}
                    ++(0,-1cm) node (mrexc\i) {};
}
\foreach \i in {1,...,3}
{
    \pgfmathsetmacro{\xoffset}{(\i-0.5)*\stringgap}
    \draw (mrexa\i.center) edge[string] (mrexc\i.center);
}
\foreach \i in {4,...,5}
{
    \pgfmathsetmacro{\xoffset}{(\i-0.5)*\stringgap}
    \draw (mrexa\i.center) edge[string] (mrexb\i.center) 
          (mrexb\i.center) edge[std] (mrexc\i.center);
    \draw (mrexb\i.center) node[right=-2pt] {\footnotesize $\inclscal{black}$};
}
\draw (mrexa6.center) edge[string,red] (mrexc6.center);
\foreach \i in {1,3,4}
{
    \pgfmathsetmacro{\xoffset}{(\i-0.5)*\stringgap}
    \draw (btwna\i.center) edge[std] (btwnb\i.center)
          (btwnb\i.center) edge[string] (btwnd\i.center);
    \draw (btwnb\i.center) node[right=-2pt] {\footnotesize $\projscal{black}$};
}
\draw (btwna2.center) edge[std,red] (btwnb2.center)
      (btwnb2.center) edge[string,red] (btwnd2.center);
\draw (btwnb2.center) node[right=-2pt] {\footnotesize $\projscal{red}$};
\foreach \i in {5,6}
{
    \pgfmathsetmacro{\xoffset}{(\i-0.5)*\stringgap}
    \draw (btwna\i.center) edge[std] (btwnc\i.center)
          (btwnc\i.center) edge[string] (btwnd\i.center);
    \draw (btwnc\i.center) node[right=-2pt] {\footnotesize $\projscal{black}$};
}
\foreach \i in {3,...,5}
{
    \pgfmathsetmacro{\xoffset}{(\i-0.5)*\stringgap}
    \draw (stda\i.center) edge[std] (stdb\i.center);
}
\draw (stda2.center) edge[std,red] (stdb2.center);
\end{tikzpicture}\sim 
\begin{tikzpicture}[scale=0.4,baseline=(std)] 
\def\strings{6}
\pgfmathsetmacro{\stringgap}{1.0/\strings}
\node (std) at (0,0) [draw, rectangle, minimum width=2.4cm, minimum height=0.6cm] {std};
\foreach \i in {1,...,\strings}
{
    \pgfmathsetmacro{\xoffset}{(\i-0.5)*\stringgap}
    \path ($(std.north west)!\xoffset!(std.north east)$) node (topb\i) {};
    \path (topb\i) ++(0,1cm) node (topa\i) {};
    \path ($(std.south west)!\xoffset!(std.south east)$) node (bota\i) {};
    \path (bota\i) ++(0,-1cm) node (botb\i) {}
                   ++(0,-1cm) node (botc\i) {};
}
\foreach \i in {1,...,3}
{
    \pgfmathsetmacro{\xoffset}{(\i-0.5)*\stringgap}
    \draw (bota\i.center) edge[std] (botb\i.center)
          (botb\i.center) edge[string] (botc\i.center);
    \draw (botb\i.center) node[right=-2pt] {\footnotesize $\projscal{black}$};
}
\foreach \i in {4,...,5}
{
    \pgfmathsetmacro{\xoffset}{(\i-0.5)*\stringgap}
    \draw (bota\i.center) edge[std] (botc\i.center);
}
\draw (bota6.center) edge[std,red] (botb6.center)
      (botb6.center) edge[string,red] (botc6.center);
\draw (botb6.center) node[right=-2pt] {\footnotesize $\projscal{red}$};
\foreach \i in {3,...,5}
{
    \pgfmathsetmacro{\xoffset}{(\i-0.5)*\stringgap}
    \draw (topa\i.center) edge[std] (topb\i.center);
}
\draw (topa2.center) edge[std,red] (topb2.center);
\end{tikzpicture}
\sim
\begin{tikzpicture}[scale=0.4,baseline=(btwnb2.center)] 
\node (std) at (0,0) [draw, rectangle, minimum width=2.0cm, minimum height=0.6cm] {std};
\path (std) ++(1.5cm,3.5cm) node (rex)  [draw, rectangle, minimum width=1.6cm, minimum height=0.6cm] {rex};
\pgfmathsetmacro{\stringgap}{1.0/4.0}
\foreach \i in {1,...,4}
{
    \pgfmathsetmacro{\xoffset}{(\i-0.5)*\stringgap}
    \path ($(rex.north west)!\xoffset!(rex.north east)$) node (rexc\i) {};
    \path (rexc\i) ++(0,1cm) node (rexb\i) {}
                   ++(0,1cm) node (rexa\i) {};
    \path ($(rex.south west)!\xoffset!(rex.south east)$) node (btwna\i) {};
    \pgfmathsetmacro{\xoffset}{0.4+(\i-0.5)*\stringgap*0.8}
    \path ($(std.north west)!\xoffset!(std.north east)$) node (btwnc\i) {};
    \path ($(btwna\i.center)!0.5!(btwnc\i.center)$) node (btwnb\i) {};
}
\path (btwnc4) ++(0,-3.5cm) node (btwnd4) {};
\pgfmathsetmacro{\stringgap}{1.0/5.0}
\foreach \i in {1,...,5}
{
    \pgfmathsetmacro{\xoffset}{(\i-0.5)*\stringgap}
    \path ($(std.south west)!\xoffset!(std.south east)$) node (stda\i) {};
    \path (stda\i) ++(0,-1cm) node (stdb\i) {}
                   ++(0,-1cm) node (stdc\i) {};
}
\foreach \i in {1,...,3}
{
    \pgfmathsetmacro{\xoffset}{(\i-0.5)*\stringgap}
    \draw (stda\i.center) edge[std] (stdb\i.center) 
          (stdb\i.center) edge[string] (stdc\i.center);
    \draw (stdb\i.center) node[right=-2pt] {\footnotesize $\projscal{black}$};
}
\foreach \i in {4,...,5}
{
    \pgfmathsetmacro{\xoffset}{(\i-0.5)*\stringgap}
    \draw (stda\i.center) edge[std] (stdc\i.center);
}
\pgfmathsetmacro{\stringgap}{1.0/4.0}
\foreach \i in {1,...,3}
{
    \pgfmathsetmacro{\xoffset}{(\i-0.5)*\stringgap}
    \draw (btwna\i.center) edge[string] (btwnb\i.center)
          (btwnb\i.center) edge[std] (btwnc\i.center);
    \draw (btwnb\i.center) node[right=-2pt] {\footnotesize $\inclscal{black}$};
}
\draw (btwna4.center) edge[string,red] (btwnd4.center);
\foreach \i in {2,...,4}
{
    \pgfmathsetmacro{\xoffset}{(\i-0.5)*\stringgap}
    \draw (rexa\i.center) edge[std] (rexb\i.center)
          (rexb\i.center) edge[string] (rexc\i.center);
    \draw (rexb\i.center) node[right=-2pt] {\footnotesize $\projscal{black}$};
}
\draw (rexa1.center) edge[std,red] (rexb1.center)
      (rexb1.center) edge[string,red] (rexc1.center);
\draw (rexb1.center) node[right=-2pt] {\footnotesize $\projscal{red}$};
\end{tikzpicture} \text{,}
\end{equation*}
where $\sim$ denotes equality up left multiplication by an invertible scalar in $\hat{R}$.
The calculation for $\upseq 0$ is similar.

When $i$ has decoration $\downseq$ we have to split the diagram into a sum. For example, when $i$ is of decorated type $\downseq 0$ we have
\input{fig/LLD0propagate.tex} \unskip
Again, the calculation for $\downseq 1$ is similar. 

If $i$ is a fixed index, then the resulting map depends on the decorated type for the corresponding index in the $\Ss$-expansion $\seq{e}_1$. 
If the corresponding ordinary decorated type is not $\downseq 0$, then we get a single term as in the calculation for $\upseq 0$ above. 
Otherwise, if the corresponding ordinary decorated type is $\downseq 0$, then we have
\begin{equation*}
\begin{tikzpicture}[scale=0.4,baseline=(mrex)] 
\def\strings{6}
\pgfmathsetmacro{\stringgap}{1.0/\strings}
\node (mrex) at (0,0) [draw, rectangle, minimum width=2.4cm, minimum height=0.6cm] {mrex};
\node (std) at (0,3.5cm) [draw, rectangle, minimum width=2.4cm, minimum height=0.6cm] {std};
\foreach \i in {1,...,\strings}
{
    \pgfmathsetmacro{\xoffset}{(\i-0.5)*\stringgap}
    \path ($(std.north west)!\xoffset!(std.north east)$) node (stdb\i) {};
    \path (stdb\i) ++(0,1cm) node (stda\i) {};
    \path ($(std.south west)!\xoffset!(std.south east)$) node (btwna\i) {};
    \path (btwna\i) ++(0,-1cm) node (btwnb\i) {}
                    ++(0,-1cm) node (btwnc\i) {};
    \path ($(mrex.south west)!\xoffset!(mrex.south east)$) node (mrexa\i) {};
    \path (mrexa\i) ++(0,-1cm) node (mrexb\i) {}
                    ++(0,-1cm) node (mrexc\i) {};
}
\path (mrexc6.center) ++(2cm,0) coordinate (mrexc7);
\foreach \i in {1,...,3}
{
    \pgfmathsetmacro{\xoffset}{(\i-0.5)*\stringgap}
    \draw (mrexa\i.center) edge[string] (mrexc\i.center);
}
\foreach \i in {5,6}
{
    \pgfmathsetmacro{\xoffset}{(\i-0.5)*\stringgap}
    \draw (mrexa\i.center) edge[string] (mrexb\i.center) 
          (mrexb\i.center) edge[std] (mrexc\i.center);
    \node at (mrexb\i.center) [right=-2pt] {\footnotesize $\inclscal{black}$};
}
    \draw (mrexa4.center) edge[red,string] (mrexb4.center) 
          (mrexb4.center) edge[red,std] (mrexc4.center);
    \node at (mrexb4.center) [right=-2pt] {\footnotesize $\inclscal{red}$};
\foreach \i in {1,...,3,5,6}
{
    \pgfmathsetmacro{\xoffset}{(\i-0.5)*\stringgap}
    \draw (btwna\i.center) edge[std] (btwnb\i.center)
          (btwnb\i.center) edge[string] (btwnc\i.center);
    \node at (btwnb\i.center) [right=-2pt] {\footnotesize $\projscal{black}$};
}
\draw (btwna4.center) edge[std,red] (btwnb4.center)
      (btwnb4.center) edge[string,red] (btwnc4.center);
\node at (btwnb4.center) [right=-2pt] {\footnotesize $\projscal{red}$};
\path (mrexc7) ++(0,2.75cm) node[dot,fill=red,name=reddot] {};
\draw[string,red] (mrexc7) edge (reddot.center);
\node at (reddot) [above right=-2pt] {\footnotesize $\projscal{red}$};
\foreach \i in {3,...,5}
{
    \pgfmathsetmacro{\xoffset}{(\i-0.5)*\stringgap}
    \draw (stda\i.center) edge[std] (stdb\i.center);
}
\draw (stda2.center) edge[std,red] (stdb2.center);
\end{tikzpicture}
\sim
\begin{tikzpicture}[scale=0.4,baseline=(mrextop)] 
\def\strings{6}
\pgfmathsetmacro{\stringgap}{1.0/\strings}
\node (mrexbot) at (0,0) [draw, rectangle, minimum width=2.4cm, minimum height=0.6cm] {mrex};
\node (mrextop) at (0,3.5cm) [draw, rectangle, minimum width=2.4cm, minimum height=0.6cm] {mrex};
\node (std) at (0,8cm) [draw, rectangle, minimum width=2.4cm, minimum height=0.6cm] {std};
\foreach \i in {1,...,\strings}
{
    \pgfmathsetmacro{\xoffset}{(\i-0.5)*\stringgap}
    \path ($(std.north west)!\xoffset!(std.north east)$) node (stdb\i) {};
    \path (stdb\i) ++(0,1cm) node (stda\i) {};
    \path ($(std.south west)!\xoffset!(std.south east)$) node (btwntopa\i) {};
    \path (btwntopa\i) ++(0,-1cm) node (btwntopb\i) {}
                       ++(0,-1cm) node (btwntopc\i) {}
                       ++(0,-1cm) node (btwntopd\i) {};
    \path ($(mrextop.south west)!\xoffset!(mrextop.south east)$) node (btwnbota\i) {};
    \path (btwnbota\i) ++(0,-2cm) node (btwnbotb\i) {};
    \path ($(mrexbot.south west)!\xoffset!(mrexbot.south east)$) node (mrexbota\i) {};
    \path (mrexbota\i) ++(0,-1cm) node (mrexbotb\i) {}
                       ++(0,-1cm) node (mrexbotc\i) {};
}
\path (mrexbotc6) ++(1cm,0) coordinate (mrexbotc7);
\foreach \i in {1,...,3}
{
    \pgfmathsetmacro{\xoffset}{(\i-0.5)*\stringgap}
    \draw (mrexbota\i.center) edge[string] (mrexbotc\i.center);
}
\foreach \i in {5,6}
{
    \pgfmathsetmacro{\xoffset}{(\i-0.5)*\stringgap}
    \draw (mrexbota\i.center) edge[string] (mrexbotb\i.center) 
          (mrexbotb\i.center) edge[std] (mrexbotc\i.center);
    \node at (mrexbotb\i.center) [right=-2pt] {\footnotesize $\inclscal{black}$};
}
    \draw (mrexbota4.center) edge[red,string] (mrexbotb4.center) 
          (mrexbotb4.center) edge[red,std] (mrexbotc4.center);
    \node at (mrexbotb4.center) [right=-2pt] {\footnotesize $\inclscal{red}$};
\foreach \i in {1,...,5}
{
    \pgfmathsetmacro{\xoffset}{(\i-0.5)*\stringgap}
    \draw (btwnbota\i.center) edge[string] (btwnbotb\i.center);
}
\draw (btwnbota6.center) edge[string,red] (btwnbotb6.center);
\draw[string, red, corner] (mrexbotc7) |- node[above right=-2pt] {\footnotesize $\projscal{red}$} ($(btwnbota6.center)!0.5!(btwnbotb6.center)$);
\foreach \i in {1,...,3}
{
    \pgfmathsetmacro{\xoffset}{(\i-0.5)*\stringgap}
    \draw (btwntopa\i.center) edge[std] (btwntopb\i.center)
          (btwntopb\i.center) edge[string] (btwntopd\i.center);
    \node at (btwntopb\i.center) [right=-2pt] {\footnotesize $\projscal{black}$};
}
\foreach \i in {5,6}
{
    \pgfmathsetmacro{\xoffset}{(\i-0.5)*\stringgap}
    \draw (btwntopa\i.center) edge[std] (btwntopc\i.center)
          (btwntopc\i.center) edge[string] (btwntopd\i.center);
    \node at (btwntopc\i.center) [right=-2pt] {\footnotesize $\projscal{black}$};
}
    \draw (btwntopa4.center) edge[red,std] (btwntopc4.center)
          (btwntopc4.center) edge[red,string] (btwntopd4.center);
    \node at (btwntopc4.center) [right=-2pt] {\footnotesize $\projscal{red}$};
\foreach \i in {3,...,5}
{
    \pgfmathsetmacro{\xoffset}{(\i-0.5)*\stringgap}
    \draw (stda\i.center) edge[std] (stdb\i.center);
}
\draw (stda2.center) edge[std,red] (stdb2.center);
\end{tikzpicture}-\begin{tikzpicture}[scale=0.4,baseline=(mrextop)] 
\def\strings{6}
\pgfmathsetmacro{\stringgap}{1.0/\strings}
\node (mrexbot) at (0,0) [draw, rectangle, minimum width=2.4cm, minimum height=0.6cm] {mrex};
\node (mrextop) at (0,3.5cm) [draw, rectangle, minimum width=2.4cm, minimum height=0.6cm] {mrex};
\node (std) at (0,8cm) [draw, rectangle, minimum width=2.4cm, minimum height=0.6cm] {std};
\foreach \i in {1,...,\strings}
{
    \pgfmathsetmacro{\xoffset}{(\i-0.5)*\stringgap}
    \path ($(std.north west)!\xoffset!(std.north east)$) node (stdb\i) {};
    \path (stdb\i) ++(0,1cm) node (stda\i) {};
    \path ($(std.south west)!\xoffset!(std.south east)$) node (btwntopa\i) {};
    \path (btwntopa\i) ++(0,-1cm) node (btwntopb\i) {}
                       ++(0,-1cm) node (btwntopc\i) {}
                       ++(0,-1cm) node (btwntopd\i) {};
    \path ($(mrextop.south west)!\xoffset!(mrextop.south east)$) node (btwnbota\i) {};
    \path (btwnbota\i) ++(0,-2cm) node (btwnbotb\i) {};
    \path ($(mrexbot.south west)!\xoffset!(mrexbot.south east)$) node (mrexbota\i) {};
    \path (mrexbota\i) ++(0,-1cm) node (mrexbotb\i) {}
                       ++(0,-1cm) node (mrexbotc\i) {};
}
\path (mrexbotc6) ++(1cm,0) coordinate (mrexbotc7);
\foreach \i in {1,3}
{
    \pgfmathsetmacro{\xoffset}{(\i-0.5)*\stringgap}
    \draw (mrexbota\i.center) edge[string] (mrexbotc\i.center);
}
\path ($(mrexbota2.center)!0.5!(mrexbotb2.center)$) node[dot,fill=red] (reddot1) {};
\path ($(mrexbotb2.center)!0.5!(mrexbotc2.center)$) node[dot,fill=red] (reddot2) {};
\draw (mrexbota2.center) edge[string,red] (reddot1.center);
\draw (mrexbotc2.center) edge[string,red] (reddot2.center);
\node (label) at (mrexbotb2) [yshift=1pt, xshift=4pt] {\footnotesize $a_{\textcolor{red} s}^{-1}$};
\foreach \i in {4,...,6}
{
    \pgfmathsetmacro{\xoffset}{(\i-0.5)*\stringgap}
    \draw (mrexbota\i.center) edge[string] (mrexbotb\i.center) 
          (mrexbotb\i.center) edge[std] (mrexbotc\i.center);
    \node at (mrexbotb\i.center) [right=-2pt] {\footnotesize $\inclscal{black}$};
}
\foreach \i in {1,...,5}
{
    \pgfmathsetmacro{\xoffset}{(\i-0.5)*\stringgap}
    \draw (btwnbota\i.center) edge[string] (btwnbotb\i.center);
}
\path ($(btwnbota6.center)!0.25!(btwnbotb6.center)$) node (turn) {};
\path ($(btwnbota6.center)!0.75!(btwnbotb6.center)$) node[dot,fill=red] (reddot3) {};
\node at (reddot3) [above right=-2pt] {\footnotesize $\projscal{red}$};
\draw (btwnbotb6.center) edge[string,red] (reddot3.center);
\draw[string, red, corner] (mrexbotc7) |- (turn.center) -- (btwnbota6.center);
\foreach \i in {1,3}
{
    \pgfmathsetmacro{\xoffset}{(\i-0.5)*\stringgap}
    \draw (btwntopa\i.center) edge[std] (btwntopb\i.center)
          (btwntopb\i.center) edge[string] (btwntopd\i.center);
    \node at (btwntopb\i.center) [right=-2pt] {\footnotesize $\projscal{black}$};
}
\draw (btwntopa2.center) edge[std,red] (btwntopb2.center)
      (btwntopb2.center) edge[string,red] (btwntopd2.center);
      \node at (btwntopb2.center) [right=-2pt] {\footnotesize $\projscal{red}$};
\foreach \i in {4,...,6}
{
    \pgfmathsetmacro{\xoffset}{(\i-0.5)*\stringgap}
    \draw (btwntopa\i.center) edge[std] (btwntopc\i.center)
          (btwntopc\i.center) edge[string] (btwntopd\i.center);
    \node at (btwntopc\i.center) [right=-2pt] {\footnotesize $\projscal{black}$};
}
\foreach \i in {3,...,5}
{
    \pgfmathsetmacro{\xoffset}{(\i-0.5)*\stringgap}
    \draw (stda\i.center) edge[std] (stdb\i.center);
}
\draw (stda2.center) edge[std,red] (stdb2.center);
\end{tikzpicture} \text{.}
\end{equation*}


Now let $\seq{e}_1 \in [\expr{x}_1]$ be a subsequence expressing $wz$, and suppose the partially standardized morphism $\LL'_{\seq{e}_1,\expr{y}}$ is non-zero.
If any of the standardized indices in the domain (i.e.~any generator except the middle one in $(\expr{\tilde{s}\sh})^{(p-1)/2}\expr{\tilde{s}}(\expr{\sh\tilde{s}})^{(p-1)/2}$, the $\Ss$-expansion of $\tilde{s}_p$) has type $0$, then by pulling bivalent vertices through braid moves, $\LL'_{\seq{e}_1,\expr{y}}$ vanishes. 
So without loss of generality all of these indices must have type $1$, and there is a unique $\seq{e} \in [\patt{r}]$ for some $\patt{r} \in [\expr{x}]_{p|\ast}$ whose $\Ss$-expansion is $\seq{e}_1$.

Suppose we have already shown that $\LL'_{\seq{f}_1,\expr{y}}$ lies in the span of partially standardized $(p|\ast)$-light leaves for all $\seq{f}_1<\seq{e}_1$, where the subsequences are ordered using the path dominance order from \cite[{\S 2.4}]{ew-soergelcalc}. 
We will show that $\LL'_{\seq{e}_1,\expr{y}}$ lies in the span of the partially standardized $(p|\ast)$-light leaves. 
To do this, we use the above calculations to pull the bivalent inclusions/projections (and any dots introduced by $\downseq$-decorated indices) in $\pastLL'_{\seq{e},\expr{w},\expr{z}}$ through the $\phi_i$ down to the bottom of the diagram. 
The goal is to get the resulting map to look like an ordinary light leaves morphism. 
The first step might look like
\input{fig/bigLLpropagate.tex} \unskip
and continue downwards to the bottom of the diagram. 
After pulling through all the $\phi_i$ and getting to the bottom we have shown that $\pastLL'_{\seq{e},\expr{w},\expr{z}}$ is equal to a scalar multiple of $\LL'_{\seq{e}_1,\expr{y}}$, plus a linear combination of partially standardized ordinary light leaves morphisms $\LL'_{\seq{f}_1,\expr{y}}$ for $\seq{f}_1<\seq{e}_1$.
By induction each $\LL'_{\seq{f}_1,\expr{y}}$ lies in the span of the partially standardized $(p|\ast)$-light leaves morphisms, so we are done.
\end{proof}

Let $w \in \W$, and suppose for all $\expr{x},\expr{y} \in \linkexpr$ we have fixed a complete selection of double leaves morphisms $\pastLLdbl_{[\expr{x}]}^{[\expr{y}]}$. 
Recall that $I_{\ngeq w}=\{z \in \W : z \ngeq w\}$ is an ideal in the Bruhat order and let $\pastLLdbl_{\ngeq w}$ denote the span of the $\pastLLdbl$ maps 
\begin{equation*}
\{\pastLLdbl_{\seq{e}}^{\seq{f}} : \expr{x},\expr{y} \in \linkexpr,\ \seq{e} \in [\expr{x}],\ \seq{f} \in [\expr{y}],\ \hat{e}=\hat{f} \in I_{\ngeq w}\} \text{.}
\end{equation*}
In other words, $\pastLLdbl_{\ngeq w}$ is spanned by double leaves morphisms which factor through $I_{\ngeq w}$. 
It follows from Theorem~\ref{thm:pastLLbasis} and \cite[Claim~6.19]{ew-soergelcalc} that $\pastLLdbl_{\ngeq w}$ is a $2$-sided ideal of morphisms in $\dgrmBSpast$, namely the $\hat{R}$-extension of the ideal $\LLdbl_{\ngeq w}$ in $\dgrmBS$. 
Let $\dgrm_{{\rm BS},p|\ast}^{\geq w}=\dgrmBSpast/\pastLLdbl_{\ngeq w}$, and let $\dgrm_{p|\ast}^{\geq w}$ be the corresponding quotient of $\dgrmpast$. 
Using Theorem~\ref{thm:pastLLbasis} and Lemma~\ref{lem:valpastLL} in conjunction with Theorem~\ref{thm:LLbasis} we obtain a light leaves basis result.

\begin{cor} \label{cor:pastLLbasis}
Let $\expr{x} \in \linkexpr$, and let $\expr{w} \in \Sspexpr$ and $\expr{z} \in \expr{\Ss}$ be rexes. 
The light leaves morphisms $\pastLL_{[\expr{x}],\expr{w},\expr{z}}$ form a valuation basis for $\Hom_{\dgrm_{p|\ast}^{\geq w}}^{\blacklozenge}(B^{(p|\ast)}_{\expr{x}},B^{(p)}_{\expr{w}} \otimes \hat{R}_{\expr{z}})$.
\end{cor}

The analogous double leaves basis result follows by the same reasoning as in \cite[{\S 7.3}]{ew-soergelcalc}.

\begin{cor} \label{cor:pastLLdblbasis}
Let $\expr{x},\expr{y} \in \linkexpr$. 
The double leaves morphisms $\pastLLdbl_{[\expr{x}]}^{[\expr{y}]}$ form a valuation basis for $\Hom_{\dgrmBSpast}^{\blacklozenge}(B^{(p|\ast)}_{\expr{x}},B^{(p|\ast)}_{\expr{y}})$.
\end{cor}

In particular, since any valuation basis is an $\hat{R}$-module basis, these results give ungraded/non-valuation bases for $\Hom$-spaces even when we forget the valuation structure entirely.

\subsection{Indecomposable objects}
\label{sec:linkindecomp}

As with Theorem~\ref{thm:LLdblbasis}, Corollary~\ref{cor:pastLLdblbasis} is strong enough to give a complete classification of the indecomposable objects in $\dgrmpast$. We closely follow the strategy of \cite[\S 6.6]{ew-soergelcalc}.

\begin{lem}
The category $\dgrmpast$ is Krull--Schmidt.
\end{lem}

\begin{proof}
It is sufficient to show that $E=\End_{\dgrmpast}^{\blacklozenge}(B)$ is local for any indecomposable object $B$ in $\dgrmpast$. 
By Corollary \ref{cor:pastLLdblbasis} $E$ is a finite $\hat{R}$-algebra. 
Any finite algebra over a complete local ring is either local or contains an idempotent, but $E$ cannot contain an idempotent as $\dgrmpast$ is a Karoubi envelope. 
Thus $E$ is local and $\dgrmpast$ is Krull--Schmidt.
\end{proof}

The Frobenius functor extends naturally to a functor $F:\dgrmF \rightarrow \hat{R} \otimes \dgrm$. 
We will similarly extend our notation by writing $\glsuseri*{BxpFp}$ and $\glsuseri*{Bpppy}$, where $x \in \W$ and $y \in \Wp$, for the images under $F$ of the indecomposable Soergel bimodules $B^F_x$ and $B^F_{F^{-1}(y)}$ in $\dgrmF$ respectively.

\begin{lem} \label{lem:blockdecomp}
Let $\expr{x},\expr{y} \in \expr{\Ss}$ and $w \in \Wpcosets$. The map
\begin{align*}
F \otimes \ident_{\hat{R}_{w}} : \Hom_{\hat{R} \otimes \dgrmF}^{\blacklozenge}(B^{F}_{\expr{x}}, B^{F}_{\expr{y}}) & \longrightarrow \Hom_{\dgrmpast}^{\blacklozenge}(B^{(F)}_{\expr{x}} \otimes \hat{R}_w, B^{(F)}_{\expr{y}} \otimes \hat{R}_w) \\
f & \longmapsto F(f) \otimes \ident_{\hat{R}_w}
\end{align*}
is a valuation isomorphism.
\end{lem}

\begin{proof}
Let $\expr{x}'=F(\expr{x})$ and $\expr{y}'=F(\expr{y})$. 
Fix a rex $\expr{w} \in \expr{\Ss}$ for $w$, and let $M$ and $M'$ denote $\Hom_{\dgrmpast}^{\blacklozenge}(B^{(F)}_{\expr{x}} \otimes \hat{R}_{\expr{w}}, B^{(F)}_{\expr{y}} \otimes \hat{R}_{\expr{w}})$ and $\Hom_{\dgrmpast}^{\blacklozenge}(B^{(p|\ast)}_{\expr{x}'|\expr{w}}, B^{(p|\ast)}_{\expr{y}'|\expr{w}})$ respectively. 
By Corollary~\ref{cor:pastLLdblbasis} the $(p|\ast)$-double leaves morphisms $\pastLLdbl_{[\expr{x}'|\expr{w}]}^{[\expr{y}'|\expr{w}]}$ form a basis for $M'$. 
Since $B^{(F)}_{\expr{x}} \otimes \hat{R}_{\expr{w}}$ is a summand of $B^{(p|\ast)}_{\expr{x}'|\expr{w}}$ (and similarly for $\expr{y}$) this means $M$ is spanned by $p_{\expr{y}} \circ \pastLLdbl_{[\expr{x}'|\expr{w}]}^{[\expr{y}'|\expr{w}]}\circ i_{\expr{x}}$, where $i_{\expr{x}}$ and $p_{\expr{y}}$ are the inclusion and projection maps for these summands.

Now note that $p_{\expr{y}} \circ \pastLLdbl_{\seq{e}}^{\seq{f}} \circ i_{\expr{x}}$ is non-zero only when the subsequences $\seq{e}$ and $\seq{f}$ come from the pattern $\ast \dotsm \ast | 1 \dotsm 1 \in [\expr{x}'|\expr{w}]_{p|\ast}$ and $\ast \dotsm \ast | 1 \dotsm 1 \in [\expr{y}'|\expr{w}]_{p|\ast}$ respectively. 
This shows that $M$ has basis $F(\LLdbl_{[\expr{x}]}^{[\expr{y}]}) \otimes \ident_{\hat{R}_{\expr{w}}}$, which proves the result. 
\end{proof}

\begin{thm} \label{thm:Frobindecomp}
Let $x \in \Wp$ and $w \in \Wpcosets$. The object $B^{(p)}_{x} \otimes \hat{R}_{w}$ is indecomposable in $\dgrmpast$.
\end{thm}

\begin{proof}
Let $\expr{x} \in \Sspexpr$ be a rex for $x$, and set $\expr{y}=F^{-1}(\expr{x}) \in \expr{\Ss}$ and $y=F^{-1}(x)$. 
Since $B^F_{y}$ is an indecomposable summand of $B^F_{\expr{y}}$, it follows that $B^{(p)}_x \otimes \hat{R}_{w}$ is a direct summand of $B^{(p)}_{\expr{x}} \otimes \hat{R}_{w}$. 
Let $E$ denote the endomorphism algebra of $B^{(p)}_x \otimes \hat{R}_w$.
By Lemma~\ref{lem:blockdecomp} $\End_{\dgrmpast}^{\blacklozenge}(B^{(p)}_{\expr{x}} \otimes \hat{R}_w) \iso \hat{R} \otimes \End_{\dgrmF}^{\bullet}(B^{F}_{\expr{y}})$, and this isomorphism restricts to an isomorphism $E \iso \hat{R} \otimes \End_{\dgrmF}^{\bullet}(B^F_y)$.
To simplify our notation we will assume that $w=1$ for the rest of this proof. 

Let $E_0 \iso \hat{R} \otimes_R \End_{\dgrmF}^0(B^F_{y})$ be the $\hat{R}$-subalgebra of $E$ generated by the ($F$-images of) degree $0$ morphisms in the ordinary diagrammatic category $\dgrmF$. 
If $L$ is the $F$-image of an endomorphism of $B^F_y$ of non-positive degree $-2n$ (note that all endomorphisms in $\dgrmF$ have even degree) then
\begin{equation*}
L=a_{\tilde{s}}^{-n}(a_{\tilde{s}}^n L) \in \hat{R}\End_{\dgrmF}^0(B^F_y)=E_0 \text{.}
\end{equation*}
This shows that $E_0$ also contains the $F$-images of all non-positive degree morphisms.
In addition, Lemma~\ref{lem:valF} implies that valuation equals degree for double leaves in $\dgrmF$, so the ideal $\End_{\dgrmpast}^{>0}(B^{(p)}_x)$ of all positive valuation morphisms in $E$ contains the $F$-images of all positive degree double leaves in $\dgrmF$.
Since $E$ is generated by the $F$-images of double leaves in $\dgrmF$, we have
\begin{equation}
E=E_0+\End_{\dgrmpast}^{>0}(B^{(p)}_x) \text{.} \label{eq:endomdecomp}
\end{equation}

The category $\dgrmF$ is Krull--Schmidt \cite[Lemma~6.25]{ew-soergelcalc}, so the ring $\End^0_{\dgrmF}(B^F_y)$ is local. 
Let $\mathfrak{m}$ be the maximal ideal of this ring, and let
\begin{equation*}
I=\hat{R}F(\mathfrak{m})+(a_s : s \in \Ssf) E_0+\End_{\dgrmpast}^{>0}(B^{(p)}_x) \subseteq E \text{,}
\end{equation*}
where $(a_s : s \in \Ssf)$ is the maximal ideal of $\hat{R}$. 
The first two terms are ideals in $E_0$, so the decomposition \eqref{eq:endomdecomp} shows that $I$ is an ideal in $E$. 
Clearly $E=E_0+I$ follows from \eqref{eq:endomdecomp} as well.

We will show that all morphisms in $E \setminus I$ are invertible, and thus that $E$ is local with maximal ideal $I$ and that $B^{(p)}_x$ is indecomposable in $\dgrmpast$. 
Suppose $g \in E \setminus I$. 
We write
\begin{equation*}
g=r_0 g_0+r_{\mathfrak{m}} g_{\mathfrak{m}}+r_{\rm f} g_{\rm f}+r_{>0} g_{>0}
\end{equation*}
where $r_0,r_{\mathfrak{m}},r_{>0} \in \hat{R}$, $r_{\rm f} \in (a_s : s \in \Ssf)$, $g_0,g_{\rm f} \in F(\End_{\dgrmF}^0(B^F_y))$, $g_{\mathfrak{m}} \in F(\mathfrak{m})$, and $g_{>0} \in \End_{\dgrmpast}^{>0}(B^{(p)}_x)$. 

Clearly $r_0 \notin (a_s : s \in \Ssf)$ and $g_0 \notin F(\mathfrak{m})$ as $g \notin I$. 
Thus we can write
\begin{equation*}
r_0 g_0+r_{\mathfrak{m}}g_{\mathfrak{m}}=r_0 g_0 \left(1+\frac{r_{\mathfrak{m}}}{r_0}g_0^{-1} g_{\mathfrak{m}}\right) \text{.}
\end{equation*}
Since $\End_{\dgrmF}^0(B^F_y)$ is finite-dimensional the maximal ideal $\mathfrak{m}$ is nilpotent. 
But $g_0^{-1} g_{\mathfrak{m}}$ is contained in $F(\mathfrak{m})$, so the above morphism is invertible using the formula $(1+x)^{-1}=1+x+x^2+\dotsb$ and the completeness of $\End^0_{\dgrmF}(B^F_y)$. 

The remaining two terms in the sum for $g$ are contained in an ideal $J=(a_s : s \in \Ssf)E_0+\End_{\dgrmpast}^{>0}(B^{(p)}_x)$. 
From Theorem~\ref{thm:LLdblbasis} and Corollary~\ref{cor:pastLLdblbasis} $J$ is generated as an $\hat{R}$-module by morphisms $a_s F(\LLdbl_{\seq{e}}^{\seq{f}})$ (for $\dfct(\seq{e})+\dfct(\seq{f})=0$ and any $s \in \Ssf$) and $\pastLLdbl_{\seq{e}}^{\seq{f}}$ (for $\dfct_p(\match{e})+\dfct_p(\match{f})>0$). 
This basis is finite, so for sufficiently large $n$ we have $J^n \leq (a_s : s \in \Ssf)J$. 
Yet $\hat{R}$ is complete with respect to its maximal ideal $(a_s : s \in \Ssf)$ so $g \in (r_0 g_0+r_{\mathfrak{m}} g_{\mathfrak{m}})+J$ is invertible using the same trick as above.
\end{proof}

For $x \in \Wp$ and $w \in \Wpcosets$, we will now write $\glsuseri*{Bppastpxbarw}$ for the indecomposable object $B^{(p)}_x \otimes \hat{R}_w$ from the previous theorem.

\begin{thm} \label{thm:Dpastindecomp}
Every indecomposable object in $\dgrmpast$ is valuation isomorphic to an object of the form $B^{(p|\ast)}_{y|w}\langle m\rangle$ for some $y \in \Wp$, $w \in \Wpcosets$, and $m \in \ZZ$.
\end{thm}

\begin{proof}
Suppose $B$ is a direct summand of $B^{(p|\ast)}_{\expr{x}}$ for some $\expr{x} \in \linkexpr$, and that $\eta \in \End_{\dgrmBSpast}^{\blacklozenge}(B^{(p|\ast)}_{\expr{x}})$ is an idempotent corresponding to this summand. 
We can write
\begin{equation*}
\eta=\sum \lambda_{\seq{e},z,\seq{f}} \pastLLdbl_{\seq{e}}^{\seq{f}} \text{,}
\end{equation*}
where $\lambda_{\seq{e},z,\seq{f}} \in \hat{R}$ and the sum is over all $z \in \W$ and all subsequences $\seq{e},\seq{f} \in [\expr{x}]$ such that $\hat{e}=\hat{f}=z$. 
Pick $z' \in \W$ maximal in the Bruhat order such that there is a non-zero coefficient $\lambda_{\seq{e}',z',\seq{f}'} \neq 0$. 
In $\dgrmpast^{\geq z'}$ we get
\begin{equation*}
\eta=\sum_{\hat{e}=\hat{f}=z'} \lambda_{\seq{e},z',\seq{f}} (\overline{\pastLL_{\seq{f},\expr{y},\expr{w}}} \circ \pastLL_{\seq{e},\expr{y},\expr{w}}) \text{,}
\end{equation*}
where the sum is over all subsequences $\seq{e},\seq{f} \in [\expr{x}]$ such that $\hat{e}=\hat{f}=z'$, for some rexes $\expr{y} \in \Sspexpr$ and $\expr{w} \in \expr{\Ss}$ with $w \in \Wpcosets$. 
Now assume that for all subsequences in the above sum we have
\begin{equation*}
\pastLL_{\seq{e},\expr{y},\expr{w}} \circ \eta \circ \overline{\pastLL_{\seq{f},\expr{y},\expr{w}}} \in (a_s : s \in \Ssf) \leq \hat{R}=\End_{\dgrmpast^{\geq z'}}^{\blacklozenge}(B^{(p)}_{\expr{y}} \otimes \hat{R}_{\expr{w}})
\end{equation*}
Then by expanding out $\eta^3=\eta$ we get $\lambda_{\seq{e},z',\seq{f}} \in (a_s : s \in \Ssf)_{s \in \Ss}$. 
But this implies that
\begin{equation*}
\eta \in (a_s : s \in \Ssf) \End_{\dgrmBSpast}^{\blacklozenge}(B^{(p|\ast)}_{\expr{x}}) \leq J\left(\End_{\dgrmBSpast}^{\blacklozenge}(B^{(p|\ast)}_{\expr{x}})\right)
\end{equation*}
where $J(\cdot)$ denotes the Jacobson radical. 
Since $\eta$ is idempotent, we obtain a contradiction. 
Hence there must be some subsequences $\seq{e},\seq{f}$ for which the following composition
\begin{equation*}
B^{(p)}_{\expr{y}} \otimes \hat{R}_w \xrightarrow{\overline{\pastLL_{\seq{f},\expr{y},\expr{w}}}} B \xrightarrow{\pastLL_{\seq{e},\expr{y},\expr{w}}} B^{(p)}_{\expr{y}} \otimes \hat{R}_w
\end{equation*}
is invertible in $\dgrmpast^{\geq z'}$. 
This induces an invertible morphism
\begin{equation*}
B^{(p)}_{y} \otimes \hat{R}_w \xrightarrow{i} B \xrightarrow{p} B^{(p)}_{y} \otimes \hat{R}_w
\end{equation*}
which proves the result.
\end{proof}

Let $M \iso \bigoplus_{i} \hat{R}\langle d_i\rangle$ be a free $\hat{R}$-valuation module. We write 
\begin{equation*}
\valrk M=\sum_{i} v^{d_i} \in \laur \text{,}
\end{equation*}
which we call the \defnemph{valuation rank} of $M$.

\begin{defn}
The \defnemph{$(p|\ast)$-diagrammatic character} is the $\laur$-linear map
\begin{align*}
\gls*{charctrpast} : [\dgrmpast] & \longrightarrow \heckepast \\
[B] & \longmapsto \sum_{x \in \W} (\valrk \Hom_{\dgrm_{p|\ast}^{\geq x}}^{\blacklozenge}(B,B^{(p|\ast)}_{x})) h^{(p|\ast)}_x\text{.}
\end{align*}
\end{defn}

Since $\dgrmpast$ has a left $\dgrmF$-action, the Grothendieck group $[\dgrmpast]$ has the structure of a left $[\dgrmF]$-module. 
By Corollary~\ref{cor:charctrheckeiso} we have $[\dgrmF] \iso \hecke \stackrel{F}{\iso} \heckep$, so $[\dgrmpast]$ has the structure of a left $\heckep$-module via $F$. 

Suppose $\expr{y} \in \Sspexpr$ and $\expr{w} \in \expr{\Ss}$ are rexes with $w \in \Wpcosets$. 
In the category $\dgrm_{p|\ast}^{\geq yw}$, the objects $B^{(p|\ast)}_{y|w}$ and $B^{(p|\ast)}_{\expr{y}|\expr{w}}$ are isomorphic. 
Thus using Corollary~\ref{cor:pastLLbasis} and Corollary~\ref{cor:ddfheckepastuntwisted} we observe that
\begin{equation}
\charctr_{p|\ast} {[B^{(p|\ast)}_{\expr{x}}]}=b^{(p|\ast)}_{\expr{x}} \label{eq:Bpastcharctr}
\end{equation}
for all $\expr{x} \in \linkexpr$.

\begin{thm}
The map $\charctr_{p|\ast} : [\dgrmpast] \rightarrow \heckepast$ is an isomorphism of left $\heckep$-modules. 
\end{thm}

\begin{proof}
The category $\dgrmpast$ is the closure of $\dgrmBSpast$ with respect to direct sums, direct summands, and valuation shifts. This means that the $\laur$-module $[\dgrmpast]$ is spanned by the isomorphism classes $\{[B^{(p|\ast)}_{\expr{x}}] : \expr{x} \in \linkexpr\}$. The calculation \eqref{eq:Bpastcharctr} along with Proposition~\ref{prop:ddfheckepasttwisted} then immediately implies that $\charctr_{p|\ast}$ is a left $\heckep$-module homomorphism. Moreover, the isomorphism classes $\{[B^{(p|\ast)}_x] : x \in \W\}$ form a $\laur$-basis for $[\dgrmpast]$ by Theorem~\ref{thm:Dpastindecomp}. For any $x \in \W$ we have
\begin{equation*}
\charctr_{p|\ast} {[B^{(p|\ast)}_x]} \in h^{(p|\ast)}_x+\sum_{y<x} \laur h^{(p|\ast)}_y
\end{equation*}
using the construction of $B^{(p|\ast)}_x$ and the definition of $\charctr_{p|\ast}$. This means that $\{\charctr_{p|\ast} {[B^{(p|\ast)_x}]} : x \in \W\}$ is a basis for $\heckepast$ as well, which shows that $\charctr_{p|\ast}$ is an isomorphism.
\end{proof}

\section{The functor \texorpdfstring{$\pr$}{Psi}}

The category $\dgrmpast$ has a $(\dgrmF,\dgrmdeg)$-bimodule structure. 
So far we have focused our attention on the left $\dgrmF$-module structure of this category. 
For example, we can reinterpret Theorem~\ref{thm:Dpastindecomp} as the following categorical decomposition
\begin{equation}
\dgrmpast \iso \bigoplus_{w \in \Wpcosets} \dgrmF \otimes \hat{R}_w \label{eq:blockdecomp}
\end{equation}
of left $\dgrmF$-modules. 
This decomposition is also a \defnemph{block decomposition}, because
\begin{equation*}
\Hom_{\dgrmpast}(\dgrmF \otimes \hat{R}_w,\dgrmF \otimes \hat{R}_{z})=0
\end{equation*}
for any distinct $w,z \in \Wpcosets$. 
By considering the right $\dgrmdeg$-action on $\dgrmpast$ in terms of this decomposition, we will construct a functor $\pr$ defined on $\dgrmdeg$.

\subsection{Construction}

\label{sec:prconstruct}

In this section we will mostly refer to standard diagrams by their isomorphism class, i.e.~we will write $\hat{R}_w$ instead of $\hat{R}_{\expr{w}}$. To deal with potential ambiguity we introduce several notions of equivalence between morphisms.

\begin{defn}
We say that two morphisms $f,g$ in $\dgrmstd$ are \defnemph{basic standard equivalent} if there exist basic standard diagrams $\sigma,\sigma'$ such that $f=\sigma \circ g \circ \sigma'$.  More generally, we say that two morphisms $h,k$ in $\dgrmBSstd$ are basic standard equivalent if there exist matrices $\Theta,\Theta'$ whose entries are basic standard diagrams such that for $H,K$ the localization matrices of $h,k$ respectively, we have $H=\Theta \circ K \circ \Theta'$. 
\end{defn}


The first step in our construction of $\pr$ is to use patterns to localize a subset of the boundary strings of an $\Ss$-graph. 

\begin{defn}
Let $\expr{x} \in \expr{\Ss}$. 
Suppose $\patt{r}=(\patt{r}_1,\patt{r}_2,\dotsc,\patt{r}_m)$ is a pattern for $\expr{x}$. 
We write 
\begin{equation*}
\gls*{Bpattr}= \bigotimes_{i=1}^m \begin{cases}
B_{s_i} & \text{if $\patt{r}_i$ is indeterminate with generator $s_i$,} \\
\hat{R}_{\patt{r}_i} & \text{if $\patt{r}_i$ is fixed}
\end{cases}
\end{equation*}
in $\dgrmpast$. 
Similarly we write 
\begin{equation*}
\glsuseri*{pipattr} = 
\bigotimes_{i=1}^m
\begin{cases}
\ident_{B_{s_i}} & \text{if $\patt{r}_i$ is indeterminate with generator $s_i$,} \\
\pi_{\patt{r}_i} & \text{if $\patt{r}_i$ is fixed}
\end{cases}
\end{equation*}
a morphism $B_{\expr{x}} \longrightarrow B_{\patt{r}}$ in $\dgrmpast$.
\end{defn}

\begin{lem} \label{lem:locpattdecomp}
Let $\expr{x} \in \expr{\Ss}$ and $w \in \Wpcosets$. Then
\begin{equation*}
\hat{R}_w \otimes B_{\expr{x}} \xrightarrow{\bigoplus_{\patt{r}} \pi_{\patt{r}}} \bigoplus_{\patt{r} \in [\expr{x}]_{\ast}(w)} B_{\patt{r}} 
\end{equation*}
is an isomorphism in $\dgrmpast$.
\end{lem}

\begin{proof}
Let $\patt{r}=(\patt{r}_1,\patt{r}_2,\dotsc,\patt{r}_m) \in [\expr{x}]_{\ast}(w)$. 
For each fixed term $\patt{r}_i$ with generator $s_i$ we have $w\hat{r}_{\leq i-1} s_i (w\hat{r}_{\leq i-1})^{-1} \notin \Ssp$. 
This means that $(w\hat{r}_{\leq i-1}) a_{s_i}$ is invertible in $\hat{R}$ by Lemma~\ref{lem:rlzcoefs}. 
Thus we have a splitting
\begin{equation*}
\hat{R}_w \otimes B_{\expr{x}}=\hat{R}_w \otimes B_{\expr{x}_{\leq i-1}} \otimes B_{s_i} \otimes B_{\expr{x}_{\geq i+1}} \iso \hat{R}_w \otimes B_{\expr{x}_{\leq i-1}} \otimes \left(\hat{R} \oplus \hat{R}_{s_i}\right) \otimes B_{\expr{x}_{\geq i+1}}
\end{equation*}
in $\dgrmpast$. 
Applying this splitting to each fixed term of a pattern, over all patterns in $[\expr{x}]_{\ast}(w)$, gives the decomposition.
\end{proof}

By inserting standard caps where necessary we obtain an isomorphism $B_{\patt{r}} \rightarrow B^{(p)}_{w\patt{r}(w\hat{r})^{-1}} \otimes \hat{R}_{w\hat{r}}$ which is basic standard equivalent to the identity. This gives a reinterpretation of Lemma~\ref{lem:locpattdecomp}.

\begin{cor} \label{cor:locpattp}
Let $\expr{x} \in \expr{\Ss}$ and $w \in \Wpcosets$. There is an isomorphism 
\begin{equation*}
\hat{R}_w \otimes B_{\expr{x}} \xrightarrow{\sim} \bigoplus_{z \in \Wpcosets} \left(\bigoplus_{\substack{\patt{r} \in [\expr{x}]_{\ast}(w)\\ w\hat{r}=z}} B^{(p)}_{w\patt{r}z^{-1}}\right) \otimes \hat{R}_{z} \text{,}
\end{equation*}
in $\dgrmpast$, where each $\patt{r}$-component is basic standard equivalent to $\pi_{\patt{r}}$.
\end{cor}

Note that by Theorem~\ref{thm:stdhoms} the components of the decomposition above are uniquely determined by the expressions $\expr{w}$ and $\expr{z}$ used to define the standard bimodules in the domain and codomain. 
The two decompositions in Lemma~\ref{lem:locpattdecomp} and Corollary~\ref{cor:locpattp} give rise to two different ways to localize an $\Ss$-graph with respect to patterns.

\begin{defn}
Let $f:B_{\expr{x}} \rightarrow B_{\expr{y}}$ be a morphism in $\dgrmBSdeg$. 
Suppose $w,z \in \Wpcosets$ with $\patt{q} \in [\expr{x}]_{\ast}(w)$ and $\patt{r} \in [\expr{y}]_{\ast}(w)$ such that $w\hat{q}=w\hat{r}=z$. 
\begin{enumerate}
\item The \defnemph{partial localization} of $\ident_{\hat{R}_w} \otimes f$ with respect to $\patt{q}$ and $\patt{r}$ is the composition
\begin{equation*}
B_{\patt{q}} \xrightarrow{\overline{\pi_{\patt{q}}}} \hat{R}_w \otimes B_{\expr{x}} \xrightarrow{\ident \otimes f} \hat{R}_w \otimes B_{\expr{y}} \xrightarrow{\pi_{\patt{r}}} B_{\patt{r}} \text{.}
\end{equation*}

\item The \defnemph{basic standard equivalent partial localization} of $\ident_{\hat{R}_w} \otimes f \otimes \ident_{\hat{R}_{z^{-1}}}$ with respect to $\patt{q}$ and $\patt{r}$ is the composition
\begin{equation*}
\xymatrix@C-6pt{
B^{(p)}_{w\patt{q}(w\hat{q})^{-1}} \ar[r]^{\sim} & B_{\patt{q}} \otimes \hat{R}_{z^{-1}} \ar[r]^-{\overline{\pi_{\patt{q}}} \otimes \ident} & \hat{R}_w \otimes B_{\expr{x}} \otimes \hat{R}_{z^{-1}} \ar[d]^{\ident \otimes f \otimes \ident} & & \\
& & \hat{R}_w \otimes B_{\expr{y}} \otimes \hat{R}_{z^{-1}} \ar[r]^-{\pi_{\patt{r}} \otimes \ident} & B_{\patt{r}} \otimes \hat{R}_{z^{-1}} \ar[r]^{\sim} & B^{(p)}_{w\patt{r}(w\hat{r})^{-1}} 
}
\end{equation*}
where the isomorphisms are basic standard equivalent to the identity.
\end{enumerate}
\end{defn}

From these two localization methods, we obtain two related analogues of the localization matrix. 
Let $\gls*{hatRdgrmBSFdev}$ be the \defnemph{devaluation} of $\hat{R} \otimes \dgrmBSF$, analogous to the degrading of $\dgrmBS$. 
We also write $\gls*{dgrmBSdegoplus}$ and $\gls*{hatRdgrmBSFdevoplus}$ for the additive envelopes of $\dgrmBSdeg$ and $\hat{R} \otimes \dgrm_{\rm BS}^{F,{\rm dev}}$ respectively.



\begin{defn} \hfill
\begin{enumerate}
\item Let $\expr{x} \in \expr{\Ss}$. 
We define two dual-valued $\Wpcosets \times \Wpcosets$ matrices $\glsuseri*{prprime}(B_{\expr{x}})$ and $\pr(B_{\expr{x}})$ of objects in $\dgrmpast$ and $\hat{R} \otimes \dgrm_{\rm BS}^{F,{\rm dev},\oplus}$ respectively as follows. 
\begin{enumerate}[label=(\roman*)]
\item The $(w,z)$-entry of $\pr'(B_{\expr{x}})$ is 
\begin{equation*}
\bigoplus_{\substack{\patt{q} \in [\expr{x}]_{\ast}(w) \\ w\hat{q}=z}} B_{\patt{q}} \text{.}
\end{equation*}

\item The $(w,z)$-entry of $\pr(B_{\expr{x}})$ is 
\begin{equation*}
\bigoplus_{\substack{\patt{q} \in [\expr{x}]_{\ast}(w) \\ w\hat{q}=z}} B^F_{F^{-1}(w\patt{q}z^{-1})}
\end{equation*}
\end{enumerate}

\item Let $f:B_{\expr{x}} \rightarrow B_{\expr{y}}$ be a morphism in $\dgrmBSdeg$. 
We define two dual-valued $\Wpcosets \times \Wpcosets$ matrices $\pr'(f)$ and $\pr(f)$ of morphisms in $\dgrmpast$ and $\hat{R} \otimes \dgrm_{\rm BS}^{F,{\rm dev},\oplus}$ as follows. 
\begin{enumerate}[label=(\roman*)]
\item The $(w,z)$-entry of $\pr'(f)$ is the direct sum of all partial localizations of $\ident_{\hat{R}_w} \otimes f$ with respect to patterns $\patt{q} \in [\expr{x}]_{\ast}(w)$ and $\patt{r} \in [\expr{y}]_{\ast}(w)$ for which $w\hat{q}=w\hat{r}=z$. 

\item The $(w,z)$-entry of $\pr(f)$ is the ($F$-preimage of the) direct sum of all basic standard equivalent partial localizations of $\ident_{\hat{R}_w} \otimes f \otimes \ident_{\hat{R}_{z^{-1}}}$ with respect to patterns $\patt{q} \in [\expr{x}]_{\ast}(w)$ and $\patt{r} \in [\expr{y}]_{\ast}(w)$ for which $w\hat{q}=w\hat{r}=z$. 
\end{enumerate}
\end{enumerate}
These constructions extend to both $\dgrmBSdegsum$ and to $\dgrmdeg$ in the natural way.
\end{defn}


For $f : B_{\expr{x}} \rightarrow B_{\expr{y}}$ note that the matrix of the domains (resp.~codomains) of the entries of $\pr(f)$ is given by $\pr(B_{\expr{x}})$ (resp.~$\pr(B_{\expr{y}})$) and similarly for $\pr'$. 
We will show in the next section that $\pr$ is a functor and that it describes a connection between Soergel bimodules in $\dgrmdeg$ and ``smaller'' Soergel bimodules in $\dgrmF$ as claimed in the introduction. 
Strictly speaking we do not need to define $\pr'$ in order to define $\pr$, but we have included this construction because it is usually much easier to calculate $\pr(f)$ from $\pr'(f)$, as the following example shows.

\begin{exam} \label{exam:prdash}
Let $p=3$ and $\W$ be of type $\widetilde{A_1}$. 
Label the unique finite generator $1$ (colored red) and the affine generator $0$ (colored blue). Here is an example of $\pr'(f)$ for a morphism $f:B_{\expr{0}} \rightarrow B_{\expr{010}}$ in $\dgrmBSdeg$:
\begin{equation*}
\pr'\left(\begin{tikzpicture}[yscale=-1,scale=0.3,baseline=(fork),font=\tiny]
    \node (fork) at (0,0) {};
    \path (fork.center) ++(-1,1)  coordinate (forktl);
    \path (fork.center) ++(0,1)   coordinate (forktop);
    \path (fork.center) ++(1,1)   coordinate (forktr);
    \path (fork.center) ++(-1,-1) coordinate (forkbl);
    \path (fork.center) ++(0,-1)  coordinate (forkbot);
    \path (fork.center) ++(1,-1)  coordinate (forkbr);
    \coordinate (dom1) at (-1,-2) {};
    \coordinate (dom2) at (0,-2)  {};
    \coordinate (dom3) at (1,-2)  {};
    \coordinate (cod1) at (-1,2) {};
    \coordinate (cod2) at (0,2)  {};
    \coordinate (cod3) at (1,2)  {};
    \draw[string, red, corner] 
                                        (dom2) -- (forkbot);
    \node[dot,fill=red] (reddot3) at (forkbot) {};
    \draw[string, blue, corner] (fork.center) -- (cod2)
                                         (fork.center) -| (dom1)
                                         (fork.center) -| (dom3);
\end{tikzpicture}\right)=
\begin{pmatrix}
    \begin{pmatrix}
        \begin{tikzpicture}[scale=0.6,baseline=(reddot),font=\tiny,yscale=-1]
            \node[dot,fill=red] (reddot) at (0,0) {};
            \node (label) at (reddot) [anchor=east] {$\binom{a_{\textcolor{blue} 0}^{-1}}{1}$};
            \draw[string,red] (reddot.center) -- ++(0,-1cm);
        \end{tikzpicture} \\ 
        \begin{tikzpicture}[scale=0.6,baseline=(turn),font=\tiny,yscale=-1]
            \node (turn) at (0,0) {};
            \node (dom1) at (-0.5cm,-1cm) {};
            \node (dom2) at (0.5cm,-1cm) {};
            \draw[std,blue,corner] (dom1.center) |- (turn.center) -| (dom2.center);
            \node (label) at (dom1 |- turn) [anchor=east] {$\binom{-a_{\textcolor{blue} 0}^{-1}}{a_{\textcolor{red} 1}+2a_{\textcolor{blue}0}}$};
        \end{tikzpicture}
    \end{pmatrix} & 
    \begin{pmatrix}
        \begin{tikzpicture}[scale=0.6,baseline=(reddot),font=\tiny,yscale=-1]
            \coordinate (dom1) at (-0.5,0);
            \coordinate (dom2) at (0,0);
            \node[dot,fill=red] (reddot) at (-0.5,1) {};
            \draw[string,red] (reddot.center) -- node [black,anchor=north east] {$\binom{a_{\textcolor{blue} 0}^{-1}}{1}$} (dom1);
            \draw[std,blue] (dom2) -- ++(0,2cm);
        \end{tikzpicture} \\ 
        \begin{tikzpicture}[scale=0.6,baseline=(origin),font=\tiny,yscale=-1]
            \coordinate (origin) at (0,0) {};
            \coordinate (dom1) at (-0.5cm,-1cm) {};
            \coordinate (dom2) at (0.5cm,-1cm) {};
            \draw[std,blue,corner] (dom1) -- ++(0,2cm);
            \node (label) at (dom1 |- origin) [anchor=east] {$\binom{-a_{\textcolor{blue} 0}^{-1}}{a_{\textcolor{red} 1}+2a_{\textcolor{blue}0}}$};
        \end{tikzpicture}
    \end{pmatrix} & 0 \\
    \begin{pmatrix}
        \begin{tikzpicture}[scale=0.6,baseline=(reddot),font=\tiny,yscale=-1]
            \coordinate (dom1) at (-0.5,0);
            \coordinate (dom2) at (0,0);
            \coordinate (dom3) at (0.5,0);
            \node[dot,fill=red] (reddot) at (0.5,1) {};
            \draw (reddot.center) edge[string,red] (dom3);
            \draw[std,blue] (dom1) -- node [black,anchor=east] {$\binom{a_{\textcolor{blue} 0}^{-1}}{1}$} ++(0,2cm);
            \draw[std,blue] (dom2) -- ++(0,2cm);
        \end{tikzpicture} \\ 
        \begin{tikzpicture}[scale=0.6,baseline=(origin),font=\tiny,yscale=-1]
            \coordinate (origin) at (0,0) {};
            \coordinate (dom1) at (-0.25cm,-1cm) {};
            \coordinate (dom2) at (0.25cm,-1cm) {};
            \draw[std,blue,corner] (dom1) -- ++(0,2cm)
                                   (dom2) -- ++(0,2cm);
            
            \node (label) at (dom1 |- origin) [anchor=east] {$\binom{-a_{\textcolor{blue} 0}^{-1}}{a_{\textcolor{red} 1}+2a_{\textcolor{blue}0}}$};
        \end{tikzpicture}
    \end{pmatrix} &
    \begin{pmatrix}
        \begin{tikzpicture}[scale=0.6,baseline=(reddot),font=\tiny,yscale=-1]
            \coordinate (dom1) at (-0.5,0);
            \coordinate (dom2) at (0,0);
            \coordinate (dom3) at (0.5,0);
            \coordinate (dom4) at (1,0);
            \coordinate (turn) at (0.5,1);
            \node[dot,fill=red] (reddot) at (0.5,0.5) {};
            \draw (reddot.center) edge[string,red] (dom3);
            \draw[std,blue] (dom1) -- node [black,anchor=east] {$\binom{a_{\textcolor{blue} 0}^{-1}}{1}$} ++(0,2cm);
            \draw[std,blue,corner] (dom2) |- (turn) -| (dom4);
        \end{tikzpicture} \\ 
        \begin{tikzpicture}[scale=0.6,baseline=(origin),font=\tiny,yscale=-1]
            \coordinate (origin) at (0,0) {};
            \coordinate (dom1) at (-0.5cm,-1cm) {};
            \coordinate (dom2) at (0.5cm,-1cm) {};
            \draw[std,blue,corner] (dom1) -- ++(0,2cm);
            \node (label) at (dom1 |- origin) [anchor=east] {$\binom{-a_{\textcolor{blue} 0}^{-1}}{a_{\textcolor{red} 1}+2a_{\textcolor{blue}0}}$};
        \end{tikzpicture}
    \end{pmatrix} & 0 \\
    0 & 0 & a'
\end{pmatrix}
\end{equation*}
where
\begin{equation*}
a'=
\begin{tikzpicture}[scale=0.6,baseline=(fork),font=\tiny,yscale=-1]
        \coordinate (dom1) at (-0.5,0);
        \coordinate (dom2) at (0,0);
        \coordinate (dom3) at (0.5,0);
        \coordinate (dom4) at (2.0,0);
        \coordinate (fork) at (1.25,1);
        \node [black,anchor=south] (label) at (fork) {$\binom{1}{a_{\textcolor{red} 1}}$};
        \draw[std,blue] (dom1) -- ++(0,2cm);
        \draw[std,red] (dom2) -- ++(0,2cm);
        \draw[string,blue,corner] (dom3) |- (fork) -| (dom4)
                                  (fork) -- ++(0,1cm);
\end{tikzpicture}
\text{.}
\end{equation*} \unskip
Here is $\pr(f)$ for the same morphism:
\begin{equation*}
\pr\left(\begin{tikzpicture}[yscale=-1,scale=0.3,baseline=(fork),font=\tiny]
    \node (fork) at (0,0) {};
    \path (fork.center) ++(-1,1)  coordinate (forktl);
    \path (fork.center) ++(0,1)   coordinate (forktop);
    \path (fork.center) ++(1,1)   coordinate (forktr);
    \path (fork.center) ++(-1,-1) coordinate (forkbl);
    \path (fork.center) ++(0,-1)  coordinate (forkbot);
    \path (fork.center) ++(1,-1)  coordinate (forkbr);
    \coordinate (dom1) at (-1,-2) {};
    \coordinate (dom2) at (0,-2)  {};
    \coordinate (dom3) at (1,-2)  {};
    \coordinate (cod1) at (-1,2) {};
    \coordinate (cod2) at (0,2)  {};
    \coordinate (cod3) at (1,2)  {};
    \draw[string, red, corner] 
                                        (dom2) -- (forkbot);
    \node[dot,fill=red] (reddot3) at (forkbot) {};
    \draw[string, blue, corner] (fork.center) -- (cod2)
                                         (fork.center) -| (dom1)
                                         (fork.center) -| (dom3);
\end{tikzpicture}\right)=
\begin{pmatrix}
    \begin{pmatrix}
        \begin{tikzpicture}[scale=0.6,baseline=(reddot),font=\tiny,yscale=-1]
            \node[dot,fill=red] (reddot) at (0,0) {};
            \node (label) at (reddot) [anchor=east] {$\binom{a_{\textcolor{blue} 0}^{-1}}{1}$};
            \draw[string,red] (reddot.center) -- ++(0,-1cm);
        \end{tikzpicture} \\ 
        \begin{tikzpicture}[scale=0.6,baseline=(turn),font=\tiny,yscale=-1]
            \node (turn) at (0,0) {};
            \node (label) at (turn) {$\binom{-a_{\textcolor{blue} 0}^{-1}}{a_{\textcolor{red} 1}+2a_{\textcolor{blue}0}}$};
        \end{tikzpicture}
    \end{pmatrix} & 
    \begin{pmatrix}
        \begin{tikzpicture}[scale=0.6,baseline=(reddot),font=\tiny,yscale=-1]
            \coordinate (dom1) at (-0.5,0);
            \node[dot,fill=red] (reddot) at (-0.5,1) {};
            \draw[string,red] (reddot.center) -- node [black,anchor=north east] {$\binom{a_{\textcolor{blue} 0}^{-1}}{1}$} (dom1);
        \end{tikzpicture} \\ 
        \begin{tikzpicture}[scale=0.6,baseline=(origin),font=\tiny,yscale=-1]
            \coordinate (origin) at (0,0) {};
            \node (label) at (origin) {$\binom{-a_{\textcolor{blue} 0}^{-1}}{a_{\textcolor{red} 1}+2a_{\textcolor{blue}0}}$};
        \end{tikzpicture}
    \end{pmatrix} & 0 \\
    \begin{pmatrix}
        \begin{tikzpicture}[scale=0.6,baseline=(reddot),font=\tiny,yscale=-1]
            \coordinate (dom3) at (0.5,0);
            \node[dot,fill=red] (reddot) at (0.5,1) {};
            \node (label) at (reddot) [anchor=east] {$\binom{a_{\textcolor{blue} 0}^{-1}}{1}$};
            \draw (reddot.center) edge[string,red] (dom3);
        \end{tikzpicture} \\ 
        \begin{tikzpicture}[scale=0.6,baseline=(origin),font=\tiny,yscale=-1]
            \coordinate (origin) at (0,0) {};
%
            \node (label) at (origin) {$\binom{-a_{\textcolor{blue} 0}^{-1}}{a_{\textcolor{red} 1}+2a_{\textcolor{blue}0}}$};
        \end{tikzpicture}
    \end{pmatrix} &
    \begin{pmatrix}
        \begin{tikzpicture}[scale=0.6,baseline=(reddot),font=\tiny,yscale=-1]
            \coordinate (dom3) at (0.5,0);
            \node[dot,fill=red] (reddot) at (0.5,1) {};
            \node (label) at (reddot) [anchor=east] {$\binom{a_{\textcolor{blue} 0}^{-1}}{1}$};
            \draw (reddot.center) edge[string,red] (dom3);
        \end{tikzpicture} \\ 
        \begin{tikzpicture}[scale=0.6,baseline=(origin),font=\tiny,yscale=-1]
            \coordinate (origin) at (0,0) {};
            \node (label) at (origin) {$\binom{-a_{\textcolor{blue} 0}^{-1}}{a_{\textcolor{red} 1}+2a_{\textcolor{blue}0}}$};
        \end{tikzpicture}
    \end{pmatrix} & 0 \\
    0 & 0 & a
\end{pmatrix}
\end{equation*}
where
\begin{equation*}
a=
\begin{tikzpicture}[scale=0.6,baseline=(fork),font=\tiny,yscale=-1]
        \coordinate (dom1) at (-1.5,0);
        \coordinate (dom2) at (1.5,0);
        \coordinate (fork) at (0,1);
        \node [anchor=south] (label) at (fork) {$\binom{a_{\textcolor{blue} 0}(a_{\textcolor{red} 1}+2a_{\textcolor{blue} 0})}{-a_{\textcolor{blue} 0}^{-1}}$};
        \path (dom1) -- ++(0,2cm);
        \draw[string,realcyan,corner] (dom1) |- (fork) -| (dom2)
                                  (fork) -- ++(0,1cm);
\end{tikzpicture}
\text{.}
\end{equation*} \unskip
\end{exam}

\subsection{Categorical results}
\label{sec:mainresults}


\begin{defn}
We define the \defnemph{matrix category} $\glsuseri*{MatWpcosetshatRdgrmBSFdevoplus}$ to be the following $\hat{R}$-linear monoidal category. 
\begin{itemize}
\item The objects are $\Wpcosets \times \Wpcosets$ matrices of objects in $\hat{R} \otimes \dgrm_{\rm BS}^{F,{\rm dev},\oplus}$. 
The tensor product of two matrices of objects $A$ and $A'$ is the categorified matrix product, with entries
\begin{equation}
(AA')_{z,w}=\bigoplus_{y \in \Wpcosets} A_{z,y} \otimes A'_{y,w} \text{.} \label{eq:prodmatrix}
\end{equation}

\item The morphisms $A \rightarrow A'$ are $\Wpcosets \times \Wpcosets$ matrices $L$, with each entry 
\begin{equation*}
L_{z,w} : A_{z,w} \longrightarrow A'_{z,w} 
\end{equation*}
a morphism in $\hat{R} \otimes \dgrm_{\rm BS}^{F,{\rm dev},\oplus}$. 
The composition of two composable matrices of morphisms $L$ and $L'$ is the categorified Hadamard product or entrywise product, with entries
\begin{equation}
(L \gls*{circWpcosets} L')_{z,w}=L_{z,w} \circ L'_{z,w} \text{.} \label{eq:hdmrdmatrix}
\end{equation}
The tensor product of two arbitrary matrices of morphisms $(L_{z,w})$ and $(L'_{z,w})$ is the categorified product matrix
\begin{equation}
(LL')_{z,w}= \bigoplus_{y \in \Wpcosets} L_{z,y} \otimes L'_{y,w} \text{.} \label{eq:prodmatrixmorphisms}
\end{equation}
\end{itemize}
\end{defn}

\begin{rem} \label{rem:matcatmotiv}
The matrix category can be motivated as follows. 
Let
\begin{equation*}
(\hat{R} \otimes \dgrm_{\rm BS}^{F,{\rm dev},\oplus})^{\Wpcosets}=\bigoplus_{w \in \Wpcosets} (\hat{R} \otimes \dgrmBS^{F,{\rm dev},\oplus})_w
\end{equation*}
denote the (external) direct sum category of $|\Wpcosets|$ copies of $(\hat{R} \otimes \dgrm_{\rm BS}^{F,{\rm dev},\oplus})$. 
The monoidal structure on $\dgrm_{\rm BS}^{F,{\rm dev},\oplus}$ induces a left $(\hat{R} \otimes \dgrm_{\rm BS}^{F,{\rm dev},\oplus})$-module structure on $(\hat{R} \otimes \dgrm_{\rm BS}^{F,{\rm dev},\oplus})^{\Wpcosets}$. 
The category of endofunctors of $(\hat{R} \otimes \dgrm_{\rm BS}^{F,{\rm dev},\oplus})^{\Wpcosets}$ has the structure of a monoidal category, with endofunctor composition as the tensor product. 
%
%
The matrix category $\Mat_{\Wpcosets}(\hat{R} \otimes \dgrm_{\rm BS}^{F,{\rm dev},\oplus})$ naturally embeds into this endofunctor category via the categorified ``row vector times matrix'' action
\begin{align*}
A : \bigoplus_{w \in \Wpcosets} (\hat{R} \otimes \dgrmBS^{F,{\rm dev}\oplus})_w & \longrightarrow \bigoplus_{w \in \Wpcosets} (\hat{R} \otimes \dgrmBS^{F,{\rm dev},\oplus})_w \\
(C_w)_{w \in \Wpcosets} & \longmapsto \left(\bigoplus_{z \in \Wpcosets} C_z \otimes A_{z,w}\right)_{w \in \Wpcosets} \text{.}
\end{align*}
From this embedding we may recover the formulas \eqref{eq:prodmatrix}--\eqref{eq:prodmatrixmorphisms}.
\end{rem}

As $\pr$ maps objects and morphisms to $\Mat_{\Wpcosets}(\hat{R} \otimes \dgrm_{\rm BS}^{F,{\rm dev},\oplus})$, we are now prepared to state our main theorem.

\begin{thm} \label{thm:linkagefunctor}
The mapping $\pr$ is a monoidal functor
\begin{equation*}
\pr: \dgrmBSdeg \longrightarrow \Mat_{\Wpcosets}(\hat{R} \otimes \dgrmBS^{F,{\rm dev},\oplus}) \text{,}
\end{equation*}
which extends to a monoidal functor on the additive Karoubi envelopes
\begin{equation*}
\pr: \dgrmdeg \longrightarrow \Mat_{\Wpcosets}(\hat{R} \otimes \dgrm^{F,{\rm dev}}) \text{.}
\end{equation*}
\end{thm}

\begin{proof}
Let $\expr{x},\expr{y} \in \expr{\Ss}$ and $w \in \Wpcosets$. 
From the definition of $[\patt{x y}]_{\ast}$ there is a bijection
\begin{align*}
\{(\patt{q'},\patt{r'}) : \patt{q'} \in [\expr{x}]_{\ast}(w),\; \patt{r'} \in [\expr{y}]_{\ast}(w\hat{q'})\} & \longrightarrow [\expr{x y}]_{\ast}(w) \\
(\patt{q'},\patt{r'}) & \longmapsto \patt{q' r'} \text{.}
\end{align*}
For any $z \in \Wpcosets$ this bijection restricts to
\begin{equation*}
\{(\patt{q'},\patt{r'}) : \patt{q'} \in [\expr{x}]_{\ast}(w),\; \patt{r'} \in [\expr{y}]_{\ast}(w\hat{q'}),\; w\hat{q'}\hat{r'}=z\} \longrightarrow \{\patt{q} \in [\expr{x y}]_{\ast}(w) : w\hat{q}=z\} \text{.}
\end{equation*}
The set $P_{w,z}$ on the left-hand side is partitioned by the sets
\begin{equation*}
P_{w,z}^{z'}=\{(\patt{q'},\patt{r'}) : \patt{q'} \in [\expr{x}]_{\ast}(w),\; \patt{r'} \in [\expr{y}]_{\ast}(w\hat{q'}),\; w\hat{q'}=z',\; z'\hat{r'}=z\}
\end{equation*}
indexed over $z' \in \Wpcosets$. Now we observe that the $(w,z)$-entry of $\pr(B_{\expr{x y}})$ is
\begin{align*}
\bigoplus_{\substack{\patt{q} \in [\expr{x}]_{\ast}(w) \\ w\hat{q}=z}} B^F_{F^{-1}(w\patt{q}z^{-1})}& =\bigoplus_{(\patt{q'},\patt{r'}) \in P_{w,z}} B^F_{F^{-1}(w\patt{q' r'}z^{-1})} \\
& =\bigoplus_{(\patt{q'},\patt{r'}) \in P_{w,z}} B^F_{F^{-1}(w\patt{q'}(w\hat{q'})^{-1})} \otimes B^F_{F^{-1}((w\hat{q'})\patt{r'}z^{-1})} \\
& =\bigoplus_{z' \in \Wpcosets} \left(\bigoplus_{(\patt{q'},\patt{r'}) \in P^{z'}_{w,z}} B^F_{F^{-1}(w\patt{q'}(z')^{-1})} \otimes B^F_{F^{-1}(z'\patt{r'}z^{-1})}\right) \\
& =\bigoplus_{z' \in \Wpcosets} (\pr(B_{\expr{x}}))_{w,z'} \otimes (\pr(B_{\expr{y}}))_{z',z} \text{.}
\end{align*}
This shows that $\pr$ is monoidal on objects, and a similar argument shows that it is monoidal on morphisms as well.



Now let $g:B_{\expr{x}} \rightarrow B_{\expr{y}}$ be a morphism in $\dgrmBSdeg$. 
For $w \in \Wpcosets$, the localization matrix of $\ident_{\hat{R}_w} \otimes g$ can be viewed as a block diagonal matrix, with a separate block $G_{w,w'}$ for each $w' \in \Wpcosets$ corresponding to entries which are standard morphisms on $\hat{Q}_{x'w'}$ for some $x' \in \Wp$. 
By construction the $(w,w')$-entry of $\pr'(g)$ has localization matrix equal to $G_{w,w'}$. 
If $h : B_{\expr{y}} \rightarrow B_{\expr{z}}$ is another morphism in $\dgrmBSdeg$ with blocks $H_{w,w'}$, then the localization matrix of $h \circ g$ has blocks $H_{w,w'} \circ G_{w,w'}$. 
This implies that the entries of $\pr'(h \circ g)$ and $\pr'(h) \circ_{\Wpcosets} \pr'(g)$ have identical localization matrices. 
As localization is faithful, it follows that $\pr'$ is a functor. 
We conclude that $\pr$ is also a functor, because $F(\pr(g)_{w,w'}) \otimes \ident_{\hat{R}_{w'}}$ is basic standard equivalent to $\pr'(g)_{w,w'}$ by definition. 
Finally the matrix category is equivalent as an additive category to a direct sum of copies of $\hat{R} \otimes \dgrm_{\rm BS}^{F,{\rm dev},\oplus}$, so its additive Karoubi envelope is just $\Mat(\hat{R} \otimes \dgrm^{F,{\rm dev}})$ and the functor extends to $\dgrmdeg$ as claimed.
%
\end{proof}


\begin{exam}
As before suppose $p=3$ and $\W$ is of type $\widetilde{A_1}$ with labeling as in Example~\ref{exam:prdash}. 
Here is an example of the Hadamard product on an idempotent (up to scaling):
\input{fig/hdmrdexample.tex}
\end{exam}


The next result will show that the monoidal action of $\dgrmdeg$ on $(\hat{R} \otimes \dgrm^{F,{\rm dev}})^{\Wpcosets}$ defined by $\pr$ (see Remark~\ref{rem:matcatmotiv}) is essentially the same as the right monoidal action on $\dgrmpast$. 

\begin{prop} \label{prop:prequalstensorproduct}
The left $(\hat{R} \otimes \dgrm^{F,{\rm dev}})$-module equivalence
\begin{align*}
(\hat{R} \otimes \dgrm^{F,{\rm dev}})^{\Wpcosets} & \longrightarrow \bigoplus_{w \in \Wpcosets} (\dgrm^{F,{\rm dev}} \otimes \hat{R}_w) \iso \dgrmpast \\
(C_w)_{w \in \Wpcosets} & \longmapsto (C_w \otimes \hat{R}_w)_{w \in \Wpcosets}
\end{align*}
is $\dgrmdeg$-equivariant up to natural isomorphism.
\end{prop}

\begin{proof}
It's enough to check equivariance on all Bott--Samelson bimodules; namely, for all $\expr{x},\expr{y} \in \expr{\Ss}$ and $w \in \Wpcosets$, we must show that
\begin{equation*}
B^{F}_{\expr{x}} \otimes \hat{R}_w \otimes B_{\expr{y}} \iso B^{F}_{\expr{x}} \otimes \bigoplus_{z \in \Wpcosets} \pr(B_{\expr{y}})_{w,z} \text{.}
\end{equation*}
By the definition of $\pr$, this isomorphism follows by tensoring $B^{F}_{\expr{x}}$ with the decomposition of $\hat{R}_w \otimes B_{\expr{y}}$ from Corollary~\ref{cor:locpattp}. This decomposition is clearly natural on Bott--Samelson bimodules, so the equivariance is natural as well.
\end{proof}



%

%


\subsection{Decategorified results}
\label{sec:decat}

In this section we aim to decategorify the functor $\pr$. 
Although $\pr$ is inherently ungraded, we will present these results using certain categories which have valuation structure. 
We hope that future work will explain precisely how the grading on $\dgrm$ interacts with this valuation structure.

Taking the valuation on $\hat{R} \otimes \dgrmF$ into account, consider the category $\Mat_{\Wpcosets}(\hat{R} \otimes \dgrmF)$. 
This matrix category has a valuation on morphisms in the sense of Definition~\ref{defn:valmod}; namely, for $L$ a matrix of morphisms we set $\val L$ be the minimal valuation of all the entries. 
For each $\expr{x} \in \expr{\Ss}$ we write $\glsuseri*{Bexprxpastp}$ for the object in $\Mat_{\Wpcosets}(\hat{R} \otimes \dgrmF)$ corresponding to $\pr(B_{\expr{x}})$ with each entry having valuation shift $0$. 
In this matrix category it is also useful to define a selective version of a valuation shift.

\begin{defn}
For each $w \in \Wpcosets$ and $m \in \ZZ$ we define $\langle m \rangle_w \in \Mat_{\Wpcosets}(\hat{R} \otimes \dgrmBS^{F,\oplus})$ by
\begin{equation*}
(\gls*{langlemranglew})_{z,z'}=\begin{cases}
\hat{R} \langle m \rangle & \text{if $z=z'=w$,} \\
\hat{R} & \text{if $z=z' \neq w$,} \\
0 & \text{otherwise.}
\end{cases}
\end{equation*}
\end{defn}

\begin{rem}
We can rewrite the valuation shift functor on $\Mat_{\Wpcosets}(\hat{R} \otimes \dgrmF)$ using the language of partial valuation shifts as $\langle 1 \rangle=\prod_{w \in \Wpcosets} \langle 1\rangle_{w}$. 
In general partial valuation shifts do \emph{not} commute with the Bott--Samelson objects $B_{\expr{x}}^{(\ast)}$ for $\expr{x} \in \expr{\Ss}$.
\end{rem}

\begin{defn}
The category $\gls*{dgrmBSpastpoplus}$ is the full additive $\hat{R}$-linear monoidal subcategory of $\Mat_{\Wpcosets}(\hat{R} \otimes \dgrmF)$ generated by $B^{(\ast)}_{\expr{x}}$ and $\langle m \rangle_w$ for all $\expr{x} \in \expr{\Ss}$, $m \in \ZZ$, and $w \in \Wpcosets$.
%
%
\end{defn}

The category $\dgrm_{\rm BS}^{(\ast),\oplus}$ inherits a monoidal action on $\bigoplus_{w \in \Wpcosets} (\dgrmF \otimes \hat{R}_w)$, defined by
\begin{align*}
D : \bigoplus_{w \in \Wpcosets} (\dgrmF \otimes \hat{R}_w) & \longrightarrow \bigoplus_{w \in \Wpcosets} (\dgrmF \otimes \hat{R}_w) \\
(C_w \otimes \hat{R}_w)_{w \in \Wpcosets} & \longmapsto \left(\left(\bigoplus_{z \in \Wpcosets} C_z \otimes D_{z,w} \right) \otimes \hat{R}_w \right)_{w \in \Wpcosets} \text{,}
\end{align*}
which comes from a valuation module version of the equivalence in Proposition~\ref{prop:prequalstensorproduct}. 
This defines a monoidal action of $\dgrm_{\rm BS}^{(\ast),\oplus}$ on $\dgrmpast$ which respects the valuation structure.

\begin{thm} \label{thm:chast}
There is an isomorphism of $\laur$-algebras mapping
\begin{align*}
\heckeast & \longrightarrow [\dgrm_{\rm BS}^{(\ast),\oplus}] \\
b^{(\ast)}_s & \longmapsto [B^{(\ast)}_s] \\
u_w & \longmapsto [\langle 1 \rangle_w]
\end{align*}
for all $s \in \Ss$ and $w \in \Wpcosets$. 
Moreover, $\charctr_{p|\ast} : [\dgrmpast] \rightarrow \heckepast$ is an isomorphism of right $[\dgrm_{\rm BS}^{(\ast),\oplus}] \iso \heckeast$-modules.
\end{thm}

\begin{proof}
For each $\expr{x} \in \expr{\Ss}$ and $w \in \W$, we calculate
\begin{align*}
\charctr_{p|\ast} [\hat{R}_w \cdot B^{(\ast)}_{\expr{x}}] & =\charctr_{p|\ast} \left[\bigoplus_{\patt{q} \in [\expr{x}]_{\ast}(w)} B^F_{F^{-1}(w\patt{q}(w\hat{q})^{-1})} \otimes \hat{R}_{w\hat{q}}\right] \\
& =\sum_{\patt{q} \in [\expr{x}]_{\ast}(w)} b^{(p)}_{w\patt{q}(w\hat{q})^{-1}} h^{(p|\ast)}_{w\hat{q}} \\
& =h^{(p|\ast)}_{w} \cdot b^{(\ast)}_{\expr{x}}
\end{align*}
by Proposition~\ref{prop:ddfheckepasttwisted}. 
Similarly $[\langle 1\rangle_w]$ acts on $[\dgrmpast]$ as $u_w$. 
As the monoidal action commutes with the left $(\hat{R} \otimes \dgrmF)$-module action and the objects $\hat{R}_w$ span $\dgrmpast$ with respect to this action, the isomorphism classes $[B^{(\ast)}_{\expr{x}}]$ and $[\langle 1 \rangle]_w$ have the correct action on all of $[\dgrmpast]$. 
Thus some quotient of $[\dgrm_{\rm BS}^{(\ast),\oplus}]$ is isomorphic to $\heckeast$. 
But the monoidal action of $\dgrm_{\rm BS}^{(\ast),\oplus}$ on $\dgrmpast$ is faithful because it is the restriction of the faithful action of $\Mat_{\Wpcosets}(\hat{R} \otimes \dgrmF)$ on $\dgrmpast$, so we are done.
\end{proof}

Recall the matrix recursion representation $\xi$ on $\W$ from \S \ref{sec:heckeast-heckepast}.
The ungraded embedding of $\dgrm_{\rm BS}^{{\rm deg},\oplus}$ into $\dgrm_{\rm BS}^{(\ast),\oplus}$, along with Theorem~\ref{thm:chast} immediately implies the following.

\begin{cor}
The functor $\pr : \dgrmdeg \rightarrow \Mat_{\Wpcosets}(\hat{R} \otimes \dgrm^{F,{\rm dev}})$ is a categorification of the matrix recursion representation $\xi$.
\end{cor}

We can now prove our lower bound on the $p$-canonical basis as discussed in the introduction. 
Recall that the \defnemph{$p$-canonical basis} is defined by $\glsuseri*{pbx}=\charctr [B_x]$ for all $x \in \W$.

\begin{cor} \label{cor:sbim-highordlink}
Let $x \in \W$. Then
\begin{equation*}
\prescript{p}{v=1}{b}_x \in \sum_{\substack{y \in \W \\ w \in \Wpcosets\\ F(y)w \leq x}} \Nneg F(\prescript{p}{v=1}{b}_y) w \text{.}
\end{equation*}
\end{cor}

\begin{proof}
Consider $1 \cdot \prescript{p}{v=1}{b}_x$ in $\prescript{}{v=1}{\hecke}_{p|\ast}$. 
By the previous corollary this corresponds to sum of the ungraded characters in the first row of $\pr([B_x])$. 
The result follows immediately.
%
\end{proof}

\begin{exam} \label{exam:sbim-highordlink}
Let $p=3$ and $\W$ be of type $\widetilde{A_1}$.
We have $\prescript{3}{}{b}_{010}b_{1}=b_{0101}+b_{01}$. 
By \cite[Proposition 4.2(6)]{jensen-williamson} this is a sum of $p$-canonical basis elements, so we can apply Corollary~\ref{cor:sbim-highordlink}. 
Setting $v=1$ we get
\begin{equation*}
\prescript{3}{v=1}{b}_{{\textcolor{blue} 0}{\textcolor{red} 1}{\textcolor{blue} 0}}b_{\textcolor{red} 1}=2(\prescript{3}{v=1}{b}_{\textcolor{red} 1})\ident+(\prescript{3}{v=1}{b}_{\textcolor{red} 1}+\prescript{3}{v=1}{b}_{{\textcolor{blue} 0}_3}){\textcolor{blue} 0}+(\prescript{3}{v=1}{b}_{\textcolor{red} 1}+\prescript{3}{v=1}{b}_{{\textcolor{blue} 0}_3}){{\textcolor{blue} 0}{\textcolor{red} 1}} \text{.}
\end{equation*}
We depict this using \defnemph{weight diagrams} in Figure~\ref{fig:weightdiagram}, where the alcove corresponding to $y \in \W$ is marked with a number of dots equal to the coefficient of $\prescript{}{v=1}{h}_y=y$. 
One can visualize the two decompositions above by coloring the dots (i.e.~standard subquotients) according to which canonical basis element (i.e.~indecomposable summand) they lie in. 
Since the decompositions lead to different colorings, we draw a complete colored weight diagram for each decomposition. 
Corollary~\ref{cor:sbim-highordlink} implies that the $p$-canonical summands partition the colors in the $\Wp$-weight diagram. 
In particular, it is easy to see that $\prescript{3}{}{b}_{0101} \neq b_{0101}$. 
Otherwise the green dots and the black dots in the $\W$-weight diagram correspond to different $p$-canonical basis elements, but this cannot be the case because it is impossible to partition the colors in the $\Wp$-weight diagram below in the same manner. 
Weight diagrams are very similar to the diagrams in \cite{jg-jensen} depicting tilting characters, and the processes of applying Corollary~\ref{cor:sbim-highordlink} or Andersen's tilting character lower bounds to a potential diagram are essentially identical.

\begin{figure}
\begin{center}
\begin{tikzpicture}[line cap=rect]
  \node at (-4,0) {\textover[c]{$\W$}{$\Wp$}};
  \draw[thick] (-3.5, 0) -- (-3,0) 
         -- 
         node[below] {$\textcolor{red}{1}\textcolor{blue}{0}\textcolor{red}{1}$} (-2,0) 
         -- 
         node[below] {$\textcolor{red}{1}\textcolor{blue}{0}$} (-1,0)
         -- 
         node[below] {$\textcolor{red}{1}$} 
         (0,0) -- 
         node[below] {$\ident$} (1,0) 
         -- 
         node[below] {$\textcolor{blue}{0}$} (2,0) 
         -- 
         node[below] {$\textcolor{blue}{0}\textcolor{red}{1}$} (3,0) 
         -- 
         node[below] {$\textcolor{blue}{0}\textcolor{red}{1}\textcolor{blue}{0}$} (4,0) 
         -- 
         node[below] {$\textcolor{blue}{0}\textcolor{red}{1}\textcolor{blue}{0}\textcolor{red}{1}$} (5,0) 
         -- node[below] {$\textcolor{blue}{0}\textcolor{red}{1}\textcolor{blue}{0}\textcolor{red}{1}\textcolor{blue}{0}$} (6,0) 
         -- (6.5,0);
  \foreach \x in {-3,-1,...,5}
  \draw[thick, blue] (\x,-0.125) edge (\x,0.125);
  \foreach \x in {-2,0,...,6}
  \draw[thick,red] (\x,-0.125) edge (\x,0.125);
%
  \foreach \x in {-3,3}
  \draw[thick,blue] (\x,-0.25) edge (\x,0.25);
  \foreach \x in {0,6}
  \draw[thick, red] (\x,-0.25) edge (\x,0.25);
  \node[dot,fill=black,above=2pt] at (-2.5,0) {};
  \node[dot,fill=black,above=2pt] at (-1.5,0) {};
  \node[dot,fill=black,above=2pt] at (-0.5,0) {};
  \node[dot,fill=black,above=2pt] at (-0.5,0) {};
  \node[dot,fill=green,above=8pt] at (-0.5,0) {};
  \node[dot,fill=black,above=2pt] at (0.5,0) {};
  \node[dot,fill=black,above=2pt] at (0.5,0) {};
  \node[dot,fill=green,above=8pt] at (0.5,0) {};
  \node[dot,fill=black,above=2pt] at (1.5,0) {};
  \node[dot,fill=green,above=8pt] at (1.5,0) {};
  \node[dot,fill=black,above=2pt] at (2.5,0) {};
  \node[dot,fill=green,above=8pt] at (2.5,0) {};
  \node[dot,fill=black,above=2pt] at (3.5,0) {};
  \node[dot,fill=black,above=2pt] at (4.5,0) {};
\end{tikzpicture}

\begin{tikzpicture}[line cap=rect]
  \node at (-4,0) {$\Wp$};
  \draw[thick] (-3.5, 0) -- (-3,0) 
         -- 
         node[below] {$\textcolor{red}{1}\textcolor{blue}{0}\textcolor{red}{1}$} (-2,0) 
         -- 
         node[below] {$\textcolor{red}{1}\textcolor{blue}{0}$} (-1,0)
         -- 
         node[below] {$\textcolor{red}{1}$} 
         (0,0) -- 
         node[below] {$\ident$} (1,0) 
         -- 
         node[below] {$\textcolor{blue}{0}$} (2,0) 
         -- 
         node[below] {$\textcolor{blue}{0}\textcolor{red}{1}$} (3,0) 
         -- 
         node[below] {$\textcolor{blue}{0}\textcolor{red}{1}\textcolor{blue}{0}$} (4,0) 
         -- 
         node[below] {$\textcolor{blue}{0}\textcolor{red}{1}\textcolor{blue}{0}\textcolor{red}{1}$} (5,0) 
         -- node[below] {$\textcolor{blue}{0}\textcolor{red}{1}\textcolor{blue}{0}\textcolor{red}{1}\textcolor{blue}{0}$} (6,0) 
         -- (6.5,0);
  \foreach \x in {-3,-1,...,5}
  \draw[thick, blue] (\x,-0.125) edge (\x,0.125);
  \foreach \x in {-2,0,...,6}
  \draw[thick,red] (\x,-0.125) edge (\x,0.125);
%
  \foreach \x in {-3,3}
  \draw[thick,blue] (\x,-0.25) edge (\x,0.25);
  \foreach \x in {0,6}
  \draw[thick, red] (\x,-0.25) edge (\x,0.25);
  \node[dot,fill=magenta,above=2pt] at (-2.5,0) {};
  \node[dot,fill=yellow,above=2pt] at (-1.5,0) {};
  \node[dot,fill=orange,above=2pt] at (-0.5,0) {};
  \node[dot,fill=gray,above=8pt] at (-0.5,0) {};
  \node[dot,fill=orange,above=2pt] at (0.5,0) {};
  \node[dot,fill=gray,above=8pt] at (0.5,0) {};
  \node[dot,fill=violet,above=2pt] at (1.5,0) {};
  \node[dot,fill=yellow,above=8pt] at (1.5,0) {};
  \node[dot,fill=teal,above=2pt] at (2.5,0) {};
  \node[dot,fill=magenta,above=8pt] at (2.5,0) {};
  \node[dot,fill=teal,above=2pt] at (3.5,0) {};
  \node[dot,fill=violet,above=2pt] at (4.5,0) {};
\end{tikzpicture}
\end{center}
\caption{Weight diagrams for $\prescript{3}{}{b}_{010}b_{1}$.}
\label{fig:weightdiagram}
\end{figure}
\end{exam}

%
\begin{rem} 
We broadly conjecture that there is a Kazhdan--Lusztig-type construction of a self-dual $\toralalg$-basis $\{b^{(\ast)}_x : x \in \W\}$ of $\heckeast$, and that there is a new category related to $\dgrm_{\rm BS}^{(\ast),\oplus}$ with indecomposable objects $\{B^{(\ast)}_x : x \in \W\}$. 
Unfortunately, precisely characterizing such constructions is tricky. 
On the algebraic side, it is unclear what should take the place of the degree condition on coefficients of the standard basis elements $b^{(\ast)}_x$. 
On the categorical side, it is unclear which morphisms in $\dgrm_{\rm BS}^{(\ast),\oplus}$ should be considered, as extra morphisms will produce too many indecomposable objects. 
Once the correct definition is found, the next natural step would be to prove a Soergel's conjecture-like result (see \cite[Conjecture~3.18]{ew-soergelcalc}), equating the isomorphism class $[B^{(\ast)}_x]$ with $b^{(\ast)}_x$ under the isomorphism in Theorem~\ref{thm:chast} for fixed $x \in \W$ when $p$ is sufficiently large. 
Their images in $\ZZ\W$ should then match the images of a ``second generation'' Kazhdan--Lusztig basis in $\hecke$ analogous to second generation tilting characters \cite{lw-tiltinggens,lw-billiards}.
\end{rem}

\subsection{Tilting characters}
\label{sec:tiltingchars}

We conclude with the application of Corollary \ref{cor:sbim-highordlink} to characters of indecomposable tilting modules, as described in the introduction. 
Suppose $G$ is a semisimple, simply connected algebraic group with root system $\Phif$ over an algebraically closed field of characteristic $p\geq h$, where $h$ is the Coxeter number of $\Phif$. 
We consider the affine Cartan matrix of type $\widetilde{\Phi_{\rm f}^\vee}^\vee$ (see Remark \ref{rem:affrootsystems}) and construct $\W$, $V$, and $\dgrm$ accordingly.

Recall that the \defnemph{Kazhdan--Lusztig basis} $\{b_x : x \in \W\}$ is the unique self-dual $\laur$-basis of $\hecke$ of the form
\begin{equation}
b_x=\sum_{y \in \W} h_{y,x} h_y
\end{equation}
such that for all $x,y \in \W$
\begin{itemize}
\item $h_{x,x}=1$;

\item $h_{y,x} \in v \ZZ[v]$ whenever $y \neq x$.
\end{itemize}
The polynomials $\{h_{y,x}: x,y \in \W\}$ are the \defnemph{Kazhdan--Lusztig polynomials} for $\W$. 
Similarly, if we write the $p$-canonical basis elements $\{\prescript{p}{}{b}_x : x \in \W\}$ in terms of the standard basis
\begin{equation}
\prescript{p}{}{b}_x=\sum_{y \in \W} \prescript{p}{}{h}_{y,x} h_y
\end{equation}
we call the Laurent polynomials $\{\prescript{p}{}{h}_{y,x} : x,y \in \W\}$ the \defnemph{$p$-Kazhdan--Lusztig polynomials} for $\W$.

Let $\Wfcosets$ denote the set of minimal length representatives for the right cosets $\Wf \backslash \W$. 
(This is also the set of dominant elements, as defined in the introduction.)
The \defnemph{antispherical Kazhdan--Lusztig polynomials} $\{n_{y,x} : x,y \in \Wfcosets\}$ can be defined by the formula
\begin{equation}
n_{y,x} = \sum_{z \in \Wf} (-v)^{\len(z)} h_{zy,x}
\end{equation}
for all $x,y \in \Wfcosets$.
We may similarly define (see e.g.~\cite[\S 1.4]{riche-williamson}) the \defnemph{antispherical $p$-Kazhdan--Lusztig polynomials}
\begin{equation}
\prescript{p}{}{n}_{y,x} = \sum_{z \in \Wf} (-v)^{\len(z)} \prescript{p}{}{h}_{zy,x} \text{.}
\end{equation}

We recall some algebraic groups notation from the introduction. 
To summarize, we write $\cdot_p$ for the $p$-dilated dot action, $T(\lambda)$ (and $T_l(\lambda)$) for the indecomposable (quantum) tilting module of highest weight $\lambda$, $\charctr M$ for the formal character of $M$, and $\chi(\lambda)$ for the Weyl character of highest weight $\lambda$. 
As in the introduction (but unlike in the rest of the paper) we will use $C_l$ here to denote the set of dominant weights in the interior of the fundamental $\rho$-shifted $l$-alcove.
For $w \in \Wpcosets$ we also recall the Frobenius scaling map $F^w$ on characters, defined by setting 
\begin{equation*}
F^w(\chi(x \cdot_p \lambda))=\chi(F(x)w \cdot_p \lambda)
\end{equation*}
for all $\lambda \in C_p$ and $x \in \Wfcosets$ and extending to all characters linearly.

Soergel's character formula for quantum tilting modules takes the following form.

\begin{thm}[{\cite[Conjecture~7.1]{soergel-KL}}]
Let $\lambda \in C_l$ and $x \in \Wfcosets$. 
Then
\begin{equation*}
\charctr T_l(x \cdot_l \lambda)=\sum_{y \in \Wfcosets} n_{y,x}(1) \chi(y \cdot_l \lambda) \text{.}
\end{equation*}
\end{thm}

The $p$-canonical tilting character formula is very similar.

\begin{thm}[{\cite[Theorem~7.6]{amrw-koszultilting}}]
Let $\lambda \in C_p$ and $x \in \Wfcosets$.
Then
\begin{equation*}
\charctr T(x \cdot_p \lambda)=\sum_{y \in \Wfcosets} \prescript{p}{}{n}_{y,x}(1) \chi(y \cdot_p \lambda) \text{.}
\end{equation*}
\end{thm}

Now we prove the recursive tilting character formula lower bound (Theorem \ref{thm:tiltcharbound}) from the introduction.
For clarity, we first show that the special case follows from Andersen's observation and Andersen's conjecture. 
Then we prove the general case using Corollary \ref{cor:sbim-highordlink}.

\begin{proof}[Proof of the recursive tilting character lower bound, special case]
First we note that for $\lambda \in C_p$, if $w \in \Wpcosets$ then $w \cdot_p \lambda \in C_{p^2}$, whereas if $x \in \W$ then $x \cdot_{p^2} \lambda=F(x) \cdot_p \lambda$.
Now suppose $x \in \Wfcosets$ with $x=F(y)w$ for some $y \in \Wfcosets$ and $w \in \Wpcosets$. 
We compute
\begin{align*}
\charctr T_{p^2}(x \cdot_p \lambda)& =\charctr T_{p^2}(F(y)w \cdot_p \lambda) \\
& =\charctr T_{p^2}(F(y) \cdot_p (w \cdot_p \lambda)) \\
& =\charctr T_{p^2}(y \cdot_{p^2} (w \cdot_p \lambda)) \\
& =\sum_z n_{z,y}(1) \chi(z \cdot_{p^2} (w \cdot_p \lambda)) \\
& =\sum_z n_{z,y}(1) \chi(F(z)w \cdot_p \lambda)) \\
& =F^w(\charctr T_p(y \cdot_p \lambda)) \text{.}
\end{align*}
In other words, quantum tilting characters ``scale up'' via the Frobenius map.

Now let $x,x' \in \Wpcosets$.
If we assume Andersen's conjecture, then Andersen's observation with $l=p^2$ takes the form
\begin{align*}
\charctr T(F(x')x \cdot_p \lambda) & \in \sum_{z \leq F(x')x} \Nneg \charctr T_{p^2}(z \cdot_p \lambda) \\
& =\sum_{\substack{y \in \Wpcosets \\ w \in \Wpcosets \\ F(y)w \leq F(x')x}} \Nneg F^w(\charctr T_{p}(y \cdot_p \lambda)) \\
& =\sum_{\substack{y \in \Wpcosets \\ w \in \Wpcosets \\ F(y)w \leq F(x')x}} \Nneg F^w(\charctr T(y \cdot_p \lambda))
\end{align*}
as claimed.
\end{proof} 

%
%

\begin{proof}[Proof of the recursive tilting character lower bound, general case]
Corollary \ref{cor:sbim-highordlink} is equivalent to
\begin{equation}
\prescript{p}{}{h}_{z,x}(1) \in \sum_{\substack{y \in \W\\ F(y)w \leq x}} \Nneg \prescript{p}{}{h}_{z',y}(1) \label{eq:pKLpolybound}
\end{equation}
for all $x,z,z' \in \W$ and $w \in \Wpcosets$ such that $z=F(z')w$. 
Taking a signed sum of \eqref{eq:pKLpolybound} over a right coset in $\Wf \backslash \W$ we obtain
\begin{equation}
\prescript{p}{}{n}_{z,x}(1) \in \sum_{\substack{y \in \Wfcosets\\ F(y)w \leq x}} \Nneg \prescript{p}{}{n}_{z',y}(1) \label{eq:pKLantispherpolybound}
\end{equation}
for $x,z,z' \in \Wfcosets$ and $w \in \Wpcosets$ such that $z=F(z')w$.
Now multiply \eqref{eq:pKLantispherpolybound} by $\chi(z \cdot_p \lambda)$ and sum over all $z \in \Wfcosets$ to obtain
\begin{align*}
\charctr T(x \cdot_p \lambda)& =\sum_{z \in\Wfcosets} \prescript{p}{}{n}_{z,x}(1) \chi(z \cdot_p \lambda) \\
& \in \sum_{\substack{y,z,z' \in \Wfcosets\\ w \in \Wpcosets\\ z=F(z')w \\ F(y)w \leq x}} \Nneg \prescript{p}{}{n}_{z',y}(1) \chi(z \cdot_p \lambda) \\
& =\sum_{\substack{y \in \Wfcosets \\ w \in \Wpcosets\\ F(y)w \leq x}} \Nneg F^w\left(\sum_{z' \in \Wfcosets} \prescript{p}{}{n}_{z',y}(1) \chi(z' \cdot_p \lambda)\right) \\
& =\sum_{\substack{y \in \Wfcosets \\ w \in \Wpcosets\\ F(y)w \leq x}} \Nneg F^w(\charctr T(y \cdot_p \lambda)) \qedhere
\end{align*}
\end{proof}

\appendix

\section{Affine reflection groups}
\label{app:affrefl}

We work using the notation and assumptions of \ref{sec:affWnotn}.
Let $\glsuseri*{Vaprime}=V_{A}/\ZZ c$. 
For each $s \in \Ss$ write $\alpha_s$ for the image of $a_s$ in $V'_{A}$. 
For any $t \in \Ss$ for which $c_{t}=1$, the set $\{\alpha_s : s \in \Ss \setminus \{t\}\}$ is a free basis for $V'_{A}$. 
Since $V'_{A}$ is free, its dual $(V'_{A})^{\ast}$ is naturally isomorphic to the annihilator
\begin{equation}
\gls*{Z}=(\ZZ c)^\circ=\{b \in V_{A}^\ast : \langle b, c\rangle=0 \} \subset V_{A}^\ast \text{.}
\end{equation}

\begin{lem} \label{lem:gradrlz}
For all $s \in \Ss$, we have $a_s^\vee \in Z$.
The subsets 
\begin{gather*}
\{\alpha_s : s \in \Ss\} \subset V'_{A} \text{,} \\
\{a_s^\vee : s \in \Ss\} \subset Z \iso (V'_{A})^\ast 
\end{gather*}
give $V'_{A}$ the structure of a realization with Cartan matrix $A$.
The quotient map $V_{A} \rightarrow V'_{A}$ is a morphism of realizations.
\end{lem}

We call $V'_{A}$ the \defnemph{gradient realization} of $(\W,\Ss)$ with respect to $A$. 
For consistency we will also write $\alpha_s^\vee=a_s^\vee \in Z$ for each $s \in \Ss$. 

\begin{proof}
For all $s \in \Ss$ we have
\begin{equation*}
\langle a_s^\vee, c \rangle=\sum_{t \in \Ss} \langle a_s^\vee, a_t \rangle c_t=\sum_{t \in \Ss} a_{st}c_t=0 \text{.}
\end{equation*}
This simultaneously shows that $a_s^\vee \in Z$ and that $c$ is fixed by $\W$. 
Hence $V'_{A}$ has the structure of a realization, and it is immediate that the quotient map $V_{A} \rightarrow V'_{A}$ is a morphism of realizations.
By Remark~\ref{rem:realizations}~\ref{item:morphism-realizations} it follows that the Cartan matrix of $V'_{A}$ is $A$.
\end{proof}

Let $V'_{A,\RR}=\RR \otimes V'_{A}$ and let 
\begin{equation}
\glsuseri*{Phif}=\{w(\alpha_s) : w \in \W,\ s \in \Ss\} \subset V'_{A,\RR} \text{.}
\end{equation}
Define a symmetric bilinear form on $V'_{A,\RR}$ by setting
\begin{equation}
(\alpha_s,\alpha_t)=n_s a_{st} \qquad \text{for all $s,t \in \Ss$.} \label{eq:gradinnerproduct}
\end{equation}
This choice ensures that
\begin{equation*}
\frac{2(\alpha_s,\alpha_t)}{(\alpha_s,\alpha_s)}=a_{st} \qquad \text{for all $s,t \in \Ss$.}
\end{equation*}
The form $(-,-)$ is well defined because
\begin{equation*}
(\alpha_s,c)=\sum_{t \in \Ss} n_s a_{st}c_t=0 
\end{equation*} 
for all $s \in \Ss$.
In fact $(-,-)$ is an inner product, because $V'_{A,\RR}$ has a basis which is a proper subset of $\{\alpha_s : s \in \Ss\}$ and any proper submatrix of $(n_s a_{st})_{s,t \in \Ss}$ is positive definite.

\begin{prop} \label{prop:finrootsystem}
The subset $\Phif \subset V'_{A,\RR}$ forms an irreducible, not necessarily reduced finite root system with respect to the inner product $(-,-)$. 
The action of $\W$ on $V'_{A,\RR}$ factors through $\glsuseri*{Wf}$, the Weyl group of $\Phif$.
Moreover, there exists some $\glsuseri*{tildes} \in \Ss$ such that $\{\alpha_s : s \in \Ss \setminus \{\tilde{s}\}\}$ is a basis of $\Phif$. 
\end{prop}

\begin{proof}
Each generator $s \in \Ss$ acts on $V'_{A,\RR}$ as a reflection with respect to the inner product. 
Also $\W$ preserves the lattice $\sum_{s \in \Ss} \ZZ \alpha_s \subset V'_{A,\RR}$, which spans $V'_{A,\RR}$.
The only non-affine reflection groups which preserve a lattice arise from finite (but not necessarily reduced) root systems. 
This immediately implies that $\Phif$ is such a root system and that $\W$ acts on $V'_{A,\RR}$ via the standard action of $\Wf$. 
Irreducibility of $\Phif$ follows immediately from the indecomposability of $A$.

For the last claim, observe that the relation
\begin{equation*}
\sum_{s \in \Ss} c_s \alpha_s=0
\end{equation*}
implies that we can write every root in $\Phif$ as a linear combination of $\{\alpha_s : s \in \Ss\}$ with coefficients all of the same sign (in fact with all coefficients non-negative).
This means that some subset of $\{\alpha_s : s \in \Ss\}$ is a basis of $\Phif$. 
As $\Phif$ has rank $|\Ss|-1$ the result follows.
\end{proof}

The finite root system $\Phif$ is called the \defnemph{gradient root system} associated to $A$ (see \cite[\S 6]{macdonald-paper}).
As in \S \ref{sec:affWnotn} we fix a choice of such an $\tilde{s}$ for the rest of this section, and set $\glsuseri*{Ssf}=\Ss \setminus \{\tilde{s}\}$. 
We call the generators in $\Ssf$ the \defnemph{finite generators} and $\tilde{s}$ the \defnemph{affine generator} of $\W$.
An immediate consequence of Proposition~\ref{prop:finrootsystem} is the following.

\begin{cor} \label{cor:finweylsemidirectproduct}
The Cartan matrix of $\Phif$ is $(a_{st})_{s,t \in \Ssf}$.
The subgroup of $\W$ generated by $\Ssf$ is isomorphic to $\Wf$.
\end{cor}

The root $\alpha_{\tilde{s}} \in \Phif$ is negative, because $\alpha_{\tilde{s}}=-\sum_{s \in \Ssf} c_s \alpha_s$ and each $c_s$ is non-negative.
Let $\gls*{alphah}=-\alpha_{\tilde{s}}$ be the corresponding positive root, and let $\gls*{sh}$ be the image of $\tilde{s}$ in the quotient $\W \rightarrow \Wf$.
It is clear that $\sh$ acts on $V'_{A}$ via reflection in $\alphah$.
We also set
\begin{align}
a_{\rm h}& =\sum_{s \in \Ssf} c_s a_s \in V_A \text{,} &
a_{\rm h}^\vee & =\sum_{s \in \Ssf} c_s^\vee a_s^\vee \in V_A^\ast
\end{align}
where the coefficients $(c_s^\vee)_{s\in \Ssf}$ are given by $\alpha_{\rm h}^\vee=\sum_{s \in \Ssf} c_s^\vee \alpha_s^\vee$.
(Note that a consequence of this second definition is that $a_{\rm h}^\vee=-a_{\tilde{s}}^\vee$.)
The next lemma is useful for calculations involving the action of $\sh$ on $V_{A}$.

\begin{lem} \label{lem:shcalc} \hfill
\begin{enumerate}[label={\rm (\roman*)}]
\item We have
\begin{equation*}
\sh(b)=b-\langle a_{\rm h}^\vee, b \rangle a_{\rm h}
\end{equation*}
for all $b \in V_{A}$.

\item The subgroup $\W_{\rm h}=\langle \sh, \tilde{s}\rangle \leq \W$ is isomorphic (as a Coxeter group) to the infinite dihederal group.
The submodule $V_{A,{\rm h}}=\ZZ a_{\rm h} \oplus \ZZ a_{\tilde{s}}$ along with the subsets
\begin{gather*}
\{a_{\rm h}, a_{\tilde{s}}\} \subset V_{A,{\rm h}} \\
\{a_{\rm h}^\vee+(V_{A,{\rm h}})^\circ, a_{\tilde{s}}^\vee+(V_{A,{\rm h}})^\circ\} \subset V_{A}^\ast/(V_{A, {\rm h}})^\circ \iso V_{A,{\rm h}}^\ast
\end{gather*}
is the universal realization of $\W_{\rm h}$ with respect to the Cartan matrix
\begin{equation*}
\begin{bmatrix}
2 & -2 \\
-2 & 2
\end{bmatrix} \text{.}
\end{equation*}
\end{enumerate}
\end{lem}

\begin{proof}
For the first part, suppose $\alpha_{\rm h}=w(\alpha_s)$ for some $w \in \Wf$ and $s \in \Ssf$. 
This implies that $\sh=wsw^{-1}$.
By \eqref{eq:nonsimplereflections}, the result follows if we can show that $a_{\rm h}=w(a_s)$ and $a_{\rm h}^\vee=w(a_s^\vee)$.
In terms of the simple roots, we already know that $w(\alpha_t)=\sum_{t \in \Ssf} c_t \alpha_t$.
We could have also calculated the coefficients $(c_t)_{t \in \Ssf}$ using $w$, $s$, and the Cartan matrix of $\Phif$.
But this shows that $w(a_s)=\sum_{t \in \Ssf} c_t a_t=a_{\rm h}$, as the coefficients here are calculated in exactly the same way using $(a_{st})_{s,t \in \Ssf}$, which is the Cartan matrix of $\Phif$ by the previous corollary. 
A similar argument works for $a_{\rm h}^\vee$.

To prove the second part, we calculate
\begin{equation*}
\langle a_{\rm h}^\vee, a_{\tilde{s}} \rangle=\sum_{s \in \Ssf} c_s^\vee \langle a_s^\vee, a_{\tilde{s}}+\ZZ c \rangle=\sum_{s \in \Ssf} c_s^\vee \langle \alpha_s^\vee, \alpha_{\tilde{s}} \rangle=-\langle \alpha_{\rm h}^\vee, \alphah \rangle=-2 \text{,}
\end{equation*}
and similarly
\begin{equation*}
\langle a_{\tilde{s}}^\vee, a_{\rm h} \rangle=\sum_{s \in \Ssf} c_s \langle a_{\tilde{s}}^\vee, a_s+\ZZ c \rangle=\sum_{s \in \Ssf} c_s \langle \alpha_{\tilde{s}}^\vee, \alpha_s \rangle=-\langle \alpha_{\rm h}^\vee, \alphah \rangle=-2 \text{.}
\end{equation*}
Also $\langle a_{\tilde{s}}^\vee, a_s\rangle=2$ and
\begin{equation*}
\langle a_{\rm h}^\vee, a_{\rm h} \rangle=\sum_{s,t \in \Ssf} c_s^\vee c_t\langle a_s^\vee, a_t+\ZZ{c} \rangle=\sum_{s,t \in \Ssf} c_s^\vee c_t \langle \alpha_s^\vee,\alpha_t \rangle=\langle \alpha_{\rm h}^\vee, \alphah \rangle=2 \text{.}
\end{equation*}
This shows that the representation of $\W_{\rm h}$ on $V_{A,{\rm h}}$ factors through the universal realization of the infinite dihedral group  with the above Cartan matrix. 
The computation \ref{eq:dihedralcalc1} follows, which shows that $\tilde{s}s_{\rm h}$ has infinite order, so $\W_{\rm h}$ is infinite dihedral.
\end{proof}

We now describe how the universal realization gives rise to an affine reflection representation of $\W$, following \cite[Chapter~6.5]{humphreys} and \cite[Chapter~1.2]{macdonald}.
Let $V_{A,\RR}$ and $Z_{\RR} \iso (V'_{A,\RR})^\ast$ denote the $\RR$-scalar extensions of $V_{A}$ and $Z$ respectively.
The inner product on $V'_{A,\RR}$ transfers easily to $Z_{\RR}$, with
\begin{equation*}
(\alpha_s^\vee,\alpha_t^\vee)=\frac{a_{st}}{n_t}
\end{equation*}
for all $s,t \in \Ss$.
Now let $E$ be the affine hyperplane
\begin{equation}
\gls*{E}=\left\{b \in V_{A,\RR}^\ast : \langle b, c \rangle =1\right\}
\end{equation}
in $V_{A,\RR}^\ast$. 
Clearly $E$ is just a translation of $Z_{\RR}$ in $V_{A,\RR}$, so it has the structure of an affine inner product space, i.e.~a Euclidean space. 

Let $\mathrm{Aff}(E)$ denote the space of affine real-valued functions on $E$.
The derivative of an affine function $f \in \mathrm{Aff}(E)$ is an element of $Z_{\RR}^\ast$, which we will denote by $Df$.
The following lemma explains the use of the word ``gradient'' in ``gradient realization''.

\begin{lem} \label{lem:derivatives}
There is a commutative square of linear maps
\begin{align*}
\xymatrix{
V_{A,\RR} \ar[r]^{\sim} \ar[d] & \mathrm{Aff}(E) \ar[d]^{D} \\
V'_{A,\RR} \ar[r]^{\sim} & Z_{\RR}^{\ast}
}
\end{align*}
where the first row is restriction to $E$. 
\end{lem}

\begin{proof}
Any linear function restricted to an affine hyperplane is an affine function.
For any $s \in \Ss$ the derivative of $\langle - , a_s\rangle|_E$ is $\alpha_s \in V'_{A,\RR} \iso Z_{\RR}^\ast$, since
\begin{equation*}
\langle b+\lambda, a_s \rangle=\langle b,a_s \rangle+\langle \lambda, a_s \rangle=\langle b, a_s \rangle+\langle \lambda, \alpha_s\rangle
\end{equation*}
for any $b \in E$ and $\lambda \in Z_{\RR}$.
As $\{a_s: s\in \Ss\}$ is a basis of $V_{A,\RR}$, this shows that the diagram commutes. 
It follows that the first row is an isomorphism, because $\dim V_{A,\RR}=|\Ss|=\dim \mathrm{Aff}(E)$.
\end{proof}

\begin{prop}
\label{prop:rlz-refl}
The affine Weyl group $\W$ acts on $E$ by Euclidean isometries, with each $s \in \Ss$ corresponding to reflection in the affine hyperplane
\begin{equation*}
E \cap (\RR a_s)^\circ=\{b \in E : \langle b, a_s \rangle=0\} \text{.}
\end{equation*}
\end{prop}

\begin{proof}
For any $w \in \W$ and $b \in E$, we have
\begin{equation*}
\langle w(b), c \rangle=\langle b, w^{-1}(c) \rangle=\langle b, c\rangle=1 \text{,}
\end{equation*}
so $w(b) \in E$ and thus $E$ is stable under the action of $\W$.

Now let $s \in \Ss$. 
To show that $s$ acts on $E$ as an affine reflection, it is enough to show that its derivative is a linear reflection in $Z_{\RR}$.
For any $s \in \Ss$, the derivative of the action of $s$ on $E$ is the linear map
\begin{align*}
Ds : Z_{\RR} & \longrightarrow Z_{\RR} \\
\lambda & \longmapsto \lambda-\langle \lambda, \alpha_s \rangle \alpha_s^\vee
\end{align*}
by Lemma~\ref{lem:derivatives}. 
This coincides with the action on $Z_{\RR}$ arising from the gradient realization $V'_{A,\RR}$.
Thus $Ds$ is a linear reflection by Lemma~\ref{prop:finrootsystem} and we are done.
\end{proof}

We can now think of $\W$ as an affine reflection group. 
By Lemma~\ref{lem:derivatives} the kernel of the quotient map $\W \rightarrow \Wf$ is the lattice $L \leq \W$ of translations.
As this quotient map splits by Corollary~\ref{cor:finweylsemidirectproduct}, we have $\W \iso \Wf \ltimes L$ as a reflection group.
For $\lambda \in Z_{\RR}$, write $t(\lambda):E \rightarrow E$ for the corresponding translation map, and let
\begin{equation}
\Lambda=\{\lambda \in Z_{\RR} : t(\lambda) \in L\} \leq \ZZ\Phi_{\rm f}^\vee \text{.}
\end{equation}
The conjugation action of $\Wf$ on $L$ is as follows.
If $w \in \Wf$ and $\lambda \in \Lambda$ then $wt(\lambda)w^{-1}=t(w(\lambda))$.

For any $b \in E$, we compute
\begin{equation} \label{eq:alphahveetranslation}
\begin{aligned}
\tilde{s}\sh(b)& =\tilde{s}(b-\langle b, a_{\rm h} \rangle \alpha_{\rm h}^\vee) \\
& =b-\langle b, a_{\tilde{s}}\rangle \alpha_{\tilde{s}}^\vee+\langle b, a_{\rm h} \rangle \alpha_{\rm h}^\vee \\
& =b+\langle b, a_{\tilde{s}}+a_{\rm h} \rangle \alpha_{\rm h}^\vee \\
& =b+\alpha_{\rm h}^\vee
\end{aligned}
\end{equation}
using Lemma~\ref{lem:shcalc}.
This means that $\tilde{s}\sh$ is the translation $t(\alpha_{\rm h}^\vee) \in L$, and thus $t(w(\alpha_{\rm h}^\vee)) \in L$ for all $w \in \Wf$.
As a result we have shown $\Lambda \geq \ZZ\Wf(\alpha_{\rm h}^\vee)$, and in fact we have equality.
An easy way to see this is as follows. 
Note that $\Lambda$ is in bijection with $\W/\Wf$ and consider a coset 
\begin{equation*}
w\Wf=w_1 \tilde{s} w_{2} \tilde{s} w_3 \tilde{s} \dotsm  w_{m-1} \tilde{s} w_m \tilde{s} \Wf \in \W/\Wf \text{,} 
\end{equation*}
where $w_i \in \Wf$ for each $i$.
Working inductively, we can rewrite this coset as
\begin{equation*}
[w_1 (\tilde{s} \sh) w_1^{-1}][(w_1 \sh^{-1} w_2) (\tilde{s} \sh) (w_1 \sh^{-1} w_2)^{-1}] \dotsm \Wf \text{,}
\end{equation*}
which corresponds to an element of $\ZZ\Wf(\alpha_{\rm h}^\vee)$.

%

\begin{rem} \label{rem:affrootsystems} \hfill
\begin{enumerate}[label=(\arabic*)]
\item Proposition~\ref{prop:rlz-refl} shows that the set $\gls*{Phi}=\{w(a_s): w \in \W,\ s \in \Ss\} \subset V_{A,\RR}$ is an affine root system in the sense of Macdonald \cite{macdonald}. 
This corresponds to the set of real roots in the root system of the Kac--Moody algebra $\mathfrak{g}(A)$ in the terminology of \cite{kac}.
Both \cite{macdonald} and \cite{kac} include a classification of affine root systems by their Dynkin diagrams, which we will frequently use in the examples.
We warn the reader that the labeling of roots in \cite{kac} does not always agree with our convention (which more closely matches \cite{macdonald}). 
Namely, the root labeled $\alpha_0$ in $A^{(2)}_{2l}$ from \cite[Table~Aff~2]{kac} is \emph{not} the root we call $a_{\tilde{s}}$.

\item The root vector $\alphah$ is the highest root or the highest short root of $\Phif$.
The first case occurs when $\Phi=\widetilde{\Phif}$, or equivalently, $A$ is the Cartan matrix of the extended Dynkin diagram associated to $\Phif$ (i.e.~a non-twisted affine Dynkin diagram or $A^{(2)}_{2l}$ in the terminology of \cite{kac}).
When this happens, $\Lambda$ is equal to the whole coroot lattice $\ZZ\Phi_{\rm f}^\vee$.
The second case occurs only if $\Phi$ is the dual or twisted affine root system $\widetilde{\Phi_{\rm f}^\vee}^\vee$.
This means that $A$ is the transpose of the Cartan matrix of the extended Dynkin diagram associated to $\Phif^\vee$.
When this happens, $\Lambda=\ZZ\Wf(\alpha_{\rm h}^\vee)$ is a sublattice of the coroot lattice which is isomorphic (as a $\Wf$-lattice) to $\ZZ\Phif$.
This latter case is particularly important for applications to modular representation theory as described in the introduction.
By direct calculation or a classification of root subsystems (e.g.~\cite[Tables~3.1--3.6]{balnojan-hertling}) the index of the lattice $\Lambda$ inside $Z$ is 
\begin{equation*}
[Z: \Lambda]=\begin{cases}
1 & \text{if $\Phif$ is simply laced,} \\
2^{n-1} & \text{if $\Phif$ is of type $B_n$,} \\
2 & \text{if $\Phif$ is of type $C_n$,} \\
4 & \text{if $\Phif$ is of type $F_4$,} \\ 
3 & \text{if $\Phif$ is of type $G_2$.}
\end{cases}
\end{equation*}
\end{enumerate}
\end{rem}

\printnoidxglossary[type=main,sort=letter,title={List of symbols},style=mcolindex]


\bibliographystyle{habbrv2}
\bibliography{matrixrecursionrefs}
\end{document}